\documentclass[11pt]{elsarticle}
\usepackage{color}
\usepackage{graphicx,multirow}
\usepackage{amssymb,mathtools}
\usepackage{epstopdf}
\usepackage{bm}
\usepackage{float}
\usepackage{subcaption}
\usepackage{mathtools}
\usepackage[margin=1in,papersize={8.5in,11in}]{geometry}
\usepackage{times}
\usepackage{amsthm}

\usepackage{algorithm}
\usepackage{algpseudocode}

\newtheorem{lemma}{\bf Lemma}[section]
\newtheorem{theorem}{\bf Theorem}[section]

\newtheorem{proposition}{\bf Proposition}[section]

\def\vs{\vspace{0.2cm}} 
\def\vse{\vspace{0.1cm}}

\journal{arXiv}

\begin{document}
\begin{frontmatter}

\title{Dynamically orthogonal tensor methods for high-dimensional nonlinear PDEs}

\author[ucsc]{Alec Dektor}
\author[ucsc]{Daniele Venturi\corref{correspondingAuthor}}
\address[ucsc]{Department of Applied Mathematics\\
University of California Santa Cruz\\ Santa Cruz, CA 95064}
\cortext[correspondingAuthor]{Corresponding author}
\ead{venturi@ucsc.edu}

\begin{abstract} 
We develop new dynamically orthogonal 
tensor methods to approximate multivariate 
functions and the solution of high-dimensional 
time-dependent nonlinear partial differential 
equations (PDEs). 
The key idea relies on a hierarchical decomposition 
of the approximation space obtained by splitting 
the independent variables of the problem into 
disjoint subsets. This process, which can be conveniently 
be visualized in terms of binary trees, yields series expansions 
analogous to the classical Tensor-Train and Hierarchical Tucker 
tensor formats. By enforcing dynamic orthogonality 
conditions at each level of binary tree, we obtain 
coupled evolution equations for the modes spanning each 
subspace within the hierarchical decomposition. 
This allows us to effectively compute the solution to 
high-dimensional time-dependent nonlinear PDEs on 
tensor manifolds of constant rank, with no need 
for rank reduction methods. We also propose new 
algorithms for dynamic addition and removal of 
modes within each subspace. Numerical examples 
are presented and discussed for high-dimensional 
hyperbolic and parabolic PDEs in bounded domains. 
\end{abstract}

\end{frontmatter}

\section{Introduction}
\label{sec:intro}

High-dimensional partial differential equations (PDEs) arise 
in many areas of engineering, physical sciences and 
mathematics.  Classical examples are equations involving 
probability density functions (PDFs) such as 
the Fokker-Plank equation \cite{Risken}, the 
Liouville equation \cite{Venturi_PRS,HeyrimJCP_2014}, or 
the Boltzmann equation \cite{cercignani1988,dimarco2014}.  
Other types of high-dimensional PDEs can be obtained
as finite-dimensional approximations of functional 
differential equations \cite{venturi2018numerical},
such as the Hopf equation of turbulence \cite{Hopf,Hopf1,Monin2},
the Schwinger-Dyson equation of quantum mechanics \cite{Itzykson}, 
or the Martin-Siggia-Rose formulation of classical statistical dynamics \cite{Martin,Jensen,Jouvet,Phythian}.
Computing the solution to high-dimensional PDEs is a 
challenging problem that requires approximating 
high-dimensional functions, i.e., the solution to the PDE, and 
then developing appropriate numerical schemes 
to compute such functions accurately. 
Classical numerical methods based on tensor product 
representations are not viable in high-dimensions, 
as the number of degrees of freedom grows 
exponentially fast with the dimension. 
To address this problem there have been substantial 
research efforts in recent years on 
high-dimensional numerical approximation theory.
Techniques such as sparse collocation 
\cite{Bungartz,Chkifa,Barthelmann,Foo1,Akil}, 
high-dimensional model representations (HDMR) 
\cite{Li1,CaoCG09,Baldeaux} and, more recently, 
deep neural networks \cite{Raissi,Raissi1,Zhu2019} 
and tensor methods 
\cite{khoromskij,Bachmayr,parr_tensor,Hackbusch_book,ChinestaBook,Kolda} 
were proposed to mitigate the exponential growth of 
the degrees of freedom, the computational cost and 
memory requirements. 

In this paper, we develop a new dynamically orthogonal 
tensor method to approximate multivariate 
functions and the solution of high-dimensional 
time-dependent nonlinear PDEs. The key idea relies on 
a hierarchical decomposition of the function space 
in terms of a sequence of nested subspaces of smaller 
dimension. Such decomposition is induced by 
by splitting the independent variables of the problem 
recursively into two disjoint subsets which 
can conveniently be visualized by binary trees.
In particular, we study two classes of trees which are 
analogous to the Tensor-Train  (TT) \cite{OseledetsTT} 
and Hierarchical Tucker (HT) \cite{Grasedyck2018}
tensor formats. By enforcing dynamic orthogonality (DO) 
\cite{do} or bi-orthogonality (BO) \cite{Cheng2013}
conditions at each level of the TT or the HT binary tree, 
we obtain coupled evolution equations for the modes 
spanning each subspace within the hierarchy. 
This allows us to represent the time evolution of high-dimensional 
functions and compute the solution of high-dimensional 
time-dependent nonlinear PDEs on a tensor manifold with 
constant rank. 
This formulation has several advantages over 
classical numerical tensor methods. In particular, 
the hard-to-compute nonlinear projection 
\cite{Lubich2018,hierar} that maps the solution 
of high-dimensional PDEs onto a tensor manifold with 
constant rank \cite{h_tucker_geom} here is 
represented explicitly by the hierarchical DO/BO 
propagator\footnote{The hierarchical DO/BO propagator 
is nonlinear even for linear PDEs. Such nonlinearity implicitly 
represnts the projection onto a tensor manifold with 
constant rank.}, i.e., by a system of coupled one-dimensional 
nonlinear PDEs. 
In other words, there is no need to perform tensor 
rank reduction \cite{hsvd_tensors_grasedyk,Grasedyck2018,Kressner2014}, 
rank-constrained temporal integration \cite{Lubich2018,hierar}, or 
Riemannian optimization \cite{DaSilva2015}, 
when solving high-dimensional PDEs with the hierarchical 
subspace decomposition method we 
propose\footnote{Classical numerical tensor methods 
for high-dimensional PDEs with explicit time stepping 
schemes require rank-reduction to project the solution 
back into the tensor manifold with prescribed rank (see 
\cite{venturi2018numerical} \S 5.5), the so-called 
retraction step \cite{DaSilva2015}. 
This can be achieved, e.g., by a sequence of suitable 
matricizations followed by hierarchical singular value 
decomposition \cite{hsvd_tensors_grasedyk,Grasedyck2018,Kressner2014}, 
or by optimization 
\cite{DaSilva2015,Kolda,parr_tensor,Silva,Rohwedder,Karlsson}.  
Rank reduction can be computationally intensive, 
especially if performed at each time step. 
Tensor methods with implicit time stepping suffer 
from similar issues. In particular, the nonlinear system 
that yields the solution at the next time step needs 
to be solved on a tensor manifold with constant rank 
by using, e.g., Riemannian optimization algorithms 
\cite{DaSilva2015,h_tucker_geom,Etter}.}.

This paper is organized as follows. In Section 
\ref{sec:rec_bi_orth} we introduce a recursive 
bi-orthogonal decomposition method for time-independent
multivariate functions, develop error estimates 
and provide simple examples of application.   
In Section \ref{sec:time_evolution} we extend the 
recursive bi-orthogonal decomposition to time-dependent 
functions, and develop a hierarchy of nested 
time-dependent orthogonal projections generalizing the 
DO and BO conditions to tensor formats with multiple 
levels. We also prove that the approximations resulting 
from the BO and the DO conditions are equivalent, in the 
sense that they span the same function spaces. 
In Section \ref{sec:PDEs}, we apply the recursive 
subspace decomposition method to compute 
the solution of high-dimensional nonlinear PDEs. In 
Section \ref{sec:numerics} we provide numerical 
examples demonstrating the accuracy and computational 
effectiveness of the recursive subspace decomposition 
method we propose. Specifically we study high-dimensional 
hyperbolic and parabolic PDEs. The main findings are 
summarized in Section \ref{sec:summary}.

\section{Recursive bi-orthogonal decomposition of time-independent multivariate functions}
\label{sec:rec_bi_orth}

Let $\Omega$ be a subset of $\mathbb{R}^d$ ($d\geq 2$) that 
contains an open set\footnote{If $\Omega\subseteq \mathbb{R}^d$ 
contains an open set then $\text{dim}(\Omega)=d$.}, and let 
\begin{equation}
\label{space_sol}
u: \Omega\to \mathbb{R}
\end{equation} 
be a multivariate function which we assume 
to be an element of a separable Hilbert space 
$\mathcal{H}(\Omega)$. A possible choice 
of such Hilbert space is the Sobolev space
\begin{equation}
H^{k}(\Omega) =\left\{u\in L^2(\Omega)\, : \, D^{\bm \alpha} u\in L^2(\Omega) \quad \text{for all}\quad  |\bm \alpha|\leq k \right\} \ , \qquad k=0,1,2,\dots
\label{Sobolev}
\end{equation}
where $\bm \alpha = (\alpha_1,\dots, \alpha_d)$ is a 
multi-index and 
\begin{equation}
D^{\bm \alpha} u =\frac{\partial^{|\bm \alpha |} u}
{\partial x_1^{\alpha_1}\dots \partial x_d^{\alpha_d}}, 
\qquad |\bm \alpha|= \alpha_1+\dots +\alpha_d.
\end{equation}
Note that \eqref{Sobolev} includes the classical Lebesgue space 
$L^2(\Omega)=H^{0}(\Omega)$. 
We equip \eqref{Sobolev} with the standard inner product 
\begin{equation}
\langle f,g\rangle_{H^{k}(\Omega)} = \sum_{|\bm \alpha| \leq k} \int_{\Omega} D^{\bm \alpha} f(\bm x) D^{\bm \alpha}g(\bm x)dx_1\dots dx_d.
\label{iprod}
\end{equation}
If needed, this inner product can be weighted 
by a non-negative separable density 
$\rho_1(x_1)\cdots \rho_d(x_d)$.
Any separable Hilbert space is isomorphic to $L^2$, and 
it can be represented as a tensor product of two Hilbert 
spaces \cite[p.51]{math_phys_vol1}, i.e.,
\begin{equation}
    \mathcal{H} \cong \mathcal{H}_1 \otimes \mathcal{H}_2.
    \label{hilbert_iso}
\end{equation}
The spaces $\mathcal{H}_1$ and $\mathcal{H}_2$ may be specified 
by partitioning the spatial variables $\{x_1, \ldots, x_d\}$ into 
two disjoint subsets. This is equivalent to represent the domain $\Omega$ 
as a Cartesian product of two sub-domains (whenever possible). 
For instance, consider the partition
\begin{equation}
\Omega=\Omega^{(1,\ldots,p)}\times \Omega^{(p+1,\ldots,d)}
\end{equation}
induced by the following splitting of the spatial variables 
\begin{equation}
\underbrace{(x_1, x_2, \ldots, x_d)}_{\text{in}\,\, \Omega} = 
(\underbrace{(x_1, \ldots, x_p)}_{\text{in}\,\, \Omega^{(1,\ldots,p)}}, 
\underbrace{(x_{p+1}, \ldots, x_d)}_{\text{in}\,\, \Omega^{(p+1,\ldots,d)}}). 
\end{equation}  
In this setting, the Sobolev space \eqref{Sobolev} admits 
the following decomposition
\begin{equation}
\label{sobolov_iso}
H^{k}(\Omega) \cong  H^{k}\left(\Omega^{(1,\ldots,p)}\right) 
\otimes H^{k}\left(\Omega^{(p+1,\ldots,d)}\right).
\end{equation}
The  inner products within each subspace 
$H^{k}\left(\Omega^{(1,\ldots,p)}\right)$ and 
$H^{k}\left(\Omega^{(p+1,\ldots,d)}\right)$ 
can be defined, respectively, as 
\begin{equation}
\langle f,g \rangle_{H^{k}\left(\Omega^{(1,\ldots,p)}\right)}
=\sum_{\alpha_1+\dots+\alpha_p \leq k}
\int_{\Omega^{(1,\ldots,p)}}
\frac{\partial^{\alpha_1+\dots+\alpha_p}f}{\partial x_1^{\alpha_1}\dots \partial x_p^{\alpha_p}} 
 \frac{\partial^{\alpha_1+\dots+\alpha_p} g}{\partial x_1^{\alpha_1}\dots \partial x_p^{\alpha_p}} 
dx_1 \cdots dx_p,
\label{iprod1}
\end{equation}
and
\begin{equation}
\langle f,g \rangle_{H^{k}\left(\Omega^{(p+1,\ldots,d)}\right)}
=\sum_{\alpha_{p+1}+\dots+\alpha_d \leq k}
\int_{\Omega^{(p+1,\ldots,d)}}
\frac{\partial^{\alpha_{p+1}+\dots+\alpha_d}f}{\partial x_{p+1}^{\alpha_{p+1}}\dots \partial x_d^{\alpha_d}} 
\frac{\partial^{\alpha_{p+1}+\dots+\alpha_d}g}{\partial x_{p+1}^{\alpha_{p+1}}\dots \partial x_d^{\alpha_d}} 
dx_{p+1} \cdots dx_d.
\label{iprod2}
\end{equation}
A representation of the multivariate function \eqref{space_sol} 
in the tensor product space \eqref{sobolov_iso} has the general 
form
\begin{equation}
\label{not_diag}	
 u(x_1, \ldots, x_d) = \sum_{i,j=1}^{\infty}a_{ij}
\varphi_{i}^{(1,\ldots,p)}(x_1, \ldots, x_p)
\varphi_{j}^{(p+1,\ldots,d)}(x_{p+1},\ldots,x_d) \ ,
\end{equation}
where $\varphi_i^{(1,\ldots,p)}$ and $\varphi_j^{(p+1,\ldots,d)}$ are 
orthonormal basis functions in $H^{k}(\Omega^{(1,\ldots,p)})$ and 
$H^{k}(\Omega^{(p+1,\ldots,d)})$, 
respectively\footnote{Orthonormality is relative to the 
inner products 
in $H^{k}(\Omega^{(1,\ldots,p)})$ and 
$H^{k}(\Omega^{(p+1,\ldots,d)})$.}.
The superscripts in \eqref{not_diag} denote which 
spatial components the function depends on.  This will be the 
case throughout this paper and for notational simplicity, the 
spatial arguments will be often omitted when there is no 
ambiguity.

With the isomorphism \eqref{sobolov_iso} and the inner products \eqref{iprod1}-\eqref{iprod2} set, it is 
straightforward to develop an operator
framework which guarantees the existence of 
a diagonalized bi-orthogonal representation of the field 
$u(x_1,\ldots,x_d)$. To this end, following Aubry {\em et. al.} 
\cite{aubry_1,aubry_2,aubry_3} and 
Venturi \cite{venturi_bi_orthogonal}
(see also \cite{venturi2006,venturi2008}), 
we define the integral operator
\begin{equation}
\begin{aligned}
&U_u : H^{k}\left(\Omega^{(1,\ldots,p)}\right) \to H^{k}\left(\Omega^{(p+1,\ldots,d)}\right), \\
&\left(U_u \psi \right)(x_{p+1}, \ldots, x_d) =
\langle u,\psi\rangle_{H^{k}\left(\Omega^{(1,\ldots,p)}\right)}.
\end{aligned}
\end{equation}
The formal adjoint of $U_u$, denoted by $U^\dagger$, 
is a linear operator defined by the requirement
\begin{equation}
\langle U_u \psi,\varphi\rangle_{H^{k}
\left(\Omega^{(p+1,\ldots,d)}\right)}=
\langle \psi ,U^\dagger_u \varphi
\rangle_{H^{k}\left(\Omega^{(1,\ldots,p)}\right)} \ ,
\end{equation}
for all 
$\psi\in H^{k}\left(\Omega^{(1,\ldots,p)}\right)$,
and all 
$\varphi\in H^{k}\left(\Omega^{(p+1,\ldots,d)}\right)$. 
By using integration by parts and discarding boundary 
conditions (formal adjoint operator) we obtain 
\begin{equation}
\begin{aligned}
&U_u^{\dagger} : H^{k}\left(\Omega^{(p+1,\ldots,d)}\right) 
\to H^{k}\left(\Omega^{(1,\ldots,p)}\right)  \ ,\\
&\left(U_u^{\dagger}\varphi\right)(x_1, \ldots, x_p) = 
\langle {u}, \varphi \rangle_{ H^{k}\left(\Omega^{(p+1,\ldots,d)}\right)} \ , 
\end{aligned}
\end{equation}
The subscript ``$u$'' in $U_u$ and $U^{\dagger}_u$ 
identifies the kernel of the integral operators.
We will shortly define a hierarchy of such operators 
and it will be important to distinguish them by their kernels. 
Next, we introduce the following correlation operators
\begin{equation}
R_u = U_u U_u^{\dagger},\qquad 
R_u : H^{k}(\Omega^{(p+1,\ldots,d)}) \to H^{k}(\Omega^{(p+1,\ldots,d)}) \ ,
\end{equation}
and 
\begin{equation}
L_u = U_u^{\dagger} U_u, \qquad 
L_u : H^{k}(\Omega^{(1,\ldots,p)}) \to H^{k}(\Omega^{(1,\ldots,p)}).
\end{equation}
Note that $L_u$ and $R_u$ are self-adjont 
relative to \eqref{iprod1} and \eqref{iprod2}, 
respectively. Moreover, if $U_u$ is compact (e.g., if we consider 
a decomposition in $H^{0}=L_2$), then $U_u^{\dagger}$
is compact, and therefore $L_u$ and $R_u$ 
are compact. Hence, by the Riesz-Schauder theorem, 
they have the same discrete spectra (see e.g. \cite[p.185]{kato}). 
By a direct calculation, it can be show that 
\begin{align}
 \left(R_u\varphi\right) (x_{p+1}, \ldots, x_d)
 = \langle r_u,\varphi \rangle_{H^k\left(\Omega^{(p+1,\ldots,d)}\right)} \qquad \varphi\in H^{k}(\Omega^{(p+1,\ldots,d)}) \ ,
\label{Ru}
\end{align}
where the correlation function $r_u$ is defined by
\begin{align}
r_u(x_{p+1}, \ldots, x_d, x_{p+1}^{\prime}, \ldots, x_d^{\prime})
=\langle u, u \rangle_{H^k\left(\Omega^{(1,\ldots,p)}\right)}.
\label{r_u}
\end{align}
Similarly,
\begin{equation}
\label{Lu}
\begin{aligned}
\left(L_u\psi \right)(x_1,\ldots,x_p) = \langle l_u,\psi \rangle_{H^k\left(\Omega^{(1,\ldots,p)}\right)}
\qquad \psi\in H^{k}(\Omega^{(1,\ldots,p)}) \ , 
\end{aligned}
\end{equation}
where the correlation function $l_u$ is defined by
\begin{align}
l_u(x_1,\ldots,x_p,x_1^{\prime},\ldots,x_p^{\prime}) =\langle u, u \rangle_{H^k\left(\Omega^{(p+1,\ldots,d)}\right)}.
\label{l_u}
\end{align}
It is a classical result in the spectral theory of 
compact operators (see e.g., \cite{aubry_1,aubry_3}) 
that there exists a canonical decomposition of the 
field \eqref{space_sol} of the form 
\begin{equation}
\label{biorthogonal}
 u= \sum_{k=1}^{\infty} \lambda_k \psi_k^{(1,\ldots,p)} \psi_k^{(p+1,\ldots, d)} \ ,
\end{equation}
where the modes $\psi_k^{(1,\ldots,p)}$ and 
$\psi_k^{(p+1,\ldots, d)}$ satisfy the eigenvalue problem
\begin{equation}
\left[
\begin{array}{cc}
U_u & 0\\
0  & U_u^\dagger
\end{array}
\right]
\left[
\begin{array}{c}
\psi_k^{(1,\ldots,p)} \\
\psi_k^{(p+1,\ldots,d)} 
\end{array}
\right]
= \lambda_k
\left[
\begin{array}{cc}
0 & 1\\
1  & 0
\end{array}
\right] \left[
\begin{array}{c}
\psi_k^{(1,\ldots,p)} \\
\psi_k^{(p+1,\ldots,d)} 
\end{array}
\right].
\label{dispersion}
\end{equation}
Moreover, it can be shown that 
$\lambda_1 \geq \lambda_2 \geq \cdots \geq 0$, and
\begin{equation}
\label{biorthogonal_properties}
\begin{aligned}
\langle \psi^{(1,\ldots,p)}_i \psi^{(1,\ldots,p)}_j 
\rangle_{H^k\left(\Omega^{(1,\ldots, p)}\right)} = 
\langle \psi^{(p+1,\ldots,d)}_i \psi^{(p+1,\ldots,d)}_j 
\rangle_{H^k\left(\Omega^{(p+1,\ldots, d)}\right)}  = \delta_{ij}.
\end{aligned}
\end{equation}
The series \eqref{biorthogonal} is usually called 
bi-orthogonal (or Schmidt) decomposition of the multivariate 
function $u$, and it converges in norm. The modes 
$\psi_k^{(1,\ldots,p)}$ are eigenfunctions 
of the operator $L_u$ with corresponding eigenvalue $\lambda_k^2$, 
while the modes $\psi_k^{(p+1,\ldots,d)}$ are eigenfunctions 
of the operator $R_u$ with corresponding eigenvalues $\lambda_k^2$, i.e., 
\begin{equation}
\begin{aligned}
L_u \psi_k^{(1,\ldots,p)} &= \lambda_k^2 \psi_k^{(1,\ldots,p)} \ , \\
R_u \psi_k^{(p+1,\ldots,d)} &= \lambda_k^2 \psi_k^{(p+1,\ldots, d)}.
\end{aligned}
\label{eigenP}
\end{equation}
In practice, since $\psi_k^{(1,\ldots,p)}$ and 
$\psi_k^{(p+1,\ldots,d)}$ are determined up to two 
unitary transformations \cite{aubry_2}, to compute 
\eqref{biorthogonal} we can solve one of the two eigenvalue 
problems in \eqref{eigenP} (the one with smaller dimension), 
and then use one of the dispersion relations 
\eqref{dispersion}, i.e., 
\begin{equation}
\label{eig_fun_proj}
\psi^{(p+1,\ldots,d)}_k = \frac{1}{\lambda_k}U_u
\psi^{(1,\ldots,p)}_k, \qquad
\psi^{(1,\ldots,p)}_k = \frac{1}{\lambda_k}U^\dagger_u
\psi^{(p+1,\ldots,d)}_k.
\end{equation}

\subsection{Hierarchical subspace decomposition and tensor formats} 
\label{sec:recursive}
To obtain a series expansion of the multivariate function 
$u(x_1,\ldots,x_d)$ in terms of univariate functions, 
we apply the bi-orthogonal decomposition method 
discussed in the previous Section recursively.  
The way in which the variables are split in each step of the 
recursive procedure, i.e., the choice of $p$ in \eqref{sobolov_iso}, 
can be conveniently visualized by binary trees. 
In Figure \ref{fig:trees} we provide two simple examples 
of such binary trees corresponding to the 
Tensor-Train (TT) \cite{OseledetsTT} 
and the Hierarchical Tucker (HT) \cite{Grasedyck2018} 
tensor formats (see also \cite{Hackbusch_book,venturi2018numerical}, 
and the references therein).
\begin{figure}[t]
	\centerline{
	\rotatebox{90}{\hspace{1.3cm}  \footnotesize}
\includegraphics[width=0.49\textwidth]{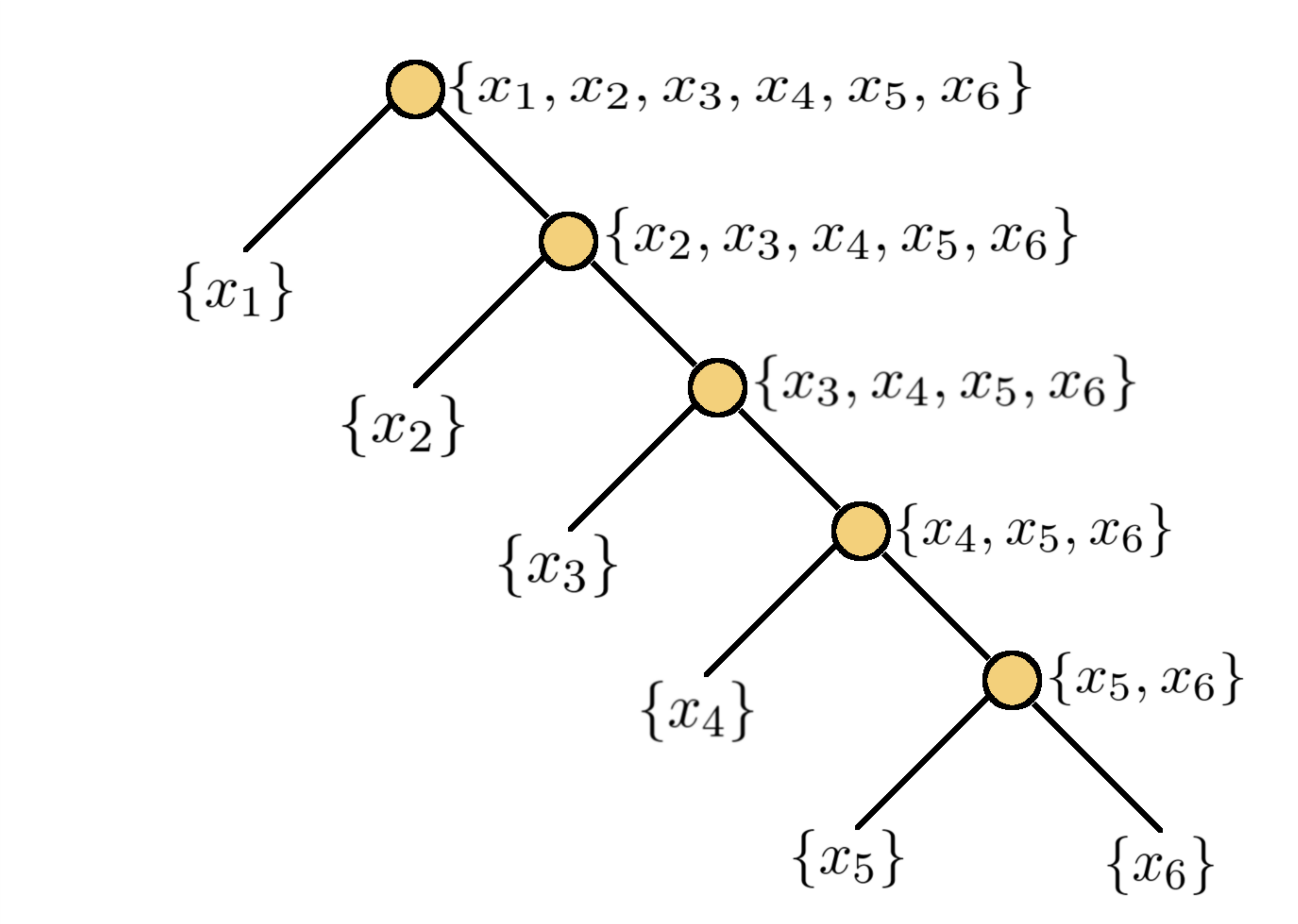}
\includegraphics[width=0.49\textwidth]{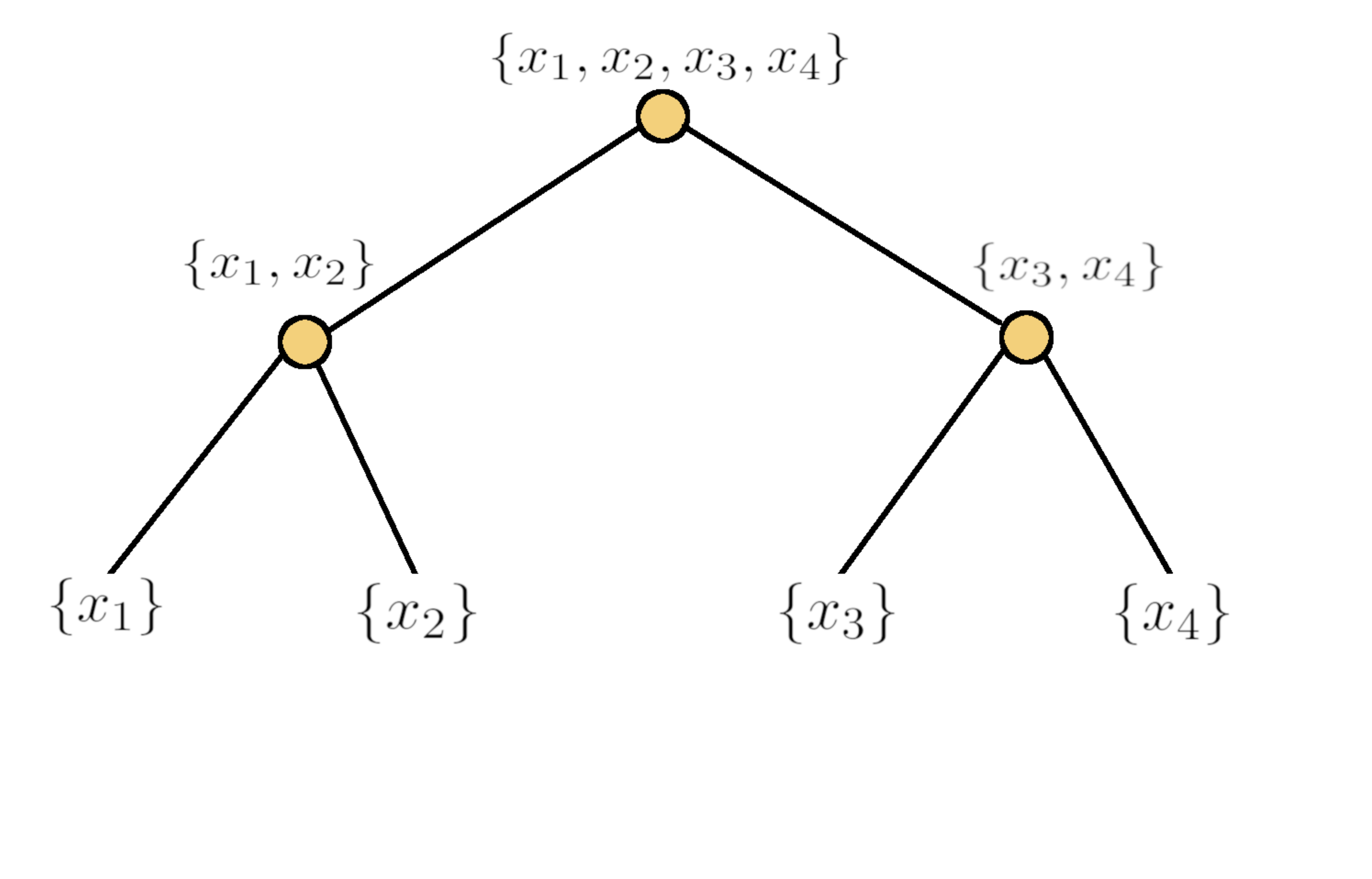}		
}
\caption{Binary trees corresponding to different tensor formats. 
Left: Tensor Train (TT) decomposition of a six-dimensional function.  Right: Hierarchical Tucker (HT) decomposition of a four-dimensional function.}
\label{fig:trees}
\end{figure}

\subsubsection{Tensor Train (TT) format}
\label{sec:TTdec}
The Tensor-Train  format singles out one variable at a time, 
resulting in a binary tree with depth $(d-1)$ when decomposing 
$d$-variate functions  $u(x_1,\ldots, x_d)$ (see Figure \ref{fig:trees}). 
This corresponds to the following hierarchical 
subspace decomposition of the Sobolev space \eqref{Sobolev}
\begin{align}
H^k(\Omega)= &H^k\left(\Omega^{(1)}\right)\otimes H^k\left(\Omega^{(2,\ldots,d)}\right), \nonumber \\
=&  H^k\left(\Omega^{(1)}\right)\otimes 
\left[H^k\left(\Omega^{(2)}\right)
\otimes H^k\left(\Omega^{(3,\ldots,d)}\right)\right],\nonumber\\
=&  H^k\left(\Omega^{(1)}\right)\otimes 
\left[H^k\left(\Omega^{(2)}\right)
\otimes \left\{H^k\left(\Omega^{(3)}\right)\otimes 
H^k\left(\Omega^{(4,\ldots,d)}\right)\right\}\right] \ ,\nonumber\\
\cdots &. \nonumber
\end{align}
in which we diagonalize each tensor product representation using 
the bi-orthogonal decomposition method. This yields the 
following TT expansion of the multivariate function 
$u(x_1,\ldots, x_d)$
\begin{align}
 \label{TT_1}
u &= \sum_{i_1 = 1}^{\infty} \lambda_{i_1} \psi_{i_1}^{(1)} \psi_{i_1}^{(2, \ldots, d)} \ , \\
\label{TT_2}
\psi_{i_1}^{(2, \ldots, d)}  &= \sum_{i_2 = 1}^{\infty} \lambda_{i_1 i_2} \psi_{i_1 i_2}^{(2)} \psi_{i_1 i_2}^{(3, \ldots, d)} \ ,\\
\nonumber
 &\vdots \\
 \label{TT_j}
 \psi_{i_1 \cdots i_{j-1}}^{(j, \ldots, d)} &= \sum_{i_{j} = 1}^{\infty} \lambda_{i_1 \cdots i_j} \psi_{i_1 \cdots i_j}^{(j)} \psi_{i_1 \cdots i_j}^{(j+1, \ldots, d)} \ , \\
 \nonumber
 &\vdots \\
 \label{TT_d-1}
 \psi_{i_1 \cdots i_{d-2}}^{(d-1, d)} &= \sum_{i_{d-1} = 1}^{\infty} \lambda_{i_1 \cdots i_{d-1}} \psi_{i_1 \cdots i_{d-1}}^{(d-1)} \psi_{i_1 \cdots i_{d-1} }^{(d)} \ , 
\end{align}
i.e., 
\begin{equation}
\label{series_tt}
   u= \sum_{i_1 = 1}^{\infty} \sum_{i_2 = 1}^{\infty}\cdots \sum_{i_{d-1} = 1}^{\infty} \lambda_{i_1} \lambda_{i_1 i_2} \cdots \lambda_{i_1 \cdots i_{d-1}} \psi_{i_1}^{(1)} \psi_{i_1 i_2}^{(2)} \cdots \psi_{i_1 \cdots i_{d-1}}^{( d-1)} \psi_{i_1 \cdots i_{d-1}}^{(d)}.
\end{equation}
Each of the bi-orthogonal modes can be obtained by solving a 
sequence of one-dimensional eigenvalue problems followed by projections.  Specifically, the eigenvalue problems are
\begin{equation}
\label{1d_eig_fun}
L_u \psi_{i_1}^{(1)} = \lambda_{i_1}^2 \psi_{i_1}^{(1)}, \qquad 
L_{\psi_{i_1 \cdots i_{k-1}}^{(k-1)}} \psi_{i_1 \cdots i_k}^{(k)}= 
\lambda_{i_1 \cdots i_k}^2 \psi_{i_1 \cdots i_k}^{(k)}, 
\qquad k = 2, \ldots, d-1,
\end{equation}
(see Eq. \eqref{Lu}) while the corresponding 
projections are defined as
\begin{equation}
\label{eig_fun_projection}
\psi^{(2,\ldots,d)}_{i_1} = \frac{1}{\lambda_{i_1}} 
\langle u, \psi^{(1)}_{i_1}\rangle_{H^k\left(\Omega^{(1)}\right)},
\qquad
\psi^{(j+1,\ldots,d)}_{i_1\cdots i_j} =  \frac{1}{\lambda_{i_1 \cdots i_j}}
\langle u, \psi^{(j)}_{i_1\cdots i_j}\rangle_{H^k\left(\Omega^{(j+1)}\right)}
\qquad j=2,\ldots, d-1.
\end{equation}

\subsubsection{Hierarchical Tucker (HT) format}
The Hierarchical Tucker format splits variables 
into disjoint subsets of equal size, whenever possible. 
In the case $d$-variate functions $u(x_1,\ldots, x_d)$, where 
$d = 2^n$ for some natural number $n$, the tree is balanced.  
In general, the Hierarchical Tucker tree is more shallow 
than a Tensor Train tree for the same number of variables $d$.  
In fact, the depth of the HT tree for $d = 2^n$ 
is $n = \log_2(d)$, while the corresponding 
TT tree has depth $2^n-1$. The HT 
format is based on the following hierarchical decomposition 
of the Sobolev space $H^k(\Omega)$
\begin{align}
H^k(\Omega)= &H^k\left(\Omega^{(1,\ldots,d/2)}\right)\otimes 
H^k\left(\Omega^{(d/2+1,\ldots,d)}\right), \nonumber \\
=&\left[H^k\left(\Omega^{(1,\ldots,d/4)}\right)\otimes 
H^k\left(\Omega^{(1+d/4,\ldots,d/2)}\right) \right]\otimes 
\left[H^k\left(\Omega^{(d/2+1,\ldots,3d/4)}\right)\otimes 
H^k\left(\Omega^{(1+3d/4,\ldots,d)}\right) \right]
\ ,\nonumber\\
\cdots &. \nonumber
\end{align}
As before, we diagonalize each tensor 
product representation as we proceed splitting variables down the tree. This yields the following 
sequence of bi-orthogonal decompositions
\begin{align}
\label{HT_1}
u&= \sum_{i_1 = 1}^{\infty} 
\lambda_{i_1}^{(1,\ldots, d/2)} 
\psi_{i_1}^{(1,\ldots, d/2)} 
\psi_{i_1}^{(d/2 + 1,\ldots,d)} \ , \\
\label{HT_2}
\psi_{i_1}^{(1,\ldots, d/2)} &= \sum_{i_2 = 1}^{\infty} 
\lambda_{i_1 i_2}^{(1,\ldots,d/4)} 
\psi_{i_1 i_2}^{(1,\ldots,d/4)} 
\psi_{i_1 i_2}^{(d/4+1,\ldots,d/2)} \ , \\
\label{HT_3}
 \psi_{i_1}^{(d/2  + 1,\ldots,d)} &= 
 \sum_{i_2=1}^{\infty} \lambda_{i_1 i_2}^{(d/2 +1, \ldots, 3d/4)} 
 \psi_{i_1 i_2}^{(d/2+1, \ldots, 3d/4)} 
 \psi_{i_1 i_2}^{(3d/4+1, \ldots, d)} \ , \\
 \nonumber &\vdots \\
 \psi_{i_1 \cdots i_{n-1}}^{(1,2)} &= 
 \sum_{i_n = 1}^{\infty} 
 \lambda_{i_1 \cdots i_n}^{(1)} \psi_{i_1 \cdots i_n}^{(1)} 
 \psi_{i_1 \cdots i_n}^{(2)} \ , \\
 \nonumber &\vdots \\
 \label{HT_d-1}
 \psi_{i_1 \cdots i_{n-1}}^{(d-1,d)} &= 
 \sum_{i_n=1}^{\infty} \lambda_{i_1 \cdots i_n}^{(d-1)} 
 \psi_{i_1 \cdots i_n}^{(d-1)} \psi_{i_1 \cdots i_n}^{(d)} \ ,
\end{align}
and the expansion
\begin{equation}
   u = \sum_{i_1 = 1}^{\infty} \cdots \sum_{i_n = 1}^{\infty} \lambda_{i_1}^{(1,\ldots,\frac{d}{2})} \cdots \lambda_{i_1 \cdots i_{n}}^{(d-1)}\psi_{i_1 \cdots i_n}^{(1)} \psi_{i_1 \cdots i_n}^{(2)} \cdots \psi_{i_1 \cdots i_n}^{(d)}.
    \label{series_ht}
\end{equation}
Similar to the TT format, a sequence of eigenfunction problems 
followed by projections are used to obtain the modes spanning 
the hierarchical subspaces.  However, in the HT case 
the eigenfunction problems are higher dimensional 
and not tractable for large $d$.
 
\paragraph{Remark}
Clearly, we may decompose a multivariate function $u(x_1,\ldots,x_d)$ 
by splitting variables in various ways at different levels 
of the decompositions.  Any binary tree which has leaves 
containing one index leads to a series expansion in terms of 
functions of one spatial variable. Separable Hilbert spaces 
defined on a Cartesian product of one-dimensional domains 
always allow such reduction.

\subsection{Error analysis}
\label{sec:error_analysis}
In this Section we develop an error analysis for 
the recursive biorthogonal decomposition we 
discussed in Section \ref{sec:recursive}. To this end we first 
state a Lemma which will be useful in Section \ref{sec:thresholding} for establishing a thresholding criterion to truncate the 
infinite sums in \eqref{series_tt} and \eqref{series_ht}.
\begin{lemma}
\label{prop:eig_val}
If $u \in H^k(\Omega)$ admits 
the bi-orthogonal expansion 
\begin{equation}
u  = \sum_{i=1}^{\infty} \lambda_i \psi^{(1,\ldots,p)}_i \psi^{(p+1,\ldots, d)}_i \qquad p\in\{2,\ldots,d-1\} \ ,
\end{equation}
then $\displaystyle\sum_{i=1}^{\infty} \lambda_i^2 = \| u \|^2_{H^k(\Omega)}$.
\end{lemma}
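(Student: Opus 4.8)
The plan is to exploit the bi-orthogonality relations \eqref{biorthogonal_properties} together with the fact that the inner product on $H^k(\Omega)$ factors as a tensor product of the inner products on $H^k(\Omega^{(1,\ldots,p)})$ and $H^k(\Omega^{(p+1,\ldots,d)})$, as recorded in \eqref{sobolov_iso}. Concretely, I would start from the bi-orthogonal expansion of $u$, substitute it into $\|u\|^2_{H^k(\Omega)} = \langle u, u\rangle_{H^k(\Omega)}$, and expand the double series:
\begin{equation}
\langle u,u\rangle_{H^k(\Omega)} = \sum_{i=1}^{\infty}\sum_{j=1}^{\infty}\lambda_i\lambda_j\,\langle \psi_i^{(1,\ldots,p)}\psi_i^{(p+1,\ldots,d)},\,\psi_j^{(1,\ldots,p)}\psi_j^{(p+1,\ldots,d)}\rangle_{H^k(\Omega)}.
\end{equation}

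The next step is to use the tensor-product structure of the inner product: on simple tensors $f^{(1,\ldots,p)}g^{(p+1,\ldots,d)}$ one has
\begin{equation}
\langle f_1 g_1, f_2 g_2\rangle_{H^k(\Omega)} = \langle f_1,f_2\rangle_{H^k(\Omega^{(1,\ldots,p)})}\,\langle g_1,g_2\rangle_{H^k(\Omega^{(p+1,\ldots,d)})},
\end{equation}
which follows from the definition \eqref{iprod} of the $H^k$ inner product once the multi-index sum is split according to the partition of variables (the cross terms involving mixed derivatives in both blocks reduce to products of the block inner products; this is exactly the content of the isomorphism \eqref{sobolov_iso} with the product inner product). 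Applying this to each term and then invoking \eqref{biorthogonal_properties} gives $\langle \psi_i^{(1,\ldots,p)},\psi_j^{(1,\ldots,p)}\rangle = \delta_{ij}$ and likewise for the complementary modes, so the double sum collapses to $\sum_{i=1}^{\infty}\lambda_i^2$, which is the claim.

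The two points requiring care are the interchange of the infinite summations with the inner product, and the factorization of the $H^k$ inner product over the variable partition. For the first, since the bi-orthogonal series converges in the $H^k(\Omega)$ norm (stated just after \eqref{biorthogonal_properties}), continuity of the inner product in each argument justifies passing the limit inside; equivalently, one argues that the partial sums $u_N = \sum_{i=1}^N \lambda_i\psi_i^{(1,\ldots,p)}\psi_i^{(p+1,\ldots,d)}$ satisfy $\|u_N\|^2 = \sum_{i=1}^N\lambda_i^2$ by finite-sum orthogonality, and then let $N\to\infty$ using $\|u_N\|\to\|u\|$. The second point is really a bookkeeping check on \eqref{iprod}: I would verify that for $k\ge 1$ the mixed partial derivatives split cleanly because each multi-index $\bm\alpha$ with $|\bm\alpha|\le k$ does \emph{not} in general decompose as a sum of a block-$(1,\ldots,p)$ index of order $\le k$ and a block-$(p+1,\ldots,d)$ index of order $\le k$ — so strictly the claim relies on using the tensor-product inner product on $H^k(\Omega^{(1,\ldots,p)})\otimes H^k(\Omega^{(p+1,\ldots,d)})$ induced by \eqref{iprod1}--\eqref{iprod2}, rather than the ambient \eqref{iprod}; the orthonormality in \eqref{biorthogonal_properties} is stated with respect to exactly those block inner products, so everything is consistent as long as $\|\cdot\|_{H^k(\Omega)}$ in the Lemma is read as the norm of the tensor-product space \eqref{sobolov_iso}.

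I expect the main obstacle to be purely expository rather than mathematical: making precise which inner product $\|u\|_{H^k(\Omega)}$ refers to (the ambient Sobolev norm versus the tensor-product norm built from \eqref{iprod1}--\eqref{iprod2}), since for $k\ge 1$ these differ. Once that is pinned down — and the natural reading makes them agree via \eqref{sobolov_iso} — the proof is a two-line computation: finite-sum Pythagoras plus a limit. For $k=0$ (the $L^2$ case) there is no subtlety at all, as the multi-index is empty and $\langle f_1g_1,f_2g_2\rangle_{L^2(\Omega)}=\langle f_1,f_2\rangle_{L^2}\langle g_1,g_2\rangle_{L^2}$ by Fubini.
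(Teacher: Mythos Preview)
Your proposal is correct and follows essentially the same approach as the paper, which dispatches the lemma in a single line by invoking the orthonormality of the modes with respect to \eqref{iprod1}--\eqref{iprod2}. Your additional care about norm convergence of the partial sums and the distinction, for $k\ge 1$, between the ambient Sobolev norm \eqref{iprod} and the tensor-product norm induced by \eqref{iprod1}--\eqref{iprod2} is well placed; the paper simply reads $\|\cdot\|_{H^k(\Omega)}$ through the isomorphism \eqref{sobolov_iso} without comment.
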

\begin{proof}
This result follows immediately from the orthonormality 
of the modes $\psi_i^{(1,\ldots,p)}$ and  
$\psi_i^{(p+1,\ldots,d)}$ relative to the inner products
\eqref{iprod1}-\eqref{iprod2}.

\end{proof}

Next, we analyze the error in the $H^k(\Omega)$ norm
between the Tensor Train series expansion 
\eqref{series_tt} and the truncated expansion
 \begin{equation}
\label{series_tt_trunc}
\tilde{u} = \sum_{i_1 = 1}^{r_1} \sum_{i_2 = 1}^{r_2(i_1)}
\cdots \sum_{i_{d-1} = 1}^{r_{d-1}(i_1, \ldots, i_{d-2})} 
\lambda_{i_1} \cdots \lambda_{i_1 \cdots i_{d-1}} 
\psi_{i_1}^{(1)} \psi_{i_1 i_2}^{(2)} \cdots 
\psi_{i_1 \cdots i_{d-1}}^{( d-1)} \psi_{i_1 \cdots i_{d-1}}^{(d)} \ ,
\end{equation}
where $r_1,r_2, \ldots , r_{d-1}$ are truncation ranks. 
To simplify indexing and array bounds 
for truncated TT expansions such as \eqref{series_tt_trunc}, 
we will omit the array indices in the rank arrays and write, e.g.,  
$\psi^{(j_1, \ldots, j_p)}_{i_1 \cdots i_k} ,\  k = 1,\ldots, r_{k}$ 
instead of $k = 1, \ldots, r_{k}(i_1,\ldots,i_{k-1})$, since the rank 
array indices are clear from the subscripts of the mode 
$\psi^{(j_1, \ldots, j_p)}_{i_1 \cdots i_k}$.  In this simplified 
notation, the truncated TT expansion \eqref{series_tt_trunc} 
can be written as 
 \begin{equation}
\label{series_tt_trunc_simple}
   \tilde{u}^{(1,\ldots,d)} = \sum_{i_1 = 1}^{r_1} \sum_{i_2 = 1}^{r_2}\cdots \sum_{i_{d-1} = 1}^{r_{d-1}} \lambda_{i_1} \cdots \lambda_{i_1 \cdots i_{d-1}} \psi_{i_1}^{(1)} \psi_{i_1 i_2}^{(2)} \cdots \psi_{i_1 \cdots i_{d-1}}^{( d-1)} \psi_{i_1 \cdots i_{d-1}}^{(d)}.
\end{equation}
\begin{proposition}
\label{thm:errors}
Let $u \in H^k(\Omega)$. The error incurred by truncating the infinite expanion \eqref{series_tt} to the finite expansion \eqref{series_tt_trunc_simple} is given by
\begin{equation}
\begin{aligned}
\| u - \tilde{u} \|^2_{H^k(\Omega)} 
= &\sum_{i_1 = r_1+1}^{\infty} \lambda_{i_1}^2 + \sum_{i_1 = 1}^{r_1}\sum_{i_2 = r_2 + 1 }^{\infty} \lambda_{i_1}^2 \lambda_{i_1 i_2}^2 + \cdots \\
&+ \sum_{i_1 = 1}^{r_1} \sum_{i_2 = 1}^{r_2} \cdots \sum_{i_{d-2}=1}^{r_{d-2}} \sum_{i_{d-1} = r_{d-1} +1}^{\infty} \lambda_{i_1}^2 \lambda_{i_1 i_2}^2 \cdots \lambda_{i_1 \cdots i_{d-1}}^2 .
\end{aligned}
\label{errors}
\end{equation}
\end{proposition}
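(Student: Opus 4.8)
The plan is to establish \eqref{errors} by induction on the number of variables $d$, exploiting the recursive ``peeling'' structure of the Tensor-Train expansion \eqref{TT_1}--\eqref{TT_d-1}. In the base case $d=2$ the expansion reduces to a single bi-orthogonal decomposition $u=\sum_{i_1}\lambda_{i_1}\psi_{i_1}^{(1)}\psi_{i_1}^{(2)}$, and $\tilde u$ retains only the first $r_1$ terms, so $u-\tilde u=\sum_{i_1=r_1+1}^{\infty}\lambda_{i_1}\psi_{i_1}^{(1)}\psi_{i_1}^{(2)}$. Since the modes $\psi_{i_1}^{(1)}$ and $\psi_{i_1}^{(2)}$ are orthonormal with respect to \eqref{iprod1}--\eqref{iprod2} (the same orthonormality used to prove Lemma \ref{prop:eig_val}), this gives $\|u-\tilde u\|^2_{H^k(\Omega)}=\sum_{i_1=r_1+1}^{\infty}\lambda_{i_1}^2$, which is exactly \eqref{errors} when $d=2$.

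For the inductive step I would assume the identity for all $(d-1)$-variate functions carrying a Tensor-Train expansion and write $u=\sum_{i_1=1}^{\infty}\lambda_{i_1}\psi_{i_1}^{(1)}\psi_{i_1}^{(2,\ldots,d)}$ as in \eqref{TT_1}. Unwinding \eqref{series_tt_trunc_simple}, the truncated function is $\tilde u=\sum_{i_1=1}^{r_1}\lambda_{i_1}\psi_{i_1}^{(1)}\tilde\psi_{i_1}^{(2,\ldots,d)}$, where $\tilde\psi_{i_1}^{(2,\ldots,d)}$ is the Tensor-Train truncation to ranks $(r_2,\ldots,r_{d-1})$ of the $(d-1)$-variate function $\psi_{i_1}^{(2,\ldots,d)}$ of \eqref{TT_2}--\eqref{TT_d-1} (that these ranks may depend on $i_1$ is immaterial). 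Then
\[
u-\tilde u=\sum_{i_1=r_1+1}^{\infty}\lambda_{i_1}\psi_{i_1}^{(1)}\psi_{i_1}^{(2,\ldots,d)}\;+\;\sum_{i_1=1}^{r_1}\lambda_{i_1}\psi_{i_1}^{(1)}\left(\psi_{i_1}^{(2,\ldots,d)}-\tilde\psi_{i_1}^{(2,\ldots,d)}\right),
\]
and since the two sums run over disjoint ranges of $i_1$, the orthonormality of $\{\psi_{i_1}^{(1)}\}$ in $H^k(\Omega^{(1)})$ makes all cross terms vanish, giving
\[
\|u-\tilde u\|^2_{H^k(\Omega)}=\sum_{i_1=r_1+1}^{\infty}\lambda_{i_1}^2\,\bigl\|\psi_{i_1}^{(2,\ldots,d)}\bigr\|^2+\sum_{i_1=1}^{r_1}\lambda_{i_1}^2\,\bigl\|\psi_{i_1}^{(2,\ldots,d)}-\tilde\psi_{i_1}^{(2,\ldots,d)}\bigr\|^2,
\]
with the norms on the right taken in $H^k(\Omega^{(2,\ldots,d)})$. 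Using $\|\psi_{i_1}^{(2,\ldots,d)}\|=1$, substituting for $\|\psi_{i_1}^{(2,\ldots,d)}-\tilde\psi_{i_1}^{(2,\ldots,d)}\|^2$ the inductive hypothesis (the $(d-1)$-variate form of \eqref{errors} in which every $\lambda$ carries the extra leading index $i_1$), and then distributing $\lambda_{i_1}^2$ over the resulting bracket, reproduces the right-hand side of \eqref{errors} term by term.

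The two points that need care --- neither a genuine obstacle once Lemma \ref{prop:eig_val} is granted --- are: (i) that a product $\psi_{i_1}^{(1)}(x_1)\,g(x_2,\ldots,x_d)$ with $g\in H^k(\Omega^{(2,\ldots,d)})$ has $H^k(\Omega)$-norm equal to $\|g\|_{H^k(\Omega^{(2,\ldots,d)})}$ when $\psi_{i_1}^{(1)}$ is normalized, and is $H^k(\Omega)$-orthogonal to $\psi_{j_1}^{(1)}h$ for every $j_1\neq i_1$ regardless of $g,h$ --- i.e., the factorization of \eqref{iprod} compatible with \eqref{iprod1}--\eqref{iprod2}, which is the ingredient already invoked in Lemma \ref{prop:eig_val}; and (ii) the bookkeeping that confirms $\tilde u$ in \eqref{series_tt_trunc_simple} is exactly \eqref{TT_1} with each $\psi_{i_1}^{(2,\ldots,d)}$ replaced by its rank-$(r_2,\ldots,r_{d-1})$ Tensor-Train truncation, so that the inductive hypothesis applies verbatim. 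Equivalently, one may bypass the induction and argue directly from the telescoping decomposition $u-\tilde u=\sum_{j=1}^{d-1}(u_{j-1}-u_j)$, where $u_0=u$ and $u_j$ is the partial truncation of only the first $j$ levels of \eqref{series_tt} to ranks $r_1,\ldots,r_j$: the telescoping differences are mutually $H^k(\Omega)$-orthogonal by the orthonormality of the level-$j$ modes, and each contributes one of the $d-1$ terms of \eqref{errors}. Convergence of all infinite sums is ensured by the norm convergence of the bi-orthogonal decomposition at each level of the tree.
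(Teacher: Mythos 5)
Your proof is correct, and it reaches \eqref{errors} by a genuinely different organization than the paper's. The paper argues directly: it writes the series \eqref{series_tt} as a nested product, splits each factor into its first $r_j$ terms plus a tail, expands the product, and groups the resulting terms by the first level at which a tail factor appears; mutual orthogonality of these groups plus Parseval within each group then yields \eqref{errors}. That is precisely the telescoping decomposition you sketch at the end as an ``equivalent'' alternative. Your primary route --- induction on $d$, peeling off the first level of the tree, splitting $u-\tilde u$ into the level-$1$ tail plus the deeper truncation errors of each retained $\psi_{i_1}^{(2,\ldots,d)}$, and invoking the hypothesis on those $(d-1)$-variate functions --- uses the same two ingredients (orthonormality of the modes at each level and the factorization of the $H^k(\Omega)$ inner product across the variable split), but replaces the combinatorial bookkeeping of the expanded product with a single one-level computation; this makes the provenance of each of the $d-1$ terms in \eqref{errors}, and the harmless dependence of $r_j$ on $i_1,\ldots,i_{j-1}$, more transparent. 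Both arguments rest on the same tacit assumption, which you rightly isolate as your point (i): that $\|\psi_{i_1}^{(1)}g\|_{H^k(\Omega)}=\|\psi_{i_1}^{(1)}\|_{H^k(\Omega^{(1)})}\|g\|_{H^k(\Omega^{(2,\ldots,d)})}$ and that distinct $\psi_{i_1}^{(1)}$ produce orthogonal products. This is immediate for $k=0$ and is exactly the tensor-product structure \eqref{sobolov_iso} with the inner products \eqref{iprod1}--\eqref{iprod2} that the paper (and Lemma \ref{prop:eig_val}) already assumes, so your proof is at the same level of rigor as the paper's.
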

\begin{proof}
Let us rewrite \eqref{series_tt} as 
\begin{equation}
   u  = \sum_{i_1 = 1}^{\infty} \lambda_{i_1} \psi_{i_1}^{(1)} \sum_{i_2=1}^{\infty} \lambda_{i_1 i_2} \psi_{i_1 i_2}^{(2)} \cdots \sum_{i_{d-1} = 1}^{\infty} \lambda_{i_1 \cdots i_{d-1}} \psi_{i_1 \cdots i_{d-1}}^{( d-1)} \psi_{i_1 \cdots i_{d-1}}^{(d)}
\end{equation}
and split each infinite sum into the superimposition of a 
finite sum and an infinite sum, i.e., 
\begin{equation}
\label{binomials}
\begin{aligned}   
   u = &\left(\sum_{i_1 = 1}^{r_1} \lambda_{i_1} \psi_{i_1}^{(1)} + 
   \sum_{i_1 = r_1+1}^{\infty} \lambda_{i_1} \psi_{i_1}^{(1)}\right) 
   \left( \sum_{i_2=1}^{r_2} \lambda_{i_1 i_2} \psi_{i_1 i_2}^{(2)} + 
   \sum_{i_2=r_2+1}^{\infty} \lambda_{i_1 i_2} \psi_{i_1 i_2}^{(2)} \right)  
   \cdots \\
  & \cdots \left( \sum_{i_{d-1} = 1}^{r_{d-1}} \lambda_{i_1 \cdots i_{d-1}} 
  \psi_{i_1 \cdots i_{d-1}}^{( d-1)} \psi_{i_1 \cdots i_{d-1}}^{(d)} + 
  \sum_{i_{d-1} = r_{d-1}+1}^{\infty} \lambda_{i_1 \cdots i_{d-1}} \psi_{i_1 
  \cdots i_{d-1}}^{( d-1)} \psi_{i_1 \cdots i_{d-1}}^{(d)} \right) .
  \end{aligned}
\end{equation}
Expanding the products in \eqref{binomials} yields the following expression
\begin{equation}
\begin{aligned}
u =& \sum_{i_1=1}^{r_1} \sum_{i_2 = 1}^{r_2} \cdots 
\sum_{i_{d-1}=1}^{r_{d-1}} \lambda_{i_1} \cdots 
\lambda_{i_1 \cdots i_{d-1}} \psi^{(1)}_{i_1} \cdots 
\psi^{(d-1)}_{i_1 \cdots i_{d-1}} \psi^{(d)}_{i_1 \cdots i_{d-1}} \\
&+ \sum_{i_1 = r_1 + 1}^{\infty} \sum_{i_2 = 1}^{\infty} \cdots 
\sum_{i_{d-1}=1}^{\infty} \lambda_{i_1} \cdots 
\lambda_{i_1 \cdots i_{d-1}} \psi^{(1)}_{i_1} \cdots 
\psi^{(d-1)}_{i_1 \cdots i_{d-1}} \psi^{(d)}_{i_1 \cdots i_{d-1}} \\
&+ \sum_{i_1 = 1}^{r_1} \sum_{i_2 = r_2 + 1}^{\infty} 
\sum_{i_3 = 1}^{\infty} \cdots \sum_{i_{d-1}=1}^{\infty} 
\lambda_{i_1} \cdots \lambda_{i_1 \cdots i_{d-1}} \psi^{(1)}_{i_1} 
\cdots \psi^{(d-1)}_{i_1 \cdots i_{d-1}} \psi^{(d)}_{i_1 \cdots i_{d-1}} \\
& + \sum_{i_1 = 1}^{r_1} \sum_{i_2 = 1}^{r_2} 
\sum_{i_3 = r_3+1}^{\infty} \sum_{i_4 = 1}^{\infty} \cdots 
\sum_{i_{d-1}=1}^{\infty} \lambda_{i_1} \cdots 
\lambda_{i_1 \cdots i_{d-1}} \psi^{(1)}_{i_1} 
\cdots \psi^{(d-1)}_{i_1 \cdots i_{d-1}} 
\psi^{(d)}_{i_1 \cdots i_{d-1}} \\
& \hspace{2cm} \vdots \\
& + \sum_{i_1 = 1}^{r_1} \sum_{i_2 = 1}^{r_2} \cdots 
\sum_{i_{d-2} = 1}^{r_{d-2}} \sum_{i_{d-1}=r_{d-1}+1}^{\infty} 
\lambda_{i_1} \cdots \lambda_{i_1 \cdots i_{d-1}} 
\psi^{(1)}_{i_1} \cdots \psi^{(d-1)}_{i_1 \cdots i_{d-1}} 
\psi^{(d)}_{i_1 \cdots i_{d-1}}. \\
\end{aligned}
\end{equation}
Using the orthogonality of each set of modes we obtain
\begin{equation*}
\begin{aligned}
\| u - \tilde{u} \|_{H^k(\Omega)}^2 =& \sum_{i_1 = r_1 + 1}^{\infty} \sum_{i_2 = 1}^{\infty} \cdots \sum_{i_{d-1}=1}^{\infty} \lambda_{i_1} \cdots \lambda_{i_1 \cdots i_{d-1}} \| \psi^{(1)}_{i_1} \cdots \psi^{(d-1)}_{i_1 \cdots i_{d-1}} \psi^{(d)}_{i_1 \cdots i_{d-1}} \|_{H^k(\Omega)}^2 \\
+& \sum_{i_1 = 1}^{r_1} \sum_{i_2 = r_2 + 1}^{\infty} \sum_{i_3 = 1}^{\infty} \cdots \sum_{i_{d-1}=1}^{\infty} \lambda_{i_1} \cdots \lambda_{i_1 \cdots i_{d-1}} \| \psi^{(1)}_{i_1} \cdots \psi^{(d-1)}_{i_1 \cdots i_{d-1}} \psi^{(d)}_{i_1 \cdots i_{d-1}} \|_{H^k(\Omega)}^2 \\
 +& \sum_{i_1 = 1}^{r_1} \sum_{i_2 = 1}^{r_2} \sum_{i_3 = r_3+1}^{\infty} \sum_{i_4 = 1}^{\infty} \cdots \sum_{i_{d-1}=1}^{\infty} \lambda_{i_1} \cdots \lambda_{i_1 \cdots i_{d-1}} \| \psi^{(1)}_{i_1} \cdots \psi^{(d-1)}_{i_1 \cdots i_{d-1}} 
 \psi^{(d)}_{i_1 \cdots i_{d-1}} \|_{H^k(\Omega)}^2 \\
& \hspace{2cm} \vdots \\
& + \sum_{i_1 = 1}^{r_1} \sum_{i_2 = 1}^{r_2} \cdots \sum_{i_{d-2} = 1}^{r_{d-2}} \sum_{i_{d-1}=r_{d-1}+1}^{\infty} \lambda_{i_1} \cdots \lambda_{i_1 \cdots i_{d-1}} \| \psi^{(1)}_{i_1} \cdots \psi^{(d-1)}_{i_1 \cdots i_{d-1}} \psi^{(d)}_{i_1 \cdots i_{d-1}} \|_{H^k(\Omega)}^2. \\
\end{aligned}
\end{equation*}
i.e.,  
\begin{equation*}
\begin{aligned}
\| u - \tilde{u} \|_{H^k(\Omega)}^2 = &\sum_{i_1 = r_1+1}^{\infty} 
\lambda_{i_1}^2 + \sum_{i_1 = 1}^{r_1}\sum_{i_2 = r_2 + 1 }^{\infty} 
\lambda_{i_1}^2 \lambda_{i_1 i_2}^2 + \cdots \\ 
&+ \sum_{i_1 = 1}^{r_1} \sum_{i_2 = 1}^{r_2} \cdots 
\sum_{i_{d-2}=1}^{r_{d-2}} \sum_{i_{d-1} = r_{d-1} +1}^{\infty} 
\lambda_{i_1}^2 \lambda_{i_1 i_2}^2 \cdots 
\lambda_{i_1 \cdots i_{d-1}}^2 .
\end{aligned}
\end{equation*}
\end{proof}

\paragraph{Remark} Proposition \ref{thm:errors} 
can be generalized to tensor formats corresponding to 
arbitrary binary trees, e.g., the HT format \eqref{series_ht}. 
In some sense, the equality \eqref{errors}
represents the infinite-dimensional 
version of well-known finite-dimensional 
results which bound the overall squared approximation 
error of multilinear singular value decompositions 
in the 2-norm by the sum (over the whole tree) of 
squares of deleted singular values. These types of 
results were first proven by De 
Lathauwer {\em et al.} in \cite{multilinear_svd_lathauwer}, 
and later generalized by Grasedyck \cite{hsvd_tensors_grasedyk} 
(see also Schneider and Uschmajew \cite{approx_rates}).

\paragraph{Remark} Recent error estimates by Griebel 
and Li \cite{Griebel2019} on the decay rate of singular 
values allow us to develop sharp upper bounds 
for \eqref{errors}  depending only on the 
multivariate rank, the smoothness 
of the function $u(x_1,\ldots,x_d)$ and 
other computable quantities. To obtain such estimates, 
it is sufficient to bound each 
eigenvalue $\lambda_{i_1}^2$, $\lambda_{i_1i_2}^2$, etc.,
with the corresponding sharp upper bound recently 
obtained in \cite{Griebel2019}.

\subsection{Computational aspects of TT and HT series expansions}
\label{sec:compute_biorthogonal}
To compute a recursive bi-orthogonal decomposition of 
the multivariate function $u(x_1,\ldots x_d)$ 
we first need to identify the self-adjoint operator 
\eqref{Lu} (or \eqref{Ru}) at each level of 
the binary tree (see Figure \ref{fig:trees}).
In other words, we need to compute the kernel \eqref{l_u}, 
(or \eqref{r_u}) and then solve the corresponding eigenfunction 
problem.  In the HT tensor format, computing such kernel 
requires evaluating a multivariate integral of dimension $d/2$ 
(at the first level of the tree) 
and then solving an eigenfunction problem of dimension $d/2$, which 
can be extremely challenging when $d$ is large. 
In the TT tensor format, this problem can be mitigated 
substantially. In fact, at the first level of the TT tree we have 
that the kernel of $L_u$ is a $(d-1)$-dimensional integral 
which can be evaluated, e.g., by using Quasi-Monte Carlo 
or more general lattice cubature rules \cite{qmc}. The corresponding 
eigenvalue problem \eqref{1d_eig_fun} and the projection
\eqref{eig_fun_projection} are both one-dimensional. 
From a numerical viewpoint, this is extremely advantageous, as 
we can accurately solve one dimensional eigenvalue problems 
in a collocation or a Galerkin setting at a low computational cost, 
once the kernels of the operators in \eqref{1d_eig_fun} are 
available.

\subsection{Thresholding hierarchical bi-orthogonal series expansions}
\label{sec:thresholding}
In this Section we develop a new thresholding criterion 
to truncate the series expansion  \eqref{series_tt} and 
\eqref{series_ht} to finite rank. 
To this end, we first notice that the amplitude of each term in the series is represented  
by products of eigenvalues from each level of the 
binary tree, since all eigenmodes are normalized.  
With this in mind, it is clear that a reasonable 
criterion to truncate the bi-orthogonal series expansion 
\eqref{series_tt} (or \eqref{series_ht}) to finite rank 
is to ensure that each of these eigenvalue products 
remains above a specified threshold $\sigma$. Hereafter, 
we develop this criterion for the TT format \eqref{series_tt}. 
The same technique can be applied to any other tensor format.  
We begin by setting some threshold value $\sigma$ 
for which we enforce $\lambda_{i_1} \lambda_{i_1 i_2} \cdots 
\lambda_{i_1 \cdots i_{d-1}} \geq \sigma$.  In the first level of 
the TT tree (Figure \eqref{fig:trees}), i.e., Eq. \eqref{TT_1}, 
we keep all modes with eigenvalues $\lambda_{i_1} \geq \sigma$, 
of which there will be a finite number $r_1$ because of property 
\eqref{biorthogonal_properties}.  Then we proceed to the second 
level of the TT tree and decompose $\psi^{(2,\ldots,d)}_{i_1}$ 
($1 \leq i_1 \leq r_1$) as in \eqref{TT_2}.  Here we set new 
thresholds $\sigma_{i_1} = \sigma/\lambda_{i_1}$ and keep 
all modes $\psi^{(2)}_{i_1 i_2}, \psi^{(3,\ldots,d)}_{i_1 i_2}$ 
with eigenvalues $\lambda_{i_1 i_2 } \geq \sigma_{i_1}$.  
Proceeding recursively in this way down to 
the $j^{\text{th}}$ level of the TT tree 
we have the thresholds $\sigma_{i_1 \cdots i_j} = \sigma_{i_1 
\cdots i_{j-1}}/\lambda_{i_1 \cdots i_j}$.  It is reasonable to 
disregard modes corresponding to eigenvalues smaller 
than $\sigma$ in the first bi-orthogonal decomposition 
since 
\begin{equation}
\label{eig_inequality}
\lambda_{i_1} \cdots \lambda_{i_1 \cdots i_{j-1}} \geq \lambda_{i_1} \cdots \lambda_{i_1 \cdots i_j} 
\end{equation}
for all $j = 2, \ldots, d-1$.
Indeed, Lemma \ref{prop:eig_val} implies that 
$\lambda_{i_1 \cdots i_j} \leq 1$ for all $j = 2, \ldots, d-1$ 
from which \eqref{eig_inequality} immediately follows.  Another 
desirable consequence of Lemma \ref{prop:eig_val} is that 
$\sigma_{i_1 \cdots i_{j-1}} \leq \sigma_{i_1 \cdots i_j}$ 
for all $j = 2,\ldots,d-1$. As a result, bi-orthogonal 
decompositions at different levels of the binary trees  are 
truncated to a different number of modes. Let us now 
summarize the thresholding algorithm for Tensor Train formats.
On the first level of the tree we decompose 
$u(x_1,\ldots,x_d)$ as in \eqref{TT_1}, for which 
we keep $r_1$ modes, identified by the criterion 
$\lambda_{i_1}\geq \sigma$. For each of the 
modes $\psi_{i_1}^{(2,\ldots,d)}$ ($i_1 = 1,\ldots,r_1$), 
we perform the decomposition \eqref{TT_2} on 
the second level of the binary tree with mode-specific 
thresholds $\sigma_{i_1}=\sigma/\lambda_{i_1}$. 
Hence, the bi-orthogonal decomposition of 
$\psi_{i_1}^{(2,\ldots,d)}$, has $r_2(i_1)$ 
modes, i.e.,  the ranks in the second level are 
described by the vector $r_2$.  
For each of the 
modes\footnote{The total number of modes 
 $\psi^{(3,\ldots,d)}_{i_1 i_2}$ is 
$r_1 \displaystyle\sum_{i_1=1}^{r_1} r_2(i_1)$.} 
$\psi^{(3,\ldots,d)}_{i_1 i_2}$, 
the decomposition at the third level of the tree is 
performed with thresholds $\sigma_{i_1 i_2}=\sigma/\lambda_{i_1i_2}$. 
This yields the truncation rank $r_3(i_1, i_2)$ in the 
decomposition of $\psi_{i_1 i_2}^{(3,\ldots,d)}$. 
Thus, the bi-orthogonal ranks for the third level of the TT 
tree are described by a matrix.  In general, on the 
$j^{\text{th}}$ level of the tree, the rank is described 
by a tensor $r_j(i_1,\ldots,i_{j-1})$ of dimension $j-1$. 


\subsubsection{An example: recursive bi-orthogonal decomposition of a 3D function}
\label{sec:ex_1}
In this Section, we apply the recursive 
bi-orthogonal decomposition method to a simple 
a three-dimensional function\footnote{For 
two- and three-dimensional functions TT and 
HT tensor formats are equivalent.}
defined on the cube $\Omega =  [-1,1]^3$.
Specifically, we consider  
\begin{equation}
u(x_1,x_2,x_3) = e^{\sin(x_1 + 2x_2 + 3x_3)} + x_2 x_3 \ , \qquad (x_1,x_2,x_3) \in \Omega.
\label{3d_example}
\end{equation}
This function is shown in Figure \ref{fig:3Dfunction} together 
with the binary tree representing the tensor format we use in the recursive bi-orthogonal decomposition. 
\begin{figure}[t]
	\centerline{
	\rotatebox{90}{\hspace{1.3cm} \footnotesize}
		\includegraphics[width=0.45\textwidth]{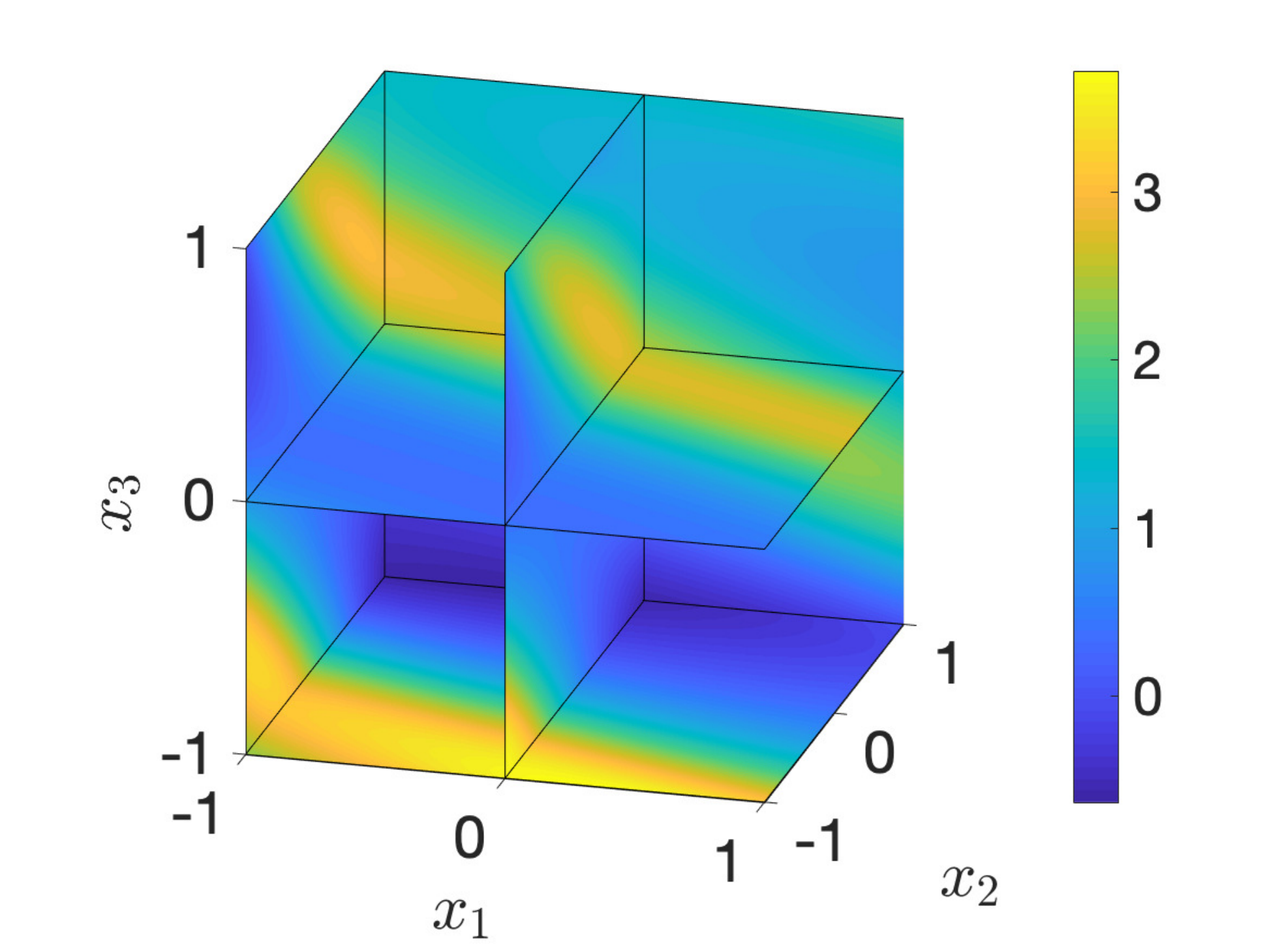}\hspace{1cm}
		\includegraphics[width=0.25\textwidth]{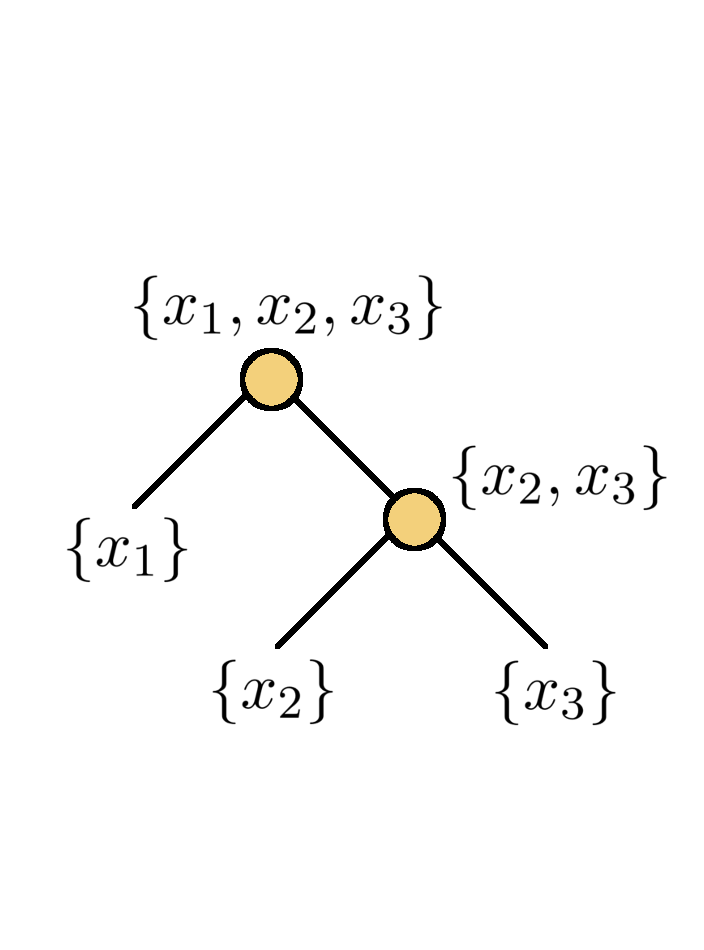}	
}
\caption{Volumetric plot of the three-dimensional function \eqref{3d_example} and binary tree representing the tensor format 
used in the recursive bi-orthogonal decomposition.}
\label{fig:3Dfunction}
\end{figure}
We discretize \eqref{3d_example} on a three-dimensional 
tensor product grid with $50$ Gauss-Legendre collocation points 
\cite{spectral} in each variable (125000 points total). 
Regarding the function space in which we perform the decomposition, 
in this example we set $k=0$ in \eqref{Sobolev}, i.e., we 
consider the classical  $L^2(\Omega)=H^{0}(\Omega)$ 
function space. In this setting, the kernel of 
the integral operator $L_u$ 
in \eqref{1d_eig_fun} (first level of the binary tree), 
reduces to 
\begin{equation}
l_u(x_1,x_1^{\prime}) = \int_{-1}^1 \int_{-1}^1 u(x_1,x_2,x_3) u(x_1^{\prime},x_2,x_3) dx_2 dx_3.
\end{equation}
This integral is computed with the Gauss-Legendre quadrature rule 
corresponding to the chosen grid points. The $x_1$-modes 
are solutions of the eigenvalue problem
\begin{equation}
\label{2d_eig_fun_problem}
\int_{-1}^1 l_u(x_1, x_1^{\prime}) \psi_{i_1}^{(1)}(x_1^{\prime}) dx_1^{\prime} = \lambda_{i_1}^2 \psi^{(1)}_{i_1}(x_1).
\end{equation}

\begin{figure}[t]
\centerline{\footnotesize\hspace{0.25cm} (a) \hspace{4.6cm}(b)\hspace{4.6cm} (c)}
	\centerline{
	\rotatebox{90}{\hspace{1.3cm} \footnotesize}
		\includegraphics[width=0.3\textwidth]{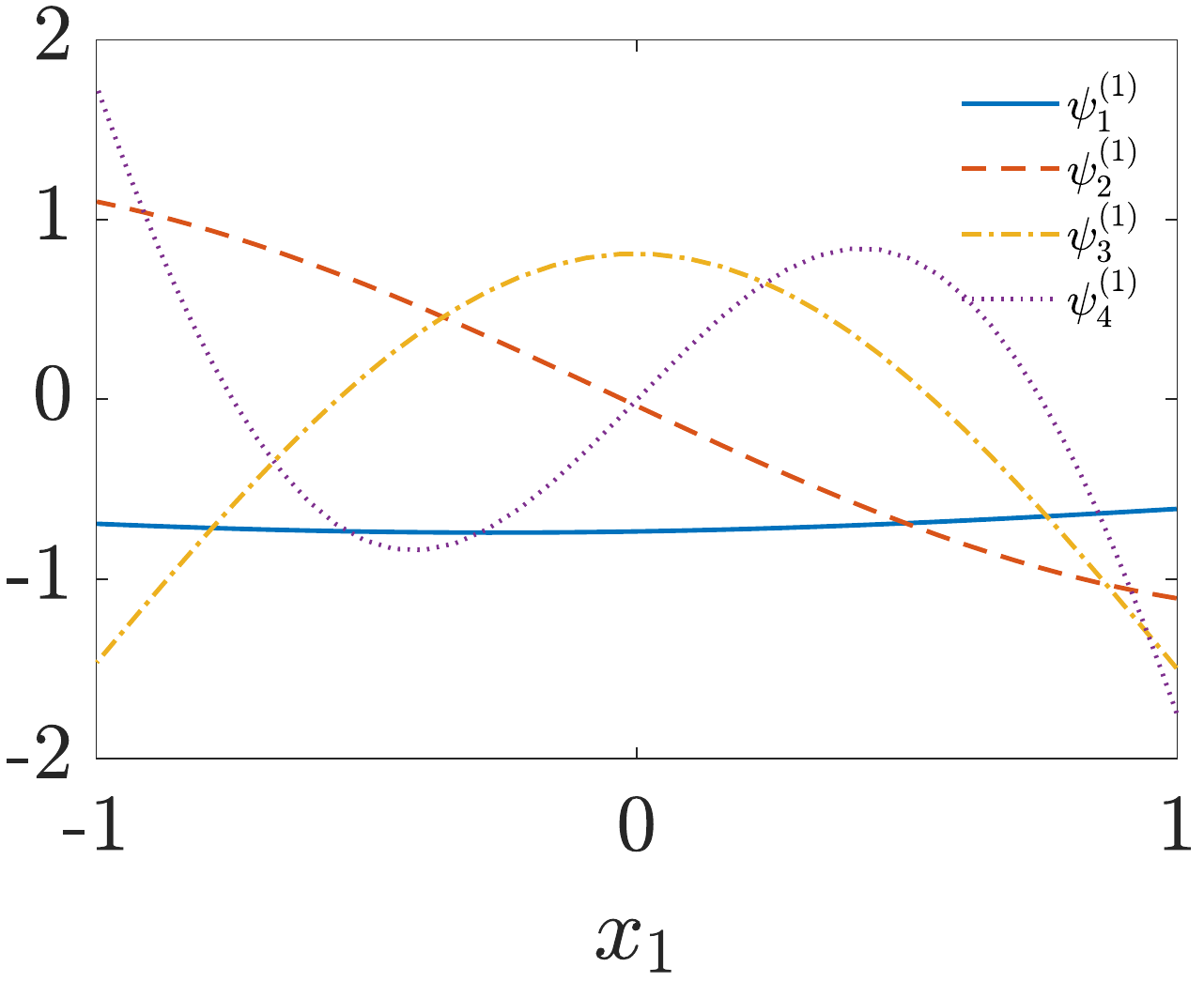}
		\includegraphics[width=0.3\textwidth]{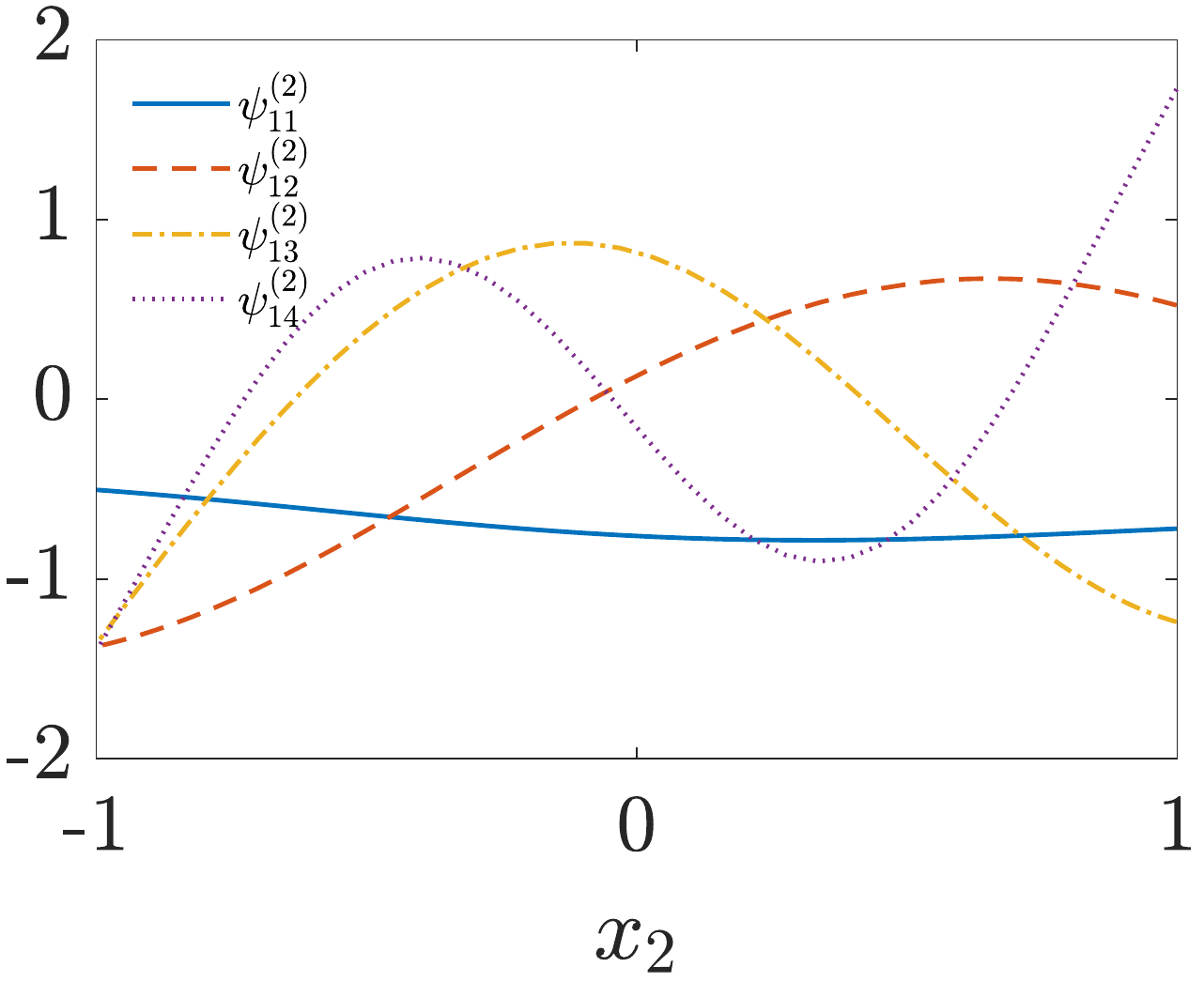}
		\includegraphics[width=0.3\textwidth]{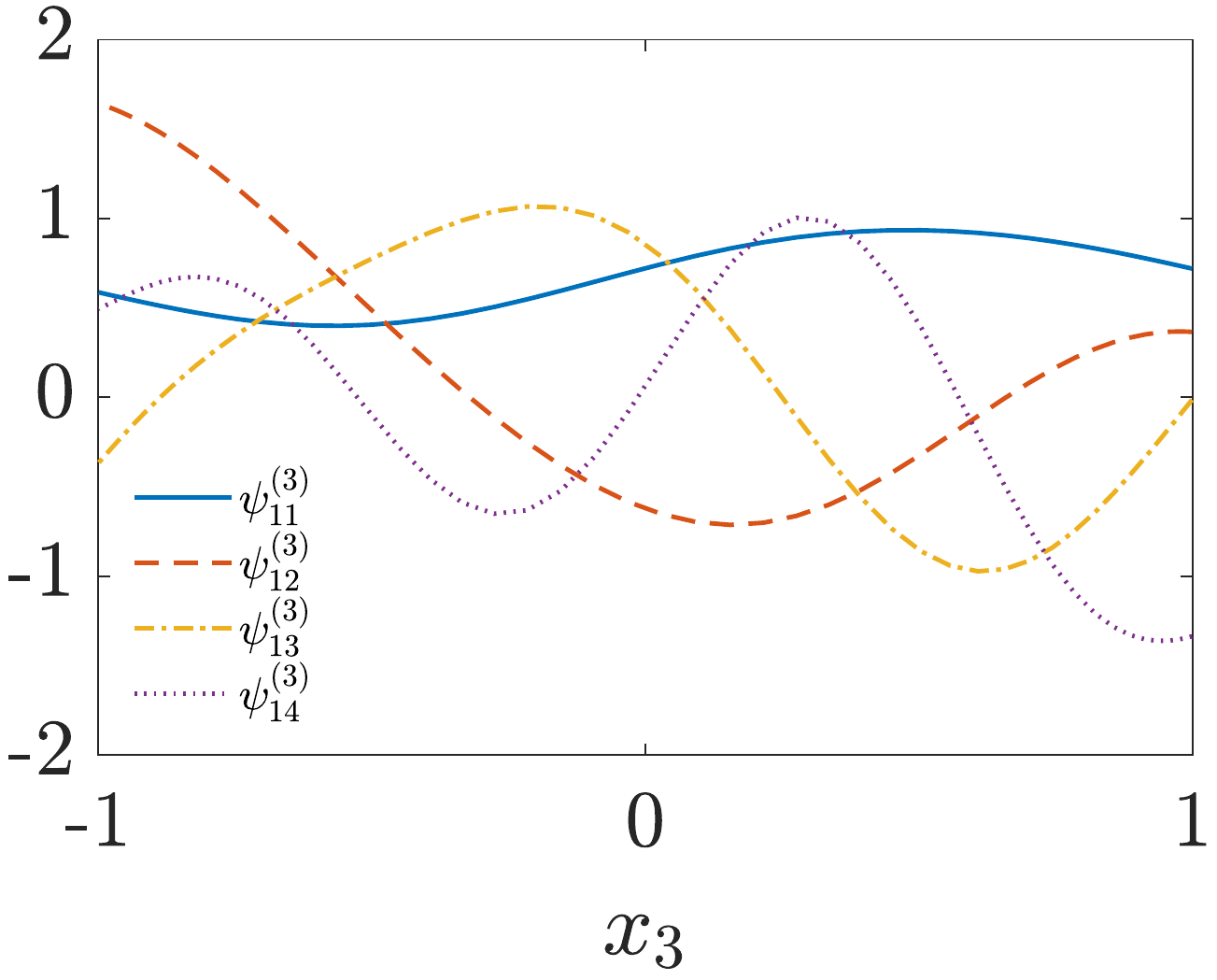}	
}
\caption{Recursive bi-orthogonal decomposition of the function \eqref{3d_example}. Shown are a few relevant modes 
$\psi^{(1)}_{i_1}(x_1)$ (a), $\psi^{(2)}_{i_1i_2}(x_2)$ (b), 
and  $\psi^{(3)}_{i_1i_2}(x_3)$ (c).}
\label{fig:3d_modes}
\end{figure}
\begin{figure}[t]
\centerline{\footnotesize\hspace{0.6cm} (a) \hspace{5cm}(b)\hspace{5cm} (c)}
	\centerline{
		\includegraphics[width=0.33\textwidth]{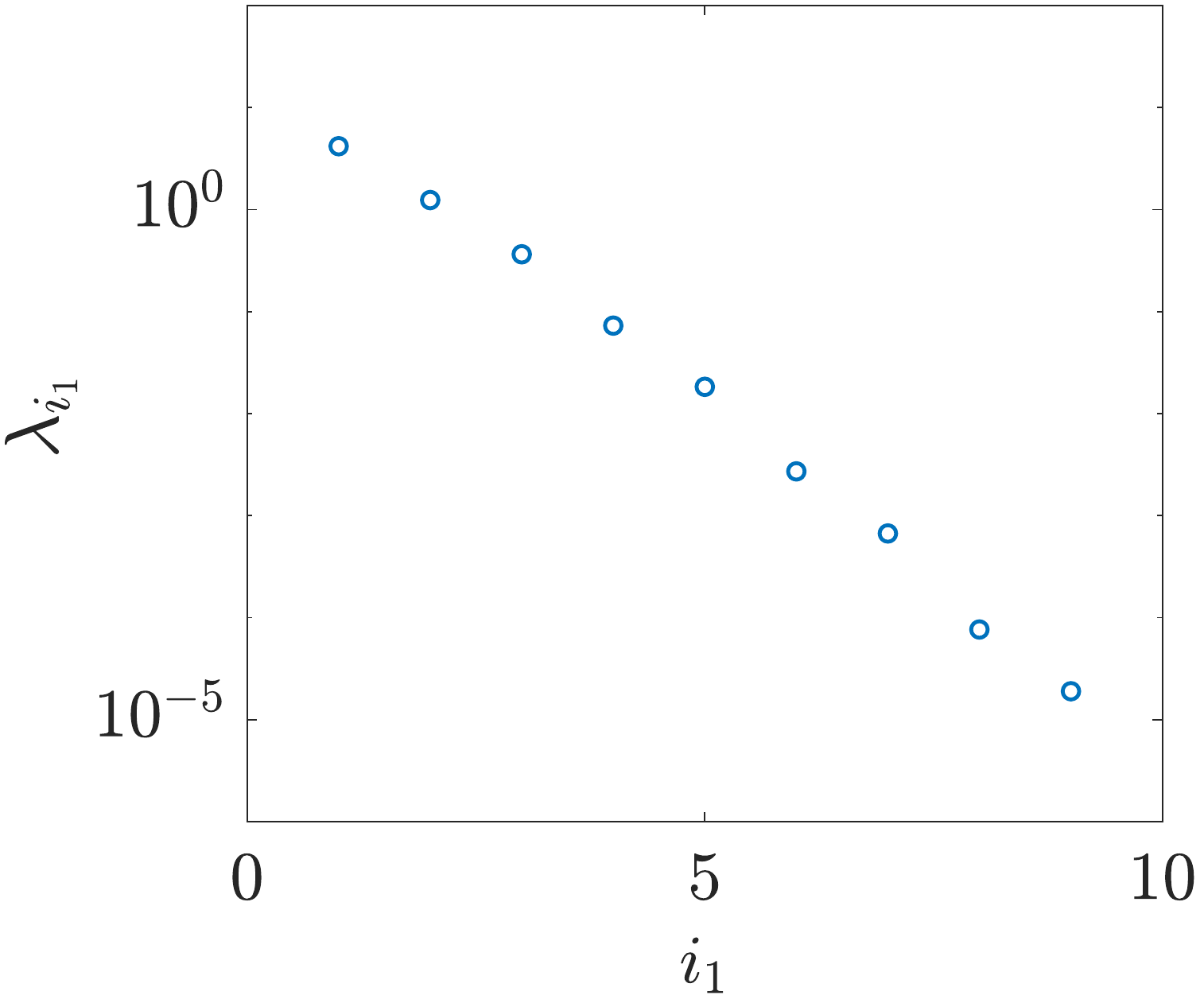}
		\includegraphics[width=0.33\textwidth]{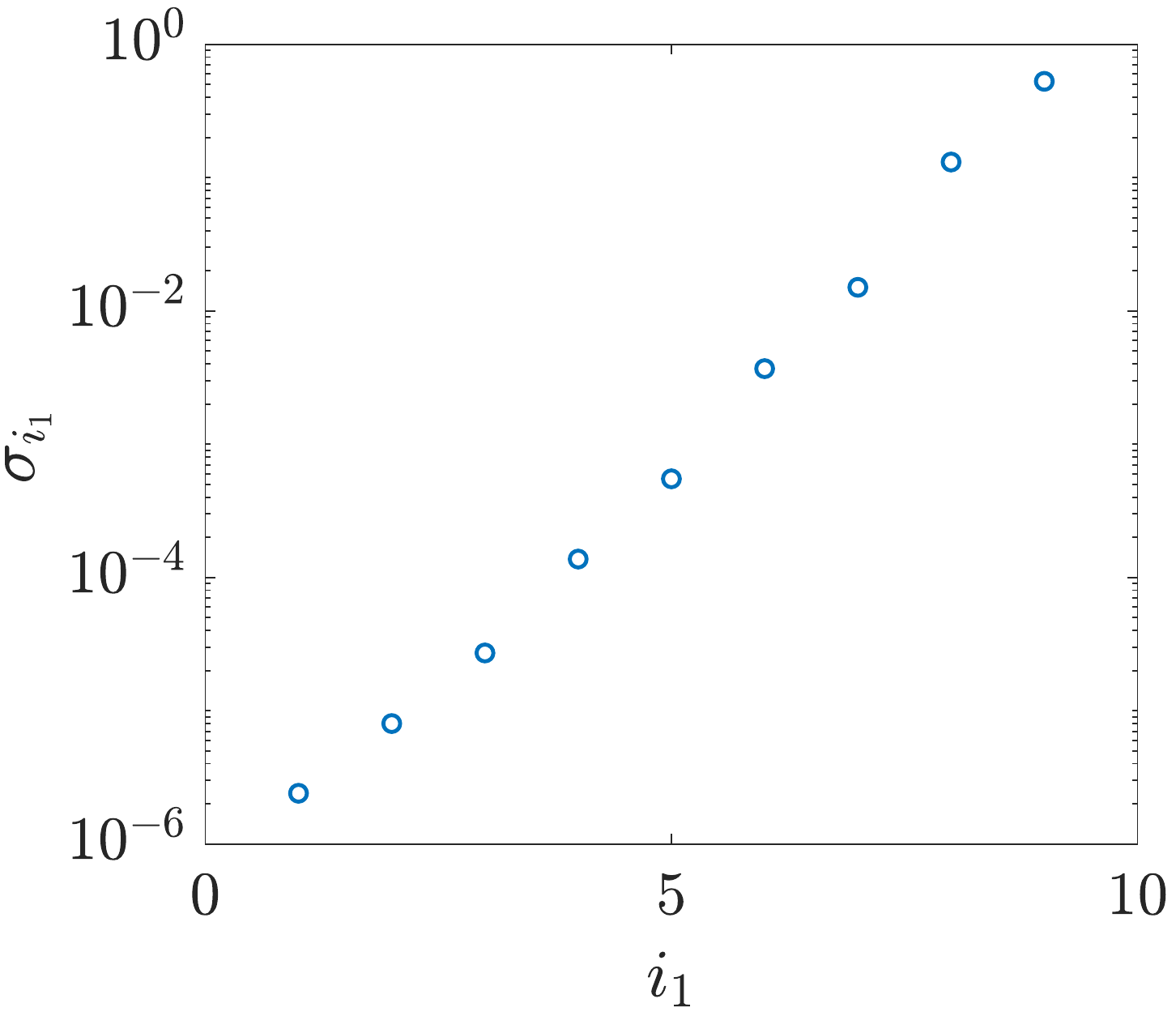}
		\includegraphics[width=0.33\textwidth]{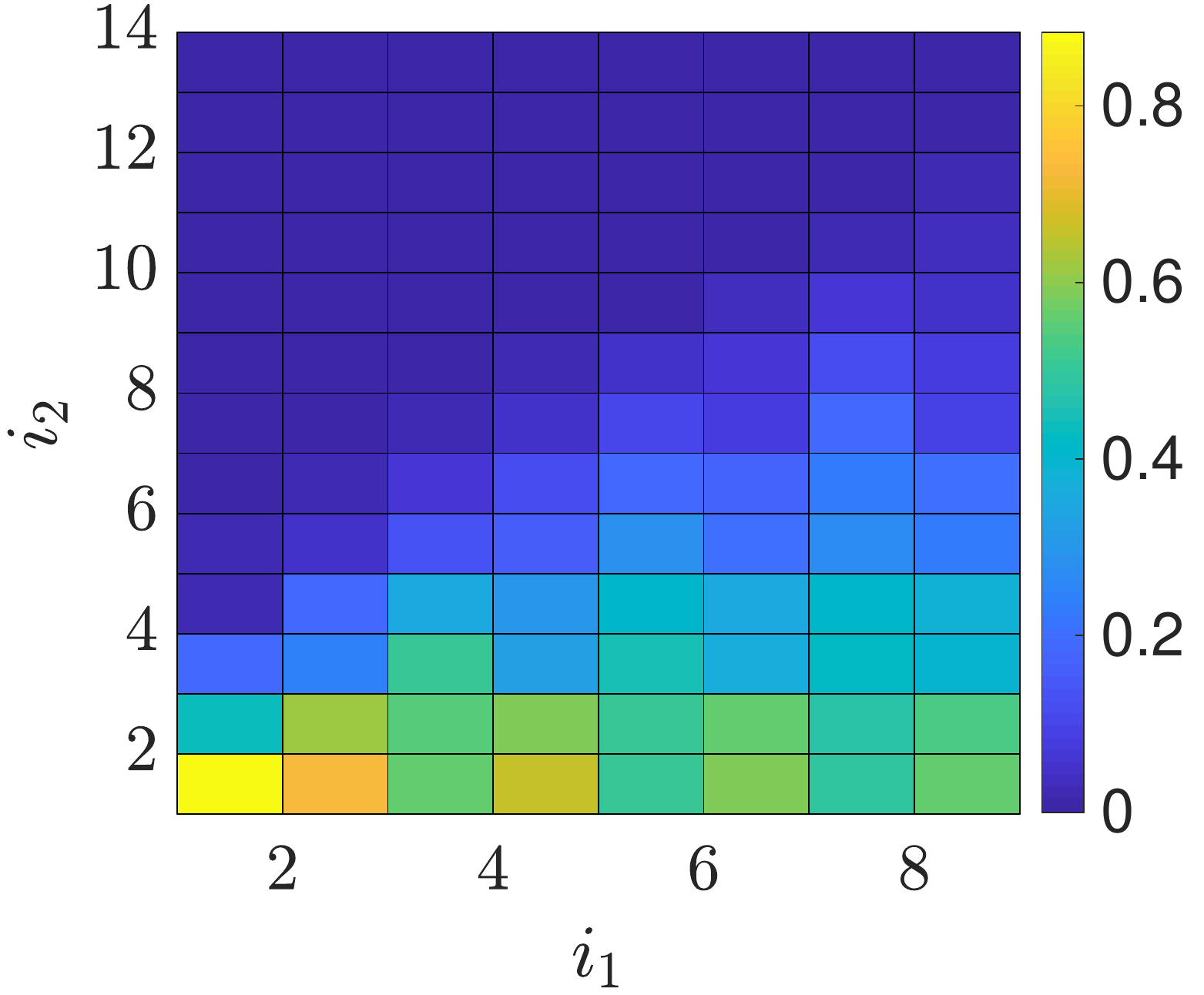}	
}
\caption{Recursive bi-orthogonal decomposition of the function \eqref{3d_example}. We plot the level-1 spectrum
$\lambda_{i_1}$ (a),  the level-2 thresholds 
corresponding to $\sigma = 10^{-5}$ (b), and the level-2 
spectrum $\lambda_{i_1i_2}$ (c).}
\label{fig:3d_approx}
\end{figure}

\noindent 
We discretize \eqref{2d_eig_fun_problem} using  
Gauss-Legendre spectral collocation with 50 points. 
This yields $50$ leading eigenvalues and corresponding 
eigenfunctions.  Following the thresholding technique discussed in Section \ref{sec:thresholding}, we set 
$\sigma = 10^{-5}$ to determine how many level-1 eigenvalues 
and eigenfunctions to keep. It turns out that only $9$ eigenvalues 
are larger than $\sigma$ which determines the first truncation rank as 
$r_1 = 9$.  These $9$ eigenvalues $\{\lambda_1, \ldots, \lambda_9\}$ 
constitute the level $1$ spectrum which is shown in 
Figure \ref{fig:3d_approx}(a). The corresponding eigenfunctions are 
$\psi^{(1)}_1, \ldots, \psi^{(1)}_9$.  The modes 
$\psi^{(2,3)}_1,\ldots,\psi^{(2,3)}_9$ can 
now be obtained through projection 
as in \eqref{eig_fun_projection}
\begin{equation}
\psi^{(2,3)}_{i_1} = \frac{1}{\lambda_{i_1}} \int_{-1}^1 
u(x_1,x_2,x_3) \psi^{(1)}_{i_1} dx_1 \ ,
\end{equation}
which we compute with Gauss-Legendre quadrature.
For each of the $9$ modes $\psi^{(2,3)}_{i_1}$ we 
follow the same procedure we used to compute 
$\psi^{(1)}_{i_1}$, i.e., we build the kernels
\begin{equation} 
l_{\psi^{(2,3)}_{i_1}}(x_2,x_2^{\prime}) = \int_{-1}^1 
\psi_{i_1}^{(2,3)}(x_2,x_3) 
\psi_{i_1}^{(2,3)}(x_2^{\prime}, x_3) dx_3
\end{equation}
and then solve the eigenvalue problems
\begin{equation}
\int_{-1}^1 l_{\psi^{(2,3)}_{i_1}}(x_2,x_2^{\prime}) 
\psi_{i_1i_2}^{(2)}(x_2^{\prime}) dx_2^{\prime} = 
\lambda_{i_1i_2}^2 \psi_{i_1i_2}^{(2)}(x_2) \ ,
\end{equation} 
to obtain $50$ eigenvalues and eigenfunctions of 
for each ${i_1} = 1, \ldots, 9$.  To decide how many eigenvalues 
and eigenfunctions to keep we use the thresholds 
$\sigma_{i_1} = \sigma/\lambda_{i_1}$ 
(see Figure \ref{fig:3d_approx}(b)). This yields the following 
vector of level-$2$ ranks $ r_2 = [11, 11, 11, 11, 11, 11, 10, 6, 0]$. 
The level-$2$ spectra $\lambda_{i_1i_2}$ is shown in 
Figure \ref{fig:3d_approx}(c) for the level-$1$ modes above 
the threshold $\sigma=10^{-5}$.

\section{Recursive bi-orthogonal decomposition of time-dependent multivariate functions}
\label{sec:time_evolution}

Let us consider the time-dependent multivariate function 
\begin{equation}
\label{time_varying_fun}
u : \Omega \times [0,T] \to \mathbb{R}.
\end{equation}
At any fixed $t \in [0,T]$ we assume that 
$u(x_1,\dots,x_d, t)$ is an element of the Sobolev 
space \eqref{Sobolev}. With the goal of solving 
high-dimensional nonlinear evolution equations (PDEs), 
in this Section we develop a recursive bi-orthogonal 
decomposition theory for \eqref{time_varying_fun}.
To decompose the function $u(x_1,\ldots,x_d,t)$ 
with recursive bi-orthogonal methods we have at least two different options. For instance, if $u$ is square integrable in $t$, 
then we may treat $t$ the same as a spatial variable $x_j$. 
In this way, we essentially include the time variable in the inner 
products \eqref{iprod1}-\eqref{iprod2}, and decompose 
the time-dependent field \eqref{time_varying_fun} 
using the methods of Section \ref{sec:recursive}. 

An alternative approach relies on introducing time 
dependence in all modes appearing in the tensor 
expansion of $u(x_1,\ldots,x_d,t)$, and then deriving 
problem-dependent evolution equations for each mode. 
To this end, one can enforce, e.g., a dynamic 
orthogonality (DO) condition or a bi-orthogonality (BO) 
condition \cite{bo1,bo2,do,do/bo_equiv,robust_do/bo}
at each level of the binary tree representing the tensor 
expansion. This generalizes the DO/BO framework 
originally proposed by Sapsis and Lermusiaux 
\cite{do}, and Cheng {\em et. al.} \cite{bo1}, which holds 
for binary trees with only one level, to binary trees with an 
arbitrary number of levels. The DO/BO method was 
originally proposed to solve 
initial/boundary value problems for nonlinear PDEs 
with parametric uncertainty modeled by the random vector 
$\bm \xi(\omega)$. The key idea was to 
introduce time-redundancy in a Karhunen-Lo\`eve-type 
expansion of the solution for the purpose of efficiently 
representing the time evolution of the stochastic modes 
and the corresponding space-time modes.  As we mentioned 
above,  the classical DO/BO expansion corresponds to  
a binary tree with only one level, where the random field 
$u(\bm x,\bm \xi,t)$ is decomposed as 
\begin{equation}
u(\bm x,\bm \xi,t) =\sum_{k=1}^\infty 
\lambda_k(t)\widehat{u}_k(\bm x,t)\Gamma_k(\bm \xi,t).
\end{equation}
In this Section we generalize this idea, and apply 
it recursively to multivariate functions not necessarily 
dependent on random parameters, until we obtain an expansion in terms of 
one-dimensional functions (whenever possible). 
To illustrate the hierarchical DO/BO method, we consider 
the TT series expansion \eqref{series_tt}, 
and introduce redundant time-dependence in all 
eigenvalues and eigenmodes. This yields the following 
representation of \eqref{time_varying_fun}
\begin{equation}
\label{series_tt_time}
u(x_1,\ldots,x_d,t) = \sum_{i_1 = 1}^{\infty} \cdots 
\sum_{i_{d-1} = 1}^{\infty} \lambda_{i_1}(t) \cdots 
\lambda_{i_1 \cdots i_{d-1}}(t) \psi_{i_1}^{(1)}(t) 
\psi_{i_1 i_2}^{(2)}(t) \cdots \psi_{i_1 \cdots i_{d-1}}^{( d-1)}(t) 
\psi_{i_1 \cdots i_{d-1}}^{(d)}(t).
\end{equation}
Other tensor expansions corresponding to different binary trees, 
e.g., the HT expansion sketched in Figure \ref{fig:trees}, 
can be generalized in a similar way.  
Hereafter, we derive the DO and BO evolution equations 
for the time-dependent modes in 
\eqref{series_tt_time}, and show that these two 
ways of propagating $u(x_1,\ldots,x_d,t)$ forward 
in time on a (smooth) low-dimensional manifold of constant 
rank \cite{h_tucker_geom} are equivalent in the sense 
that the finite dimensional function spaces 
containing the DO and BO components 
are essentially the same.  

In the following Sections every function 
is time dependent, so $t$ is omitted from 
the function arguments.  Superscripts indicate spatial 
dependencies as in Section \ref{sec:rec_bi_orth}, 
so spatial arguments of functions are also omitted 
when there is no ambiguity.  
Angled brackets $\langle \cdot , \cdot \rangle$ here 
denote the $L^2=H^{(0)}$ inner product over all 
spatial components for which the two 
arguments are defined.

\subsection{DO-TT propagator}
\label{sec:DO-TT}

To derive the Dynamically-Orthogonal Tensor-Train (DO-TT) propagator, let us first consider the level-1 expansion 
\begin{equation}
u(x_1,\ldots,x_d,t) = \sum_{i_1 = 1}^{r_1} \psi_{i_1}^{(1)}(t)\Psi_{i_1}^{(2,\ldots,d)}(t) \ ,
\label{level1TT}
\end{equation}
where 
\begin{equation}
\Psi_{i_1}^{(2,\ldots,d)}(t)= \lambda_{i_1}(t)\psi_{i_1}^{(2,\ldots,d)}(t).
\end{equation}
By differentiating \eqref{level1TT}  with respect to time we obtain
\begin{equation}
\label{begin_der_0}
 \frac{\partial u}{\partial t} =  \sum_{i_1 = 1}^{r_1} \frac{\partial \psi_{i_1}^{(1)}}{\partial t} \Psi_{i_1}^{(2,\ldots, d)} + \psi_{i_1}^{(1)} \frac{\partial \Psi_{i_1}^{(2, \ldots, d)}}{\partial t}.
\end{equation}
Clearly, if $u(x_1,\ldots,x_d,t)$ is given then 
$\partial u/\partial t$ is known, and therefore the left hand 
side of \eqref{begin_der_0} is fully determined. On the other 
hand, in the context of nonlinear evolution equations, 
$\partial u/\partial t$ is represented by the right 
hand side of the PDE.
Applying $\langle \cdot , \psi_{k_1}^{(1)} \rangle$ 
to \eqref{begin_der_0} and utlizing the  
DO conditions (see \cite{do})
\begin{align}
\label{dynamic_orthogonality0}
\langle \frac{\partial \psi_{i_1}^{(1)}}{\partial t}, 
\psi_{k_1}^{(1)} \rangle = 0\qquad \text{for all} \quad 
i_1,k_1=1,2,\ldots
\end{align}
yields
\begin{equation}
\label{phi_ev_0}
    \frac{\partial \Psi_{k_1}^{(2, \ldots, d)}}{\partial t} = 
    \underbrace{\langle \frac{\partial u}{\partial t}, \psi_{k_1}^{(1)} \rangle}_{N_{k_1}^{(2,\ldots,d)}(t)}.
\end{equation}
The evolution equations for the modes 
$\psi_{k_1}^{(1)}$ can be obtained by applying 
$\langle \cdot, \Psi_{k_1}^{(2, \ldots d)} \rangle$ 
to \eqref{begin_der_0} and \eqref{phi_ev_0}, and the 
using simple algebra. This yields 
\begin{equation}
 \sum_{i_1 = 1}^{r_1} \frac{\partial \psi_{i_1}^{(1)}}{\partial t} 
 \underbrace{\langle \Psi_{i_1}^{(2, \ldots, d)}, 
 \Psi_{k_1}^{(2,\ldots,d)} \rangle}_{C_{i_1k_1}(t)}
= \underbrace{\langle \frac{\partial u}{\partial t}, 
\Psi_{k_1}^{(2, \ldots, d)} \rangle - \sum_{i_1 = 1}^{r_1} 
\psi_{i_1}^{(1)} \langle \frac{\partial u}{\partial t} , 
\Psi_{k_1}^{(2,\ldots, d)}\psi_{i_1}^{(1)} \rangle}_{M_{k_1}^{(1)}(t)}.
\label{psi_ev_0}
\end{equation}
Equations \eqref{phi_ev_0} and \eqref{psi_ev_0} 
can be conveniently expressed in a matrix-vector form as
\begin{equation}
\begin{aligned}
\bm C(t) \frac{\partial \bm \psi^{(1)}}{\partial t}  =\bm M^{(1)}(t), \qquad 
\frac{\partial \bm \Psi^{(2,\ldots,d)}}{\partial t}  = \bm N^{(2,\ldots,d)}(t).
\end{aligned}
\label{DO1}
\end{equation}
where
\begin{equation}
\bm \psi^{(1)}  =\left[
\begin{array}{c}
\psi^{(1)}_1\\
\vdots\\
 \psi^{(1)}_{r_1}
\end{array}
\right],\qquad
\bm \Psi^{(2,\ldots, d)} =
\left[
\begin{array}{c}
\Psi^{(2,\ldots,d)}_1\\
 \vdots\\ 
\Psi^{(2,\ldots,d)}_{r_1}
\end{array}
\right],
\end{equation}
\begin{equation}
\bm M^{(1)}(t)=
\left[
\begin{array}{c}
M_1^{(1)}(t)\\
\vdots\\
M_{r_1}^{(1)}(t)
\end{array}
\right],\qquad 
\bm N^{(2,\ldots,d)}(t)= 
\left[
\begin{array}{c}
N^{(2,\ldots,d)}_1(t)\\
\vdots\\
N^{(2,\ldots,d)}_{r_1}(t)
\end{array}
\right].
\end{equation}
%
Note that $\bm M^{(1)}(t)$ and $\bm C(t)$ depend 
on $\Psi^{(2,\ldots,d)}$ and $\partial u/\partial t$, while 
$\bm N^{(2,\ldots,d)}(t)$ depends on $\bm \psi^{(1)}$ and 
$\partial u/\partial t$. Therefore, given 
$\partial u/\partial t$ we have that the 
system \eqref{DO1} is closed. 
At this point, we move to the next level of the TT binary 
tree and derive DO evolution equations for the level-$2$ modes. 
By following the same steps as in the derivation of 
the level-$1$ system \eqref{DO1}, we obtain
\begin{equation}
    \begin{aligned}
    \frac{\partial \Psi_{k_1 k_2}^{(3, \ldots, d)}}{\partial t}& = 
    \underbrace{\langle N^{(2,\ldots,d)}_{k_1} , 
    \psi_{k_1 k_2}^{(2)} \rangle}_{N_{k_1 k_2}^{(3,\ldots,d)}(t)} \ ,\\
    \sum_{i_2 = 1}^{r_2} \frac{ \partial \psi_{k_1 i_2}^{(2)}}{\partial t} \langle \Psi_{k_1 i_2}^{(3, \ldots, d)} , \Psi_{k_1 k_2}^{(3, \ldots, d)} \rangle &= \underbrace{\langle N^{(2,\ldots,d)}_{k_1} , \Psi_{k_1 k_2}^{(3, \ldots, d)} \rangle - \sum_{i_2 = 1}^{r_2} \psi_{k_1 i_2}^{(2) } 
    \langle N^{(2,\ldots,d)}_{k_1} , \Psi_{k_1 k_2}^{(3, \ldots, d)}\psi_{k_1 i_2}^{(2)} \rangle}_{ M_{k_1 k_2}^{(2)}(t)} \ , 
    \end{aligned}
\end{equation}
where we defined 
\begin{equation}
\Psi_{k_1 k_2}^{(3, \ldots, d)}=\lambda_{k_1k_2}(t)\psi_{k_1 k_2}^{(3, \ldots, d)}(t).
\end{equation}
Proceeding recursively, it is possible to obtain evolution 
equations for each mode in the TT binary tree sketched in Figure \ref{fig:trees}. Specifically, we have 
    \begin{align}
        \frac{\partial \Psi_{k_1 \cdots k_j}^{(j+1, \ldots, d)}}{\partial t} =& \underbrace{\langle N^{(j, \ldots, d)}_{k_1 \cdots k_{j-1}} , \psi_{k_1 \cdots k_j}^{(j)} \rangle}_{N_{k_1 \cdots k_j}^{(j+1, \ldots, d)}(t)} \\
   \sum_{i_j = 1}^{r_j} \frac{ \partial \psi^{(j)}_{k_1 \cdots k_{j-1} i_j}}{\partial t} &\langle \Psi_{k_1 \cdots k_{j-1} i_j}^{(j+1, \ldots, d)} , \Psi_{k_1 \cdots k_j}^{(j+1, \ldots, d)} \rangle  \label{DOTT2}\\
   & = \underbrace{\langle N_{k_1 \cdots k_{j-1}}, \Psi_{k_1 \cdots k_j}^{(j+1, \ldots, d)} \rangle - \sum_{i_j = 1}^{r_j} \psi_{k_1 \cdots k_{j-1} i_j} \langle  N^{(j,\ldots,d)}_{k_1 \cdots k_{j-1}} , \Psi_{k_1 \cdots k_j}^{(j+1, \ldots, d)} \psi_{k_1 \cdots k_{j-1} i_j}^{(j)} \rangle}_{ M_{k_1 \cdots k_{j}}^{(j)}(t)}.\label{DOTT3}
    \end{align}
Equations \eqref{DOTT2}-\eqref{DOTT3} can be conveniently expressed 
in a matrix-vector form as 
\begin{equation}
\begin{aligned}
\bm C_{i_1\cdots i_{j-1}}(t) \frac{\partial \bm \psi_{i_1\cdots i_{j-1}}^{(j)}}{\partial t}  =\bm M^{(j)}_{i_1\cdots i_{j-1}}(t), \qquad 
\frac{\partial \bm \Psi^{(j+1,\ldots,d)}_{i_1\cdots i_j}}{\partial t}  = \bm N^{(j+1,\ldots,d)}_{k_1\cdots k_j}(t).
\end{aligned}
\label{do_ev_eq}
\end{equation}
where, 
\begin{equation}
 \bm \psi_{i_1\cdots i_{j-1}}^{(j)}  =\left[
\begin{array}{c}
 \psi_{i_1\cdots i_{j-1}1}^{(j)}\\
\vdots\\
 \psi_{i_1\cdots i_{j-1}r_j}^{(j)}
\end{array}
\right],\qquad
\bm \Psi^{(j+1,\ldots,d)}_{i_1\cdots i_j}=
\left[
\begin{array}{c}
\Psi^{(j+1,\ldots,d)}_{i_1\cdots i_j 1 }\\
 \vdots\\ 
\Psi^{(j+1,\ldots,d)}_{i_1\cdots i_j r_j }
\end{array}
\right],
\end{equation}
\begin{equation}
\bm  M^{(j)}_{i_1\cdots i_{j-1}}(t)=
\left[
\begin{array}{c}
M^{(j)}_{i_1\cdots i_{j-1}1}(t)\\
\vdots\\
M^{(j)}_{i_1\cdots i_{j-1}r_j}(t)
\end{array}
\right],\qquad 
\bm N^{(j+1,\ldots,d)}_{k_1\cdots k_j}(t)= 
\left[
\begin{array}{c}
N^{(j+1,\ldots,d)}_{k_1\cdots k_j 1}(t)\\
\vdots\\
N^{(j+1,\ldots,d)}_{k_1\cdots k_j r_j}(t)
\end{array}
\right].
\end{equation}

\paragraph{Remark} The ``non-leaf'' modes 
$\psi_{k_1 \cdots k_j}^{(j+1, \ldots, d)}$ can be constructed at 
any time in terms of the ``leaf'' modes as
\begin{equation}
\psi_{k_1 \cdots k_j}^{(j+1, \ldots, d)} = \sum_{i_{j+1} = 1}^{r_{j+1}} 
\cdots \sum_{i_{d-1} = 1}^{r_{d-1}} \psi_{k_1 \cdots k_j i_{j+1}}^{(j + 1)} 
\cdots  \psi_{k_1 \cdots k_j i_{j+1} \cdots i_{d-1}}^{(d-1)} \psi_{k_1 
\cdots k_j i_{j+1} \cdots i_{d-1}}^{(d)}
\end{equation}
To build the time-dependent multivariate function \eqref{series_tt_time} 
it is sufficient to integrate only the evolution equations 
corresponding to the leaf modes.

\subsection{An example: DO-TT decomposition of a time-dependent 3D function}
Let us consider the time-dependent multivariate function
\begin{equation}
\label{3d_ev_example}
u(x_1,x_2,x_3,t) = (t+1) x_2 x_3 + (t^2-10) x_1 x_3 - 
(4 \sin(t) + 3) x_1 x_2 x_3,\qquad (x_1,x_2,x_3)\in \Omega.
\end{equation}
where  $\Omega= [-1,1]^{3}$ is the standard three-dimensional 
cube. We are interested in decomposing 
\eqref{3d_ev_example} with the DO-TT method. 
To this end, we consider the $L^2$ inner product 
and first perform a recursive bi-orthogonal decomposition 
of the initial state $u(x_1,x_2,x_3,0)$ using the methods of Section 
\ref{sec:recursive}.  To this end, we consider $50$ 
Gauss-Legendre quadrature points and set the 
eigenvalue threshold to $\sigma = 10^{-5}$. By following 
the same steps as in the example \ref{sec:ex_1},  
this yields multivariate ranks $r_1=2$ and $r_2=[1\, 1]$. 
This allows us to approximate the initial condition as 
\begin{equation}
u(x_1,x_2,x_3,0) \simeq \psi^{(1)}_1(0)\psi^{(2)}_{11}(0)\psi^{(3)}_{11}(0) + \psi^{(1)}_2(0)\psi^{(2)}_{21}(0)\psi^{(3)}_{21}(0).
\end{equation}
The time derivative of $u$ is easily obtained as
\begin{equation}
\frac{\partial u}{\partial t} = x_2 x_3 + 2t x_1 x_3 - 4 \cos(t) x_1 x_2 x_3.
\label{ic}
\end{equation}
A substitution of \eqref{ic} intro the multi-level DO evolution 
equations \eqref{do_ev_eq} yields 
\begin{align}
\frac{\partial \psi^{(1)}_k}{\partial t} = &\langle x_2 x_3 \psi^{(2,3)}_{k} \rangle + 2tx_1 \langle x_3 \psi_k^{(2,3)} \rangle - 4 \cos(t) x_1 \langle x_2 x_3 \psi^{(2,3)}_k \rangle \nonumber\\
&- \sum_{i = 1}^{r_1} \psi_i^{(1)} \left[ \langle \psi^{(1)}_i \rangle \langle x_2 x_3 \psi^{(2,3)}_k \rangle + 2t \langle x_1 \psi^{(1)}_i \rangle \langle x_3 \psi^{(2,3)}_k \rangle - \right. \nonumber\\
&\left. 4 \cos(t) \langle x_1 \psi^{(1)}_i \rangle \langle x_2 x_3 \psi^{(2,3)}_k \rangle \right] \ ,\label{do_ev_3d1}\\
\frac{\partial \psi^{(2)}_{k1}}{\partial t} \langle \psi^{(3)}_{k1}  \psi^{(3)}_{k1}\rangle = &x_2 \langle x_3 \psi^{(3)}_{k1} \rangle \langle \psi^{(1)}_k \rangle + 2t \langle x_3 \psi^{(3)}_{k1} \rangle \langle x_1 \psi^{(1)}_k \rangle - 4 \cos(t) x_2 \langle x_3 \psi^{(3)}_{k1} \rangle \langle x_1 \psi^{(1)}_k \rangle \nonumber\\
&- \psi^{(2)}_{k1} \left[ \langle x_2 \psi^{(2)}_{k1} \rangle \langle x_3 \psi^{(3)}_{k1} \rangle \langle \psi^{(1)}_{k} \rangle + 2t \langle \psi^{(2)}_{k1} \rangle \langle x_3 \psi^{(3)}_{k1} \rangle \langle x_1 \psi^{(1)}_k \rangle   \right.\nonumber \\
&\left. - 4 \cos(t) \langle x_2 \psi^{(2)}_{k 1} \rangle \langle x_3 \psi^{(3)}_{k1} \rangle \langle x_1 \psi^{(1)}_k \rangle \right] \ , \label{do_ev_3d2}\\
\frac{\partial \psi^{(3)}_{k1}}{\partial t } = &x_3 \langle x_2 \psi^{(2)}_{k 1} \rangle \langle \psi^{(1)}_k \rangle + 2t x_3 \langle \psi^{(2)}_{k1} \rangle \langle x_1 \psi^{(1)}_{k} \rangle - 4 \cos(t) x_3 \langle x_2 \psi^{(2)}_{k1} \rangle \langle x_1 \psi^{(1)}_k \rangle \ , \label{do_ev_3d3}
\end{align}
for $ j = 1,2$.
In a spectral collocation setting, all modes are represented 
by their values at the 50 Gauss-Legendre collocation points. 
Hence, the DO evolution equations reduce to a system 
of ODEs. All integrals in 
\eqref{do_ev_3d1}-\eqref{do_ev_3d3} are computed by using 
the one-dimensional Gauss-Legendre quadrature rule.
In Figure \ref{fig:3d_ev_modes} we plot the temporal evolution of 
all $6$ modes appearing in the 
system \eqref{do_ev_3d1}-\eqref{do_ev_3d3}. 
The time-dependent $L^2(\Omega)$ error between 
the DO-TT expansion 
\begin{equation}
u(t,x_1,x_2,x_3) =\psi^{(1)}_1(t)\psi^{(2)}_{11}(t)
\psi^{(3)}_{11}(t) + \psi^{(1)}_2(t)\psi^{(2)}_{21}(t)
\psi^{(3)}_{21}(t),
\label{DOTT}
\end{equation}
and the function \eqref{3d_ev_example} is plotted 
in Figure \ref{fig:3d_ev_error} versus time.
We notice that \eqref{3d_ev_example} is a separable 
function with rank $r_1 = 2$, $r_2 = [1,1]$ at each 
time $t$. This means that it can be represented exactly 
on a low-dimensional tensor manifold of constant rank at 
each time. This is the reason why the $L^2$ error in Figure 
\ref{fig:3d_ev_error} is of order $10^{-8}$ for all $t\geq 0$. 
In general, the multivariate function $u(x_1,\ldots,x_d,t)$ 
is not separable and may become rougher/wavier 
as time increases. In these cases, the multivariate 
rank $(r_1, \ldots, r_{d-1})$ of the DO-TT expansion 
usually needs to be increased in time to accurately 
represent represent $u(x_1,\ldots,x_d,t)$. Methods for increasing rank 
will be addressed in Section \ref{sec:add_modes}
in the context of adaptive DO-TT approximation of the 
solution to high-dimensional PDEs.

\begin{figure}[t]
\centerline{\footnotesize\hspace{0.4cm}$t=0.0$ \hspace{4cm} $t=2.5$  \hspace{4.0cm} $t=5.0$}
	\centerline{
	\rotatebox{90}{\hspace{1.3cm}\footnotesize}
		\includegraphics[width=0.3\textwidth]{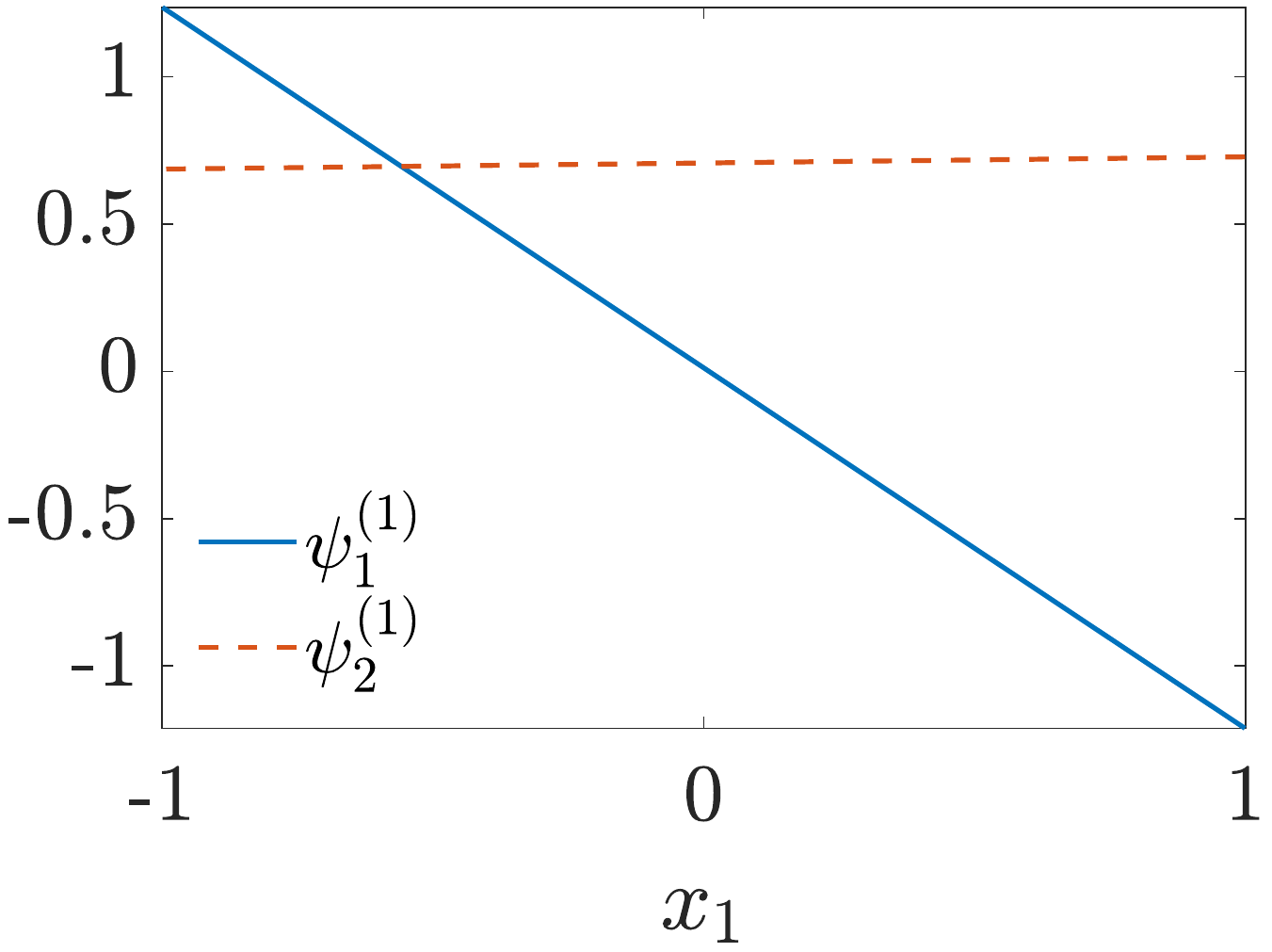}
		\includegraphics[width=0.3\textwidth]{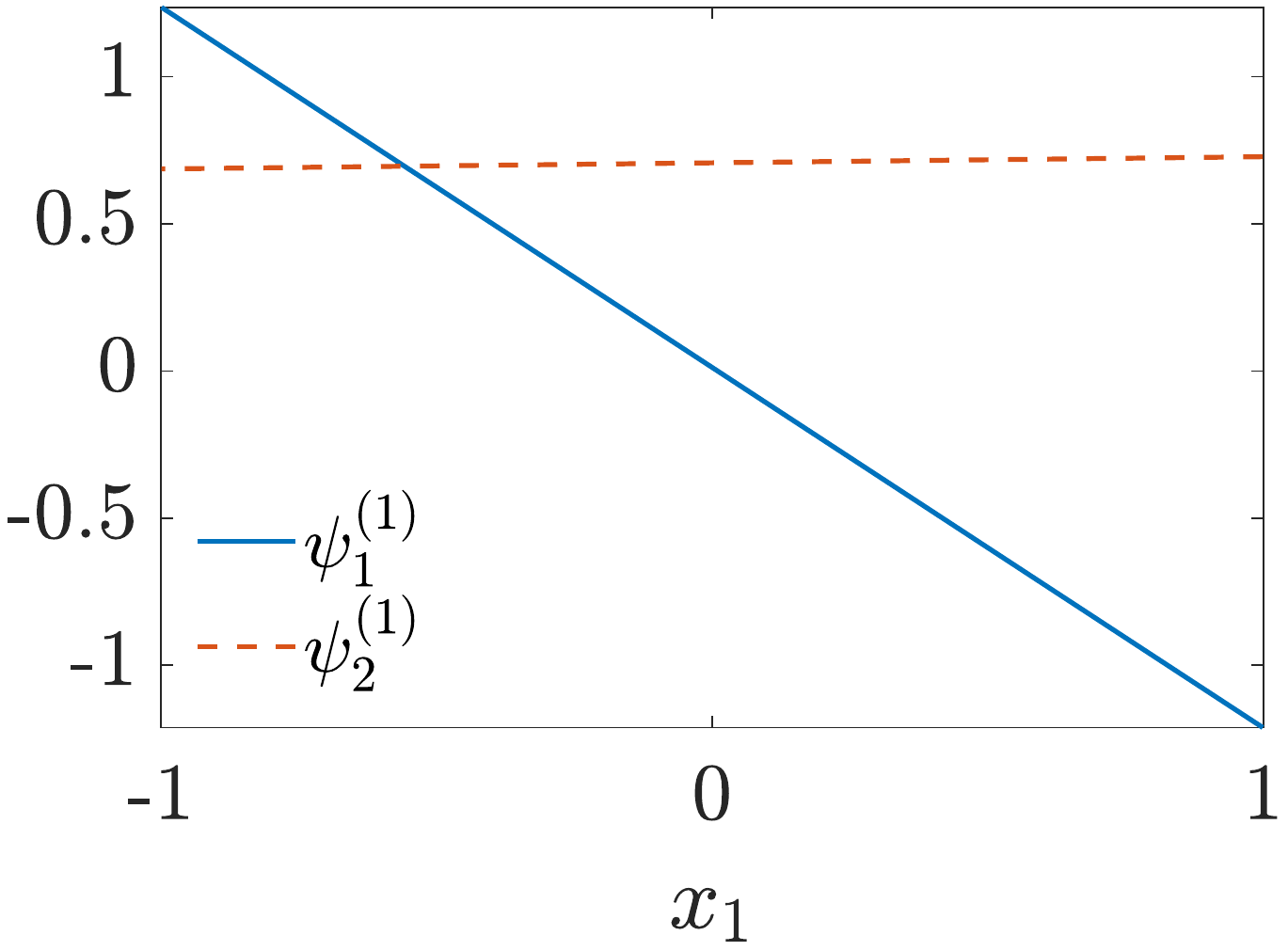}
		\includegraphics[width=0.3\textwidth]{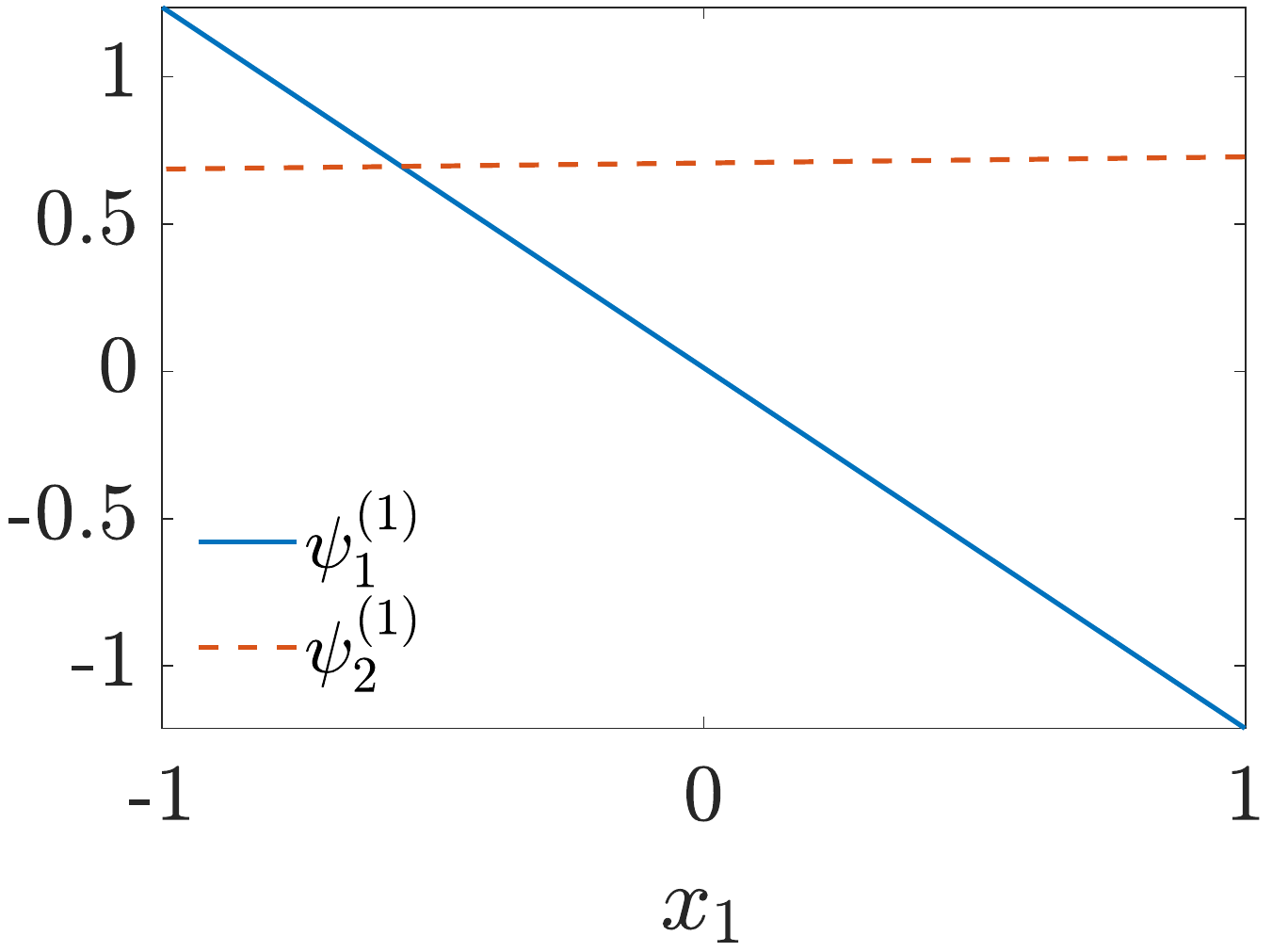}	
}
	\centerline{
	\rotatebox{90}{\hspace{1.3cm} \footnotesize }
		\includegraphics[width=0.3\textwidth]{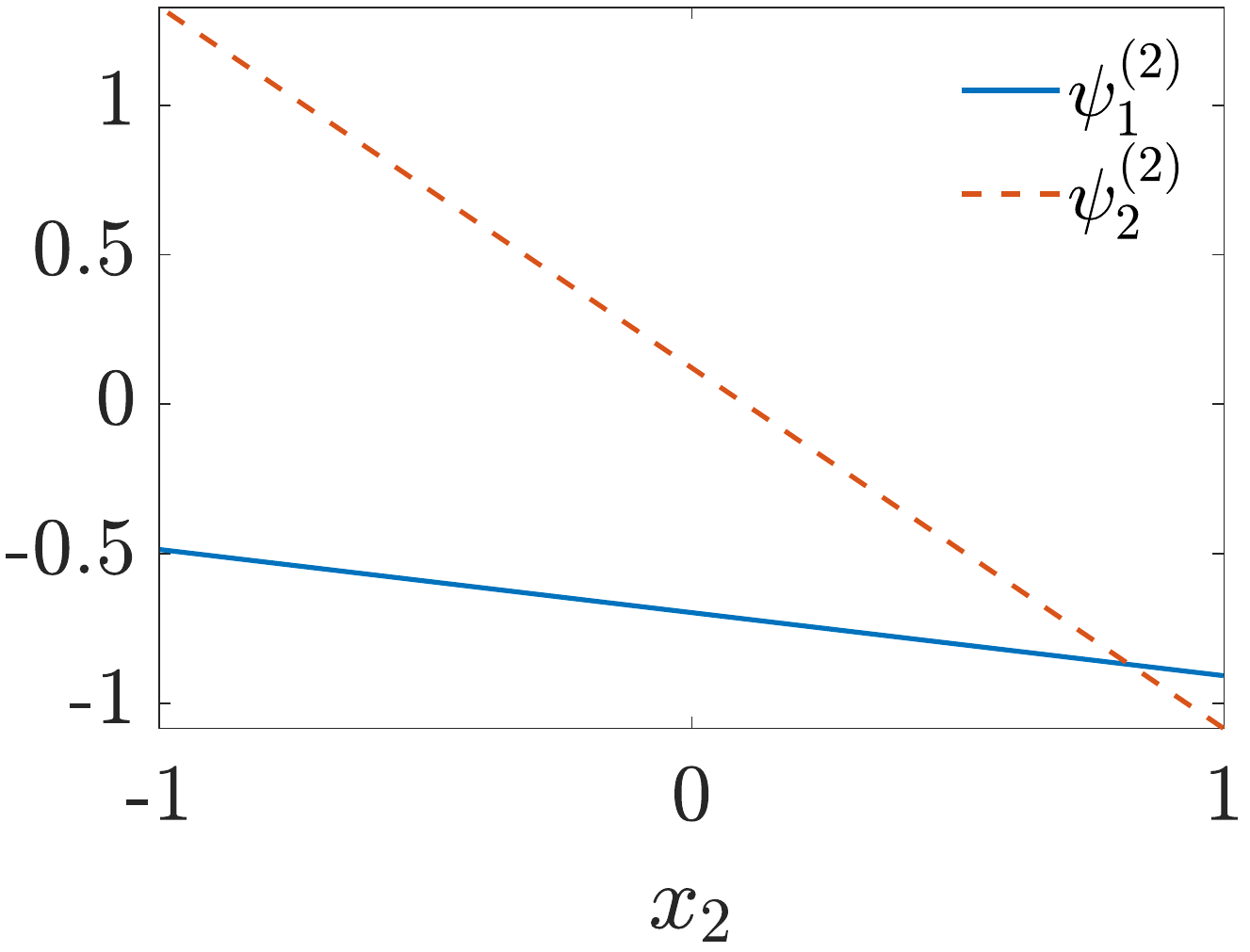}
		\includegraphics[width=0.3\textwidth]{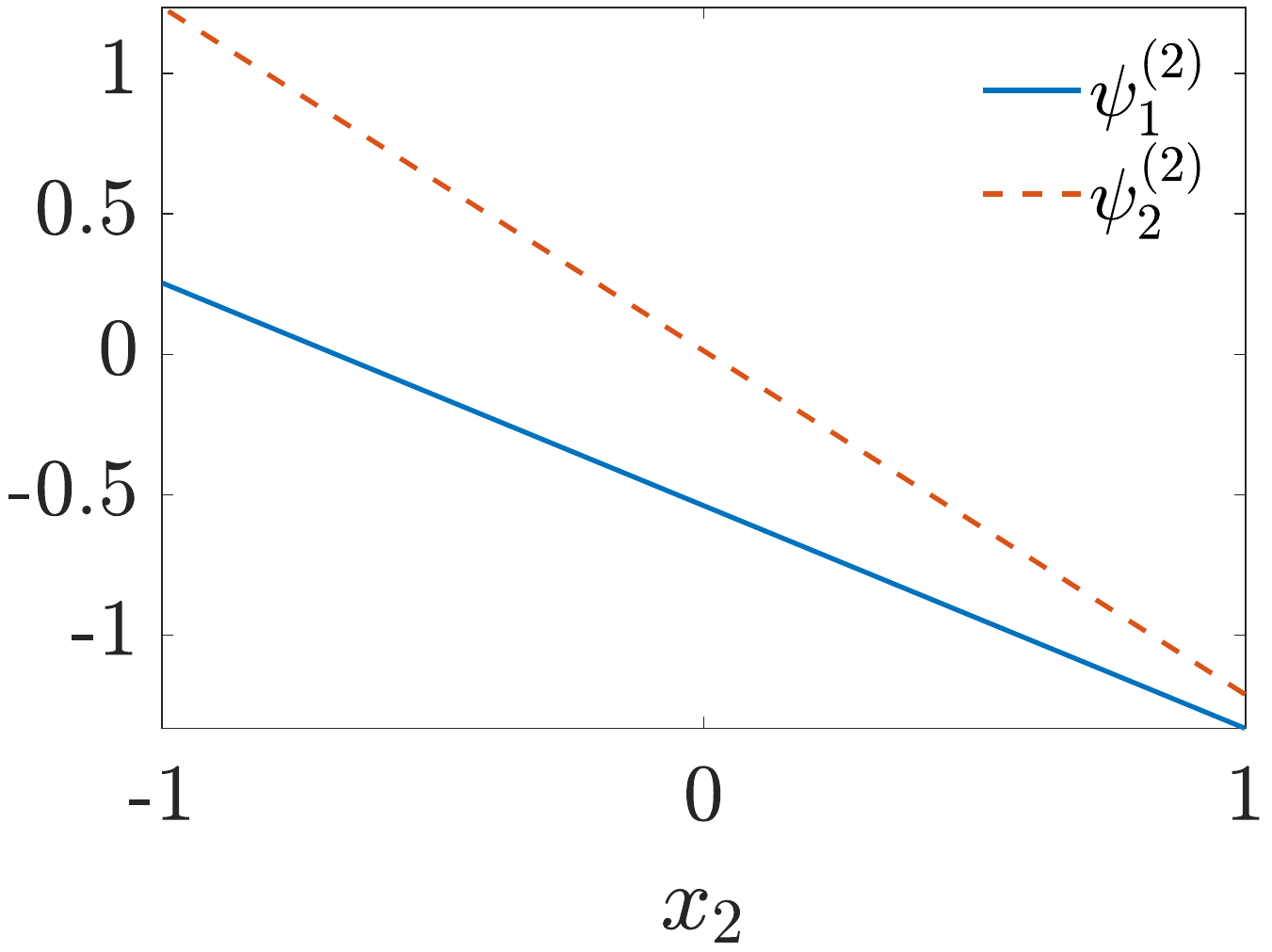}
		\includegraphics[width=0.3\textwidth]{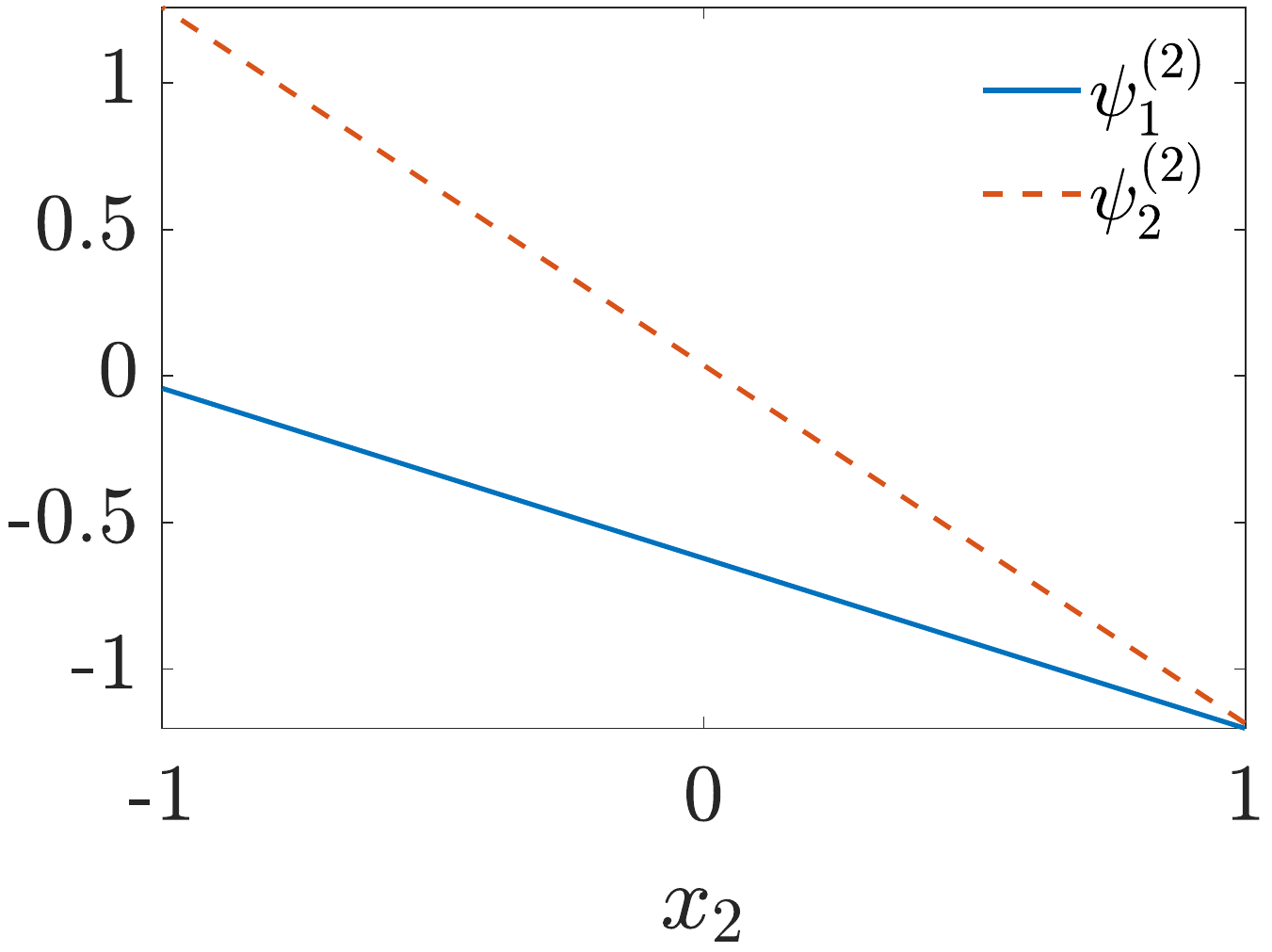}	
}
	\centerline{
	\rotatebox{90}{\hspace{1.3cm} \footnotesize }
		\includegraphics[width=0.3\textwidth]{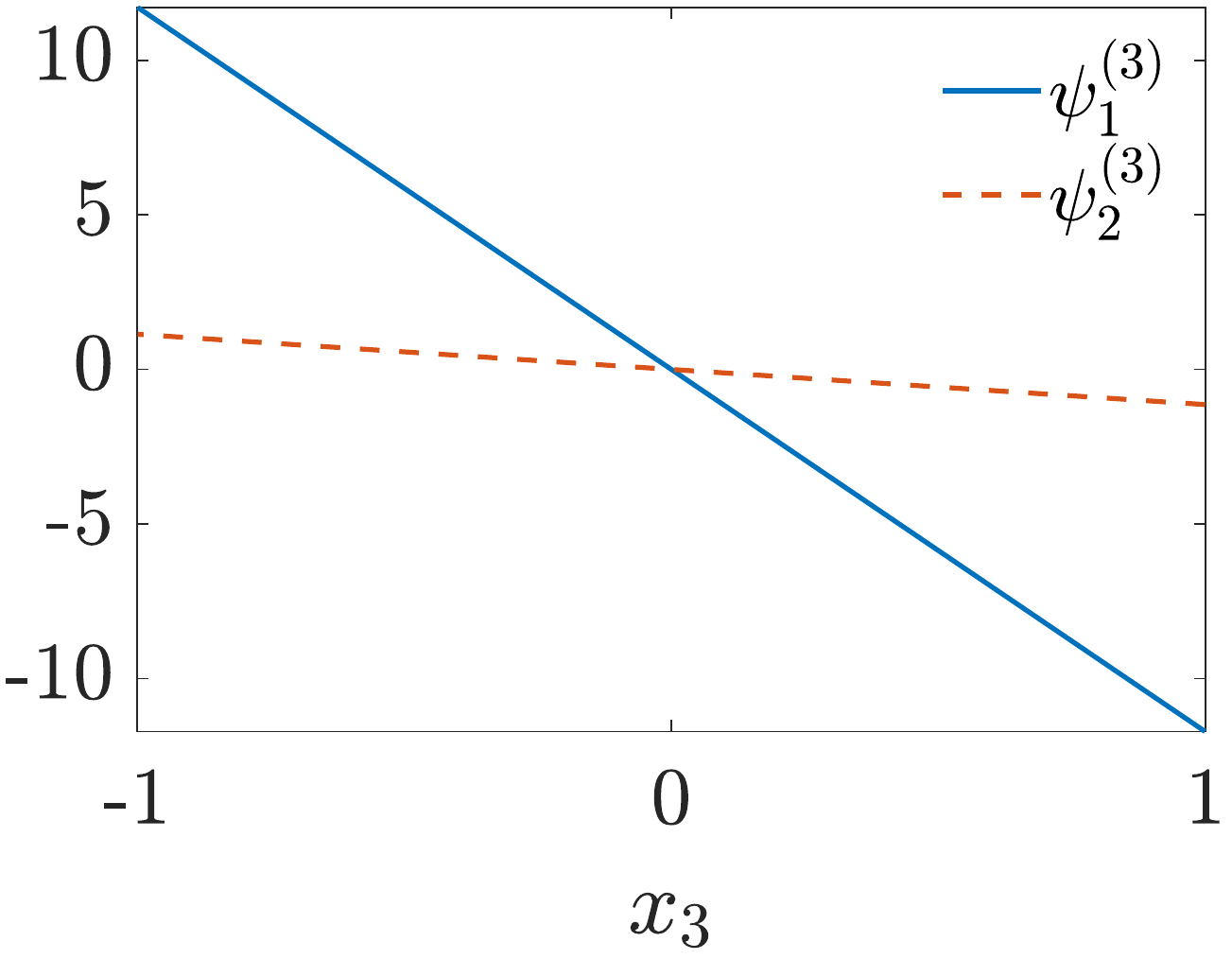}
		\includegraphics[width=0.3\textwidth]{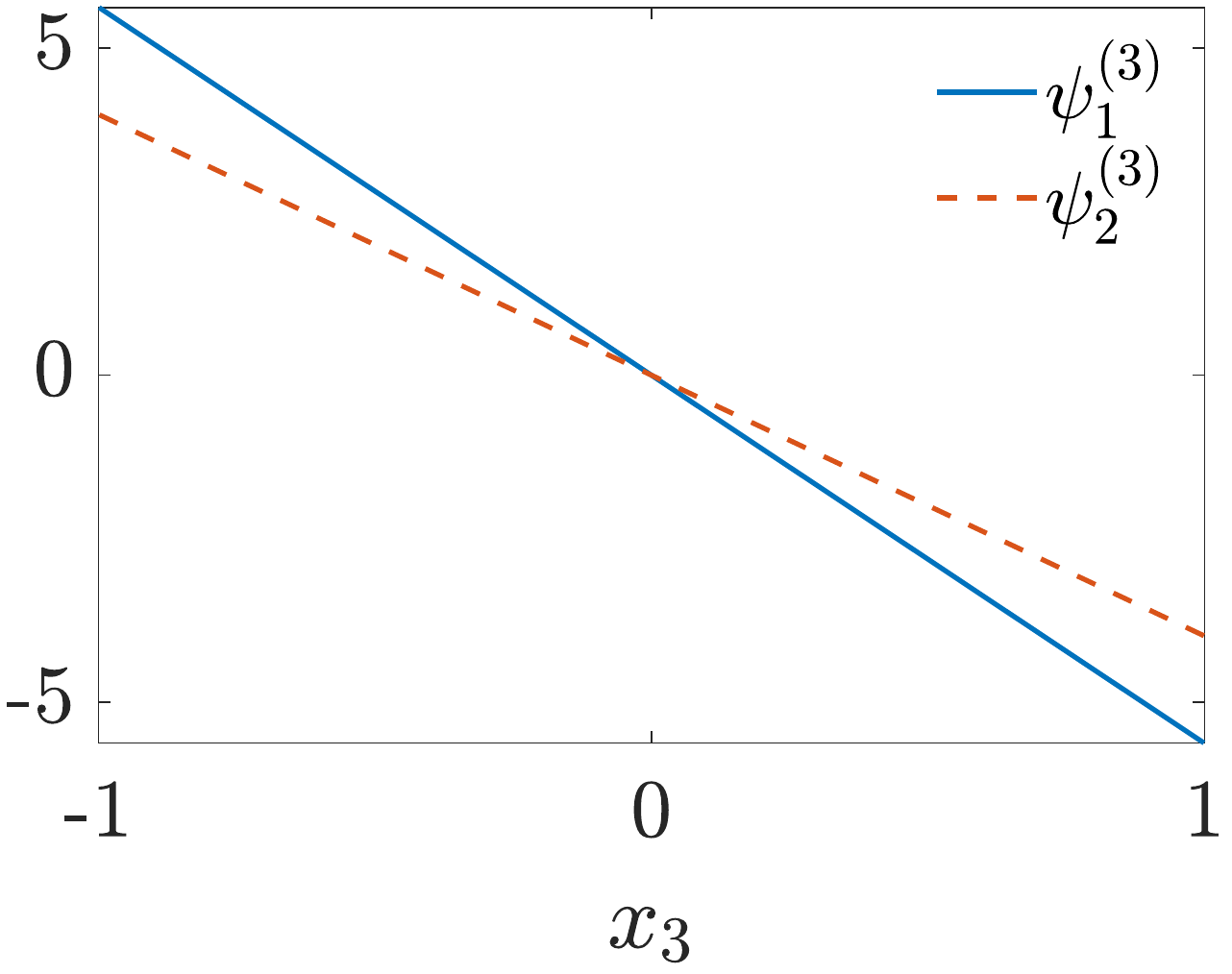}
		\includegraphics[width=0.3\textwidth]{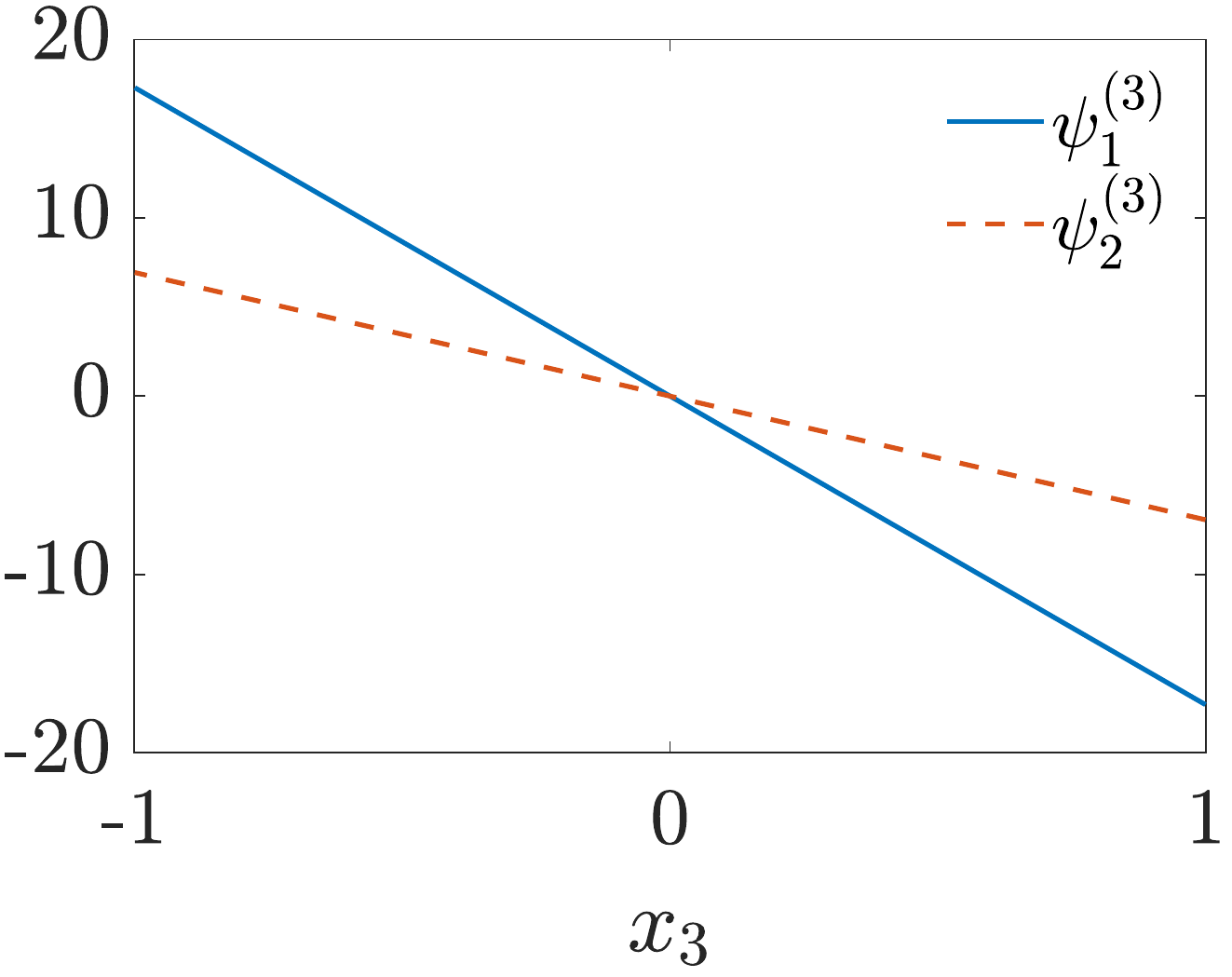}
}
\caption{Temporal evolution of the bi-orthogonal modes in the DO-TT expansion \eqref{DOTT}.}
\label{fig:3d_ev_modes}
\end{figure}

\begin{figure}
\centerline{
\includegraphics[width=0.6\textwidth]{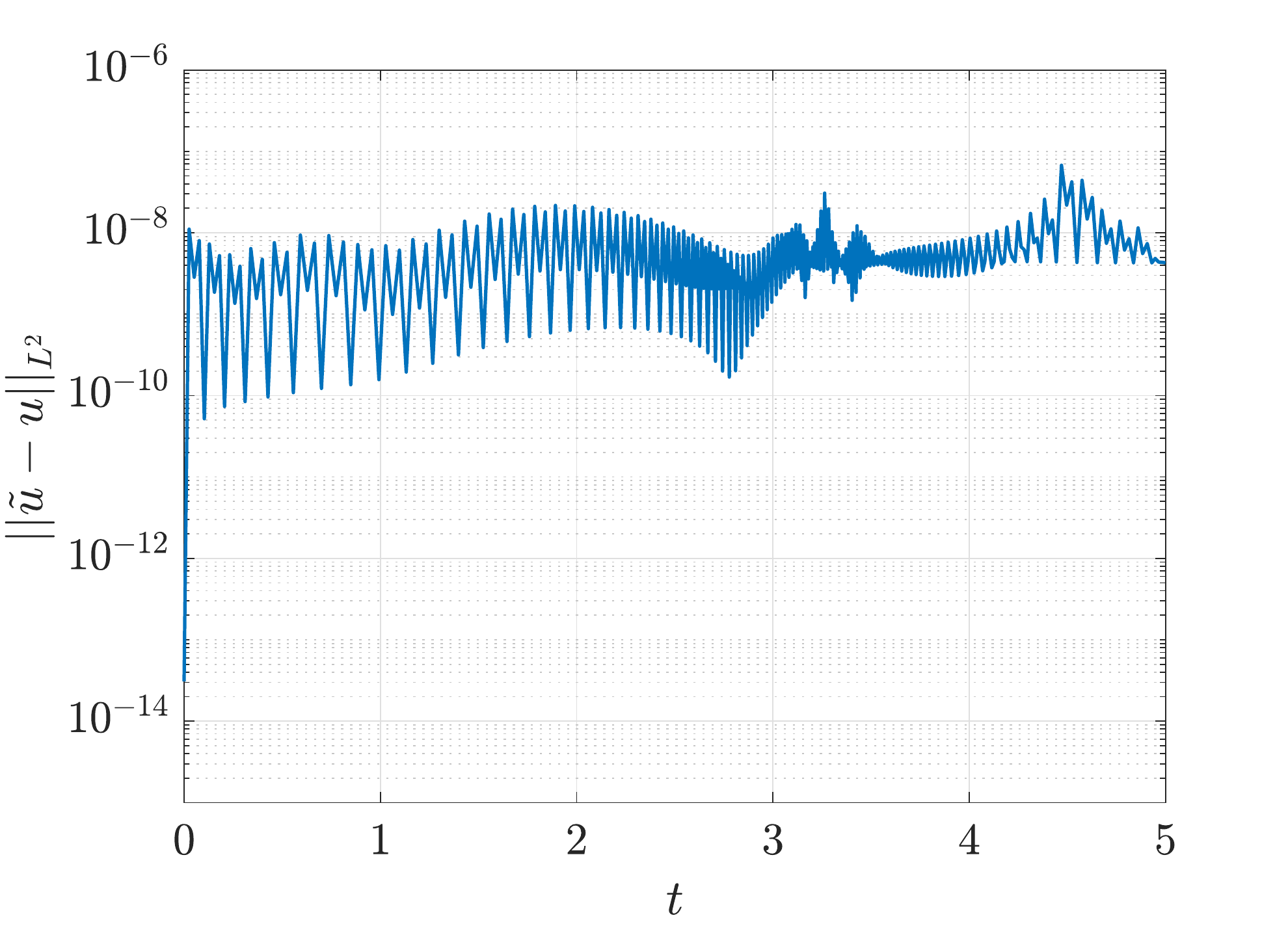}
}
\caption{Time-dependent $L^2(\Omega)$ error between of the DD-TT expansion \eqref{DOTT} and the multivariate function \eqref{3d_ev_example}.}.
\label{fig:3d_ev_error}
\end{figure}


\subsection{BO-TT propagator}
\label{sec:BO-TT}
Alongside the multi-level DO propagator \eqref{do_ev_eq},
we derive the Bi-Orthogonal Tensor-Train (BO-TT) propagator. 
The key idea is to replace the dynamic orthogonality condition between 
the modes at each level of the binary tree, with a bi-orthogonality 
condition (see \cite{bo1}). To this end, let us consider the following 
sequence of bi-orthogonal decompositions 
\begin{align}
\label{series_tt_bo0}
u(x_1,\ldots,x_d,t)= &\sum_{i_1 = 1}^{\infty}  \varphi_{i_1}^{(1)}(t)
\varphi_{i_1}^{(2,\ldots,d )}(t)\nonumber \\ 
=& \sum_{i_1 = 1}^{\infty} \sum_{i_2 = 1}^{\infty}
 \varphi_{i_1}^{(1)}(t) \varphi_{i_1i_2}^{(2)}(t) 
 \varphi_{i_1i_2}^{(3,\ldots,d)}(t),\nonumber\\
=& \sum_{i_1 = 1}^{\infty} \sum_{i_2 = 1}^{\infty}
 \sum_{i_3 = 1}^{\infty}
 \varphi_{i_1}^{(1)}(t) \varphi_{i_1i_2}^{(2)}(t) 
 \varphi_{i_1i_2i_3}^{(3)}(t) \varphi_{i_1i_2i_3}^{(4,\ldots,d)}(t)\\
 \cdots &\nonumber
\end{align}
where the modes $\varphi_{i_1\cdots i_{j-1}k}^{(j)}(t)$ and 
$\varphi_{i_1\cdots i_{j-1}k}^{(j+1,\ldots,d)}(t)$ satisfy the 
orthogonality conditions
\begin{align}
\label{bo_cond_1}
    \langle \varphi_{i_1 \dots i_{j-1} k}^{(j)}(t), \varphi_{i_1 \dots i_{j-1} p}^{(j)}(t) \rangle &= \lambda_k(t) \delta_{kp}, \\
    \label{bo_cond_2}
    \langle \varphi_{i_1 \dots i_{j-1} k}^{(j+1,\ldots,d)}(t), \varphi_{i_1 \dots i_{j-1} p}^{(j+1,\ldots,d)}(t) \rangle &= \delta_{kp},
\end{align}
for all $t\geq 0$. In contrast with the multi-level DO modes, 
here the one-dimensional BO modes carry the eigenvalues.
By extending the derivation given in \cite{bo1, bo2} to the case 
of multi-level binary trees, it is straightforward to obtain the 
following BO-TT evolution equations
\begin{equation}
\label{bo_evolution}
\begin{aligned}
&\frac{\partial \bm \varphi^{(j)}_{i_1 \cdots i_{j-1}}}{\partial t} = 
\bm M_{i_1 \cdots i_{j-1}}(t) \bm\varphi^{(j)}_{i_1 \cdots i_{j-1}}  + 
\bm p^{(j)}_{i_1 \cdots i_{j-1}}(t) \ , \\
&\bm \Lambda_{i_1 \cdots i_{j-1}}(t)\frac{\partial 
\bm \varphi_{i_1 \cdots i_{j-1}}^{(j+1,\ldots, d)}}{\partial t} = 
- \bm S_{i_1 \cdots i_{j-1}}(t)
\bm \varphi_{i_1 \cdots i_{j-1}}^{(j+1,\ldots, d)}  + 
\bm h^{(j+1,\ldots,d)}_{i_1 \cdots i_{j-1}}(t) \ ,
\end{aligned}
\end{equation}
where 
\begin{equation}
\bm \varphi_{i_1\cdots i_{j-1}}^{(j)} = \left[
\begin{array}{c}
 \varphi_{i_1\cdots i_{j-1}1}^{(j)}\\
\vdots\\
 \varphi_{i_1\cdots i_{j-1}r_j}^{(j)}
\end{array}
\right] \ ,\qquad
 \bm \varphi_{i_1\cdots i_{j-1}}^{(j+1,\ldots d)} = \left[
\begin{array}{c}
 \varphi_{i_1\cdots i_{j-1}1}^{(j+1,\ldots d)}\\
\vdots\\
 \varphi_{i_1\cdots i_{j-1}r_j}^{(j+1,\ldots d)}
\end{array}
\right] \ ,
\end{equation}
and
\begin{equation}
\bm h^{(j+1,\ldots,d)}_{i_1 \cdots i_{j-1}} = \langle N_{i_1 \cdots i_{j-1}}^{(j,\ldots,d)}, 
\bm \varphi^{(j)}_{i_1 \cdots i_{j-1}} \rangle \ , \qquad 
\bm p^{(j)}_{i_1 \cdots i_{j-1}} = \langle N_{i_1 \cdots i_{j-1}}^{(j,\ldots,d)}, 
\bm \varphi^{(j+1,\ldots, d)}_{i_1 \cdots i_{j-1}} \rangle \ , 
\end{equation}  
\begin{equation}
\bm \Lambda_{i_1 \cdots i_{j-1}}(t) = \langle \bm \varphi_{i_1 \cdots i_{j-1}}^{(j)} , \bm \varphi_{i_1 \cdots i_{j-1}}^{(j)} \rangle \ , \qquad
\bm M_{i_1 \cdots i_{j-1}}(t) = \langle \bm \varphi_{i_1 \cdots i_{j-1}}^{(j+1,\ldots, d)}, \frac{\partial \bm \varphi_{i_1 \cdots i_{j-1}}^{(j+1,\ldots, d)}}{\partial t} \rangle \ ,
\end{equation}
\begin{equation}
\bm S_{i_1 \cdots i_{j-1}}(t) = \langle \bm \varphi_{i_1 \cdots i_{j-1}}^{(j)}, \frac{\partial \bm \varphi_{i_1 \cdots i_{j-1}}^{(j)}}{\partial t} \rangle \ ,
\end{equation}
for $j = 2, \ldots, d-2$. The matrices $\bm S_{i_1 \cdots i_j}$ are skew-symmetric, as it can be verified by differentiating \eqref{bo_cond_1} with respect to time.

\subsection{Equivalence between DO-TT and BO-TT expansions}
\label{sec:equivalenceDO-BO-TT}

The DO-TT and the BO-TT series expansions we 
discussed in Section \ref{sec:DO-TT} 
and Section \ref{sec:BO-TT} are equivalent 
in the sense that they approximate a time-dependent 
multivariate function using components from the same 
finite-dimensional function space. In this Section we prove such 
equivalence. The approach we take is to prove 
the equivalence for one
level of the TT binary tree, following similar steps taken 
in \cite{do/bo_equiv}, and then proceed inductively 
to show the equivalence throughout the whole TT binary 
tree. Clearly, switching from DO to BO at any level of the 
binary tree affects all expansions in the child nodes. 
Let $\bm \varphi^{(j)}(t) = [\varphi^{(j)}_1(t), \ldots, 
\varphi^{(j)}_{r_j}(t)]$ and $\bm \varphi^{(j+1, \ldots, d)}(t) 
= [\varphi^{(j+1,\ldots,d)}_1(t), \ldots,
\varphi^{(j+1,\ldots,d)}_{r_j}(t)]$ (row vectors) 
be the BO modes at the $j$th-level of the TT binary 
tree. Consider the transformation
\begin{equation}
\label{transformation}
\bm \psi^{(j)}(t) = \bm\varphi^{(j)}(t) \bm 
\Lambda_j^{-\frac{1}{2}}(t) \bm P_j(t), \qquad 
\bm \psi^{(j+1,\ldots, d)}(t) = \bm \varphi^{(j+1, \ldots, d)}(t) 
\bm \Lambda_j^{\frac{1}{2}}(t) \bm P_j(t),
\end{equation}
where $[\bm \Lambda_j(t)]_{ik} = \langle \varphi^{(j)}_i, 
\varphi^{(j)}_k\rangle$, and $\bm P_j(t)$ 
satisfies the matrix differential equation\footnote{$\bm P_j(t)$
is a time-dependent $r_j\times r_j$ matrix with real coefficients.}
\begin{equation}
\label{matrix_differential_equation}
\begin{cases}
\displaystyle \frac{d \bm P_j}{d t} = -\bm \Lambda_j^{-\frac{1}{2}} 
\bm \Sigma_j \bm \Lambda_j^{-\frac{1}{2}}\bm P_j \vse\\
 \bm P_j(0) = \bm I
\end{cases}
\end{equation}
with 
\begin{equation}
[\bm \Sigma_j(t)]_{ik} = 
\begin{cases} 
[\bm S_j(t)]_{ik} & \quad i \neq j  \vse\\
0 & \quad i = j
\end{cases} \ , \qquad 
[\bm S_j(t)]_{ik} = \langle \varphi_i^{(j)}, \frac{\partial \varphi^{(j)}_k}{\partial t} \rangle.
\label{skewsym}
\end{equation}
\begin{lemma}
\label{lemma:orthogonal}
The matrix $\bm P_j(t)$ defined by the initial value problem 
\eqref{matrix_differential_equation} is orthogonal 
for all $t \geq 0$.
\end{lemma}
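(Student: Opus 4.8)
The plan is to show that the coefficient matrix of the linear system \eqref{matrix_differential_equation} is skew-symmetric, and then invoke the standard fact that a fundamental solution of a linear ODE with skew-symmetric coefficient matrix preserves the initial Gram matrix; since $\bm P_j(0)=\bm I$, this Gram matrix is the identity for all $t$, which is exactly orthogonality.

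First I would record that $\bm \Sigma_j(t)$ is skew-symmetric. Differentiating the bi-orthogonality condition \eqref{bo_cond_1} with respect to $t$ gives
\begin{equation}
[\bm S_j(t)]_{ik} + [\bm S_j(t)]_{ki} = \dot{\lambda}_k(t)\,\delta_{ik},
\end{equation}
so the off-diagonal entries of $\bm S_j$ satisfy $[\bm S_j]_{ik} = -[\bm S_j]_{ki}$ for $i\neq k$; since $\bm \Sigma_j$ is obtained from $\bm S_j$ by setting the diagonal entries to zero, $\bm \Sigma_j^T = -\bm \Sigma_j$. Next, $\bm \Lambda_j(t)$ is the Gram matrix of $\varphi^{(j)}_1,\ldots,\varphi^{(j)}_{r_j}$; by \eqref{bo_cond_1} it is in fact diagonal with strictly positive entries $\lambda_1(t),\ldots,\lambda_{r_j}(t)$, so $\bm \Lambda_j^{-1/2}(t)$ is well-defined and symmetric (this is where the constant-rank hypothesis enters, keeping the $\lambda_k(t)$ bounded away from zero along the whole trajectory, so $\bm \Lambda_j(t)$ remains invertible). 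Writing $\bm A_j(t) := -\bm \Lambda_j^{-1/2}\bm \Sigma_j\bm \Lambda_j^{-1/2}$ for the coefficient matrix in \eqref{matrix_differential_equation}, we then get
\begin{equation}
\bm A_j^T = -\bm \Lambda_j^{-1/2}\bm \Sigma_j^{T}\bm \Lambda_j^{-1/2} = \bm \Lambda_j^{-1/2}\bm \Sigma_j\bm \Lambda_j^{-1/2} = -\bm A_j,
\end{equation}
i.e.\ $\bm A_j(t)$ is skew-symmetric for every $t\geq 0$ (note this holds even though $\bm \Lambda_j^{-1/2}$ and $\bm \Sigma_j$ need not commute).

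Then I would consider the symmetric matrix $\bm G(t) := \bm P_j(t)^T\bm P_j(t)$, which satisfies $\bm G(0)=\bm I$. Differentiating and using \eqref{matrix_differential_equation},
\begin{equation}
\frac{d\bm G}{dt} = \left(\frac{d\bm P_j}{dt}\right)^{\!T}\!\bm P_j + \bm P_j^T\frac{d\bm P_j}{dt} = \bm P_j^T\bigl(\bm A_j^T + \bm A_j\bigr)\bm P_j = \bm 0.
\end{equation}
Hence $\bm G(t)\equiv\bm G(0)=\bm I$, so $\bm P_j(t)^T\bm P_j(t)=\bm I$; since $\bm P_j(t)$ is square this also gives $\bm P_j(t)\bm P_j(t)^T=\bm I$, and therefore $\bm P_j(t)$ is orthogonal for all $t\geq 0$. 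Existence of $\bm P_j$ on $[0,T]$ follows from continuity of $t\mapsto \bm A_j(t)$ and linearity of the ODE.

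The only delicate point is the first step: confirming that $\bm \Sigma_j$ is skew-symmetric and that $\bm \Lambda_j^{-1/2}$ is symmetric and stays well-defined. Both rest on the bi-orthogonality conditions \eqref{bo_cond_1}–\eqref{bo_cond_2} being maintained along the flow and on the rank $r_j$ being constant; once these are granted, the orthogonality of $\bm P_j$ is an immediate consequence of skew-symmetry, with no further computation required.
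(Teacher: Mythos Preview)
Your proof is correct and follows essentially the same route as the paper: you show that the coefficient matrix $\bm A_j(t)=-\bm\Lambda_j^{-1/2}\bm\Sigma_j\bm\Lambda_j^{-1/2}$ is skew-symmetric, then differentiate $\bm P_j^T\bm P_j$ and use the initial condition $\bm P_j(0)=\bm I$. If anything, you are more careful than the paper in justifying why $\bm\Sigma_j$ is skew-symmetric (the paper simply asserts it) and in noting that $\bm\Lambda_j^{-1/2}$ is symmetric and well-defined under the constant-rank assumption.
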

\begin{proof}
The matrix $\bm F_j(t) = - \bm \Lambda_j^{-\frac{1}{2}}(t) 
\bm \Sigma_j(t) \bm \Lambda_j^{-\frac{1}{2}}(t)$ is skew-symmetric 
for all $t\geq 0$, since $\bm \Sigma_j$ is skew-symmetric (see Eq. \eqref{skewsym}).  
Therefore, we have 
\begin{align*}
\frac{d }{d t} (\bm P_j^T \bm P_j) &= 
\frac{d \bm P_j^T}{d t} \bm P_j + \bm P_j^T \frac{d \bm P_j}{d t} \\
&= (\bm F_j\bm P_j)^T \bm P_j + \bm P_j^T \bm F_j\bm P_j \\
&= \bm P_j^T (\bm F_j^T + \bm F_j )\bm P_j \\
&= \bm 0. 
\end{align*}
This implies that $\bm P_j(t)^T \bm P_j(t)=\bm P_j(0)^T \bm P_j(0)=\bm I$, i.e., 
$\bm P_j(t)$ is orthogonal for all $t\geq 0$. We notice that the same 
conclusion holds if we replace $\bm P_j(0)$ with any orthogonal matrix, 
not just the identity. 

\end{proof}

\noindent
Hereafter we prove that the linear transformation 
\eqref{transformation}-\eqref{matrix_differential_equation} 
defines the mapping between the DO and BO modes at the $j^{\text{th}}$-level 
of the TT binary tree. 

\begin{theorem}
\label{thm:equiv_thm}
The linear transformation defined by 
\eqref{transformation}-\eqref{matrix_differential_equation} 
is invertible and it defines a new set of modes 
$\{\psi^{(j)}_1(t),\ldots,\psi^{(j)}_{r_j}(t)\}$ such that,
for all $t\geq 0$,  
\begin{itemize}
\item[(i)] $\{\psi^{(j)}_1,\ldots,\psi^{(j)}_{r_j}\}$ is orthonormal,  
\item[(ii)] $\displaystyle \sum_{k=1}^{r_j}\psi^{(j)}_k(t)
\psi^{(j+1,\ldots,d)}_k(t) = \sum_{k=1}^{r_j} \varphi^{(j)}_k(t)\varphi^{(j+1,\ldots,d)}_k(t)$,
\item[(iii)]  $\{\psi^{(j)}_1,\ldots,\psi^{(j)}_{r_j}\}$ satisfies 
the DO condition $\displaystyle \langle \frac{\partial \psi^{(j)}_i}{\partial t}  \psi^{(j)}_k \rangle = 0\qquad \forall i,k=1,\ldots, r_j$.
\end{itemize}
\end{theorem}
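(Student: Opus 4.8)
The plan is to verify the three properties in order, relying on Lemma \ref{lemma:orthogonal} for the orthogonality of $\bm P_j(t)$ and on the bi-orthogonality conditions \eqref{bo_cond_1}--\eqref{bo_cond_2} that the BO modes satisfy. First, invertibility of the transformation \eqref{transformation} is immediate: $\bm\Lambda_j(t)$ is the Gram matrix of the BO modes $\varphi^{(j)}_i$, hence symmetric positive definite (its diagonal entries are the eigenvalues $\lambda_k(t)>0$ by \eqref{bo_cond_1}), so $\bm\Lambda_j^{\pm 1/2}$ are well defined and invertible, and $\bm P_j(t)$ is orthogonal by Lemma \ref{lemma:orthogonal}, hence invertible. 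Therefore \eqref{transformation} is a composition of invertible linear maps.

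For (i), I would compute the Gram matrix of the new modes directly: using that the inner product acts only on the spatial variables while $\bm\Lambda_j^{-1/2}$ and $\bm P_j$ are deterministic matrices, $\langle\psi^{(j)}_i,\psi^{(j)}_k\rangle = \bigl[\bm P_j^T\bm\Lambda_j^{-1/2}\langle\bm\varphi^{(j)},(\bm\varphi^{(j)})^T\rangle\bm\Lambda_j^{-1/2}\bm P_j\bigr]_{ik} = \bigl[\bm P_j^T\bm\Lambda_j^{-1/2}\bm\Lambda_j\bm\Lambda_j^{-1/2}\bm P_j\bigr]_{ik} = \bigl[\bm P_j^T\bm P_j\bigr]_{ik} = \delta_{ik}$, where the last step uses Lemma \ref{lemma:orthogonal}. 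For (ii), I would write $\sum_k\psi^{(j)}_k\psi^{(j+1,\ldots,d)}_k = \bm\psi^{(j)}(\bm\psi^{(j+1,\ldots,d)})^T$ and substitute \eqref{transformation}: the product telescopes as $\bm\varphi^{(j)}\bm\Lambda_j^{-1/2}\bm P_j\bm P_j^T\bm\Lambda_j^{1/2}(\bm\varphi^{(j+1,\ldots,d)})^T = \bm\varphi^{(j)}(\bm\varphi^{(j+1,\ldots,d)})^T = \sum_k\varphi^{(j)}_k\varphi^{(j+1,\ldots,d)}_k$, again using $\bm P_j\bm P_j^T = \bm I$ and that $\bm\Lambda_j^{-1/2}$ commutes with itself.

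Property (iii) is the substantive step and I expect it to be the main obstacle. Differentiating $\bm\psi^{(j)} = \bm\varphi^{(j)}\bm\Lambda_j^{-1/2}\bm P_j$ in time produces three terms, one of which involves $d\bm\Lambda_j^{-1/2}/dt$; handling this requires care since $\bm\Lambda_j$ and $d\bm\Lambda_j/dt$ need not commute, so one cannot naively write $d\bm\Lambda_j^{-1/2}/dt = -\tfrac12\bm\Lambda_j^{-3/2}d\bm\Lambda_j/dt$. The strategy is to compute the Gram-type matrix $\langle\bm\psi^{(j)},(\partial_t\bm\psi^{(j)})^T\rangle$ and show it vanishes. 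Writing $\partial_t\bm\psi^{(j)} = (\partial_t\bm\varphi^{(j)})\bm\Lambda_j^{-1/2}\bm P_j + \bm\varphi^{(j)}(\partial_t\bm\Lambda_j^{-1/2})\bm P_j + \bm\varphi^{(j)}\bm\Lambda_j^{-1/2}\partial_t\bm P_j$, and using \eqref{matrix_differential_equation} for $\partial_t\bm P_j$, the first term contributes $\bm P_j^T\bm\Lambda_j^{-1/2}\bm S_j\bm\Lambda_j^{-1/2}\bm P_j$ (with $\bm S_j$ as in \eqref{skewsym}), the third contributes $-\bm P_j^T\bm\Lambda_j^{-1/2}\bm\Sigma_j\bm\Lambda_j^{-1/2}\bm P_j$, and the middle term combines with the symmetric part. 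The key identities needed are: differentiating $\bm\Lambda_j = \langle\bm\varphi^{(j)},(\bm\varphi^{(j)})^T\rangle$ gives $\partial_t\bm\Lambda_j = \bm S_j + \bm S_j^T$ (so the symmetric part of $\bm S_j$ equals $\tfrac12\partial_t\bm\Lambda_j$), and the definition \eqref{skewsym} of $\bm\Sigma_j$ (the off-diagonal part of $\bm S_j$) must be matched against the off-diagonal part of $\bm\Lambda_j^{-1/2}\bm S_j\bm\Lambda_j^{-1/2}$ after conjugation by $\bm P_j$. I would assemble these pieces to show that the resulting matrix is skew-symmetric and has zero diagonal, hence is zero — which is exactly the DO condition $\langle\partial_t\psi^{(j)}_i\,\psi^{(j)}_k\rangle = 0$. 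The delicate bookkeeping is reconciling the matrix $\bm\Sigma_j$, which is defined as a literal entrywise truncation of $\bm S_j$, with the conjugated quantities that arise naturally from the chain rule; this is where I anticipate the calculation requiring the most attention, and possibly an auxiliary observation that $\bm\Lambda_j$ can be taken diagonal without loss of generality (since the BO modes diagonalize the Gram matrix, $\bm\Lambda_j = \mathrm{diag}(\lambda_1,\ldots,\lambda_{r_j})$), which would make $\bm\Lambda_j^{-1/2}$ commute with everything and collapse the argument to the single-level case treated in \cite{do/bo_equiv}.
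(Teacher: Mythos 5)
Your plan follows the paper's proof almost step for step: the same transformation, the same use of Lemma \ref{lemma:orthogonal} for invertibility and for (i), the same telescoping for (ii), and for (iii) the same reduction to the identity $\tfrac{1}{2}(\bm S_j-\bm S_j^T)-\bm \Sigma_j=\bm 0$ (you differentiate the forward map $\bm\psi^{(j)}=\bm\varphi^{(j)}\bm\Lambda_j^{-1/2}\bm P_j$ where the paper differentiates the inverse map $\bm\varphi^{(j)}=\bm\psi^{(j)}\bm P_j^T\bm\Lambda_j^{1/2}$, but the bookkeeping is identical). Your explicit remark that $\bm\Lambda_j$ is diagonal by the bi-orthogonality condition \eqref{bo_cond_1} is exactly the observation needed both to differentiate $\bm\Lambda_j^{-1/2}$ and to identify the entrywise truncation $\bm\Sigma_j$ with the skew-symmetric part of $\bm S_j$; the paper uses this implicitly.

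The one step that does not hold as written is your closing inference for (iii): ``the resulting matrix is skew-symmetric and has zero diagonal, hence is zero.'' Every skew-symmetric matrix has zero diagonal, and a nonzero one exists in every dimension $r_j\geq 2$; moreover the DO Gram matrix $\bm D=\langle[\bm\psi^{(j)}]^T\partial_t\bm\psi^{(j)}\rangle$ is \emph{automatically} skew-symmetric once (i) is established (differentiate $\langle\psi^{(j)}_i,\psi^{(j)}_k\rangle=\delta_{ik}$), so skew-symmetry carries no information toward $\bm D=\bm 0$. The correct conclusion comes from the pieces you already assembled: your three terms sum to
\begin{equation*}
\bm D=\bm P_j^T\bm\Lambda_j^{-1/2}\Bigl[\tfrac{1}{2}\bigl(\bm S_j-\bm S_j^T\bigr)-\bm\Sigma_j\Bigr]\bm\Lambda_j^{-1/2}\bm P_j,
\end{equation*}
and the bracket vanishes \emph{entrywise}: its diagonal is zero because both $\bm S_j-\bm S_j^T$ and $\bm\Sigma_j$ have zero diagonal, while for $i\neq k$ differentiating \eqref{bo_cond_1} gives $[\bm S_j]_{ik}+[\bm S_j]_{ki}=\partial_t[\bm\Lambda_j]_{ik}=0$, so $\tfrac{1}{2}([\bm S_j]_{ik}-[\bm S_j]_{ki})=[\bm S_j]_{ik}=[\bm\Sigma_j]_{ik}$. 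Replace the ``hence is zero'' sentence with this entrywise check and the proof is complete and matches the paper's.
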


\begin{proof}
The transformations defined in 
\eqref{transformation} are invertible by 
Lemma \ref{lemma:orthogonal}.  To prove $(i)$, we notice that
\begin{align*}
\bm \Lambda_j &= \langle [\bm \varphi^{(j)}]^T \bm \varphi^{(j)} \rangle \\
&= \langle [\bm \psi^{(j)} \bm P_j^T \bm\Lambda_j^{\frac{1}{2}}]^T \bm\psi^{(j)} \bm P_j^T \bm \Lambda_j^{\frac{1}{2}} \rangle \\
&= \langle \bm \Lambda_j^{\frac{1}{2}} \bm P_j [\bm \psi^{(j)}]^T \bm \psi^{(j)} \bm P_j^T \bm\Lambda_j^{\frac{1}{2}} \rangle \\
&=   \bm \Lambda_j^{\frac{1}{2}} \bm P_j \langle [\bm \psi^{(j)}]^T \bm \psi^{(j)} \rangle \bm P_j^T  \bm \Lambda_j^{\frac{1}{2}}.
\end{align*}
Multiply by $\bm P_j^T \bm \Lambda_j^{-\frac{1}{2}}$ and 
$\bm \Lambda_j^{-\frac{1}{2}} \bm P_j$ to the left and the 
right hand sides, respectively. This yields, 
\begin{align*}
\langle [\bm \psi^{(j)}]^T \bm \psi^{(j)} \rangle &= \bm P_j^T 
\bm \Lambda_j^{-\frac{1}{2}} \bm \Lambda_j 
\bm \Lambda_j^{-\frac{1}{2}} \bm P_j \\
&= \bm I
\end{align*}
which proves $(i)$.  To prove $(ii)$, it is sufficient to apply the
transformation \eqref{transformation}. In fact,  
\begin{align*}
\sum_{k=1}^{r_j}  \varphi^{(j)}_k \varphi^{(j+1,\ldots,d)}_k
&= \bm \varphi^{(j)}[\bm\varphi^{(j+1,\ldots,d)}]^T \\
&= \bm \psi^{(j)} \bm P_j^T \bm \Lambda_j^{\frac{1}{2}} 
\bm \Lambda_j^{-\frac{1}{2}} \bm P_j \bm \psi^{(j+1,\ldots,d)} \\
&= \bm \psi^{(j)} [\bm \psi^{(j+1,\ldots,d)}]^T\\
&=\sum_{k=1}^{r_j}  \psi^{(j)}_k \psi^{(j+1,\ldots,d)}_k.
\end{align*}
To prove $(iii)$,  we first differentiate $\bm \varphi^{(j)} = 
\bm \psi^{(j)} \bm P_j^T \bm \Lambda_j^{\frac{1}{2}}$ with 
respect to time to obtain\footnote{This equality follows 
from \eqref{matrix_differential_equation} and the identity 
$$\bm S_j = \bm \Sigma_j + \frac{1}{2} \frac{\partial \bm \Lambda_j}{\partial t}.$$}
\begin{align*}
\frac{\partial \bm \varphi^{(j)}}{\partial t} = 
\frac{\partial \bm \psi^{(j)}}{\partial t} \bm P_j^T 
\bm \Lambda_j^{\frac{1}{2}} + \bm \psi^{(j)} \bm P_j^T 
\bm \Lambda_j^{-\frac{1}{2}}[\bm S_j - 2 \bm \Sigma_j]^T.
\end{align*}
At this point, we have 
\begin{align*}
S &= \langle [\bm \varphi^{(j)}]^T \frac{\partial \bm \varphi^{(j)}}{\partial t} \rangle\\
&= \langle \bm \Lambda_j^{\frac{1}{2}} \bm P_j [\bm \psi^{(j)}]^T 
\left(\frac{\partial \bm \psi^{(j)}}{\partial t} \bm P_j^T 
\bm \Lambda_j^{\frac{1}{2}} + \bm \psi^{(j)} \bm P_j^T 
\bm \Lambda_j^{-\frac{1}{2}}[\bm S_j - 2 \bm \Sigma_j]^T\right) \rangle\\
&= \bm \Lambda_j^{\frac{1}{2}} \bm P_j \langle [\bm \psi^{(j)}]^T 
\frac{\partial \bm \psi^{(j)}}{\partial t} \rangle \bm P_j^T \bm 
\Lambda^{\frac{1}{2}} + [\bm S_j - 2 \bm \Sigma_j]^T,
\end{align*}
i.e., 
\begin{align*}
\frac{1}{2} \bm \Lambda_j^{\frac{1}{2}} \bm P_j \langle [\bm \psi^{(j)}]^T 
\frac{\partial \bm \psi^{(j)}}{\partial t} \rangle \bm P_j^T 
\bm \Lambda_j^{\frac{1}{2}} &= \frac{ \bm S_j - \bm S_j^T}{2} - \bm \Sigma_j  \\
&= \bm 0
\end{align*}
because $\bm \Sigma_j$ is the skew-symmetric part of $\bm S_j$.  
We know that $\bm P_j$ and $\bm \Lambda_j$ 
are nonzero, and therefore it must be the case that
\begin{equation}
\langle \frac{\partial [\bm \psi^{(j)}]^T}{\partial t}\bm \psi^{(j)} \rangle = \bm 0.
\end{equation}
i.e., the modes $\psi^{(j)}_k$ are dynamically orthogonal. 

\end{proof}

With the equivalence between the BO and DO modes established at the 
$j^{\text{th}}$ level of the TT binary tree, we can discuss the effects 
of this transformation on the remaining parts of the tree.  
An immediate consequence is that every mode belonging to levels 
above the $j^{\text{th}}$ one remains unchanged.  However, all modes below 
the $j^{\text{th}}$-level need to be recomputed.  Moreover, in the case 
of the BO-TT representation,  one has to make sure 
that there are no eigenvalue crossings in any of 
the bi-orthogonal decompositions \cite{do/bo_equiv}.

\section{Dynamically orthogonal tensor methods for high-dimensional nonlinear PDEs}
\label{sec:PDEs}
In this Section we develop dynamically orthogonal tensor 
methods to compute to solution of initial/boundary value 
problems involving nonlinear PDEs of the form 
\begin{equation}
 \label{pde_intro}
 \begin{cases}
\displaystyle\frac{\partial {u(\bm x,t)}}{\partial t} = 
G(u) \qquad \bm x \in\Omega, \quad t\geq 0 \vse\\
{u}(\bm x,0) = u_0(\bm x)  \qquad \bm x \in\Omega \vse\\
Bu(\bm x,t)  = h(t,\bm x)  \qquad \bm x \in \partial \Omega 
\end{cases}
\end{equation}
where $G$ is a nonlinear operator, $B$ is a linear 
boundary operator, $\Omega$ is a bounded 
subset of $\mathbb{R}^d$ which can be represented 
as a Cartesian products of $d$ one-dimensional 
domains\footnote{The numerical results we present 
in this paper are for spatial domains $\Omega$ that can be represented 
as Cartesian products of one-dimensional domains.
Obviously, this is not always the case. Handling 
high-dimensional functions and PDEs in 
complex geometries is not a trivial task. 
For instance, a four-dimensional sphere may be discretized by a set of 
four-dimensional cubes, i.e., tesseracts. Each tesseract consists 
of eight cubical cells, 24 faces, 32 edges 
and 16 vertices. Connecting such tesseracts in 
a finite-element fashion is not straightforward. Similarly, 
mapping high-dimensional complex domains into 
separable domains (whenever possible) is 
not straightforward.}, 
while $u_0(\bm x)$ and $h(\bm x,t)$ are, respectively, the initial 
condition and the boundary condition. 
To compute the solution of \eqref{pde_intro} 
we substitute any of the tensor series expansion 
we discussed in Section \ref{sec:time_evolution}, e.g.,  
\eqref{series_tt_time}, into \eqref{pde_intro} and 
derive a coupled system of nonlinear evolution equations 
for the one-dimensional modes $\psi^{(j)}_{i_1\cdots i_j}(t)$. 
The derivation of such evolution equations is identical 
to the derivation given in Section \ref{sec:time_evolution}, 
with $\partial u/\partial t$ replaced by $G(u)$.  
Specifically, the DO-TT system 
\eqref{DOTT2}-\eqref{DOTT3} takes the form 
\begin{equation}
\begin{aligned}
\frac{\partial \Psi_{k_1 \cdots k_j}^{(j+1, \ldots, d)}}{\partial t} =&
N_{k_1 \cdots k_j}^{(j+1, \ldots, d)} \ , \\
   \sum_{i_j = 1}^{r_j} \frac{ \partial \psi^{(j)}_{k_1 \cdots k_{j-1} i_j}}{\partial t} &\langle \Psi_{k_1 \cdots k_{j-1} i_j}^{(j+1, \ldots, d)} , \Psi_{k_1 \cdots k_j}^{(j+1, \ldots, d)} \rangle\\
   & =\langle N_{k_1 \cdots k_{j-1}}, \Psi_{k_1 \cdots k_j}^{(j+1, \ldots, d)} \rangle - \sum_{i_j = 1}^{r_j} \psi_{k_1 \cdots k_{j-1} i_j} \langle  N^{(j,\ldots,d)}_{k_1 \cdots k_{j-1}} , \Psi_{k_1 \cdots k_j}^{(j+1, \ldots, d)} \psi_{k_1 \cdots k_{j-1} i_j}^{(j)} \rangle \ ,
\end{aligned}
\label{do_PDE}
\end{equation}
where 
\begin{equation}
N_{k_1}^{(2,\ldots,d)}=\langle G(u),\psi^{(1)}_{k_1}\rangle \ ,\  \ldots  \ ,
N_{k_1 \cdots k_j}^{(j+1, \ldots, d)}=\langle N_{k_1 \cdots k_{j-1}}^{(j, \ldots, d)}\psi^{(j)}_{k_1\cdots k_j}\rangle,\qquad j=2,3,\ldots .
\label{Niprod}
\end{equation}
If $G(u)$ is separable, i.e., if it can be written as 
\begin{equation}
\label{sep_op_0}
G= \sum_{i = 1}^{r_G} G_1^{(i)} \otimes \cdots \otimes G_d^{(i)} \ ,
\end{equation}
where $G_j^{(i)}$ ($i=1,\ldots, r_G$) are nonlinear operators 
acting only on functions of $x_j$, then after replacing $u$ with 
the series expansion \eqref{series_tt_time} all inner products in \eqref{do_PDE}-\eqref{Niprod} 
are essentially one-dimensional integrals. 
A simple example of a separable nonlinear 
operator is $G(u)=-u\cdot \nabla u+\nabla^2 u$, 
where $\nabla$ and $\nabla^2$ are, respectively,  
$d$-dimensional gradient and Laplace operators. Other 
examples of separable linear operators will be given in 
Section \ref{sec:numerics}.
Projecting \eqref{pde_intro} recursively onto the 
tensor-train modes, e.g., as in Section \ref{sec:DO-TT}, 
allows us to compute the solution on a tensor manifold 
with constant rank. This is achieved by solving 
a coupled system of one-dimensional nonlinear PDEs, e.g., 
the system \eqref{do_PDE}. However, the solution 
of \eqref{pde_intro} may not have an accurate representation 
on a tensor manifold with constant rank for all times. Hence, 
we may need to add or remove modes 
adaptively as time integration proceeds. 

\paragraph{Remark}
Classical numerical tensor methods 
to solve high-dimensional PDEs with explicit time stepping 
schemes require rank-reduction to project the solution back 
into a tensor manifold \cite{h_tucker_geom} 
with specified rank (see \cite{venturi2018numerical} \S 5.5). 
This can be achieved, e.g., 
by a sequence of singular value decompositions 
\cite{hsvd_tensors_grasedyk,Kressner2014}, or by 
optimization \cite{Kolda,DaSilva2015,parr_tensor,Silva,Rohwedder,Karlsson}.  
Rank reduction can be computationally intensive, 
especially if performed at each time step. 
Numerical tensor methods with implicit time stepping suffer 
from similar issues. In particular, the nonlinear system that 
yields the solution at the next time step needs to be 
solved on a tensor manifold with constant rank, e.g., 
by using Riemannian optimization algorithms 
\cite{DaSilva2015,h_tucker_geom,Etter}.
The dynamically orthogonal tensor method we propose 
operates in a different way. In particular, 
the hard-to-compute nonlinear projection 
\cite{Lubich2018,hierar} that maps the solution 
of high-dimensional PDEs onto a tensor manifold with 
constant rank \cite{h_tucker_geom} here is 
represented explicitly by the hierarchical DO/BO 
propagator, i.e., by a system of coupled one-dimensional 
nonlinear PDEs. 
In other words, there is no need to perform tensor 
rank reduction \cite{hsvd_tensors_grasedyk,Grasedyck2018,Kressner2014}, 
rank-constrained temporal integration \cite{Lubich2018,hierar}, or 
Riemannian optimization \cite{DaSilva2015}, 
when solving high-dimensional PDEs with the dynamically 
orthogonal tensor method we propose.

\subsection{Adaptive addition and removal of modes}
\label{sec:add_modes}

The solution to the PDE \eqref{pde_intro} 
may not be accurately approximated by elements of a 
tensor manifold with fixed rank at all times. 
Therefore, it may be desirable to increase or decrease 
the tensor rank of the solution as time integration 
proceeds. Removing modes is straightforward since 
one can simply truncate the BO-TT or DO-TT decomposition
we discussed in Section \ref{sec:BO-TT} and 
Section \ref{sec:DO-TT}, respectively, at
any level of the binary tree to a decomposition of 
smaller rank. This obviously affects all child 
nodes and corresponding modes in 
the binary tree. 

Adding modes is a more subtle task.  If the hierarchical 
rank of the tensor representing the 
solution is too large at any level of the binary tree, 
then the matrices at the left hand side of DO-TT and BO-TT 
propagators (Eqs. \eqref{do_ev_eq} and \eqref{bo_evolution}), 
i.e., $\bm C_{i_1\cdots i_j}(t)$ and 
$\bm \Lambda_{i_1 \cdots i_j}(t)$, may 
become nearly singular.  
On the other hand, if the hierarchical rank is too small,  the 
tensor approximation may not be accurate. The energy 
by which each mode contributes to the series expansion of 
the solution can be tracked by the 
eigenvalues of $\bm \Lambda_{i_1 \cdots i_j}$ in the BO-TT 
setting,  and by the eigenvalues of $\bm C_{i_1 \cdots i_j}(t)$ 
in the DO-TT setting. Once a new mode is added with 
zero energy, one or more of the matrices $\bm C_{i_1 \cdots i_j}$ 
or $\bm \Lambda_{i_1 \cdots i_j}$ become singular. 
To overcome this issue, and be able to continue integrating 
the system Babaee {\em et al.} \cite{robust_do/bo} 
proposed an energy threshold criterion 
combined with matrix pseudo-inverse. This technique is effective, 
although it does slightly pollute the solution at the time 
of adding the mode. The process of activating a new mode from 
a state with zero energy, can be rigorously addressed by using the theory
of fast-slow systems \cite{fast_slow}. In fact, both DO-TT and BO-TT 
propagators (Eqs. \eqref{do_ev_eq} and \eqref{bo_evolution})
become very stiff systems when adding a new mode, 
which requires appropriate temporal-integrators. 
Hereafter we propose two new algorithms to add modes in 
the DO-TT/BO-TT propagators:   

\paragraph{\bf Algorithm 1}
In this algorithm, we add a pair of modes 
(left and right) satisfying the orthogonality 
conditions of DO or BO (whichever condition is currently being enforced) 
with zero energy.  Evolve the modes which do not require inverting 
the now singular matrix (right hand modes in the DO setting and 
left hand modes in the BO setting) for a short amount of time 
while keeping the other modes constant.  At this point, the energy 
of the new mode comes up to some value $\lambda_{\epsilon}$ 
which is significantly smaller than the energy of the more developed 
modes.  Now when the matrix is inverted to continue mode propagation 
of the modes which were fixed, we obtain a slow-fast 
system \cite{fast_slow}.  The evolution of these modes 
remains slow-fast until the energy amongst all modes 
become more balanced.

\paragraph{\bf Algorithm 2}
In this algorithm, we switch from DO-TT or BO-TT  propagators 
to an explicit time stepping scheme involving numerical tensors 
\cite{venturi2018numerical,khoromskij,tensor_survey} for a 
number of time steps. This naturally increases the rank of the 
solution tensor (see \cite{venturi2018numerical}, \S 5.5) based 
on the structure of the PDE. At this point, we perform an 
orthogonalization of such tensor
\cite{hsvd_tensors_grasedyk,Kressner2014}, 
and restart the DO-TT/BO-TT propagators using such 
new orthogonal set of expansion modes.
In other words, given a DO-TT (or BO-TT) representation of the solution 
of \eqref{pde_intro} at time $t$, we convert such representation
into a numerical tensor in any format
\cite{Kolda,Bachmayr,hsvd_tensors_grasedyk}. 
Then we perform a number of time steps of \eqref{pde_intro} 
with an explicit temporal integration scheme using the numerical 
tensor representation, without performing any 
rank reduction. This naturally sends the solution tensor 
into a tensor manifold with larger multivariate rank. 
At this point we orthogonalize the solution tensor to 
obtain a new set of bi-orthogonal modes, which can be 
further truncated using the thresholding 
technique explained in Section \ref{sec:thresholding},
and then restart the DO-TT (or BO-TT) propagator with the new 
initial condition.\vse\vse

\paragraph{Remark} Algorithm 1 relies on 
enlarging the finite dimensional function 
space which the approximate solution lives in.  
Algorithm 2 is different in that the new bi-orthogonal modes 
obtained from the numerical tensor need not lie in the same 
finite dimensional function space as the previous set of modes.  
In other words, we are re-representing the 
solution in a different finite dimensional function space (tensor 
manifold with larger multivariate rank).  
Moreover, we are not explicitly adding modes with low energy, but 
we are letting the PDE itself increase the solution rank using numerical 
tensor techniques. Algorithm 2 may or may not result in 
a slow-fast system. Numerical examples demonstrating the 
effectiveness of Algorithm 2 will be provided in 
Section \ref{sec:hyperbolic}.

\section{Numerical examples}
\label{sec:numerics}
In this Section,  we demonstrate the accuracy and 
computational efficiency of the dynamically orthogonal 
tensor method we propose in this paper to compute  
the solution of high-dimensional PDEs. 
Specifically, we study the DO-TT 
representation, and apply it to hyperbolic and parabolic 
PDEs in periodic hypercubes with dimension ranging 
from 2 to 50. The reason for the choice of such PDEs 
is that they admit analytical solutions, which we will use
to rigorously assess the accuracy and convergence rate
of the proposed methods. We also demonstrate 
the adpative  algorithm we developed in 
Section \ref{sec:add_modes} to dynamically 
enrich the time-evolving basis at each level 
of the TT binary tree.

\subsection{Hyperbolic PDEs}
\label{sec:hyperbolic}
In this Section we study the DO-TT propagator of several 
linear hyperbolic PDEs in periodic domains. 
\subsubsection{Two-dimensional hyperbolic PDE}
Let us begin with the 
two-dimensional initial/boundary value problem
\begin{equation}
 \label{2dpde}
 \begin{cases}
\displaystyle\frac{\partial u(x_1,x_2,t)}{\partial t} = (\sin(x_1) +
 3\cos(x_2))\frac{\partial u(x_1,x_2,t)}{\partial x_1} +
 \cos(x_2)\frac{\partial u(x_1,x_2,t)}{\partial x_2} \vs\\
u(x_1,x_2,0) = \exp\left[\sin(x_1 + x_2)\right]
\end{cases}
\end{equation}
in the spatial domain $\Omega=[0,2\pi]^2$ with periodic 
boundary conditions. 
\begin{figure}
\centerline{\hspace{0.0cm}\footnotesize (a)  
\hspace{7.5cm} (b)}
\centerline{
\includegraphics[width=0.5\textwidth]{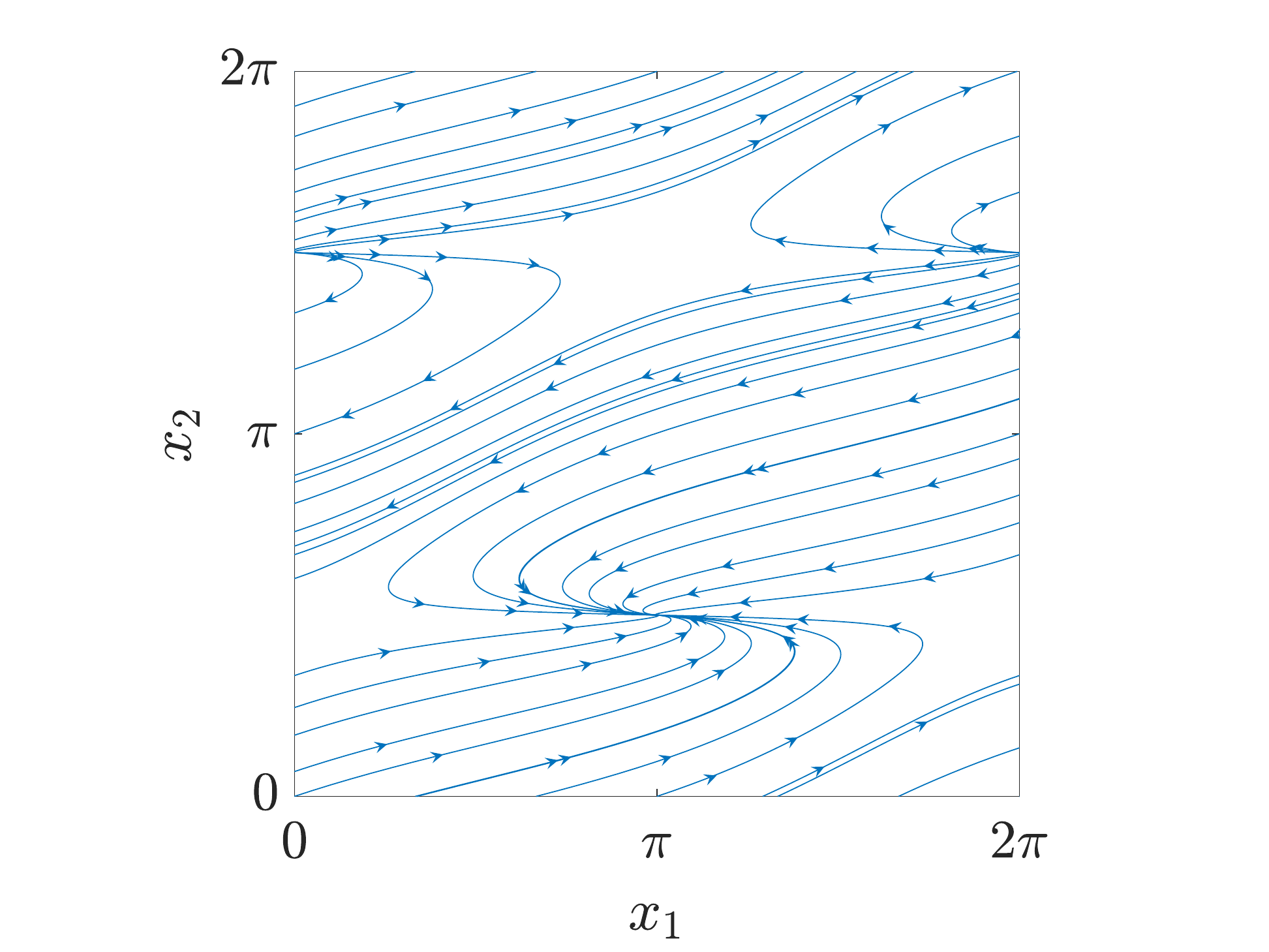}
\includegraphics[width=0.5\textwidth]{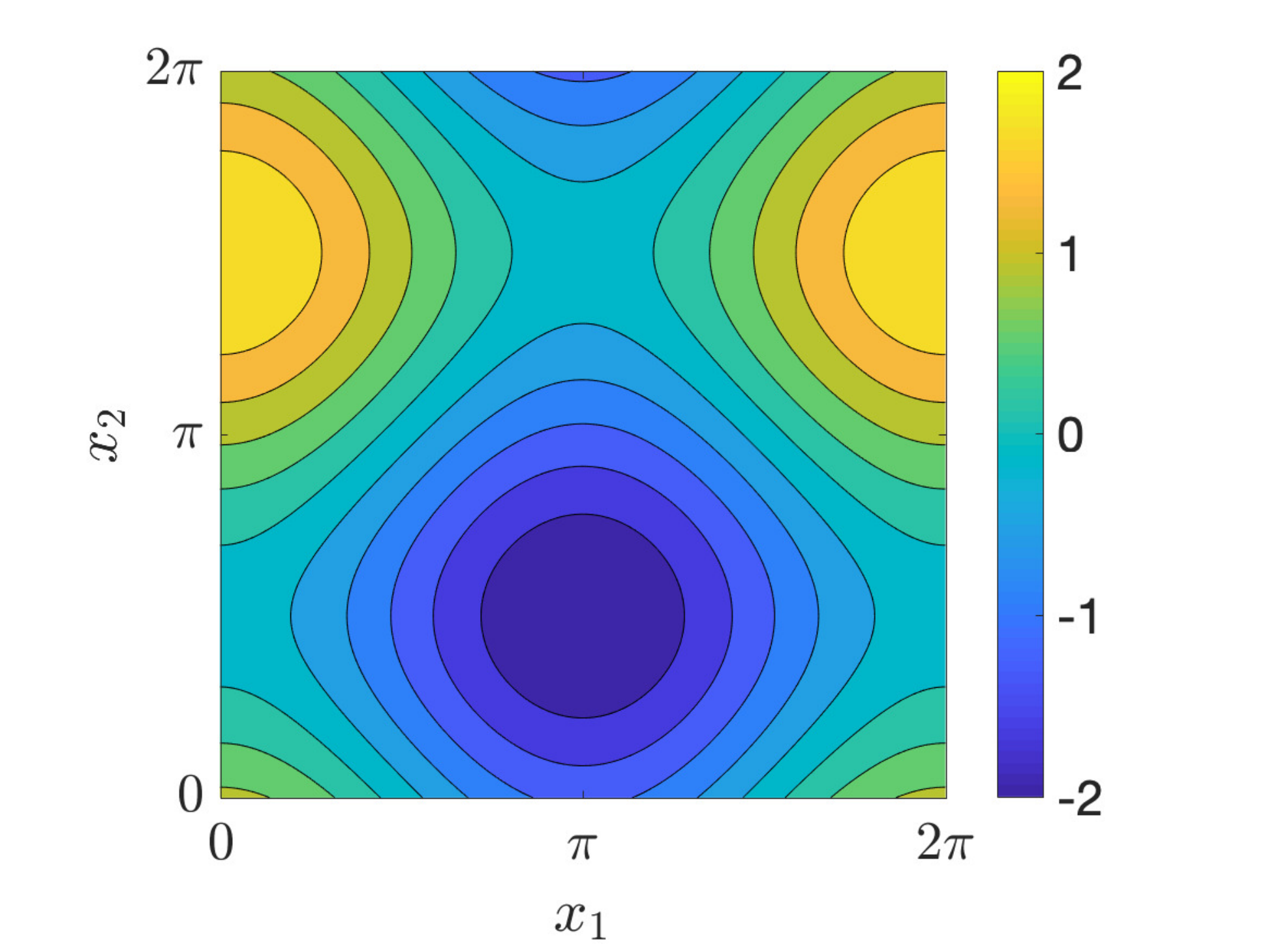}
}
\caption{Two-dimensional advection equation \eqref{2dpde}. 
(a) Characteristic curves associated associated with the PDE 
and (b) and divergence of the velocity field that advects the 
solution (b). Clearly, the flow depicted in (a) is not volume-preserving.}.
\label{fig:phase_plane}
\end{figure}
As is well known, the PDE \eqref{2dpde} can be reduced 
to the trivial ODE $du/dt=0$ along the flow generated 
by the dynamical system (see, e.g.,  \cite{Rhee}) 
\begin{equation}
\label{char_system}
\left\{
\begin{aligned}
\displaystyle\frac{d x_1}{d t} &= \sin(x_1) + 3 \cos(x_2), \vs\\
\displaystyle\frac{d x_2}{d t} &= \cos(x_2).
\end{aligned}\right.
\end{equation}
In Figure \ref{fig:phase_plane} we plot the phase 
portrait we obtained by solving \eqref{char_system} numerically
for different initial conditions $(x_{01},x_{02})\in\Omega$.
With the flow $\{x_1(t,x_{01},x_{02}),x_1(t,x_{01},x_{02})\}$ 
available, we can write the analytical solution to \eqref{2dpde} as
\begin{equation}
u(x_1,x_2,t)=\exp\left[\sin(x_{01}(x_1,x_2,t) + x_{02}(x_1,x_2,t))\right],
\label{2Dsol}
\end{equation}
where $\{x_{01}(x_1,x_2,t), x_{02}(x_1,x_2,t)\}$ denotes 
the inverse flow. The semi-analytical solution \eqref{2Dsol} 
is plotted in the first row of Figure \ref{fig:2d_solution} at 
different times. 
Next, we compute the solution to  \eqref{2dpde} using the 
DO-TT method we proposed in Section \ref{sec:PDEs}. To this 
end, we first perform a bi-orthogonal decomposition of the initial condition 
$u(x_1,x_2,0)$ in \eqref{2dpde} with threshold set to $\sigma = 10^{-13}$ 
(see Section \ref{sec:thresholding}). This yields $17$ modes 
$(\psi^{(1)}_{i_1} , \psi^{(2)}_{i_1})_{i_1 = 1}^{17}$, each of 
which is collocated on an evenly-spaced grid with $257$ nodes 
in $[0,2\pi]$. The DO-TT system associated with the PDE
 \eqref{2dpde} is 
\begin{equation}
\label{do_tt_2d_ex}
\begin{aligned}
\frac{\partial \psi^{(2)}_j}{\partial t} &= \sum_{i_1 = 1}^{r_1} 
\left\{ \langle \sin(x_1) \frac{\partial \psi^{(1)}_{i_1}}{\partial x_1} 
\psi^{(1)}_j \rangle \psi^{(2)}_{i_1} + 
3 \langle \frac{\partial \psi^{(1)}_{i_1}}{\partial x_1} \psi^{(1)}_j \rangle 
\psi^{(2)}_{i_1} \cos(x_2) \right\} + 
\cos(x_2) \frac{\partial \psi^{(2)}_j}{\partial x_2}, \\
\sum_{p=1}^{r_1} \langle \psi^{(2)}_j\psi^{(2)}_p\rangle 
\frac{\partial \psi^{(1)}_p}{\partial t} &= 
\sum_{i_1 = 1}^{r_1} \left\{
\sin(x_1) \frac{\partial \psi^{(1)}_{i_1}}{\partial x_1} 
\langle \psi^{(2)}_{i_1} \psi^{(2)}_j \rangle 
+ 3 \frac{\partial \psi^{(1)}_{i_1}}{\partial x_1} 
\langle \cos(x_2) \psi_{i_1}^{(2)} \psi_j^{(2)} \rangle +  \right.\\
& \psi_{i_1}^{(1)} \langle \cos(x_2) 
\frac{\partial \psi^{(2)}_{i_1}}{\partial x_2} \psi^{(2)}_j \rangle 
- \sum_{p=1}^{r_1} \psi^{(1)}_p \left[ \langle \sin(x_1) 
\frac{\partial \psi^{(1)}_{i_1}}{\partial x_1} \psi^{(1)}_p \rangle 
\langle \psi^{(2)}_{i_1} \psi^{(2)}_j \rangle \right.\\
&\left.\left.+ 3 \langle \frac{\partial \psi^{(1)}_{i_1}}{\partial x_1} 
\psi^{(1)}_p \rangle \langle \cos(x_2) \psi_{i_1}^{(2)} \psi_j^{(2)} \rangle
 + \langle \psi_{i_1}^{(1)} \psi^{(1)}_p \rangle \langle \cos(x_2)
\frac{\partial \psi^{(2)}_{i_1}}{\partial x_2} \psi^{(2)}_j \rangle 
\right] \right\}.
\end{aligned}
\end{equation}
In a Fourier spectral collocation setting \cite{spectral}, the 
partial derivatives $\partial/\partial x_1$,  $\partial/\partial x_2$
and the inner products can be easily represented by one-dimensional 
spectral differentiation matrices, and one-dimensional  Fourier quadrature 
rules \cite{spectral}. This allows us to transforms the PDE system 
\eqref{do_tt_2d_ex} into a system of nonlinear ODEs 
with $2\times r_1\times 256$ equations. It is worthwhile 
emphasizing that the DO-TT system \eqref{do_tt_2d_ex} is 
nonlinear even though the PDE \eqref{2dpde} is linear. 
The nonlinearity is related to the fact 
that we implicitly project the solution back on a tensor manifold 
with constant rank $r_1$ at each time. As we pointed out in 
Section \ref{sec:PDEs}, projecting on a tensor manifold with constant 
rank is a nonlinear operation.
The system \eqref{do_tt_2d_ex} is solved numerically by  
inverting the matrix $C_{jk}(t)=\langle \psi^{(2)}_j\psi^{(2)}_k\rangle$ 
(assuming it is non-singular) at each time step, and an 
explicit RK4 scheme with time step $\Delta t = 10^{-3}$. 
We ran one simulation with constant rank $r_1=17$, 
two adaptive simulations using the pseudo-inverse (PI) 
technique proposed in \cite{robust_do/bo} 
for addition of modes, and one adaptive simulation 
using our Algorithm 2 in Section \ref{sec:add_modes}. 
In Figure  \ref{fig:2d_solution} we 
compare the time snapshots of the constant rank  
DO-TT solution ($r_1=17$) with the semi-analytical 
solution \eqref{2Dsol}.   
\begin{figure}[t]
\hspace{0.5cm}
\centerline{\footnotesize\hspace{-0.5cm}$t = 0.0$ \hspace{4.5cm} $t = 0.5$  \hspace{4.5cm} $t = 1.0$ }

\hspace{0.5cm} 
\centerline{
	\rotatebox{90}{\hspace{0.7cm}\footnotesize Method of characteristics }
		\includegraphics[width=0.34\textwidth]{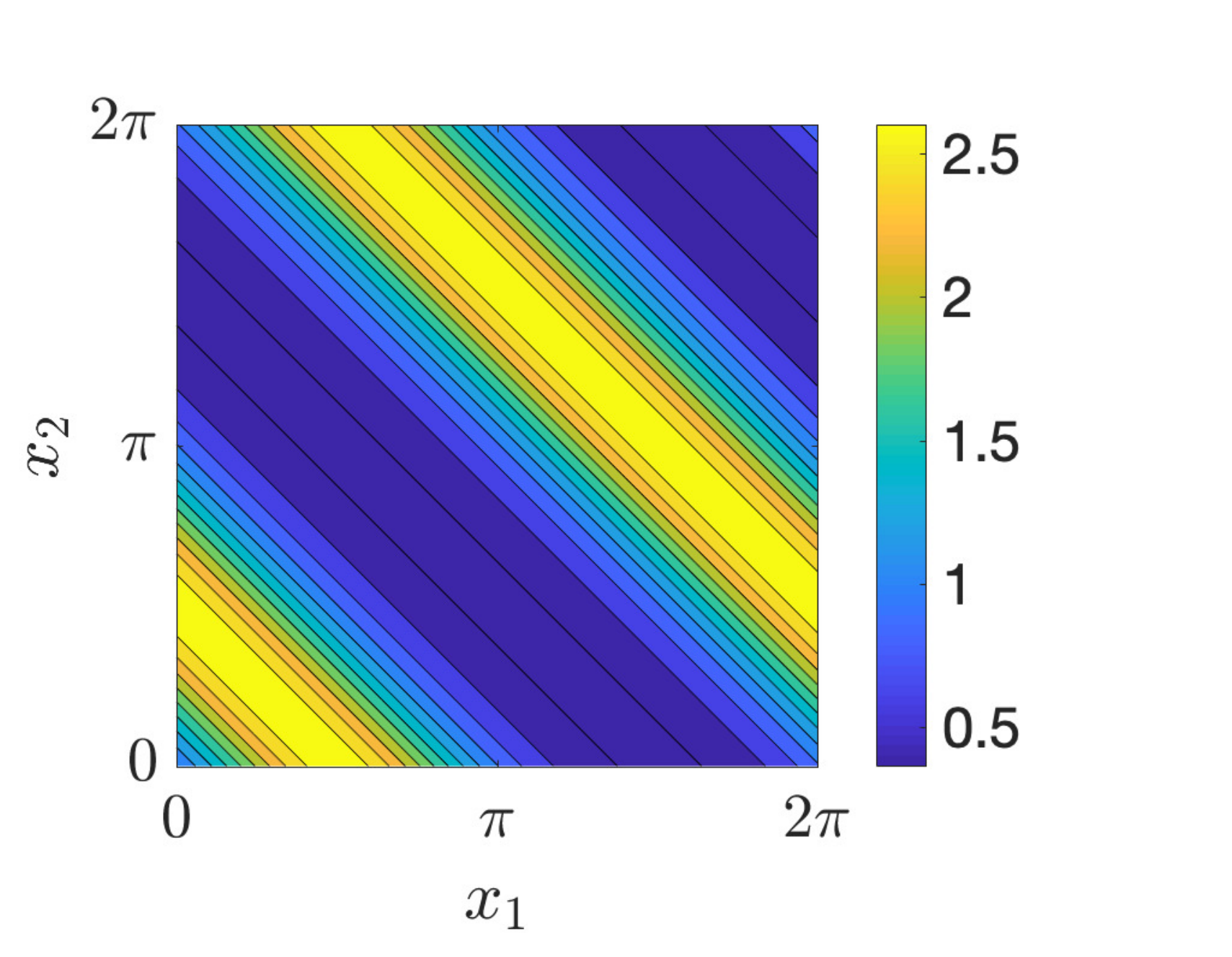}
		\includegraphics[width=0.34\textwidth]{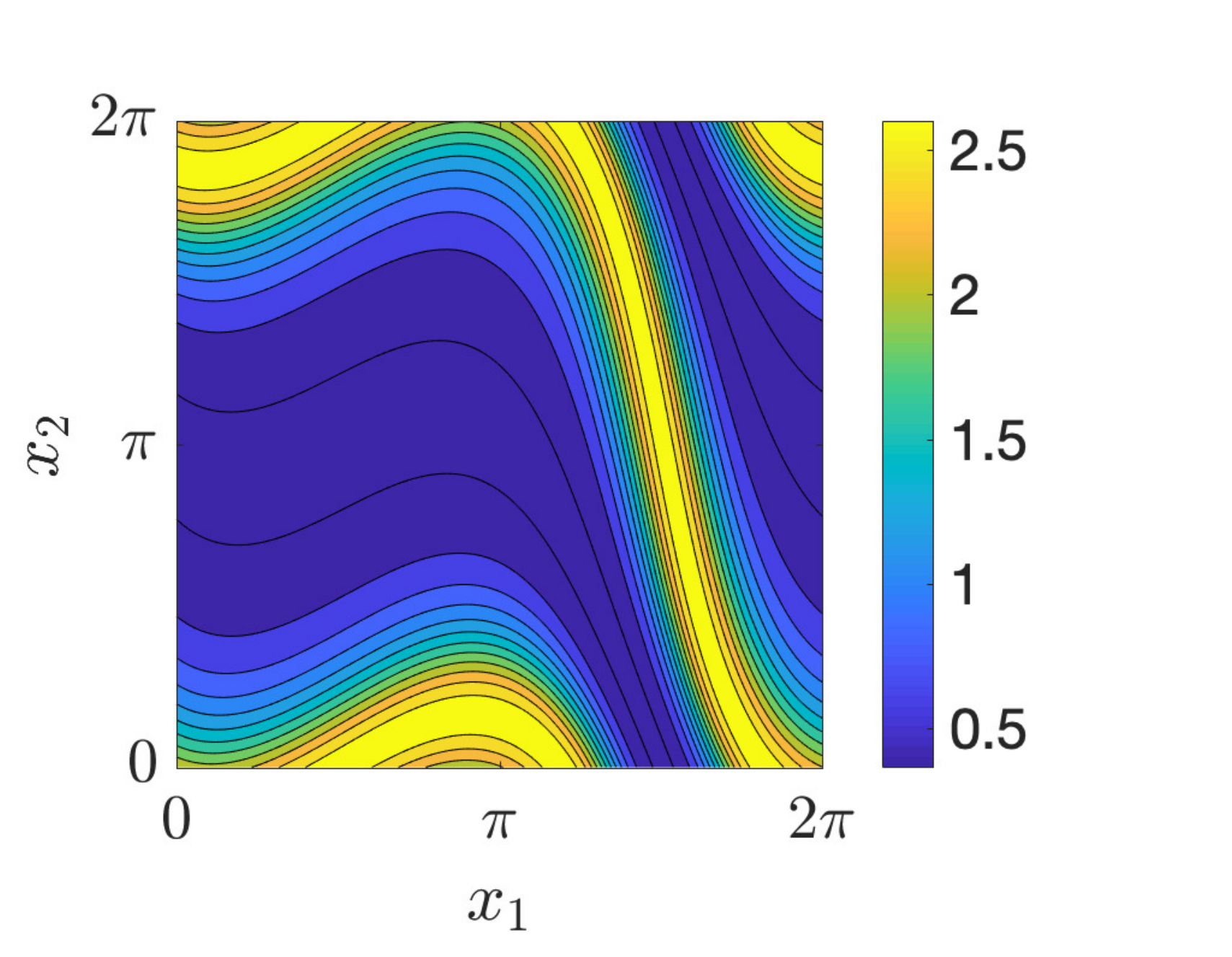}
		\includegraphics[width=0.34\textwidth]{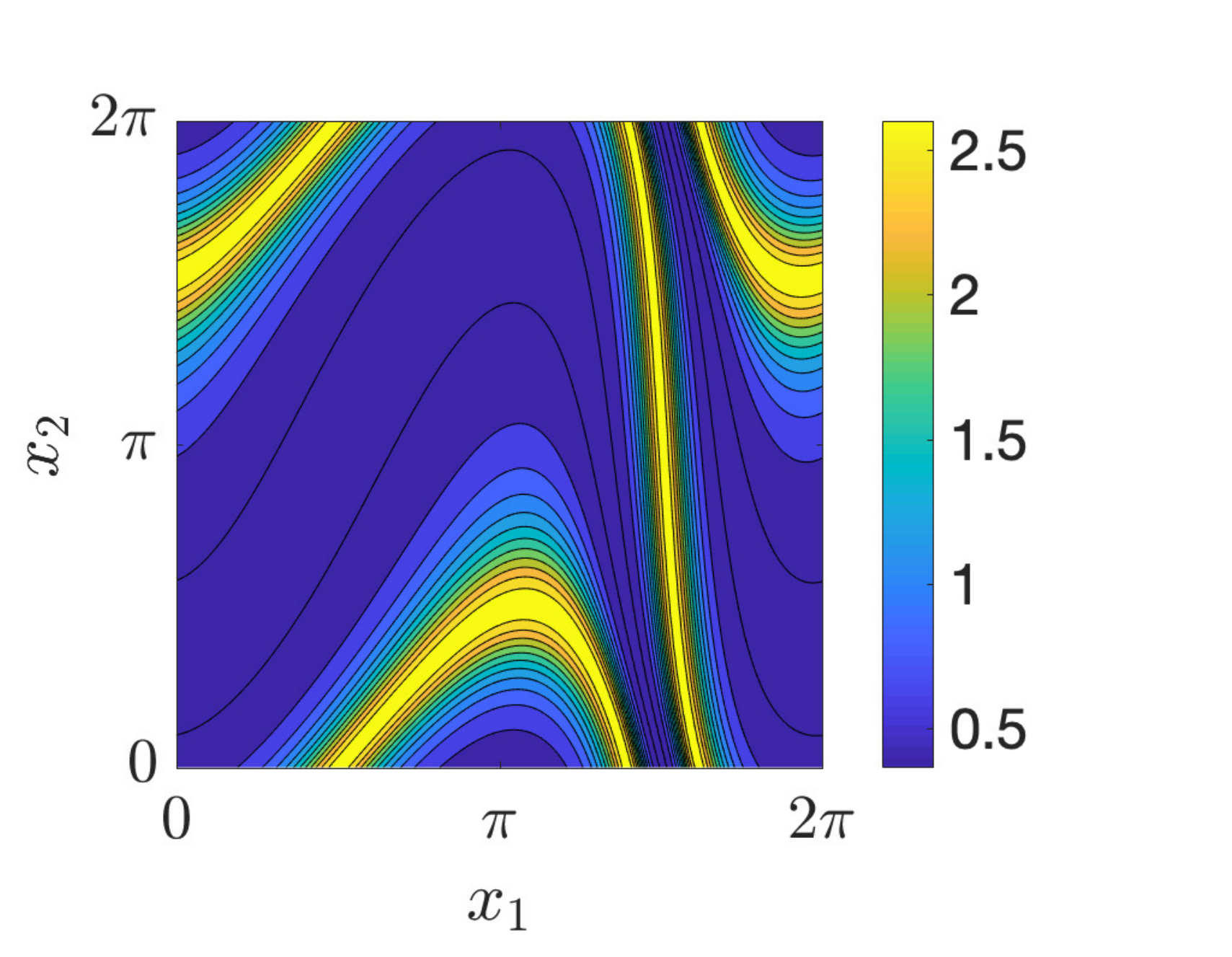}	
}
	
\hspace{0.5cm}	
\centerline{
	\rotatebox{90}{\hspace{1.2cm}  \footnotesize  DO-TT (rank 17)}
       \includegraphics[width=0.34\textwidth]{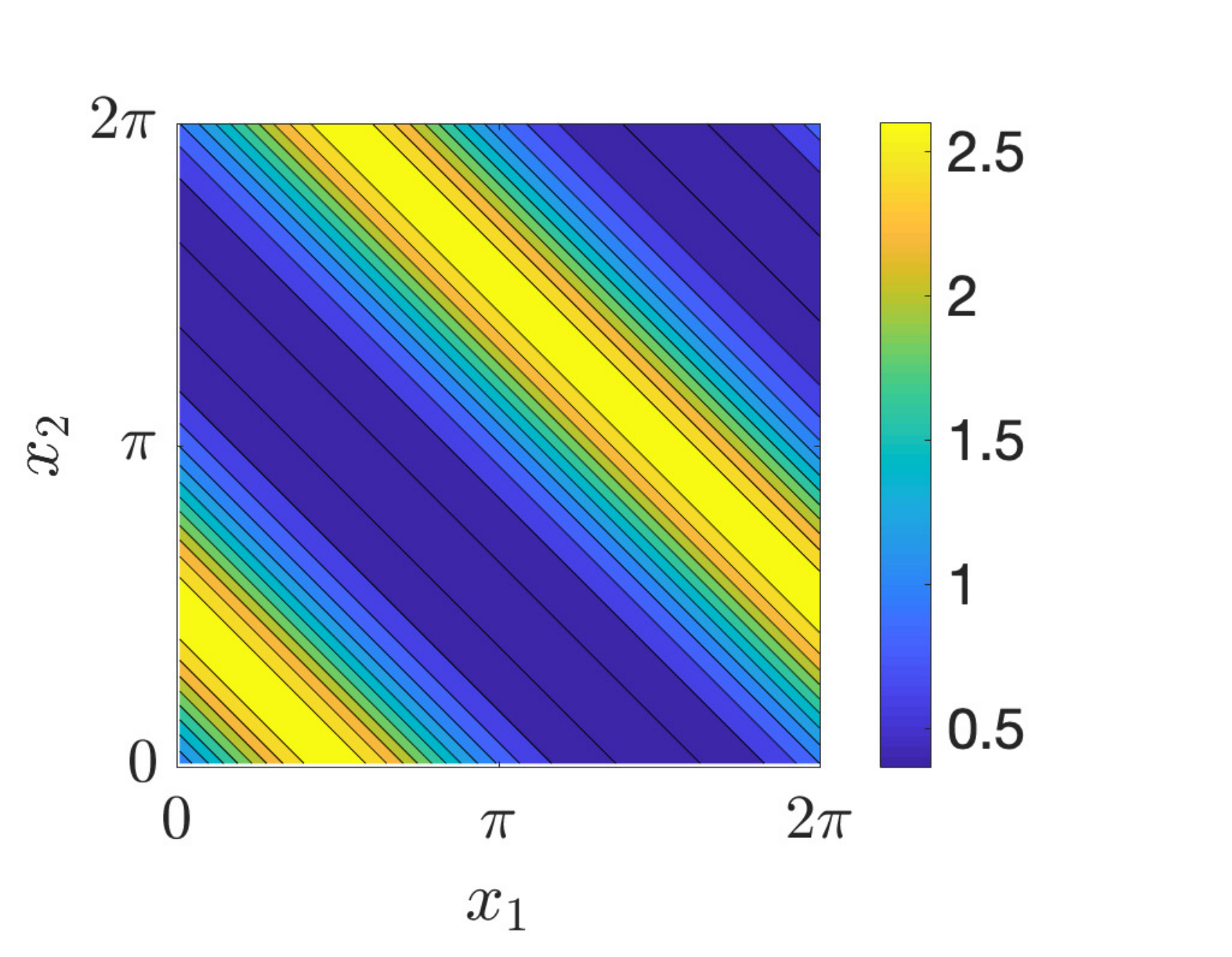}
	    \includegraphics[width=0.34\textwidth]{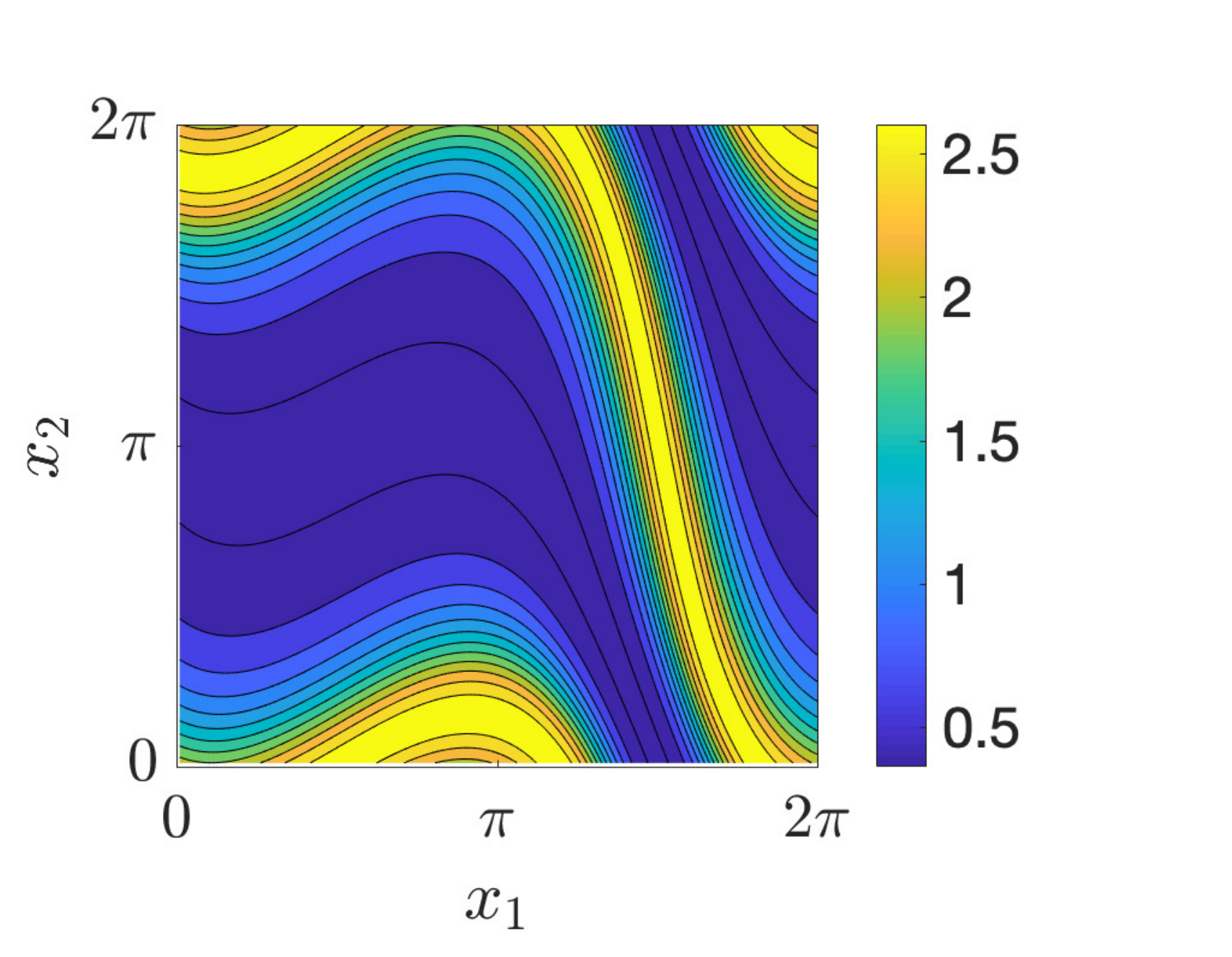}
	    \includegraphics[width=0.34\textwidth]{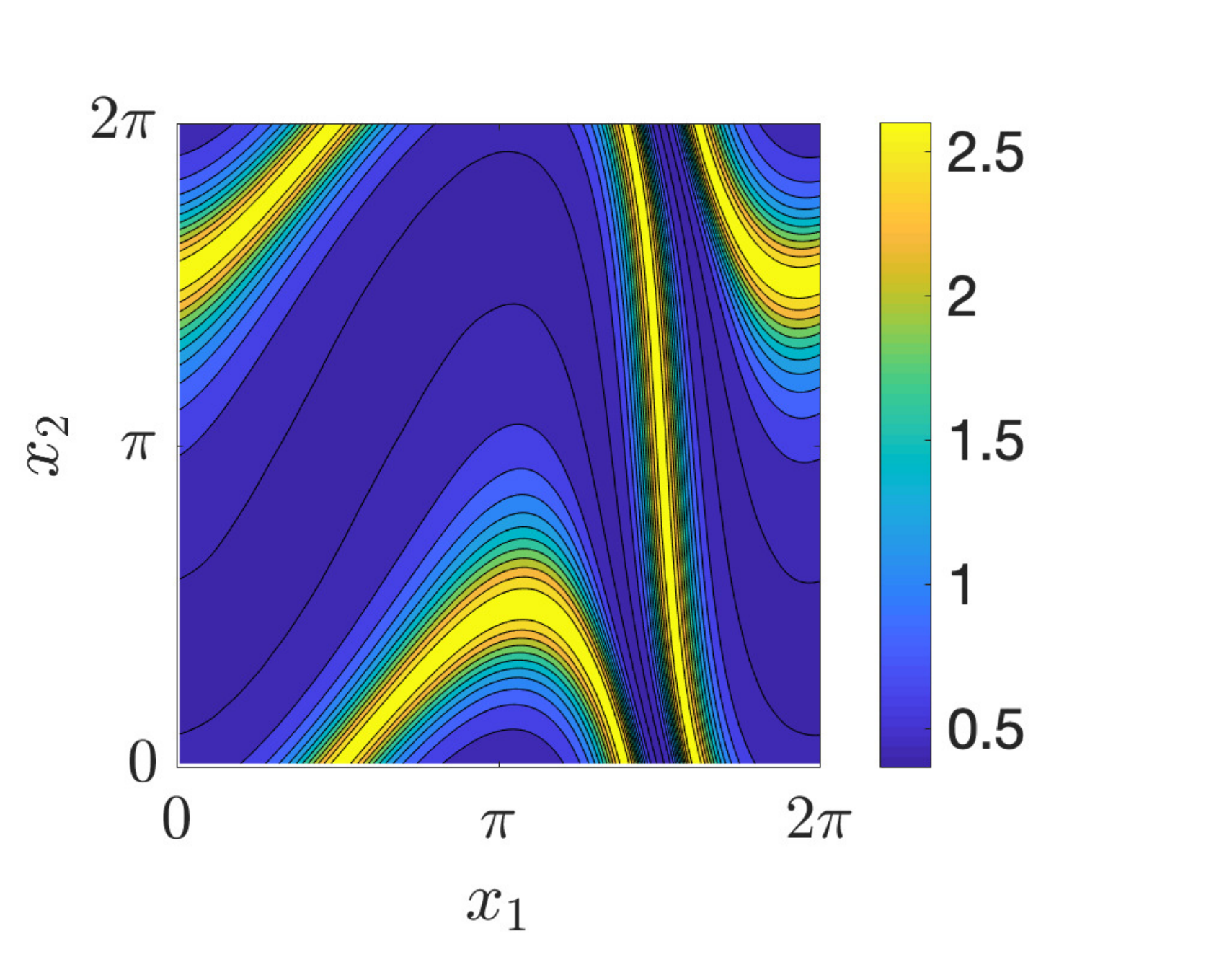}
}

\hspace{0.5cm}	
\centerline{
	\rotatebox{90}{\hspace{1.2cm}  \footnotesize Pointwise error}
       \includegraphics[width=0.34\textwidth]{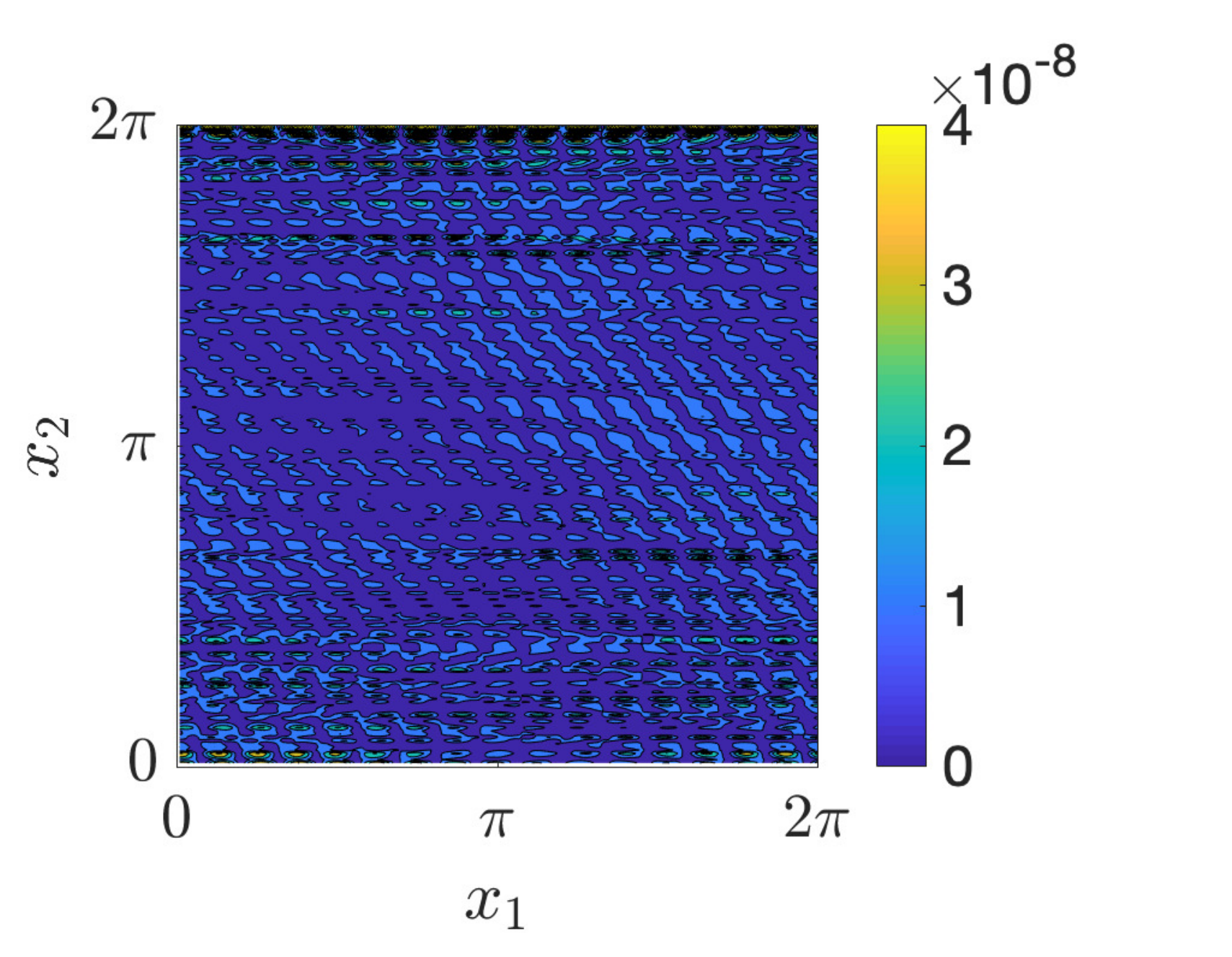}
	    \includegraphics[width=0.34\textwidth]{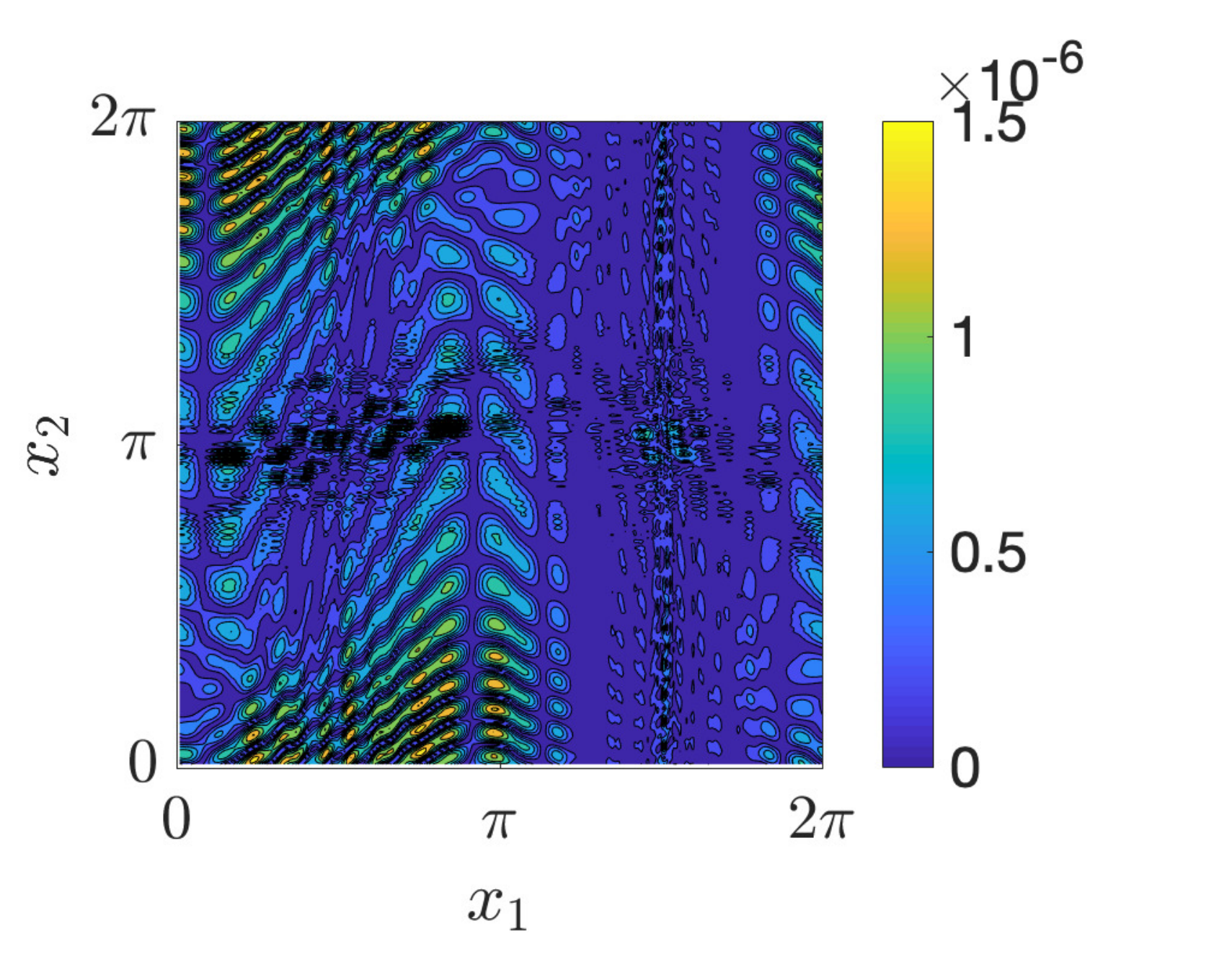}
	    \includegraphics[width=0.34\textwidth]{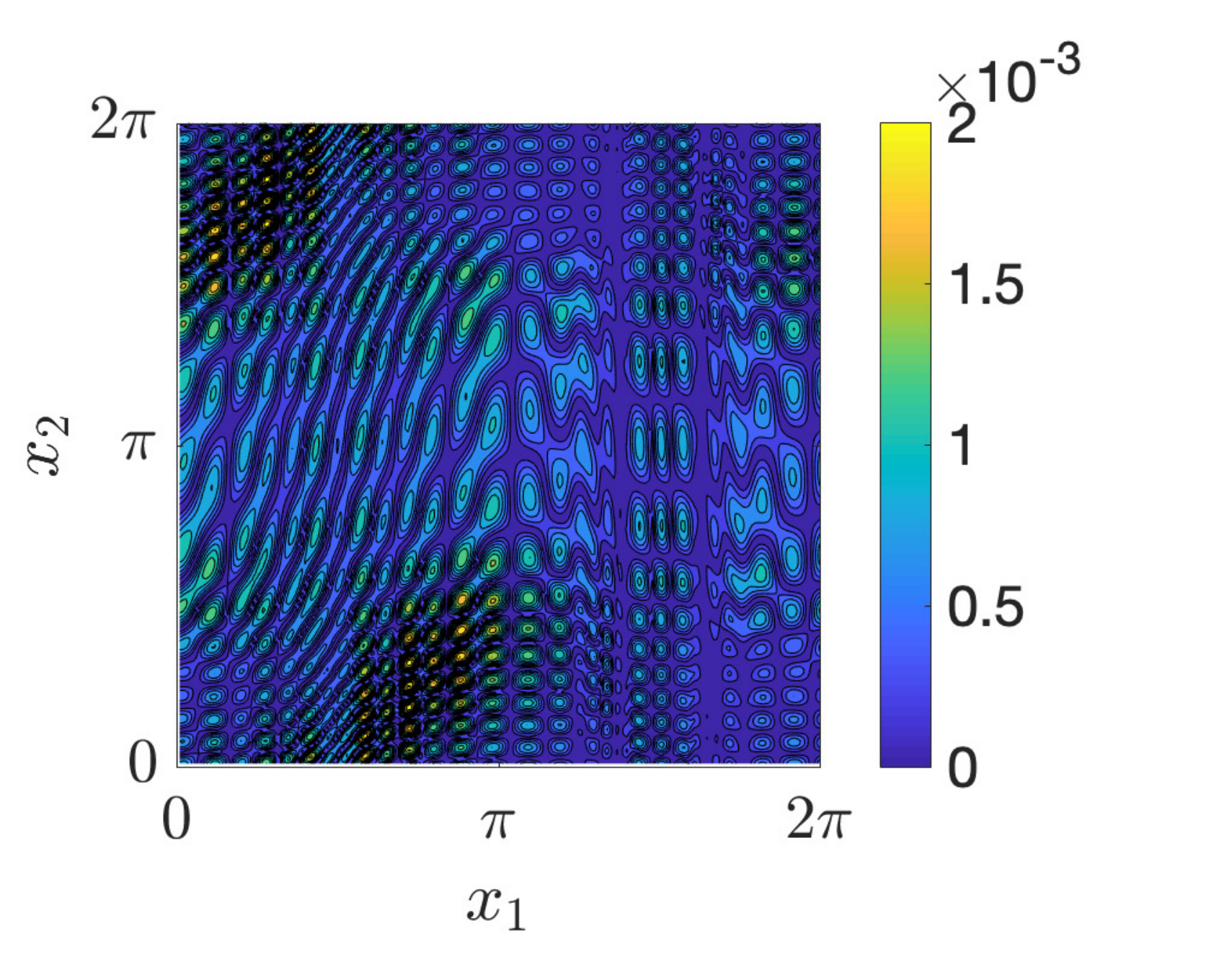}
}
\caption{Time snapshots of the solution to the PDE \eqref{2dpde} obtained using Method of Characteristics and the proposed Dynamically Orthogonal Tensor Train (DO-TT) method with constant rank ($r_1=17$). We also plot the maximum pointwise error between the two solutions.}
\label{fig:2d_solution}
\end{figure}
In Figure \ref{fig:2d_error} we plot the time-dependent 
$L^2(\Omega)$ errors between various DO-TT simulations and 
the semi-analytical solution \eqref{2Dsol}.  It is seen that 
the simulation with constant rank has an error slope that 
increases substantially around $t = 0.5$. This suggests that 
$17$ DO-TT modes are no longer sufficient 
to represent the solution \eqref{2Dsol} for $t>0.5$. 
This can also be also seen from the fact that the DO-TT spectrum 
tends to flatten out in time (Figure \ref{fig:2d_mode_ev}), 
suggesting that each of the $17$ modes is picking up more 
and more energy. To overcome this problem, and therefore 
control the error growth in time, we implemented the 
adaptive algorithm for mode addition/removal proposed 
in \cite{robust_do/bo}, and compared it with our Algorithm 2 
in Section \ref{sec:add_modes} (Algorithm 2). 
As it is seen in Figure \ref{fig:discontinuous_adaptive_modes}, 
such algorithm can indeed control the temporal 
growth of the DO-TT spectrum, 
hence the overall error (see Figure \ref{fig:2d_error}).
Each time modes are added using our Algorithm 2 
(Section \ref{sec:add_modes}), there is a re-orthogonalization 
process which can yield a discontinuity in the temporal evolution 
of each mode (see Figure \ref{fig:discontinuous_adaptive_modes}). 
In practice, the DO-TT system is re-started from a new initial 
condition after such re-orthogonalization. This does is not 
affect the solution, nor creates any temporal discontinuity 
or error jump (see Figure \ref{fig:2d_error}). 
\begin{figure}[t]
\centerline{\footnotesize\hspace{0.4cm}$t=0.0$ \hspace{3.9cm} $t=0.5$  \hspace{4.1cm} $t = 1.0$}
	\centerline{
	\rotatebox{90}{\hspace{1.3cm}  \footnotesize}
		\includegraphics[width=0.3\textwidth]{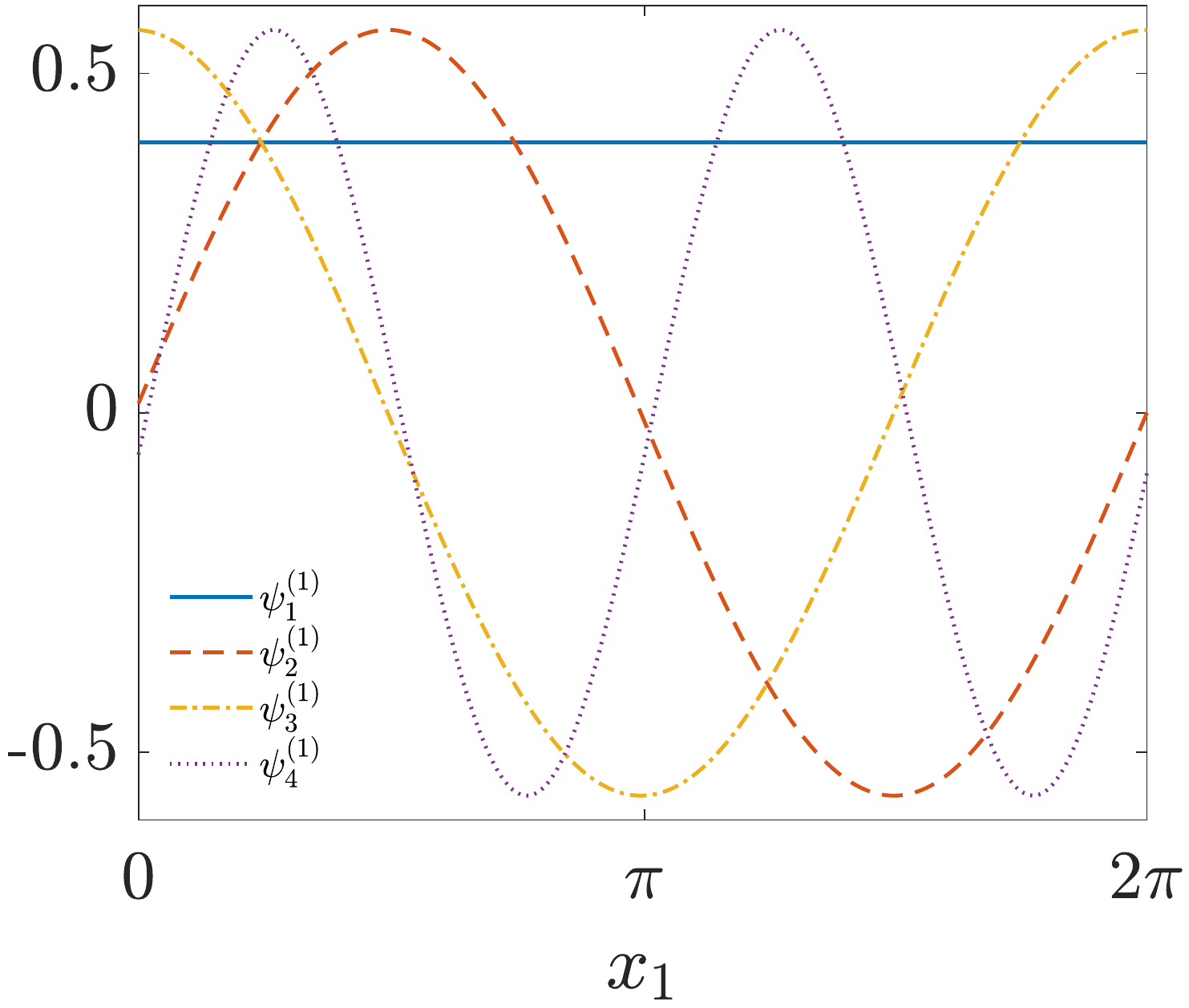}
		\includegraphics[width=0.3\textwidth]{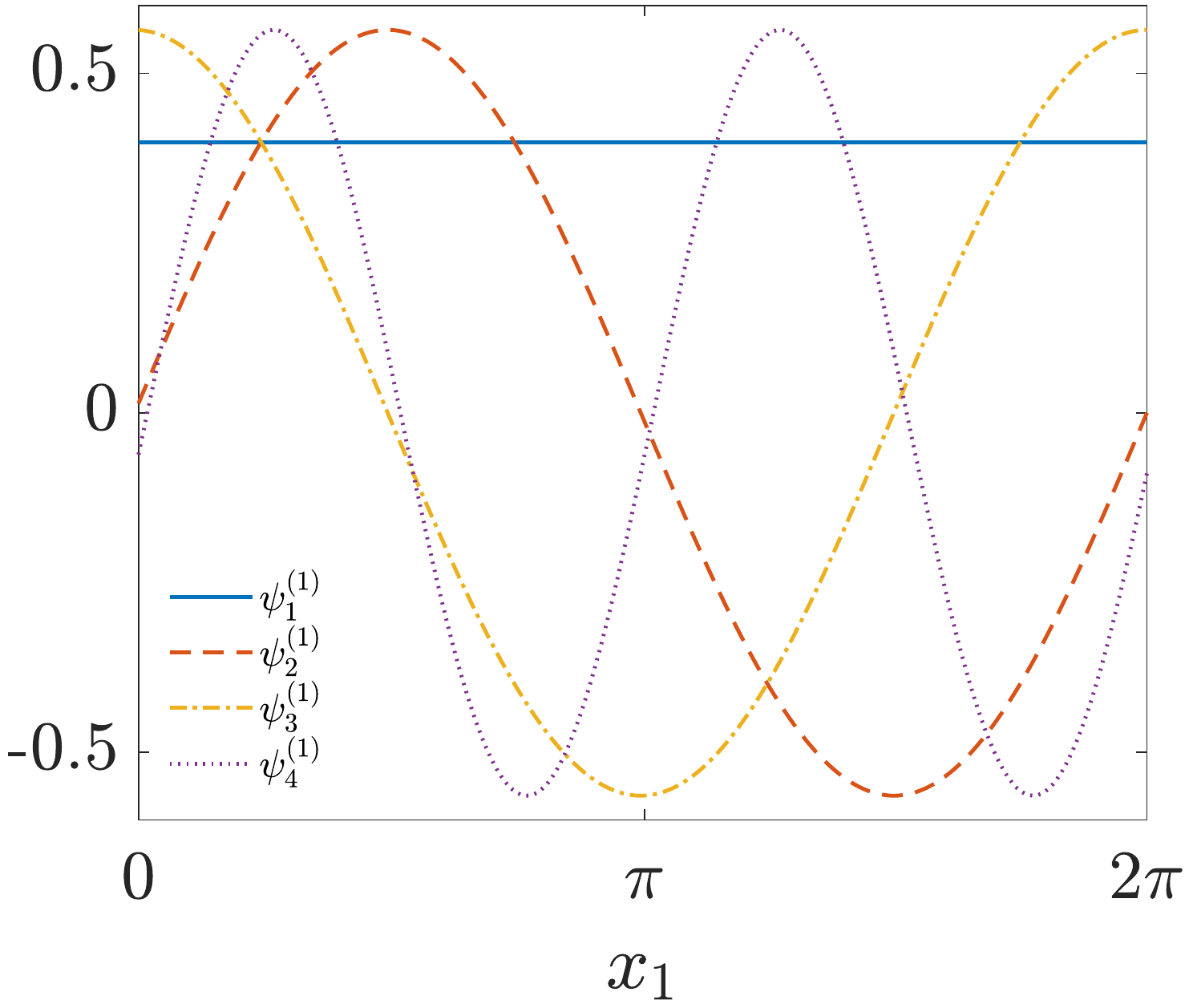}
		\includegraphics[width=0.3\textwidth]{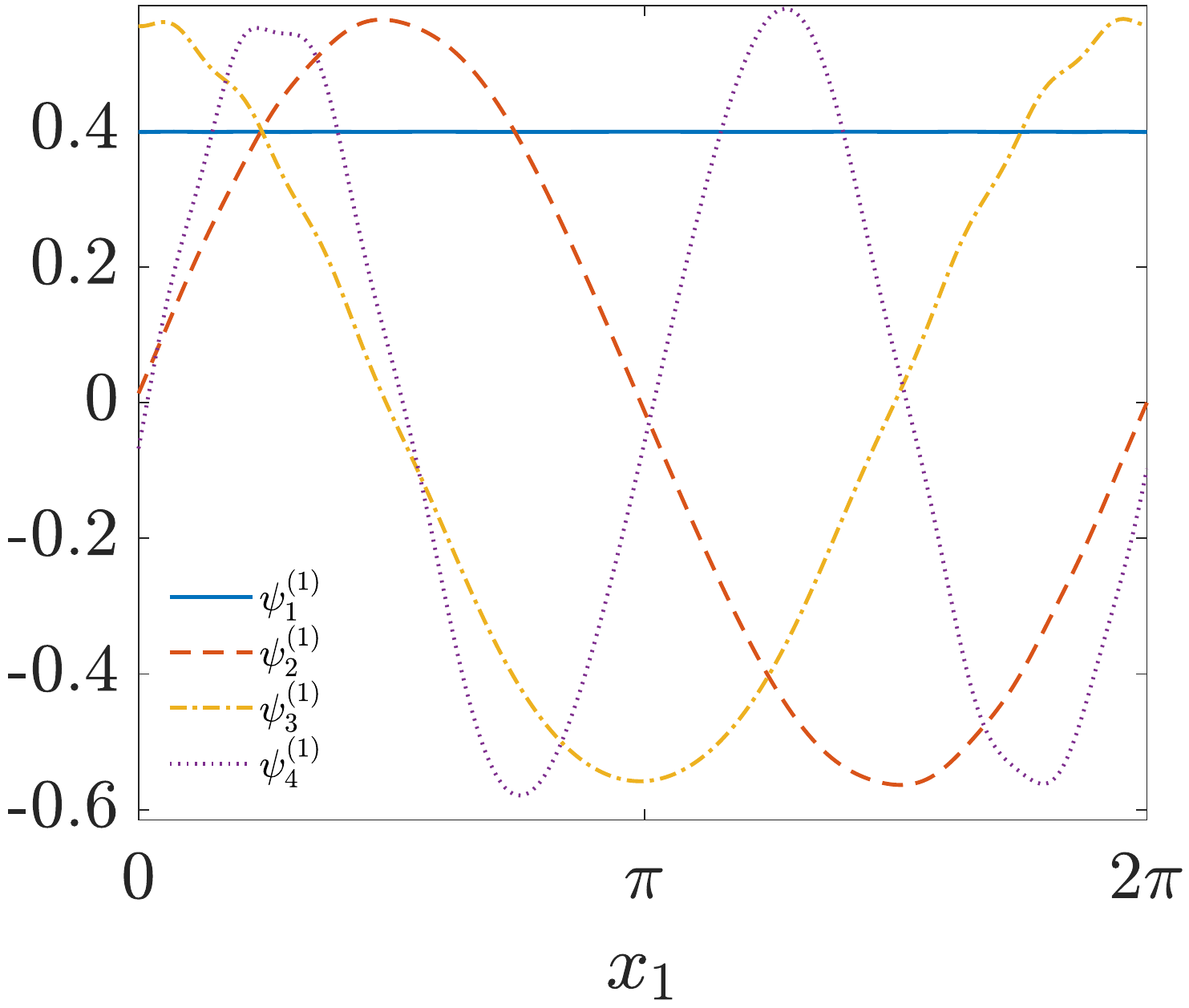}	
}
	\centerline{
	\rotatebox{90}{\hspace{1.3cm}  \footnotesize }
		\includegraphics[width=0.3\textwidth]{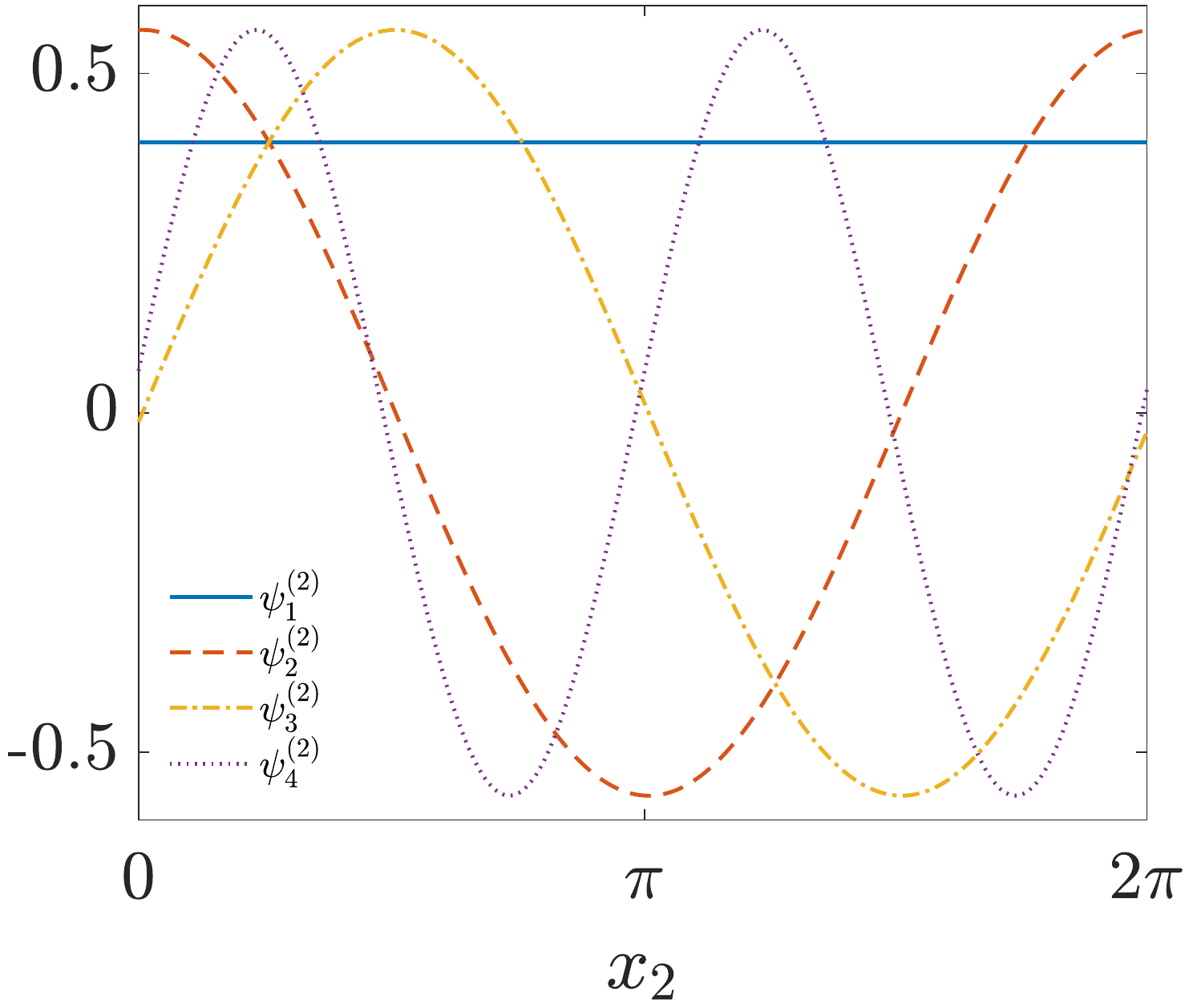}
	    \includegraphics[width=0.3\textwidth]{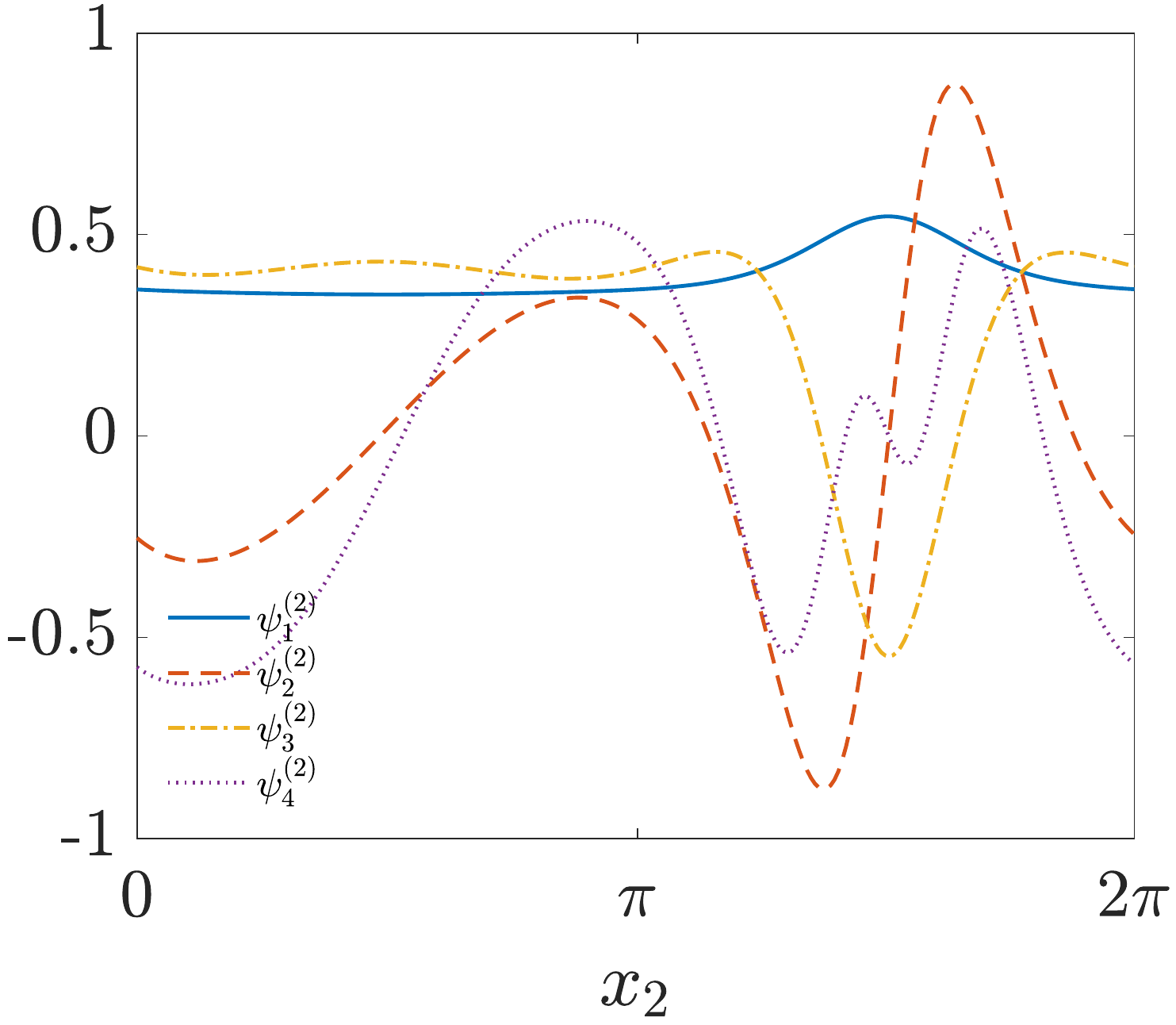}
	    \includegraphics[width=0.3\textwidth]{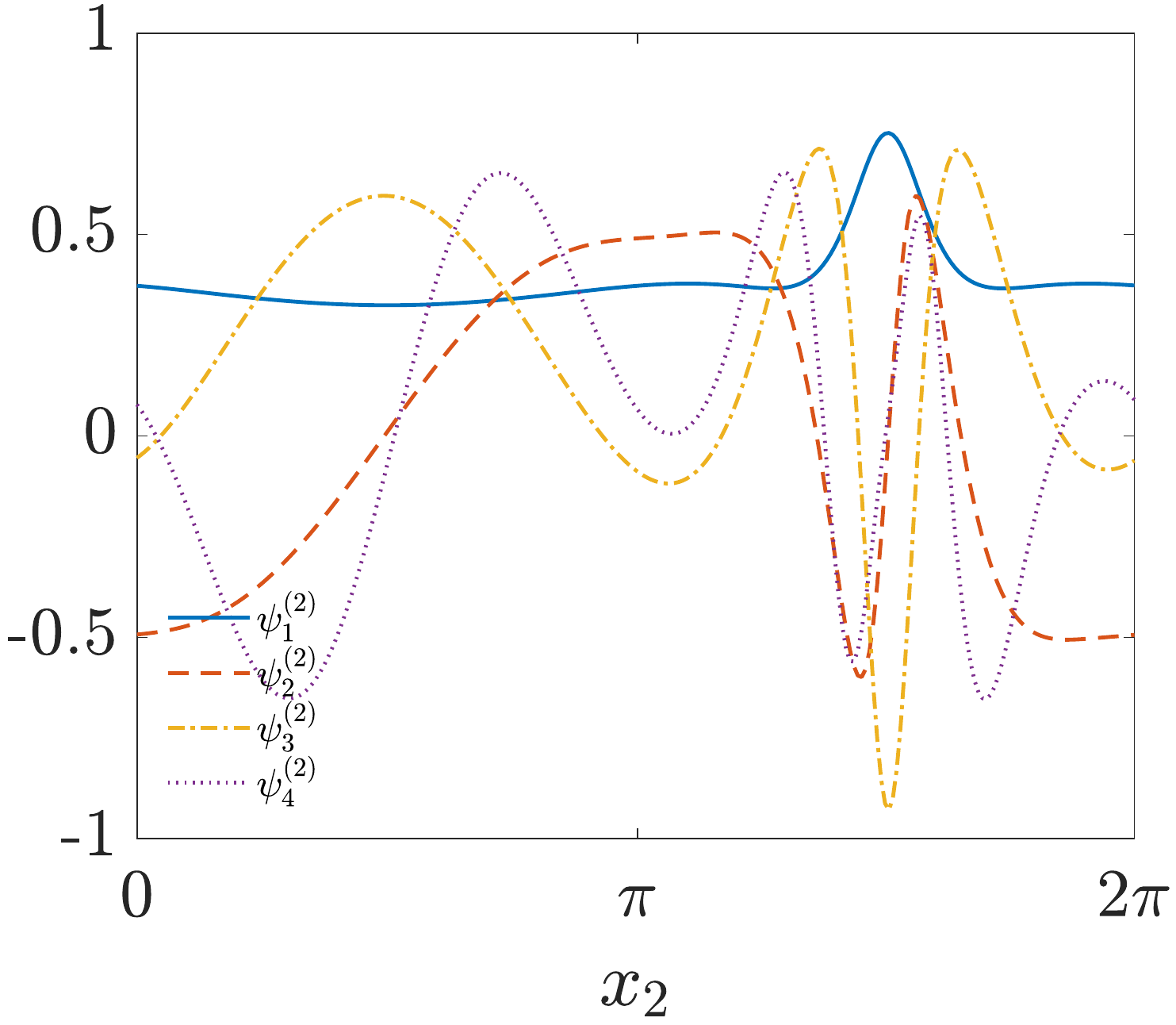}
}
	\centerline{
	\rotatebox{90}{\hspace{2.1cm} \footnotesize $\lambda_{i_1}$ }
		\includegraphics[width=0.3\textwidth]{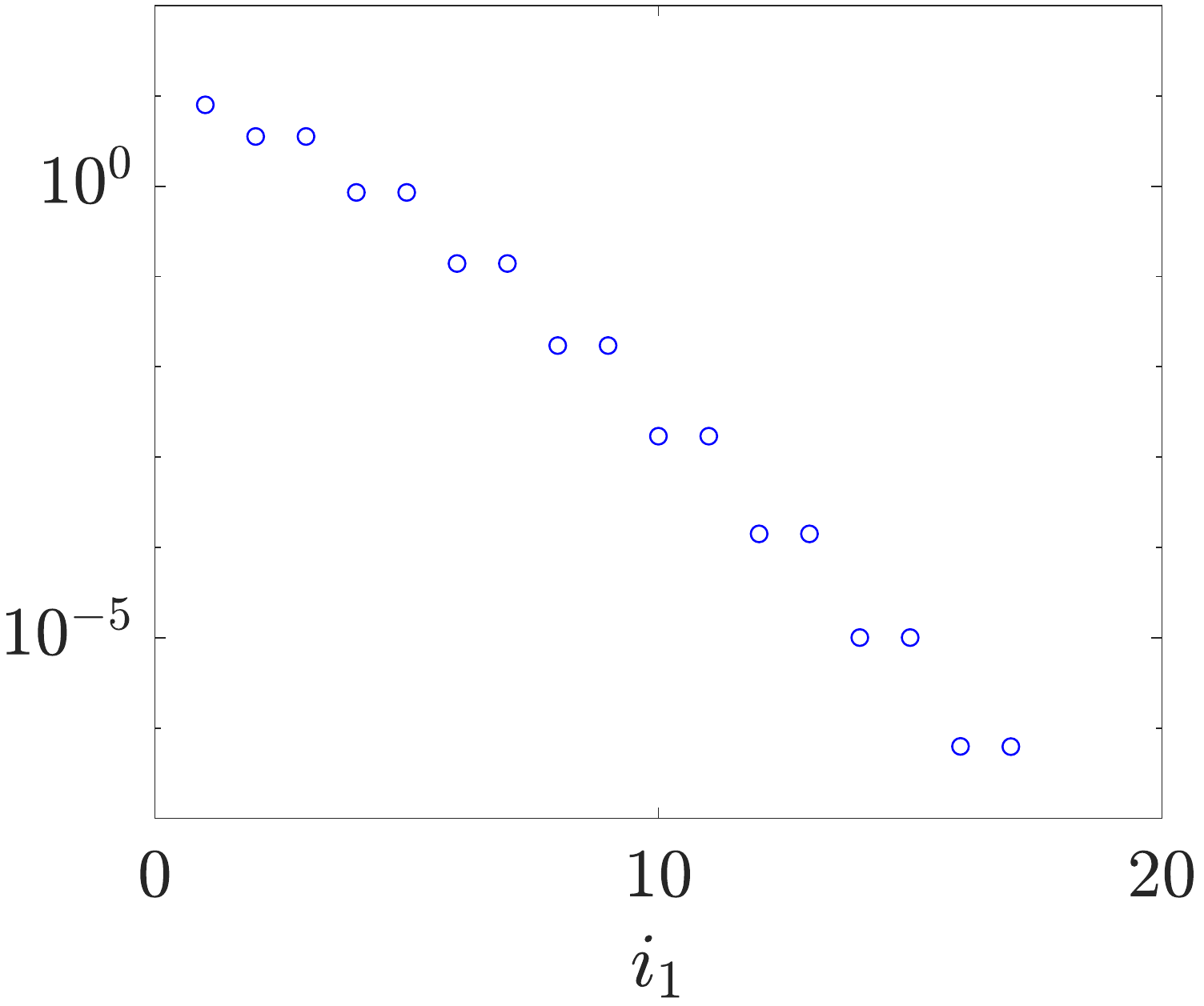}	
	    \includegraphics[width=0.3\textwidth]{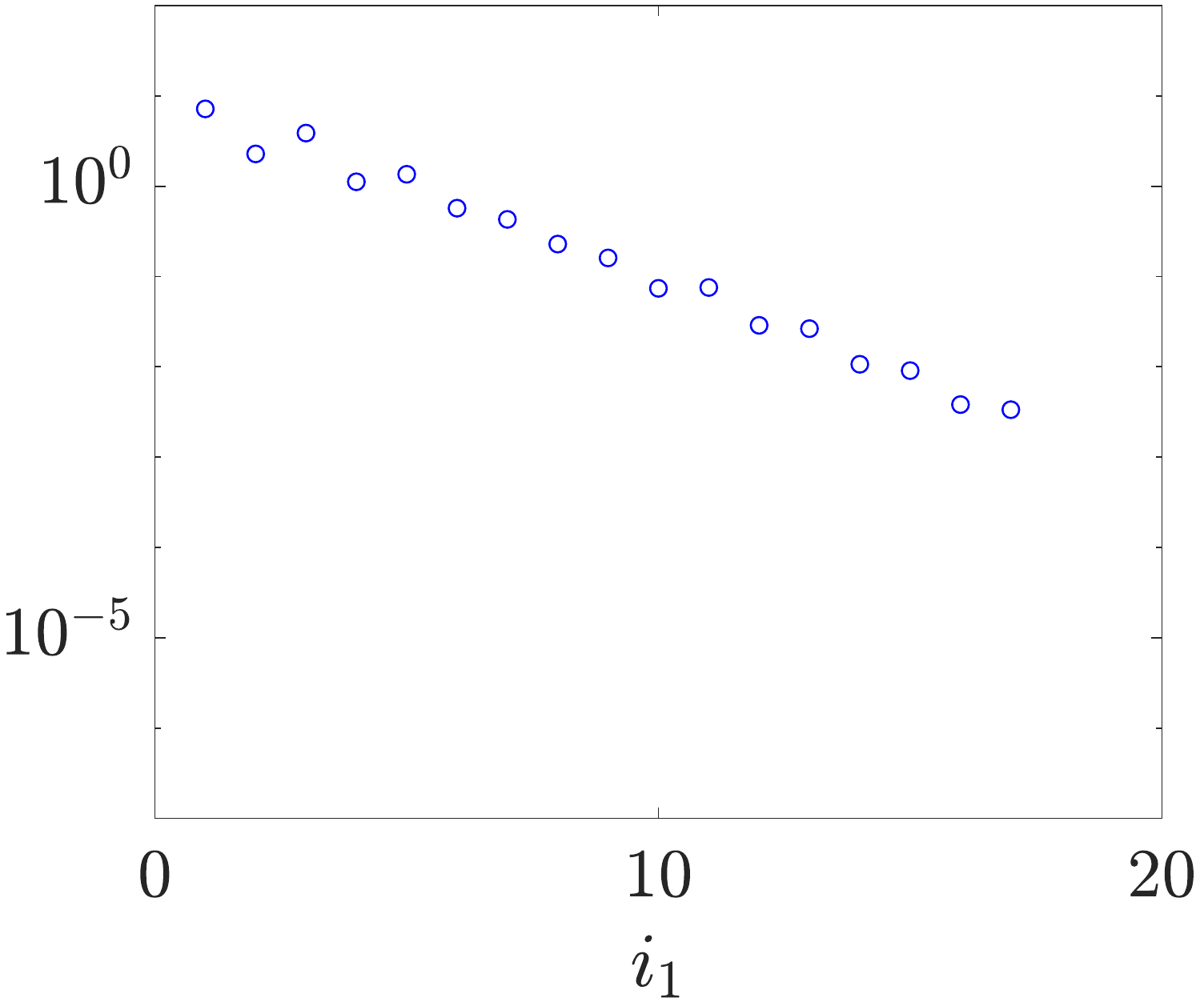}
	    \includegraphics[width=0.3\textwidth]{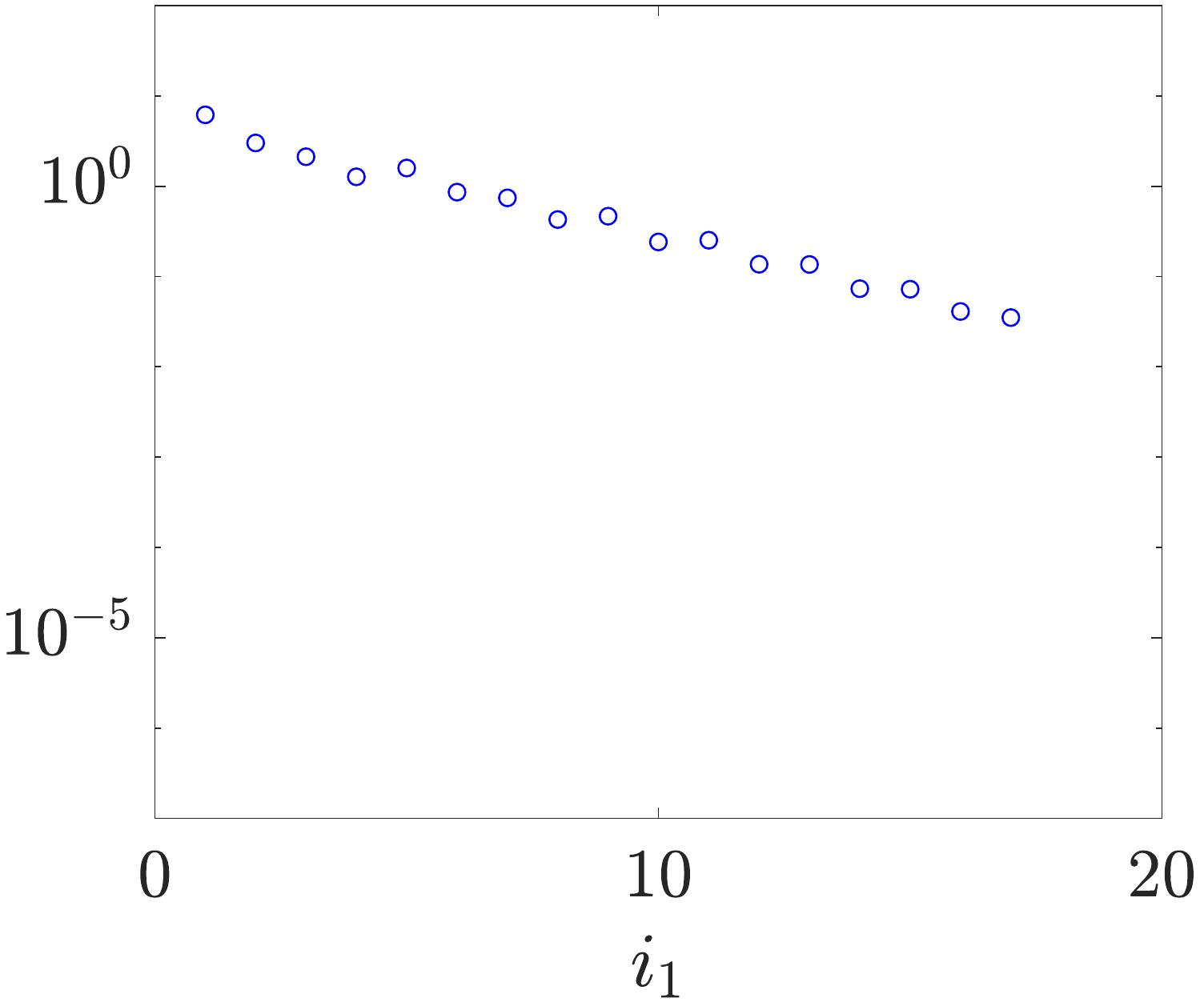}
}
\caption{First four modes of the constant-rank ($r_1=17$) 
DO-TT representation of  the solution to the PDE \eqref{2dpde}, 
and corresponding spectrum. Shown are results 
at times $t = 0.0$, $t = 0.5$ and $t = 1.0$.}
\label{fig:2d_mode_ev}
\end{figure}

\begin{figure}[t]
\centerline{\footnotesize\hspace{0.4cm}$t=0.0$ \hspace{3.9cm} $t=0.5$  \hspace{4.1cm} $t = 1.0$}
	\centerline{
	\rotatebox{90}{\hspace{1.3cm}  \footnotesize}
		\includegraphics[width=0.3\textwidth]{2d_lhs_modes-eps-converted-to.pdf}
		\includegraphics[width=0.3\textwidth]{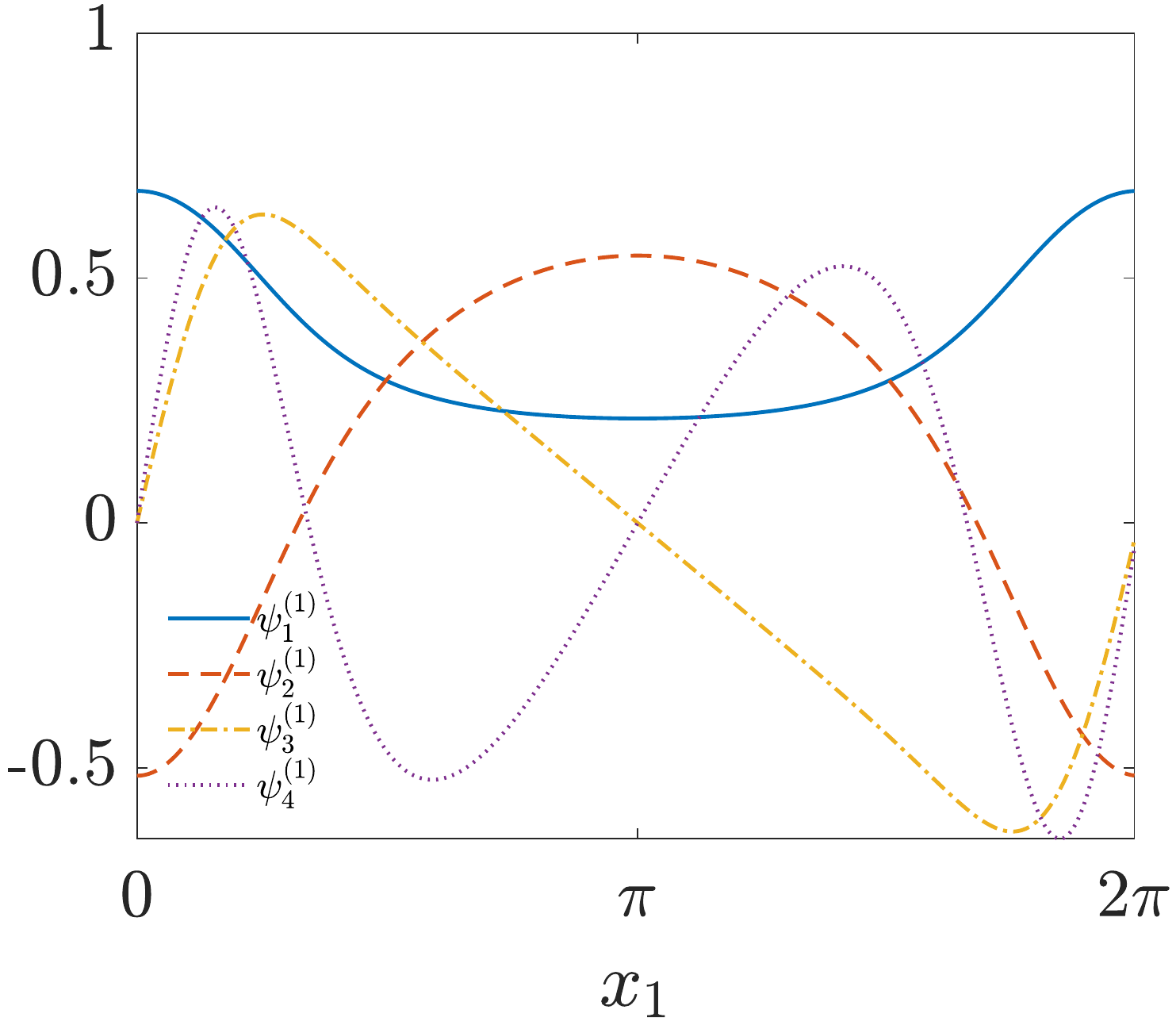}
		\includegraphics[width=0.3\textwidth]{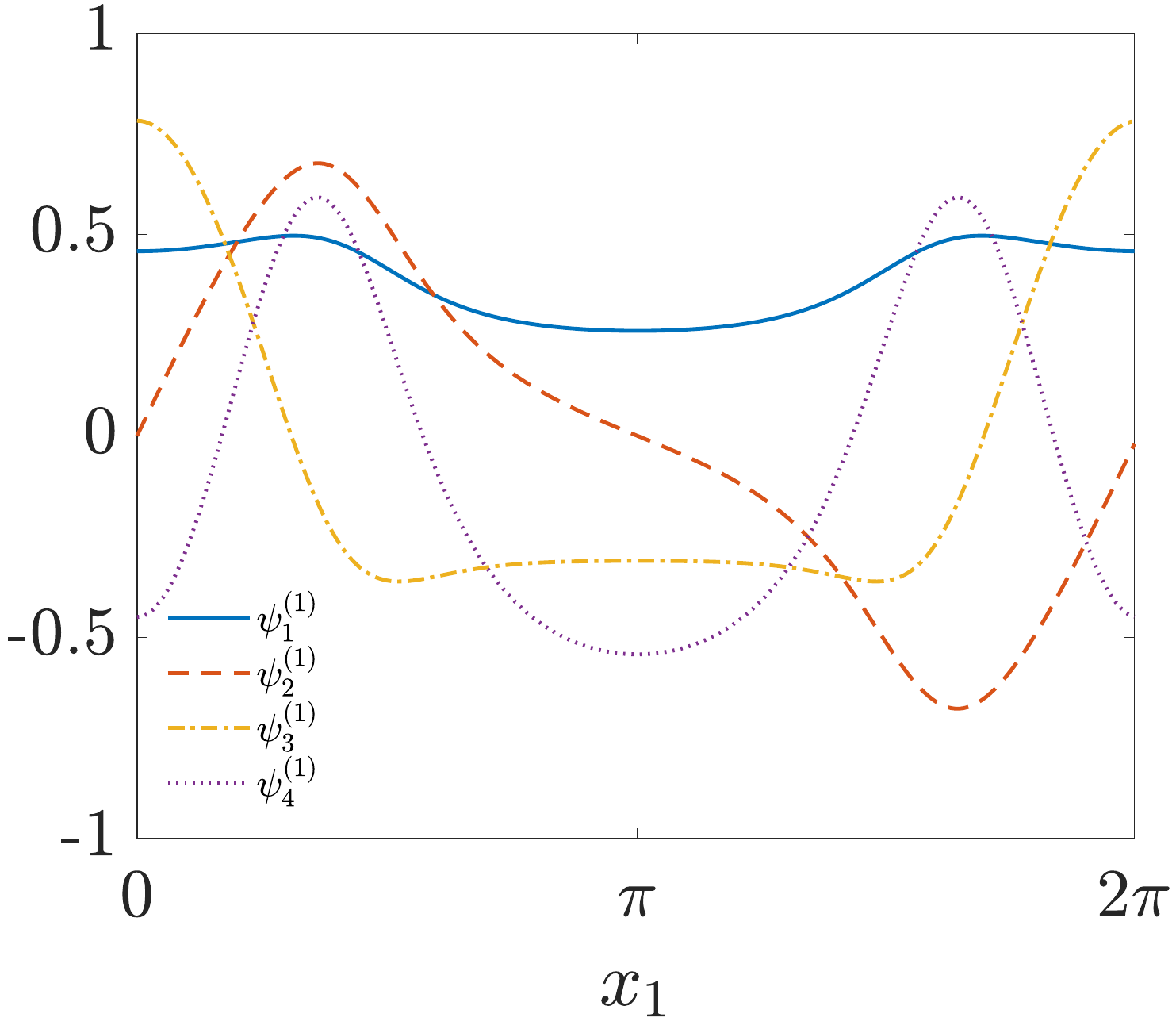}	
}
	\centerline{
	\rotatebox{90}{\hspace{1.3cm}  \footnotesize }
		\includegraphics[width=0.3\textwidth]{2d_rhs_modes-eps-converted-to.pdf}
	    \includegraphics[width=0.3\textwidth]{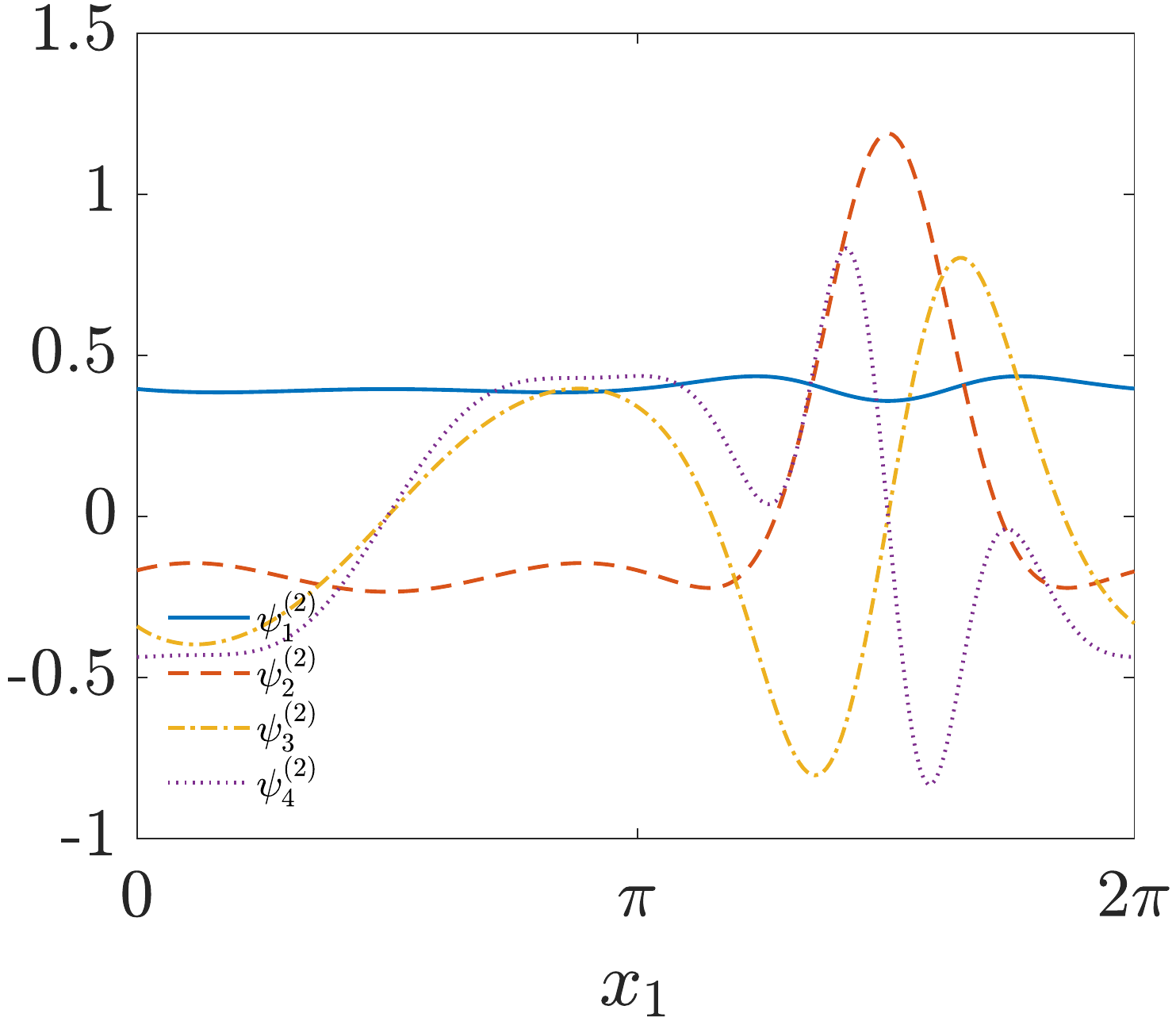}
	    \includegraphics[width=0.3\textwidth]{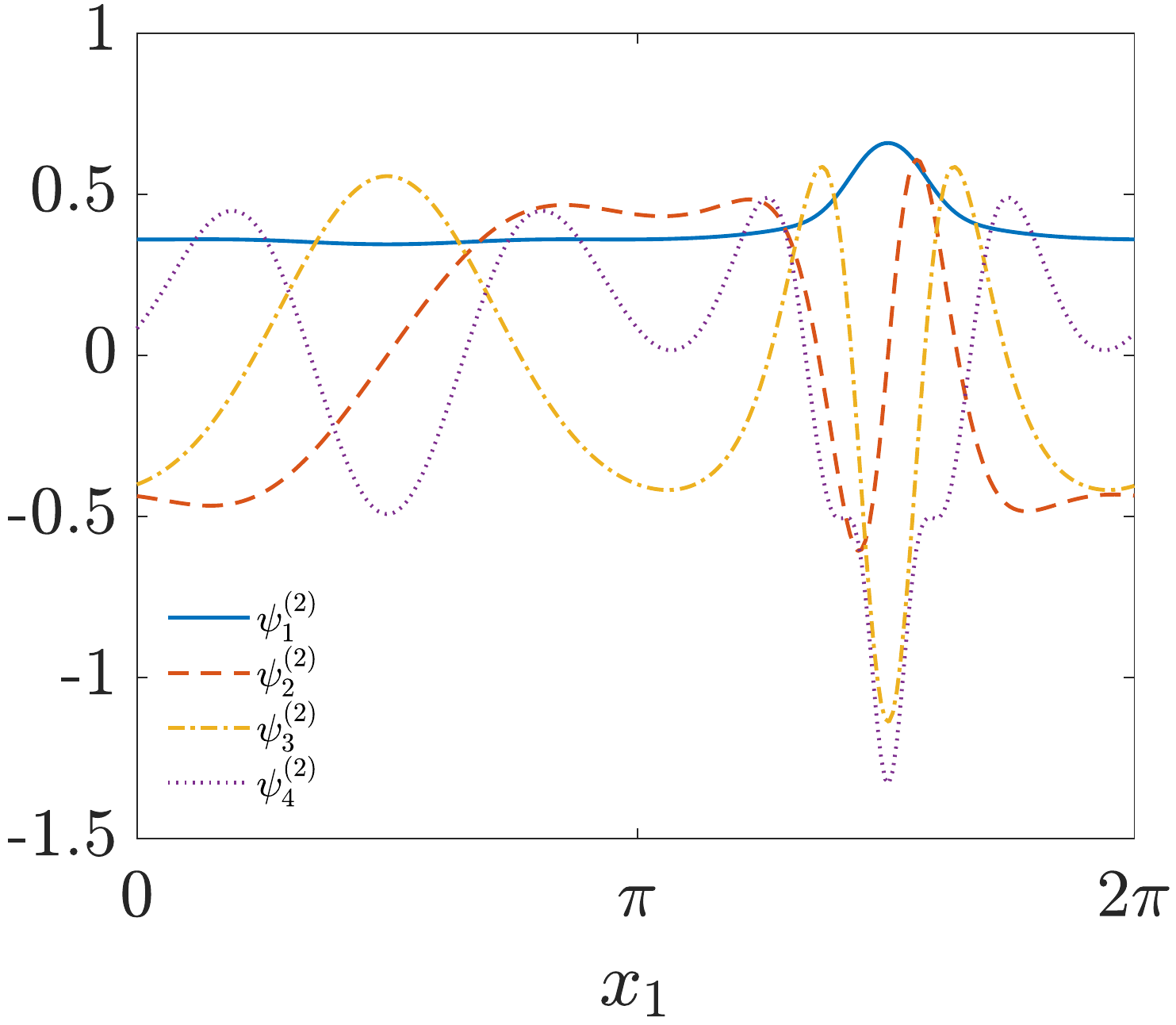}
}
	\centerline{
	\rotatebox{90}{\hspace{2.1cm} \footnotesize $\lambda_{i_1}$ }
		\includegraphics[width=0.3\textwidth]{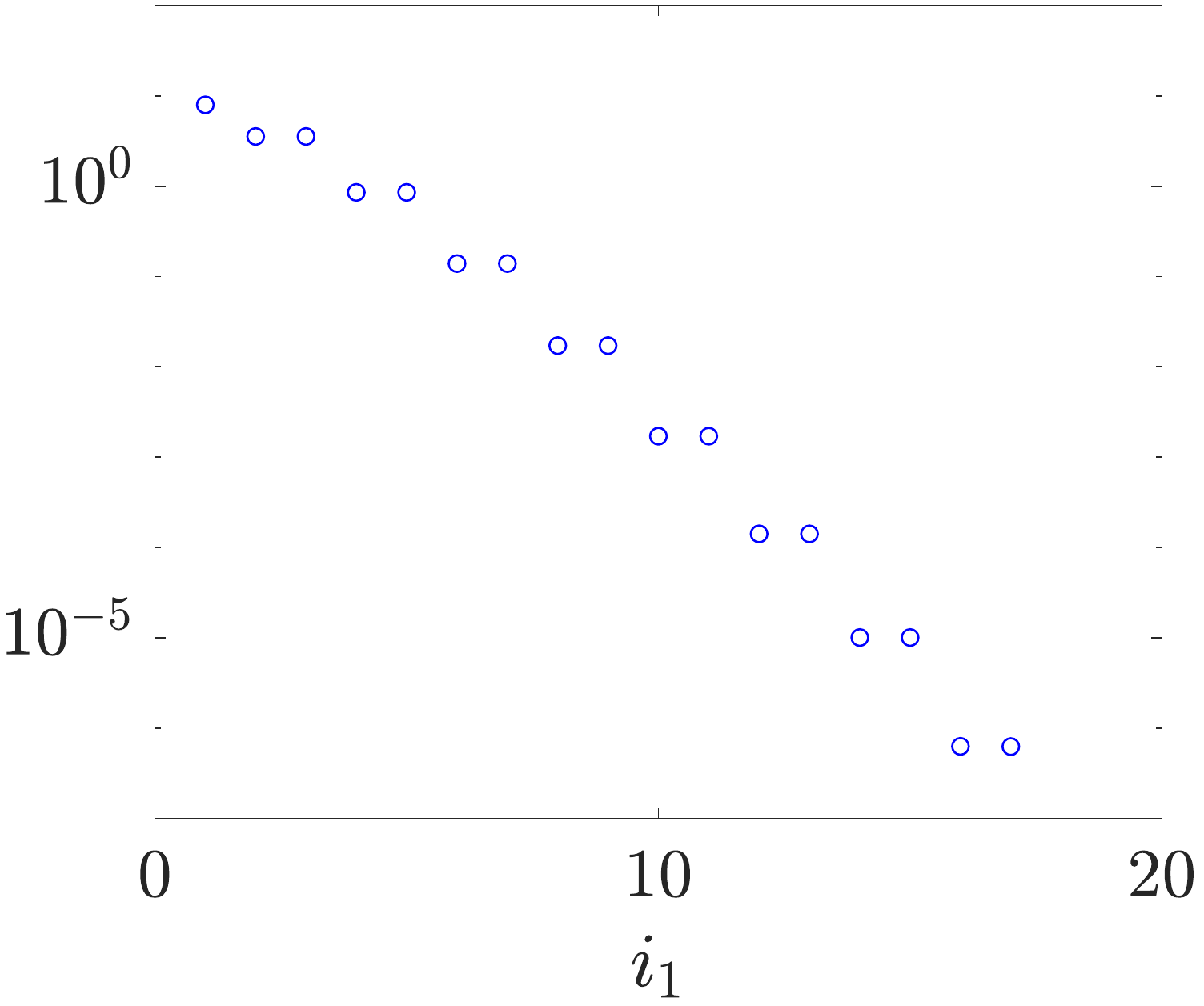}	
	    \includegraphics[width=0.3\textwidth]{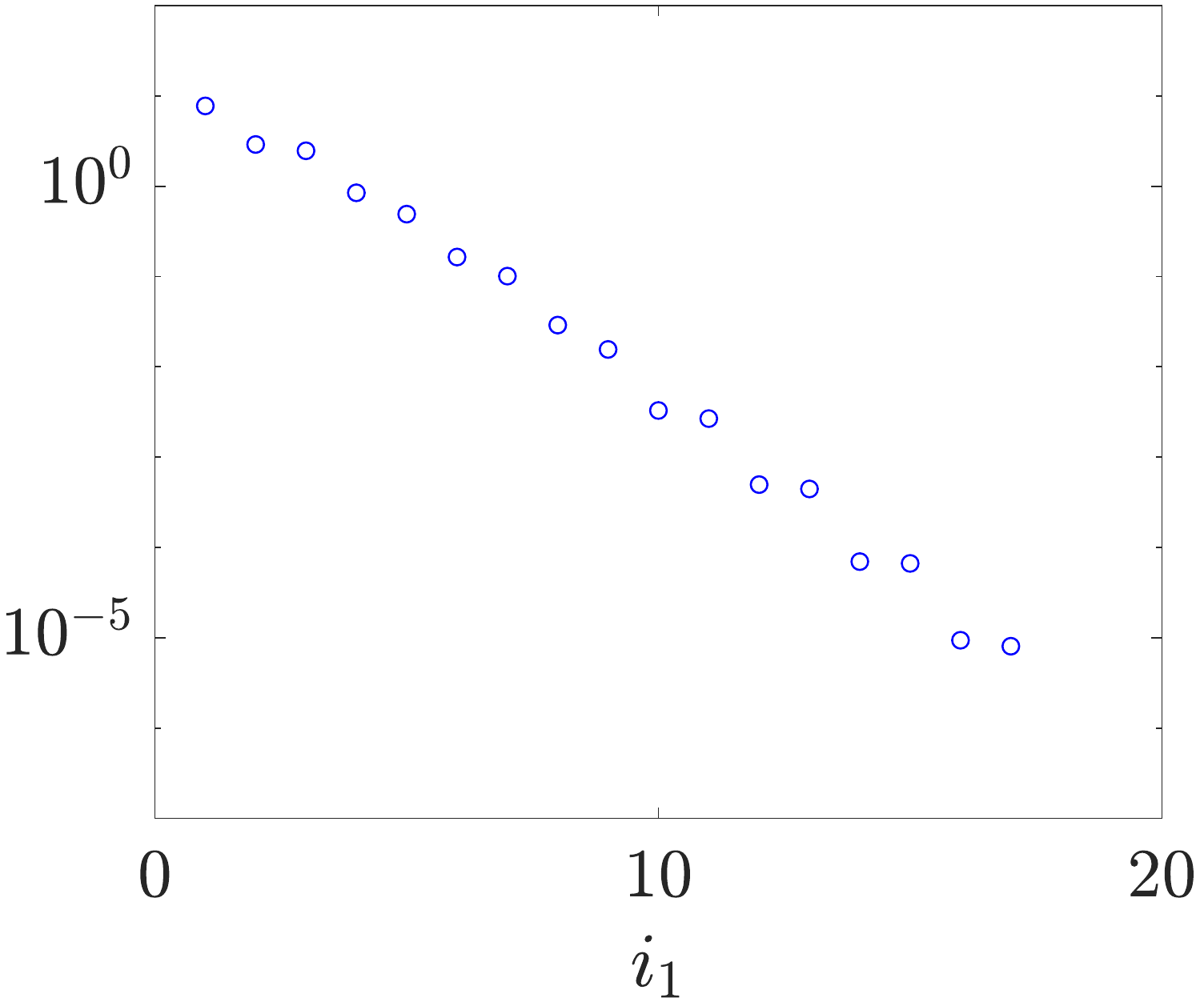}
	    \includegraphics[width=0.3\textwidth]{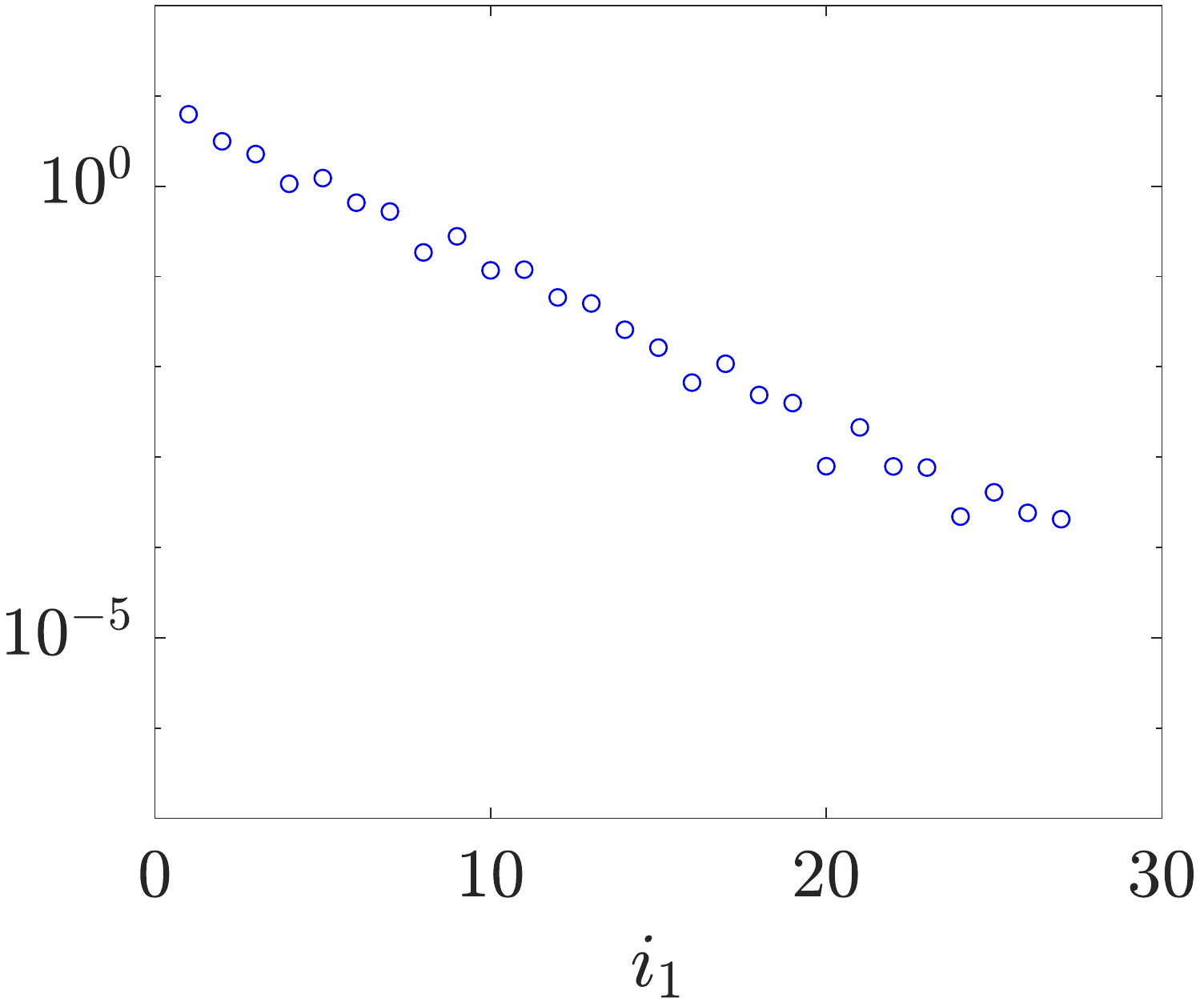}
}
\caption{First four modes of the adaptive-rank DO-TT representation 
of the solution to \eqref{2dpde}, and corresponding spectrum. 
The adaptive expansion is computed by using our Algorithm 2  
in Section \ref{sec:add_modes}. Shown are results at 
times $t = 0.0$, $t = 0.5$ and $t = 1.0$. It is seen that
the proposed algorithm can control the spectral 
decay at $t=1$ (compare the spectra in the last row of this Figure 
with the spectra in Figure \ref{fig:2d_mode_ev}).}
\label{fig:2d_mode_ev_ht_adaptive}
\end{figure}

\begin{figure}[t]
\centerline{\footnotesize\hspace{0.2cm}$\psi^{(2)}_2$  \hspace{7.cm} $\psi^{(2)}_3$ }
	\centerline{
	\rotatebox{90}{\hspace{1.3cm}  \footnotesize}
		\includegraphics[width=0.4\textwidth]{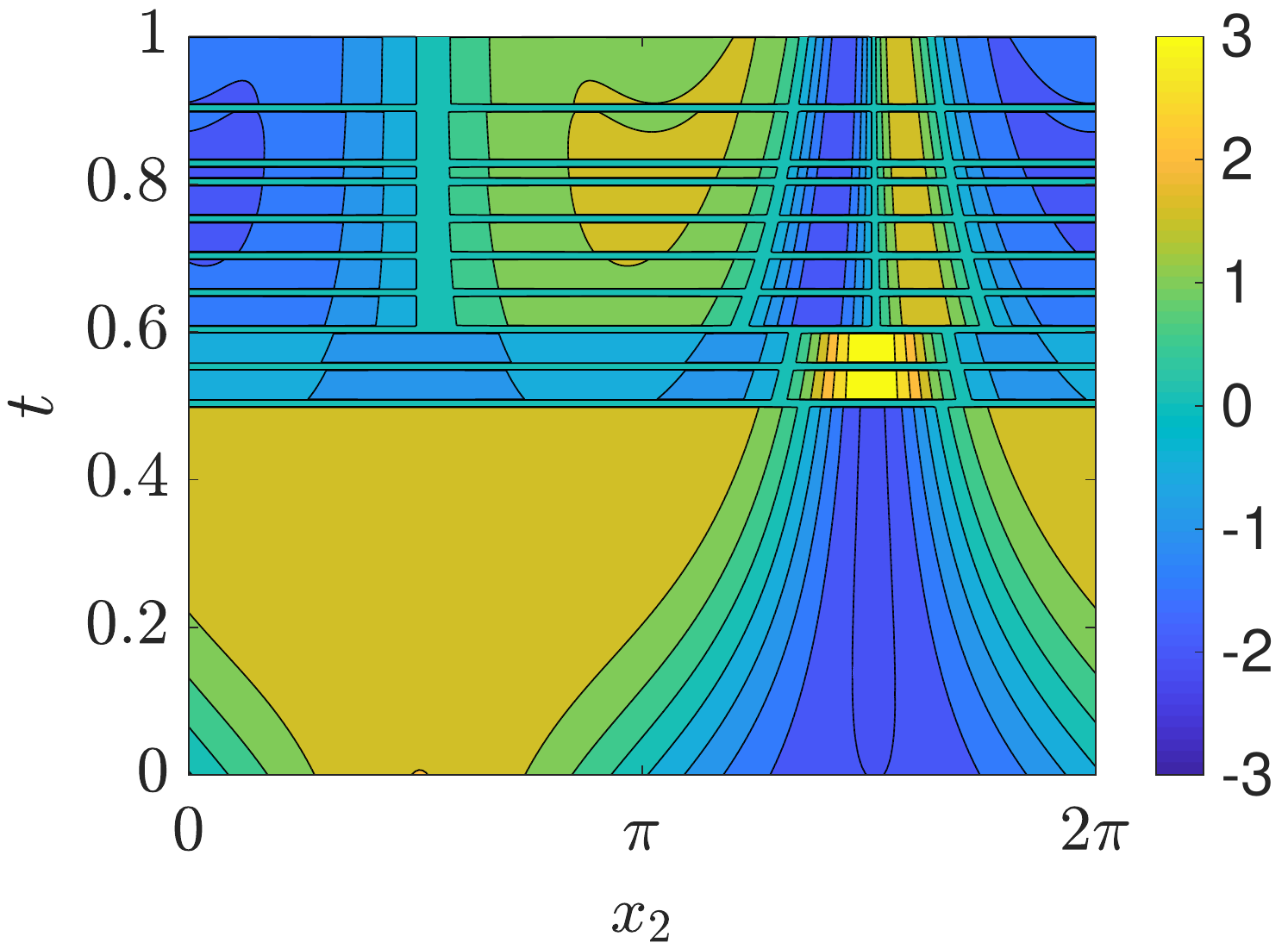}\hspace{1cm}
		\includegraphics[width=0.4\textwidth]{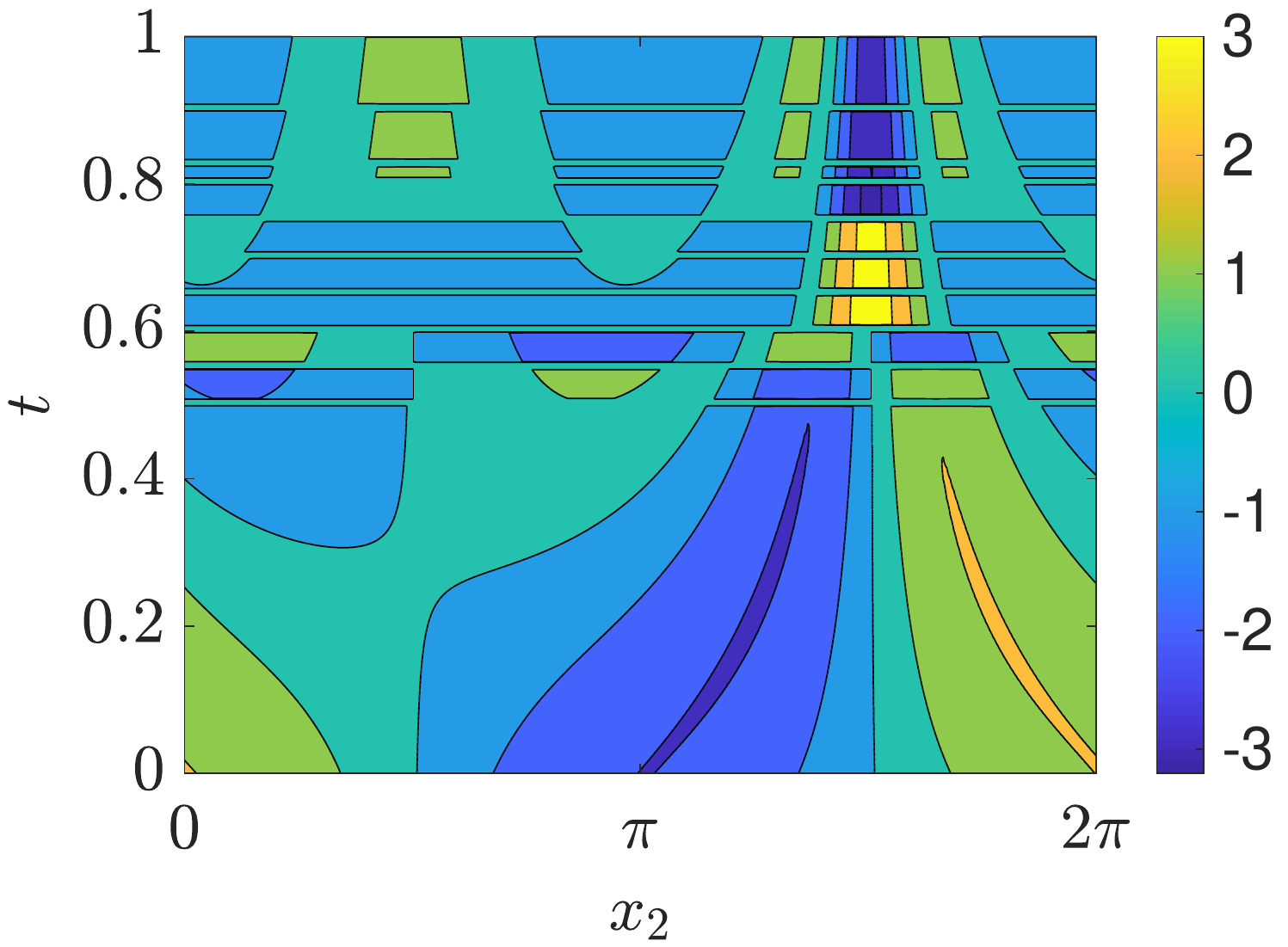}		
}
\caption{Adaptive DO-TT simulation of the PDE \eqref{2dpde}. 
Time evolution of the modes 
$\psi_2^{(2)}$ and  $\psi_3^{(2)}$ we obtained by using 
Algorithm 2 in Section \ref{sec:add_modes}. In particular, 
here we add one mode at time 
$t = 0.5$, $t = 0.55$, $t = 0.6$, $t = 0.65$, $t = 0.7$, 
$t = 0.75$, $t = 0.8$, $t = 0.825$, $t = 0.85$, $t = 0.9$ 
(10 modes total). When a new mode is added, there is a 
re-orthogonalization process that can yield a discontinuity 
in the temporal evolution of each mode. In practice, the DO-TT 
system is re-started from a new initial 
condition after such re-orthogonalization takes place. 
This does not create any temporal discontinuity in the 
solution, nor any error jump (see Figure \ref{fig:2d_error}).}
\label{fig:discontinuous_adaptive_modes}
\end{figure}

\begin{figure}
\centerline{\hspace{0.3cm}\hspace{8cm}}
\centerline{
\includegraphics[width=0.5\textwidth]{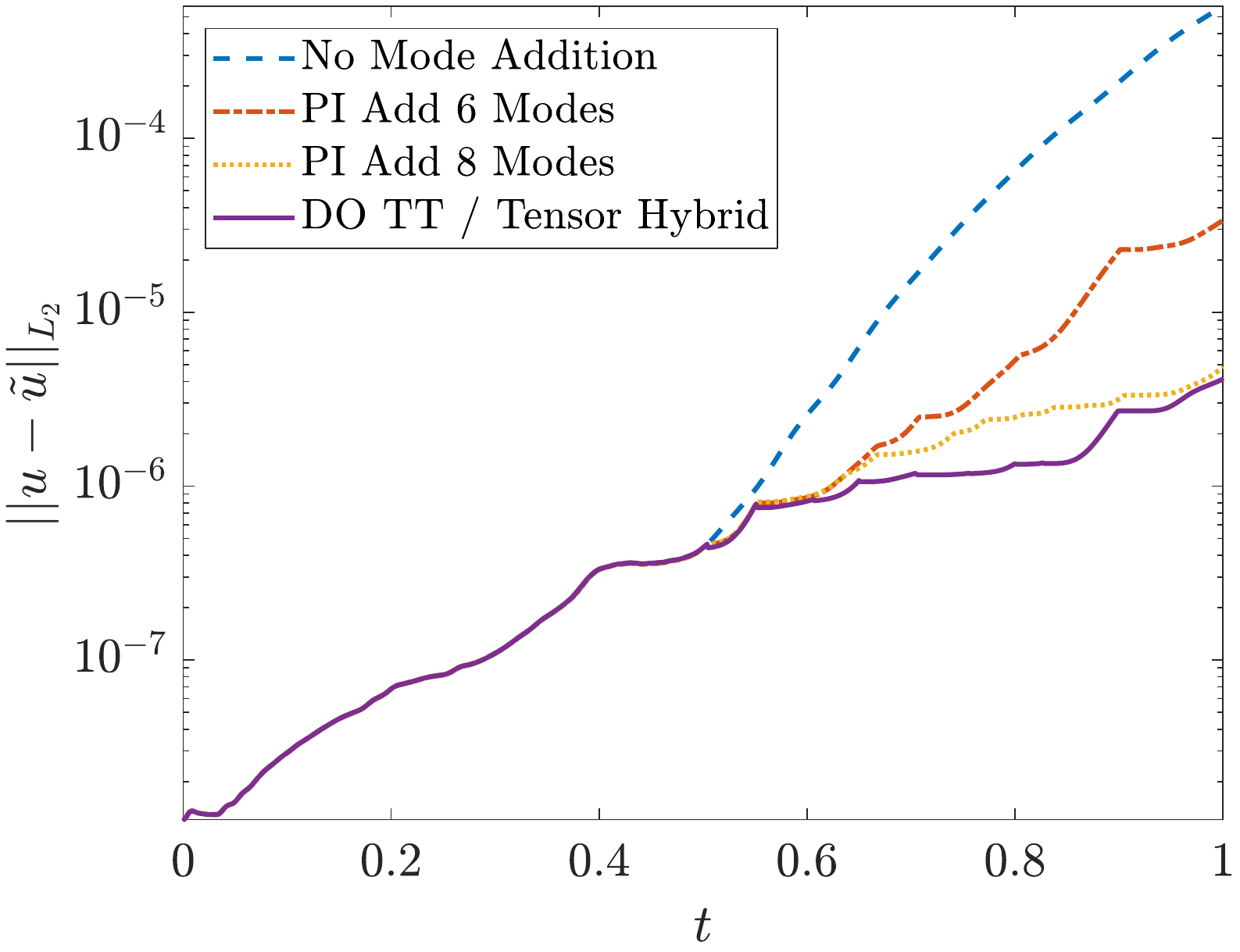}
}
\caption{Two-dimensional hyperbolic PDE \eqref{2dpde}. Time-dependent 
$L^2(\Omega)$ errors between various DO-TT simulations and 
the semi-analytical solution \eqref{2Dsol}.  In particular, we plot the constant rank solution obtained by setting $r_1=17$ in \eqref{do_tt_2d_ex} (dashed line); the DO-TT solution we obtained by adding one extra mode at $t = 0.5$, $t = 0.55$, $t = 0.6$, $t = 0.65$, $t = 0.7$, $t = 0.75$ (6 extra modes total) using the pseudo-inverse (PI) algorithm proposed in \cite{robust_do/bo}; the DO-TT solution  we obtained by adding one extra mode at $t = 0.5$, $t = 0.55$, $t = 0.6$, $t = 0.65$, $t = 0.7$, $t = 0.75$, $t = 0.8$, $t = 0.825$, $t = 0.85$, $t = 0.9$ using the pseudo-inverse algorithm (dashed line) and our Algorithm 2 in Section \ref{sec:add_modes}. }
\label{fig:2d_error}
\end{figure}

\subsubsection{Four-dimensional hyperbolic PDE}
\label{sec:4dhyperbolic}
Let us consider the following four-dimensional linear 
hyperbolic PDE 
\begin{equation}
\label{4d_pde}
 \begin{cases}
\displaystyle\frac{\partial {u(\bm x,t)}}{\partial t} =  
\sum_{i,j = 1}^4 c_{ij} f_j(x_j) \frac{\partial u(\bm x,t)}{\partial x_i} 
\vse\\
\displaystyle 
u(\bm x,0) = \exp\left[-\frac{1}{10} \sin(x_1 + x_2 + x_3 + x_4)\right] 
\end{cases}
\end{equation}
in the spatial  domain $\Omega=[0,2\pi]^4$, 
with periodic boundary conditions. 
In equation \eqref{4d_pde} $c_{ij}$ are real numbers and $f_j(x_j)$ are 
real-valued functions.  The system of DO-TT evolution 
equations \eqref{do_PDE} can be explicitly written for the linear 
PDE \eqref{4d_pde}. Such system is rather complicated, and 
therefore not presented here. For numerical 
demonstration, we set the coefficient 
matrix $c_{ij}$ as 
$$\bm c = \begin{bmatrix}
0 & 0.5 & 0 & 0\\
0 & 0 & - 0.3 & 0 \\
0 & 0 & 0 & -1 \\
0.5 & 0 & 0 & 0 
\end{bmatrix}
$$
and consider the following functions
\begin{align*}
f_1(x_1) = \sin(x_1), \qquad 
f_2(x_2) = \cos(2x_2), \qquad 
f_3(x_3) = \sin(3x_3), \qquad 
f_4(x_4) = \cos(4x_4).
\end{align*}
This yields non-trivial dynamics ($c_{ij}$ is not diagonal). 
We solve the DO-TT system \eqref{do_PDE} numerically 
using a Fourier spectral collocation method with $20$ Fourier points 
in each variable $x_i$, and RK4 time integration with $\Delta t = 10^{-3}$. 
To this end, we first decompose the four-dimensional 
initial condition in \eqref{4d_pde} with the bi-orthogonal method 
we discussed in Section \ref{sec:recursive}. 
Specifically, we consider the space $H^{0}(\Omega)=L_2(\Omega)$ 
and set the eigenvalue threshold to $\sigma = 10^{-10}$. This 
yields the following hierarchical ranks
\begin{align}
r_1 = 9,
\qquad 
r_2 = \begin{bmatrix} 1 & 2 & 2 & 2 & 2 & 2 & 2 & 2 &2 \end{bmatrix}, 
\qquad 
r_3  = \begin{bmatrix} 1 & 2 & 2 & 2 & 2 & 2 & 2 & 2 & 2 \\
0 & 2 & 2 & 2 & 2 & 2 & 2 & 2 & 2
\end{bmatrix}^T,
\label{ranks}
\end{align}
which we keep constant throughout the simulation. In other words, 
here we do not perform adaptive addition/removal of DO-TT 
modes as we did in the previous PDE example 
(Eqs. \eqref{2dpde} and \eqref{do_tt_2d_ex}). 
In Figure \ref{fig:4d_modes} we plot the time 
evolution of a few representative DO-TT modes. 
\begin{figure}[t]
\centerline{\footnotesize\hspace{0.2cm}$\psi_{21}^{(2)}$ \hspace{4.8cm} $\psi^{(4)}_{211}$  \hspace{4.8cm} $\psi^{(4)}_{311}$}
	\centerline{
	\rotatebox{90}{\hspace{1.3cm}  \footnotesize}
		\includegraphics[width=0.33\textwidth]{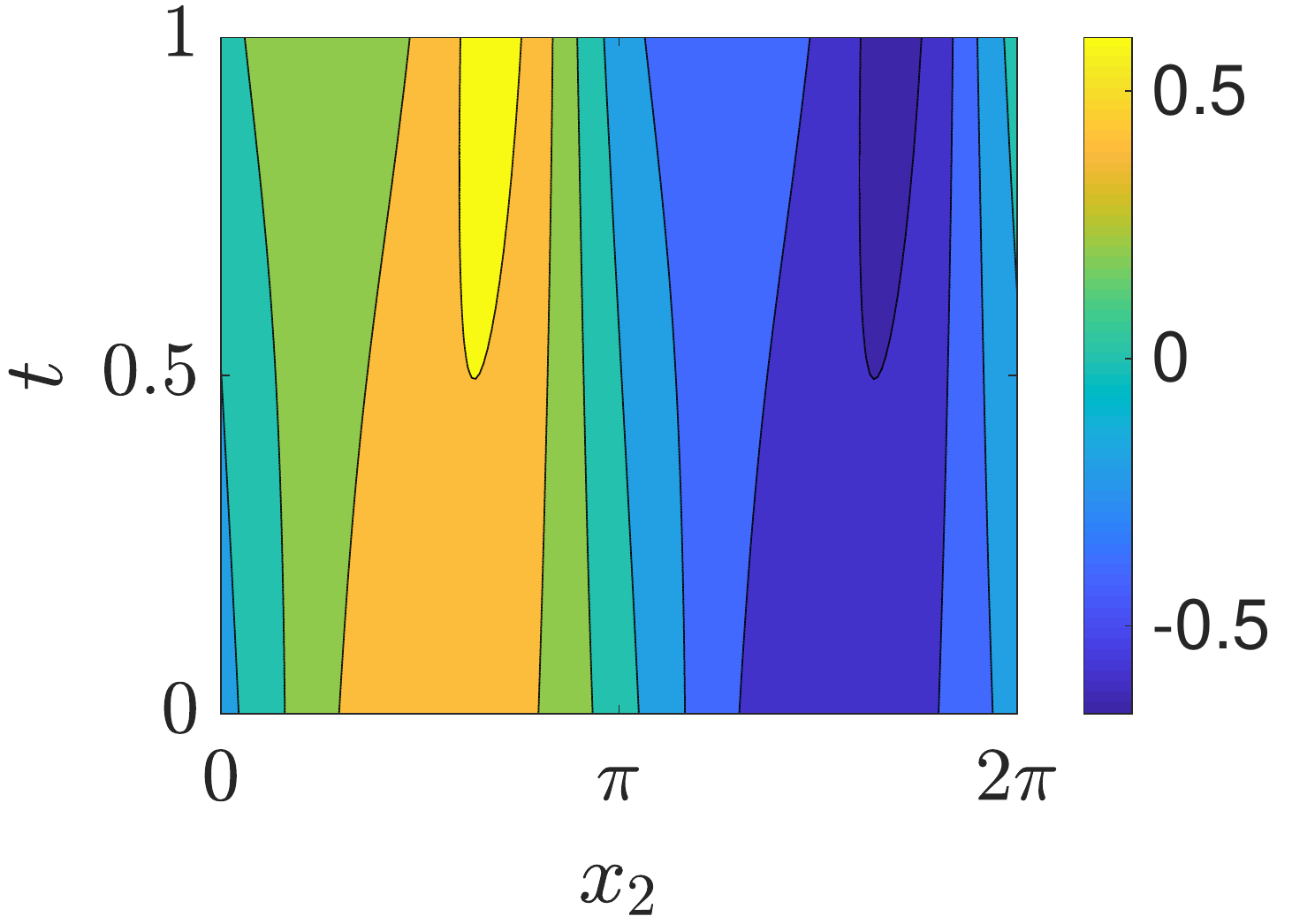}
		\includegraphics[width=0.33\textwidth]{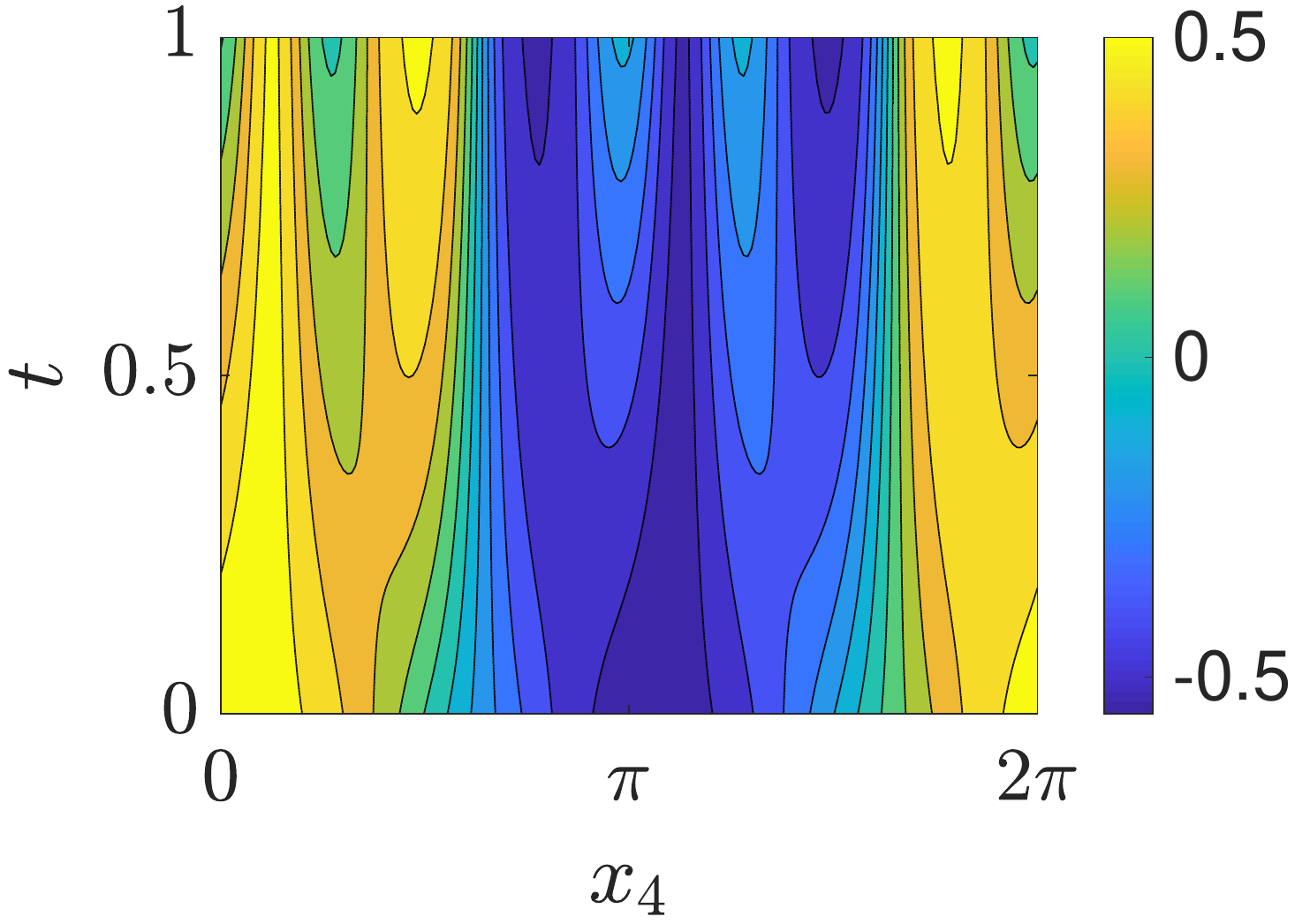}
		\includegraphics[width=0.33\textwidth]{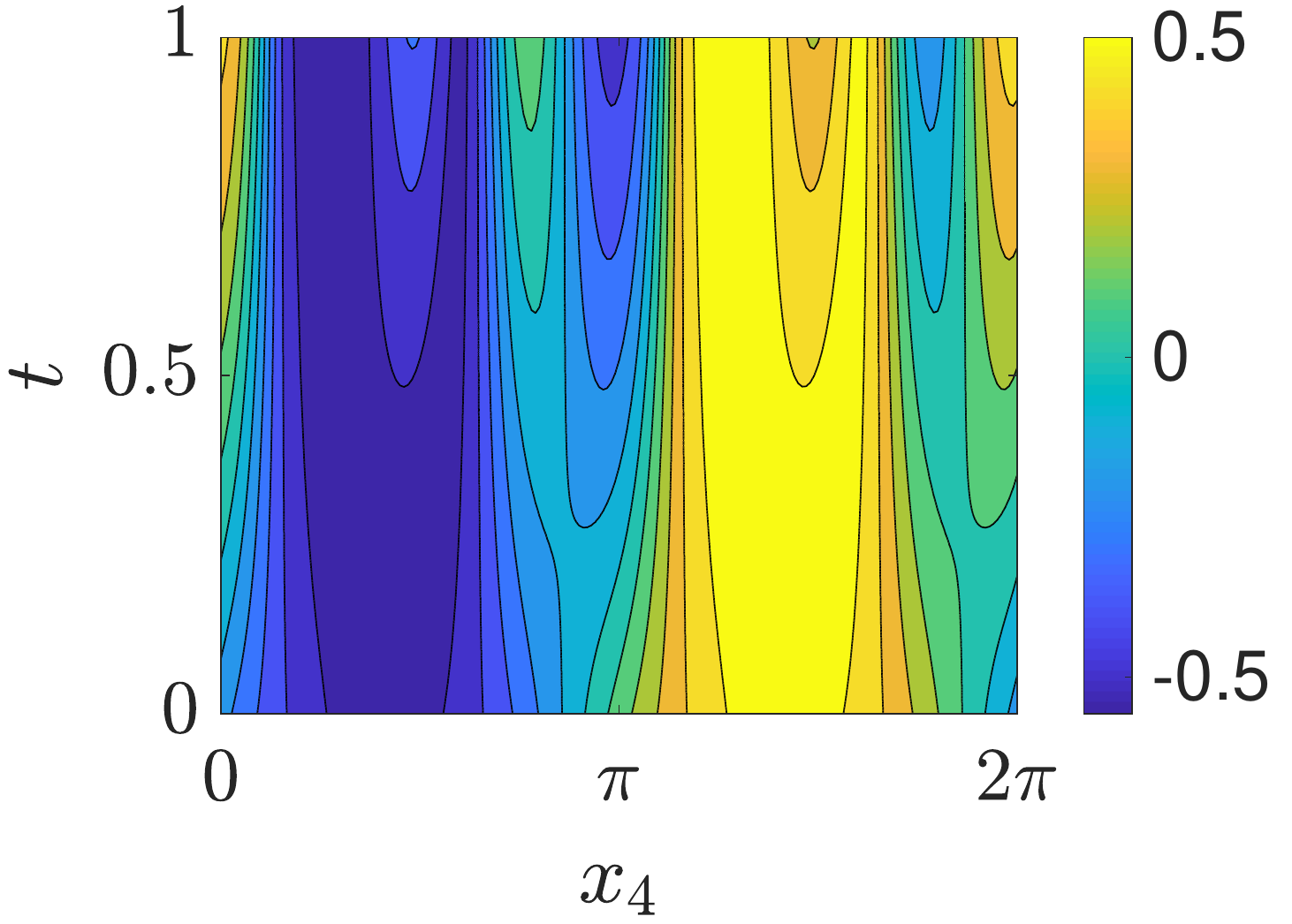}	
}
\caption{Time evolution of a few representative DO-TT modes generated by the four-dimensional hyperbolic initial/boundary value problem \eqref{4d_pde}.}
\label{fig:4d_modes}
\end{figure}
It is seen that as time evolves the spatial frequency of 
such modes increases which suggests that the
hyperbolic dynamics activate higher spatial 
frequencies in the spectral representation of the PDE 
solution. This is shown in Figure \ref{fig:4d_solution}, 
where we plot one section of the solution to the initial value problem 
\eqref{4d_pde} we obtained with the method of 
characteristics (benchmark solution), 
and the solution we obtained with the DO-TT propagator. 
It is seen that even with the very few hierarchical ranks 
summarized in \eqref{ranks} we were able to resolve the 4D solution to 
a reasonable accuracy (see Figure \ref{fig:4d_error}(a)).
\begin{figure}[t]
\hspace{0.5cm}
\centerline{\footnotesize\hspace{-0.5cm}$t = 0.0$ \hspace{4.5cm} $t = 0.5$  \hspace{4.5cm} $t = 1.0$ }

\hspace{0.5cm} 
\centerline{
	\rotatebox{90}{\hspace{0.7cm}\footnotesize Method of characteristics }
	    \includegraphics[width=0.34\textwidth]{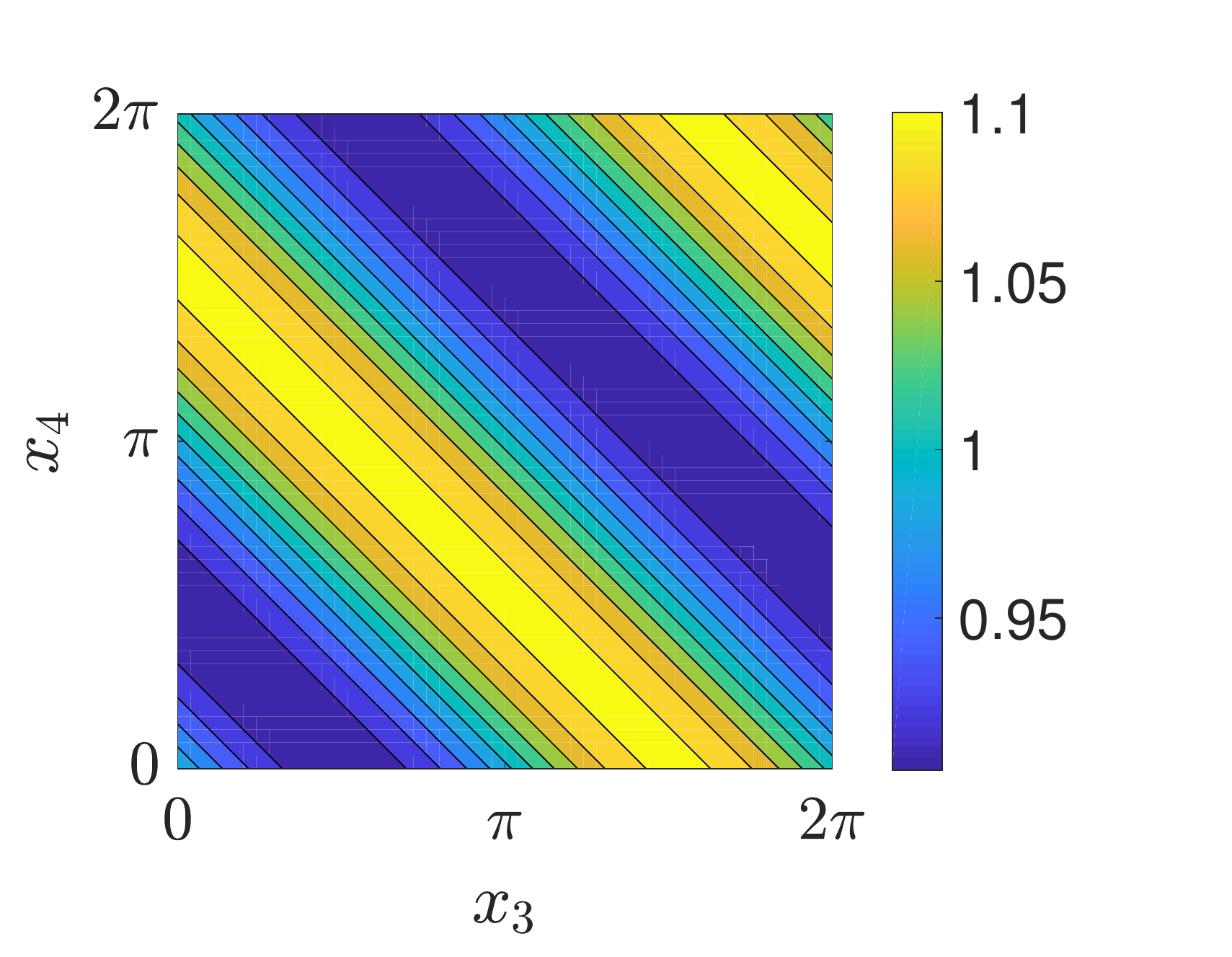}
		\includegraphics[width=0.34\textwidth]{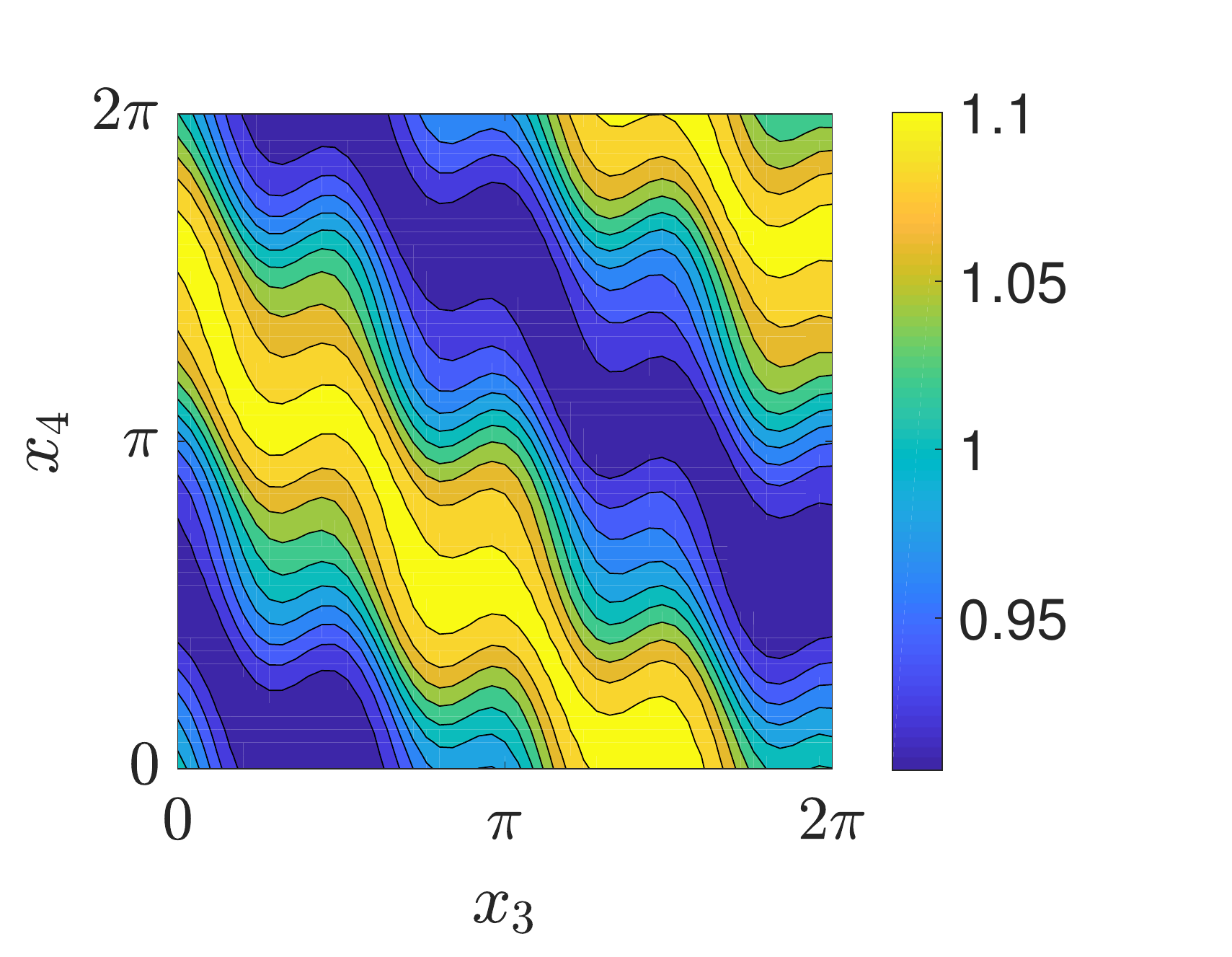}
		\includegraphics[width=0.34\textwidth]{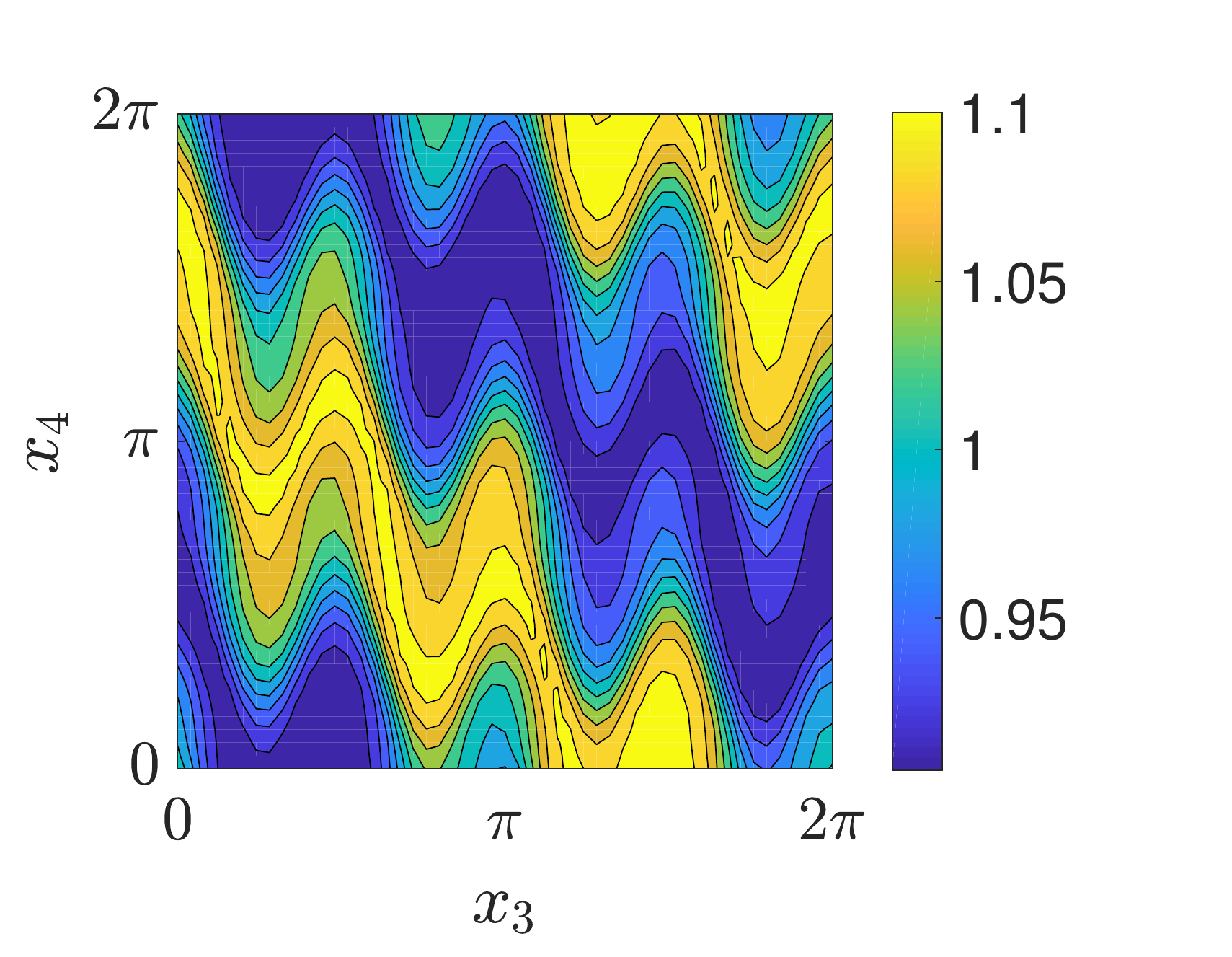}	
}
	
\hspace{0.5cm}	
\centerline{
	\rotatebox{90}{\hspace{1.8cm}  \footnotesize  DO-TT }
		\includegraphics[width=0.34\textwidth]{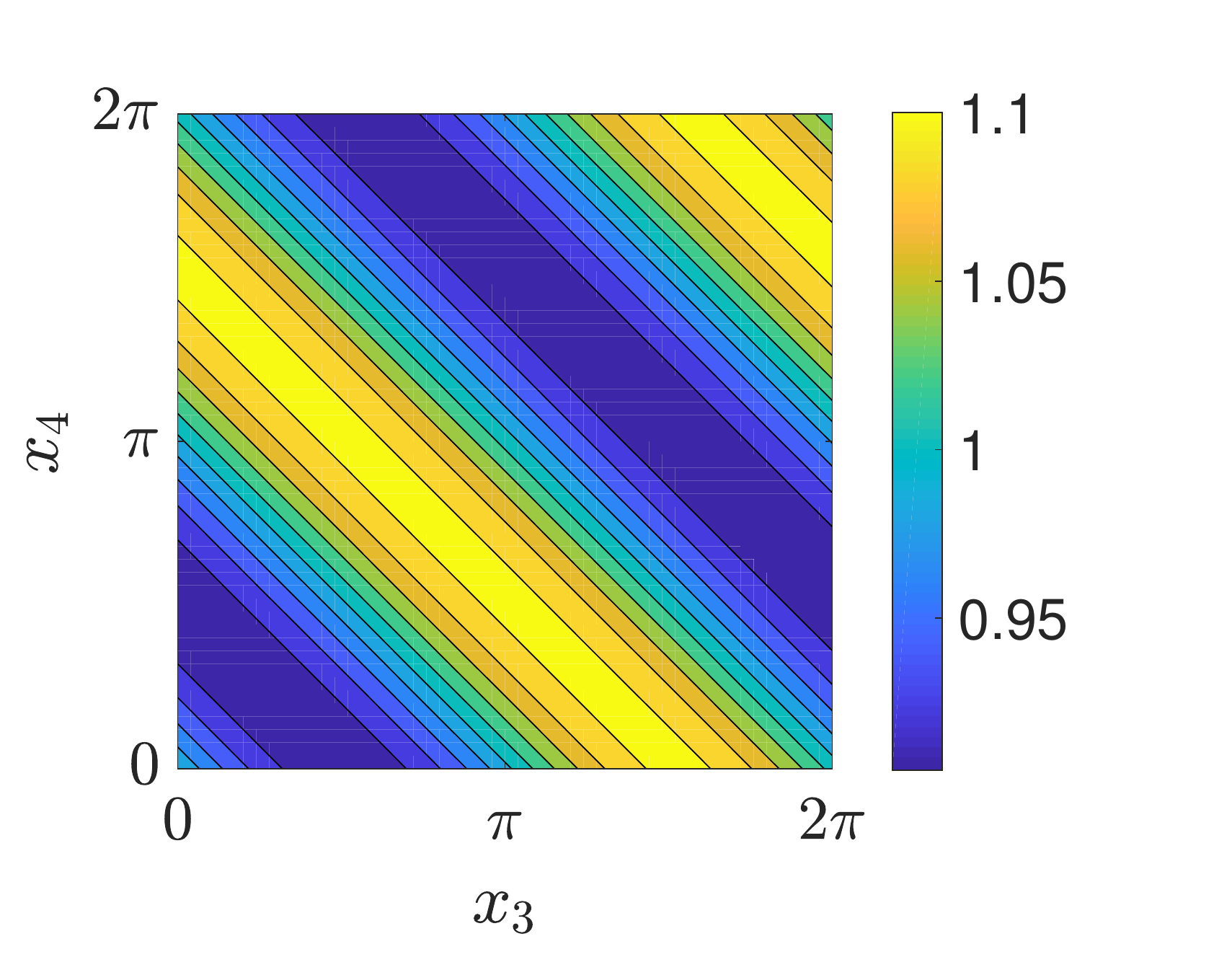}
		\includegraphics[width=0.34\textwidth]{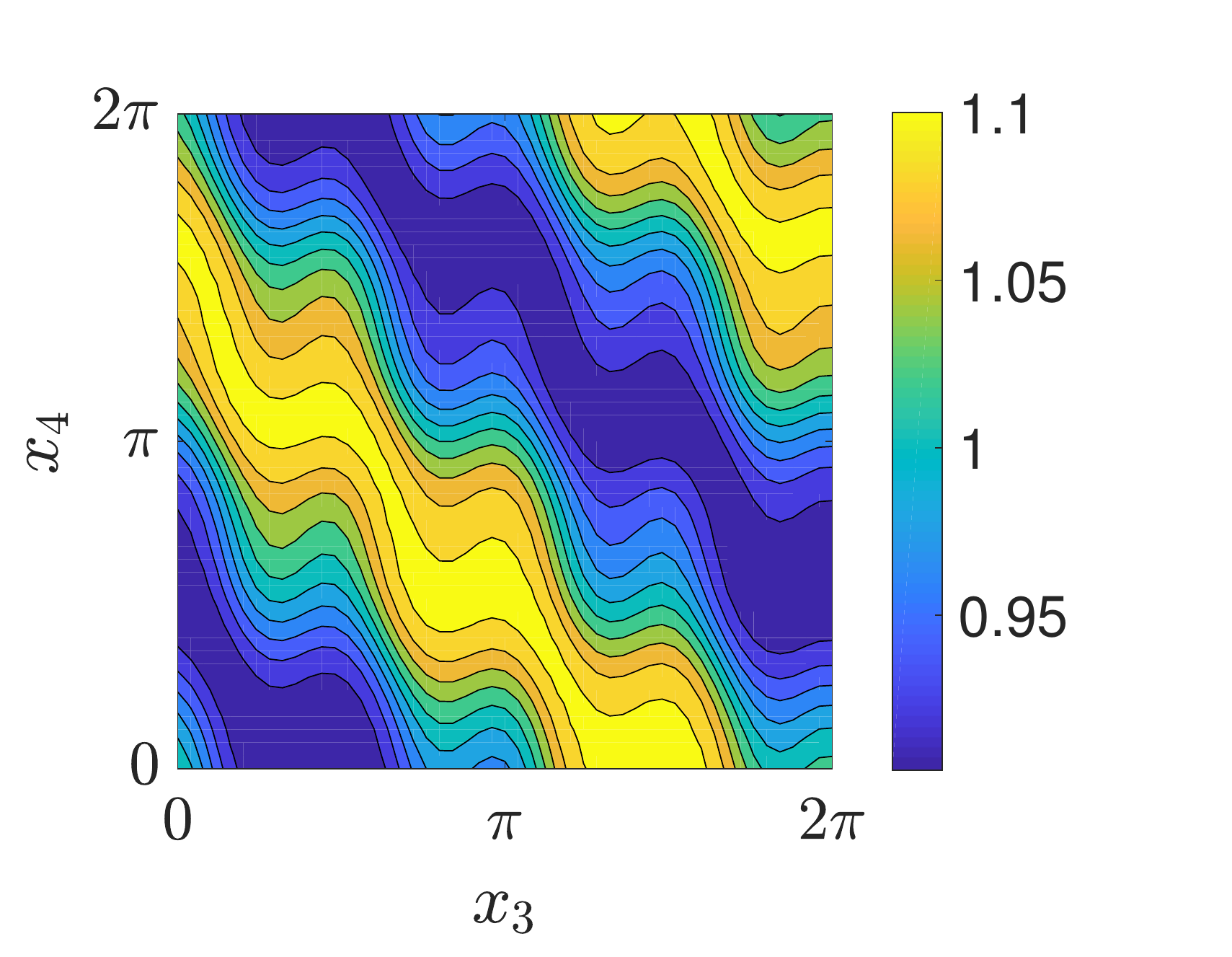}
		\includegraphics[width=0.34\textwidth]{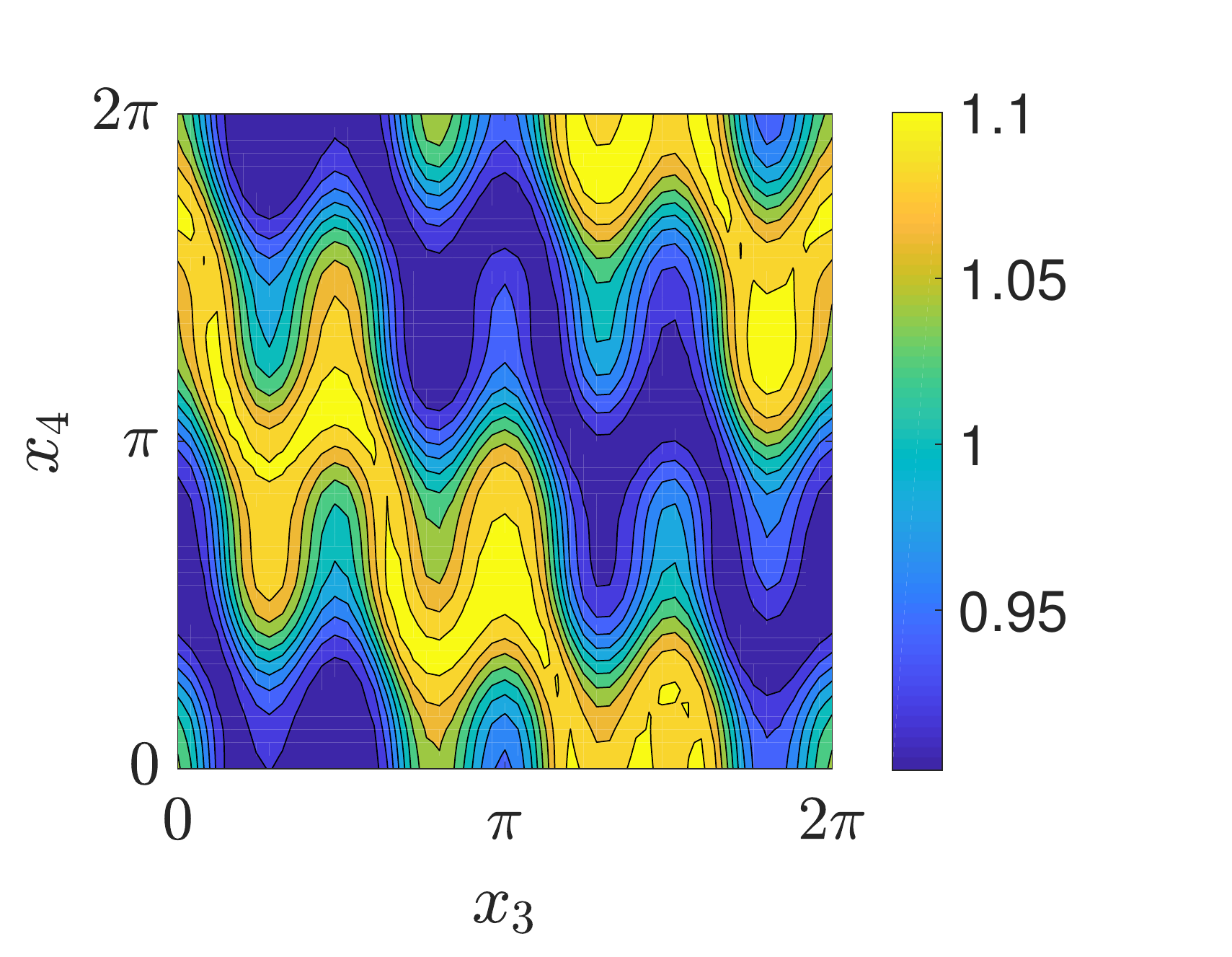}
}

\hspace{0.5cm}	
\centerline{
	\rotatebox{90}{\hspace{1.2cm}  \footnotesize Pointwise error}
        \includegraphics[width=0.34\textwidth]{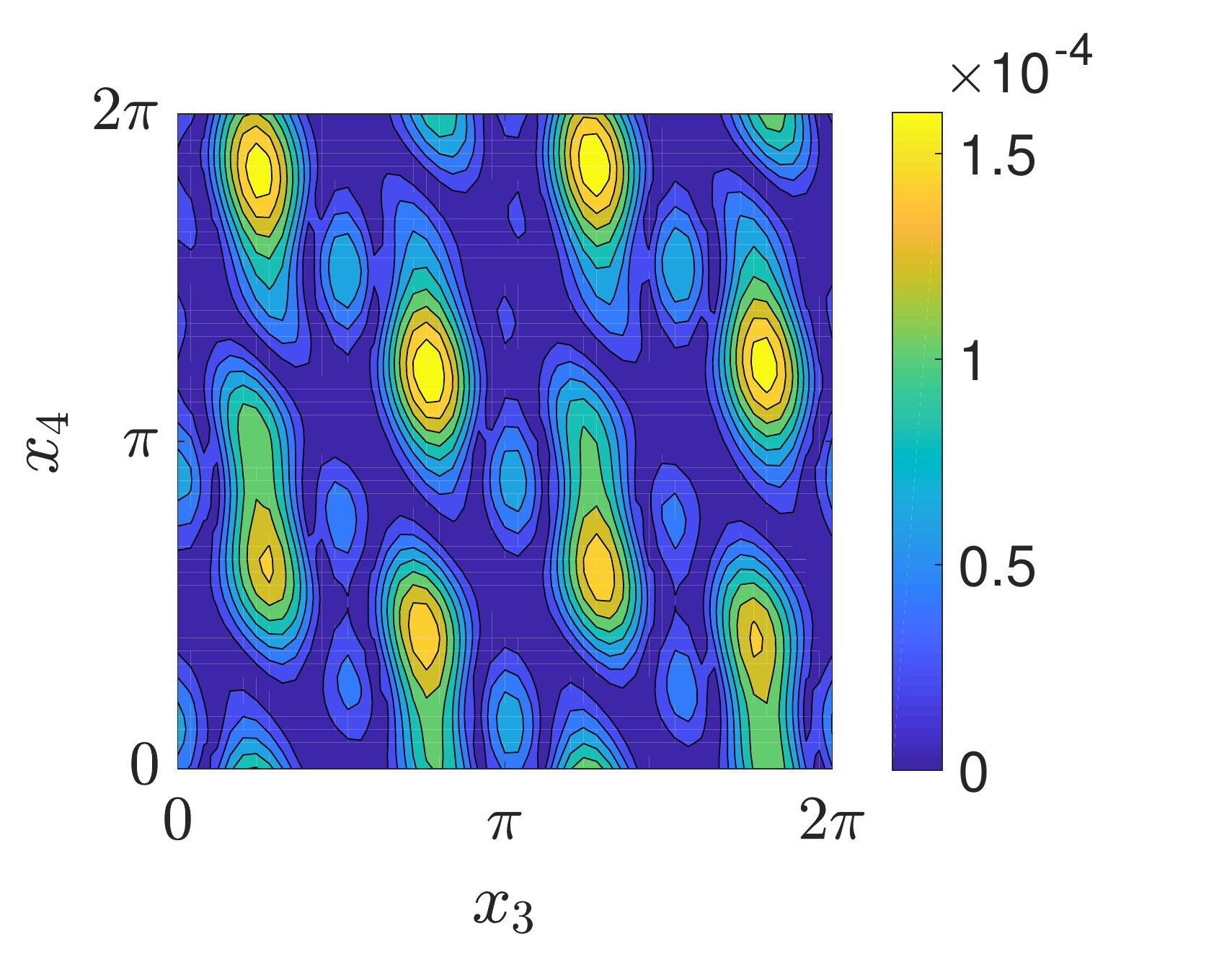}
	    \includegraphics[width=0.34\textwidth]{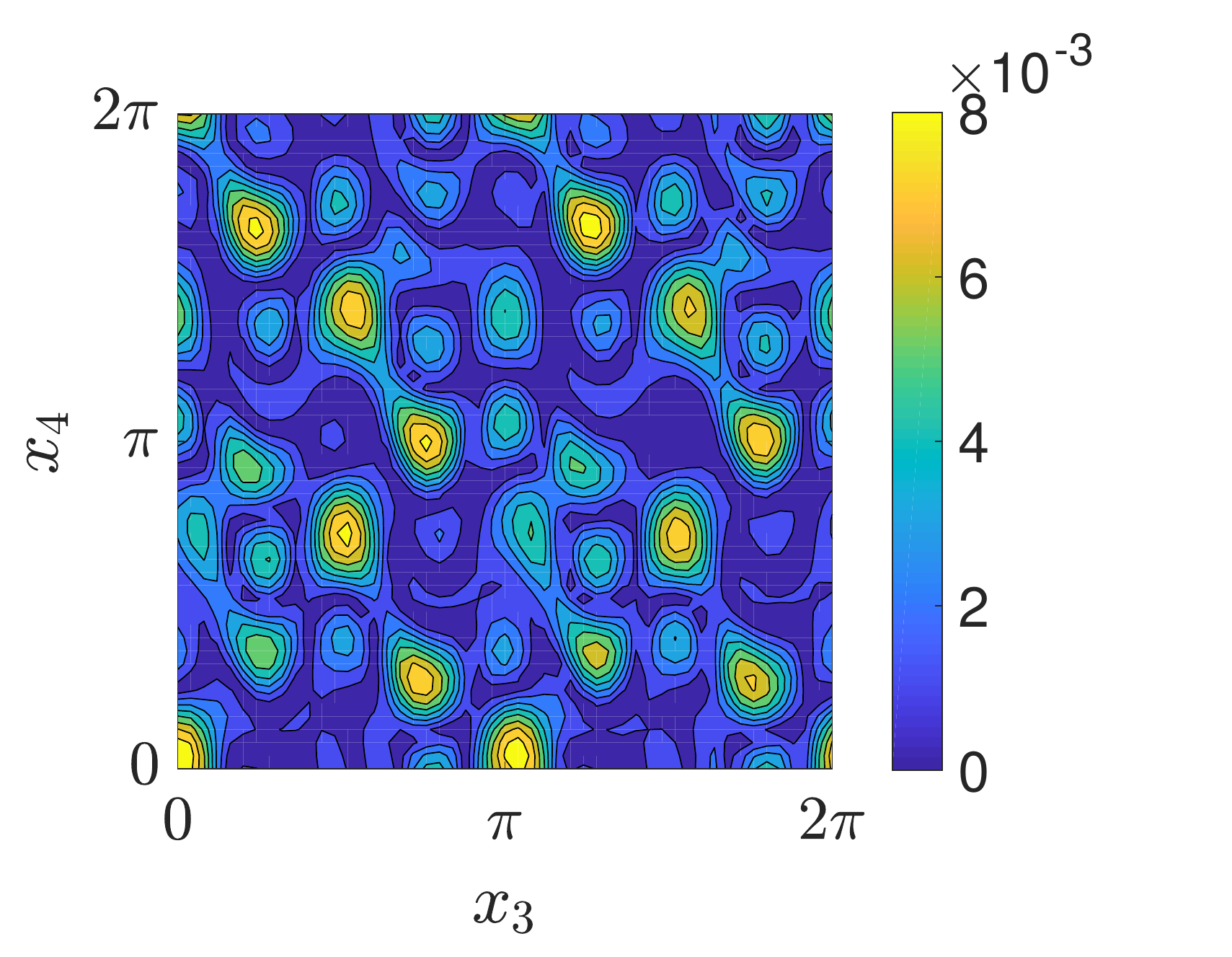}
	    \includegraphics[width=0.34\textwidth]{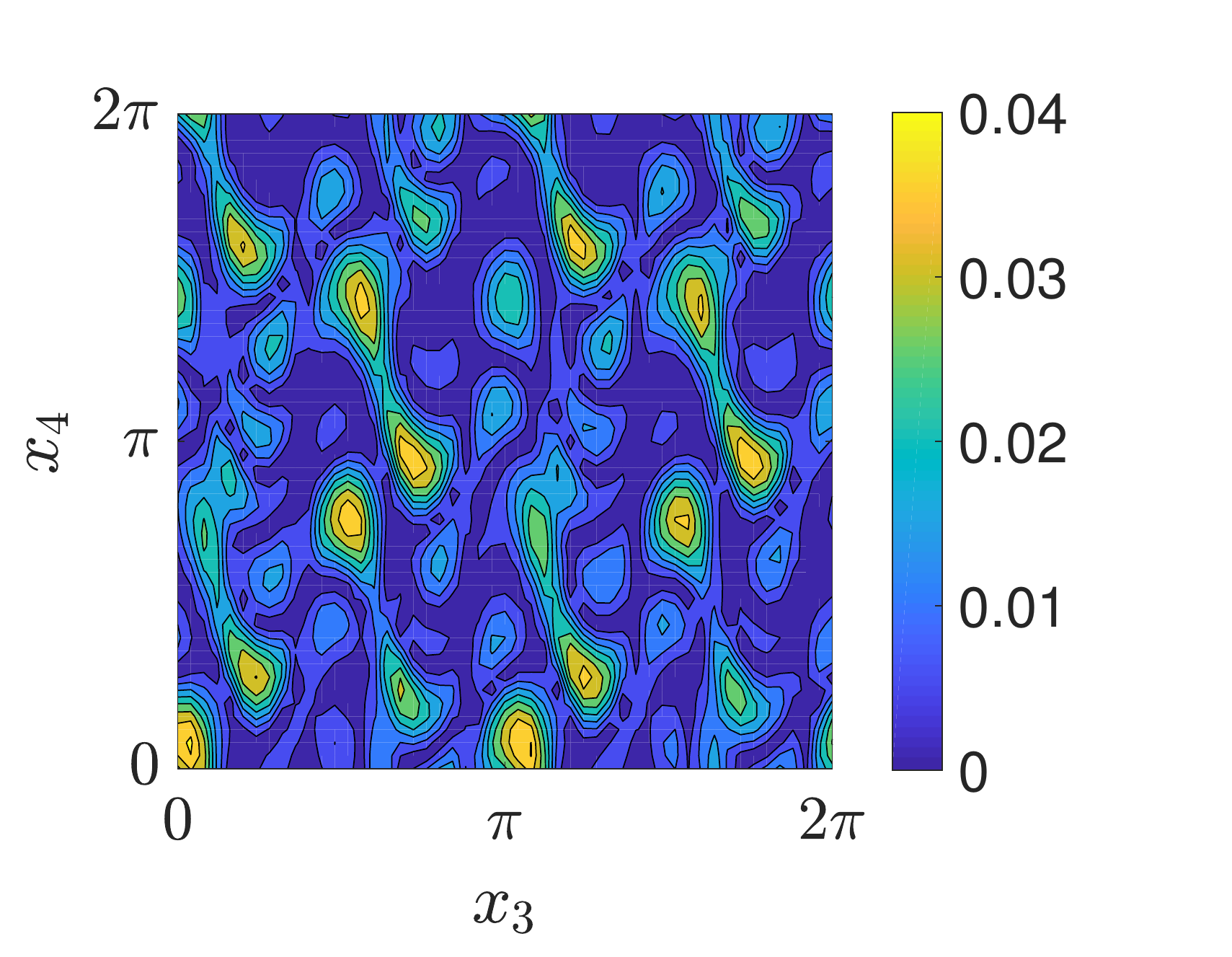}
}
\caption{Time snapshots of a 2D slice ($x_1= 2.9568$, $x_2=2.9568$) of the solution to the four-dimensional 
PDE \eqref{4d_pde} obtained using method of characteristics 
(benchmark solution),  and the DO-TT propagator with 
hierarchical ranks \eqref{ranks}. We also plot the maximum 
pointwise error between the two solutions.}
\label{fig:4d_solution}
\end{figure}

\begin{figure}
\centerline{\hspace{0.4cm}\footnotesize (a)\hspace{8cm} (b)}
\centerline{
\includegraphics[width=0.5\textwidth]{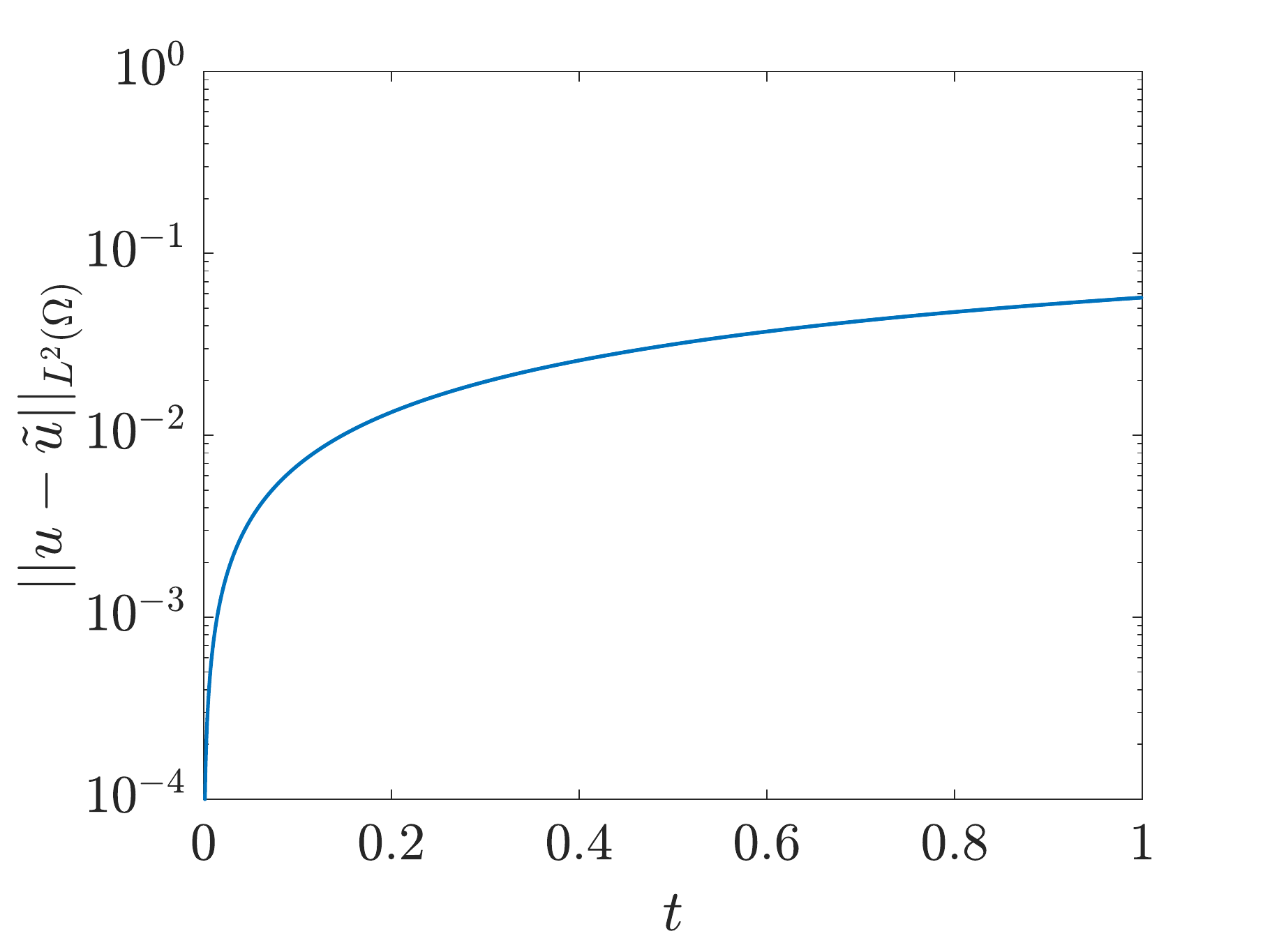}
\includegraphics[width=0.5\textwidth]{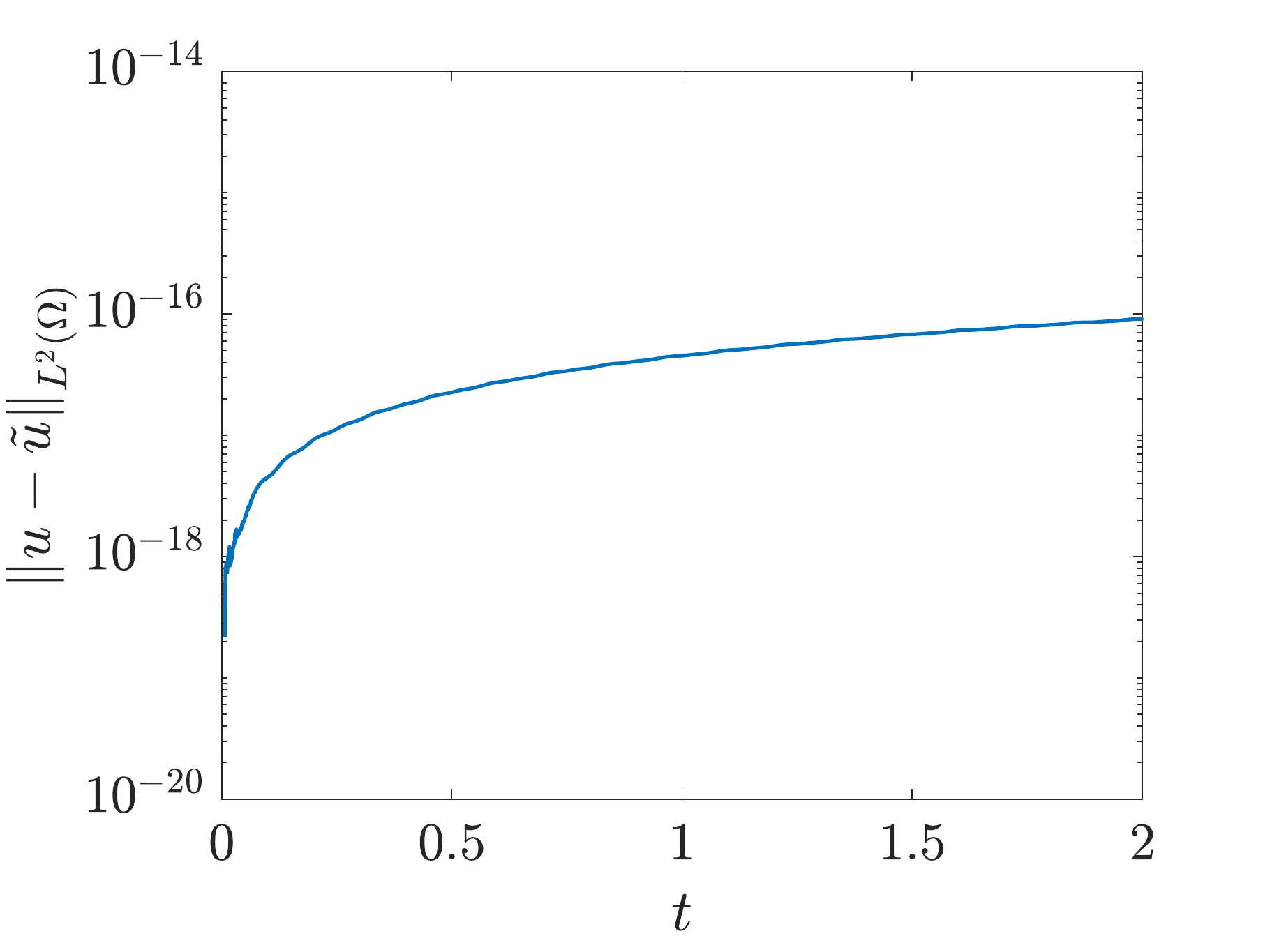}
}
\caption{Time-dependent $L^2(\Omega)$ error of the 
DO-TT  approximation of the solution to the 
four-dimensional PDE \eqref{4d_pde} (a), and 
the fifty-dimensional PDE \eqref{high_dim_pde} (b). 
In (a) we computed the DO-TT error relative to an accurate 
benchmark solution constructed with the method of 
characteristics. In (b) we used the analytical solution 
\eqref{error50d}.}
\label{fig:4d_error}
\end{figure}

\subsection{Fifty-dimensional hyperbolic PDE}
Let us consider the following fifty-dimensional  
hyperbolic problem
\begin{equation}
 \label{high_dim_pde}
 \begin{cases}
\displaystyle\frac{\partial u(\bm x,t)}{\partial t} = \sum_{j = 1}^{50} f_j(x_j)\frac{\partial u(\bm x,t)}{\partial x_j} \vse\\
{u}(\bm x,0) = \displaystyle\prod_{j = 1}^{50} \psi_0^{(j)}(x_j)
\end{cases}
\end{equation}
subject to periodic boundary conditions in the 
hyper-cube $\Omega=[0, 2\pi]^{50}$.  It is easy to 
show that the solution to \eqref{high_dim_pde} is 
rank-one for all $t\geq 0$. 
Correspondingly, we look for a rank-one DO-TT solution
of the form
\begin{equation}
\tilde {u} = \prod_{j=1}^{50} \psi^{(j)}(t). 
\label{DO-TT-50d}
\end{equation} 
By substituting \eqref{DO-TT-50d} into \eqref{high_dim_pde} 
and imposing DO orthogonality conditions at each level of the 
binary tree yields the DO-TT propagator  
\begin{equation}
\label{do_tt_50d}
\begin{aligned}
\frac{\partial \psi^{(j)}}{\partial t} &= f_j(x_j) \frac{\partial \psi^{(j)}}{\partial x_j} - f_j(x_j)\psi^{(j)} \langle \frac{\partial \psi^{(j)}}{\partial x_j} \psi^{(j)} \rangle \ , \qquad j = 1,2, \ldots, 49\\
\frac{\partial \psi^{(50)}}{\partial t} &= \sum_{j = 1}^{49} f_j(x_j) \langle \frac{\partial \psi^{(j)}}{\partial x_j} \psi^{(j)} \rangle \psi^{(50)}.
\end{aligned}
\end{equation}
Note that this system is nonlinear. Specifically, the first 49 equations 
are uncoupled, while the 50th one is coupled with the entire system.  
We set the coefficients $f_j(x_j) = j$, i.e., constant,  
and the components of the initial condition as 
\begin{equation}
\begin{aligned}
\psi_0^{(j)}(x_j) = \frac{\sin(x_j)}{\sqrt{\pi}}, \quad j = 1,\ldots, 49, 
\qquad \psi_0^{(50)}(x_{50}) = 10^7(3 + \sin(x_{50})),
    \end{aligned}
\end{equation}
which satisfy the bi-orthogonality condition. The analytical solution to \eqref{high_dim_pde} is easily obtained as 
\begin{equation}
\label{hd_analytic}
u = \prod_{j = 1}^{50} \psi_0^{(j)}(x_j + jt).
\end{equation}
As before, we solved the DO-TT system \eqref{do_tt_50d} using 
a Fourier spectral collocation method with  
$60$ Fourier points in each variable $x_j$, and RK4 time integration 
with $\Delta t = 10^{-3}$. The temporal evolution of 
a few representative DO-TT modes is shown in Figure \ref{fig:high_dim_figures}. 
\begin{figure}[t]
\centerline{\footnotesize\hspace{-.2cm} $\psi^{(1)}$ \hspace{4.8cm} $\psi^{(25)}$   \hspace{4.8cm}  $\psi^{(50)}$   }
	\centerline{
	\rotatebox{90}{\hspace{1.3cm}  \footnotesize}
		\includegraphics[width=0.34\textwidth]{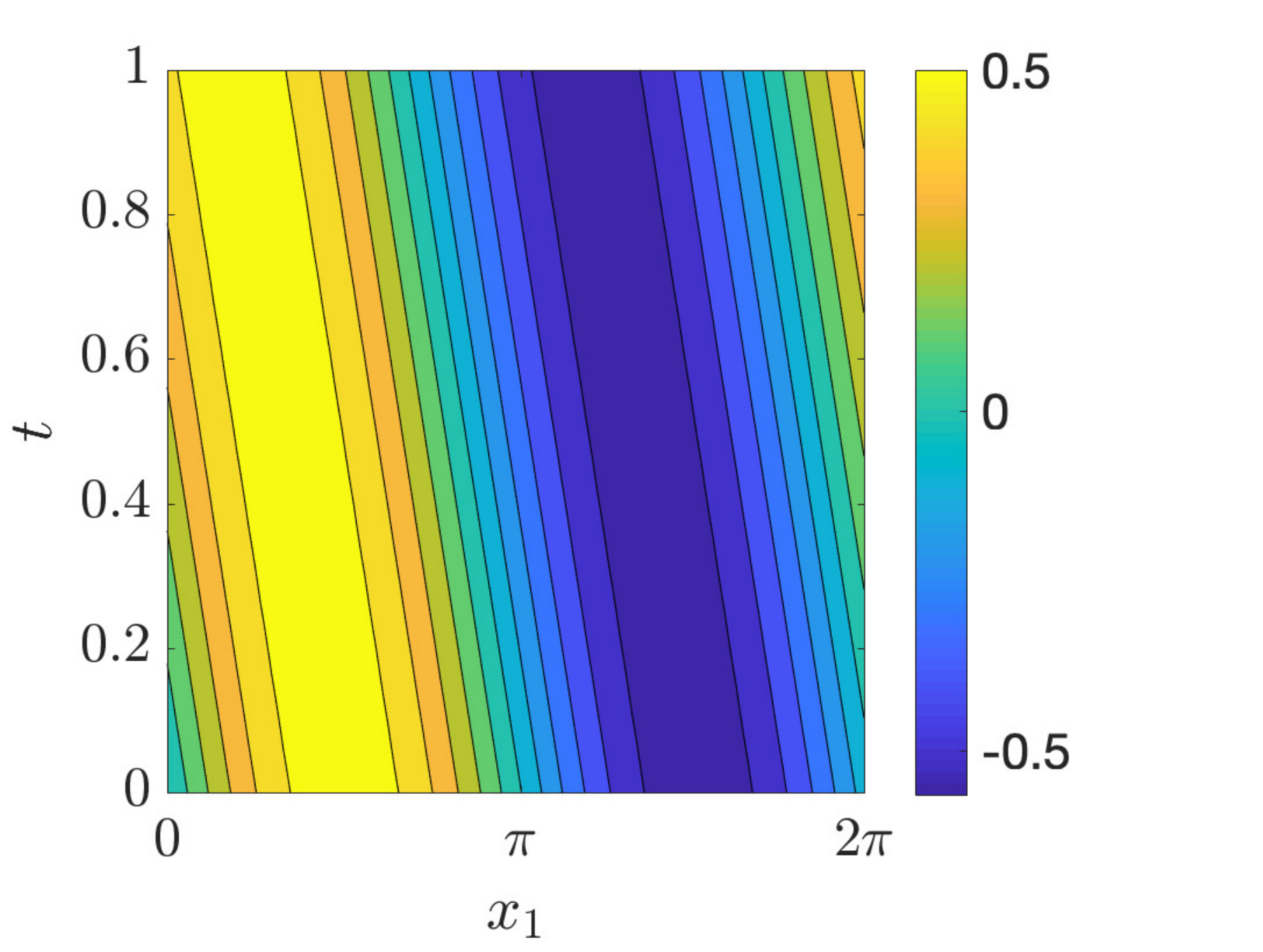}
		\includegraphics[width=0.34\textwidth]{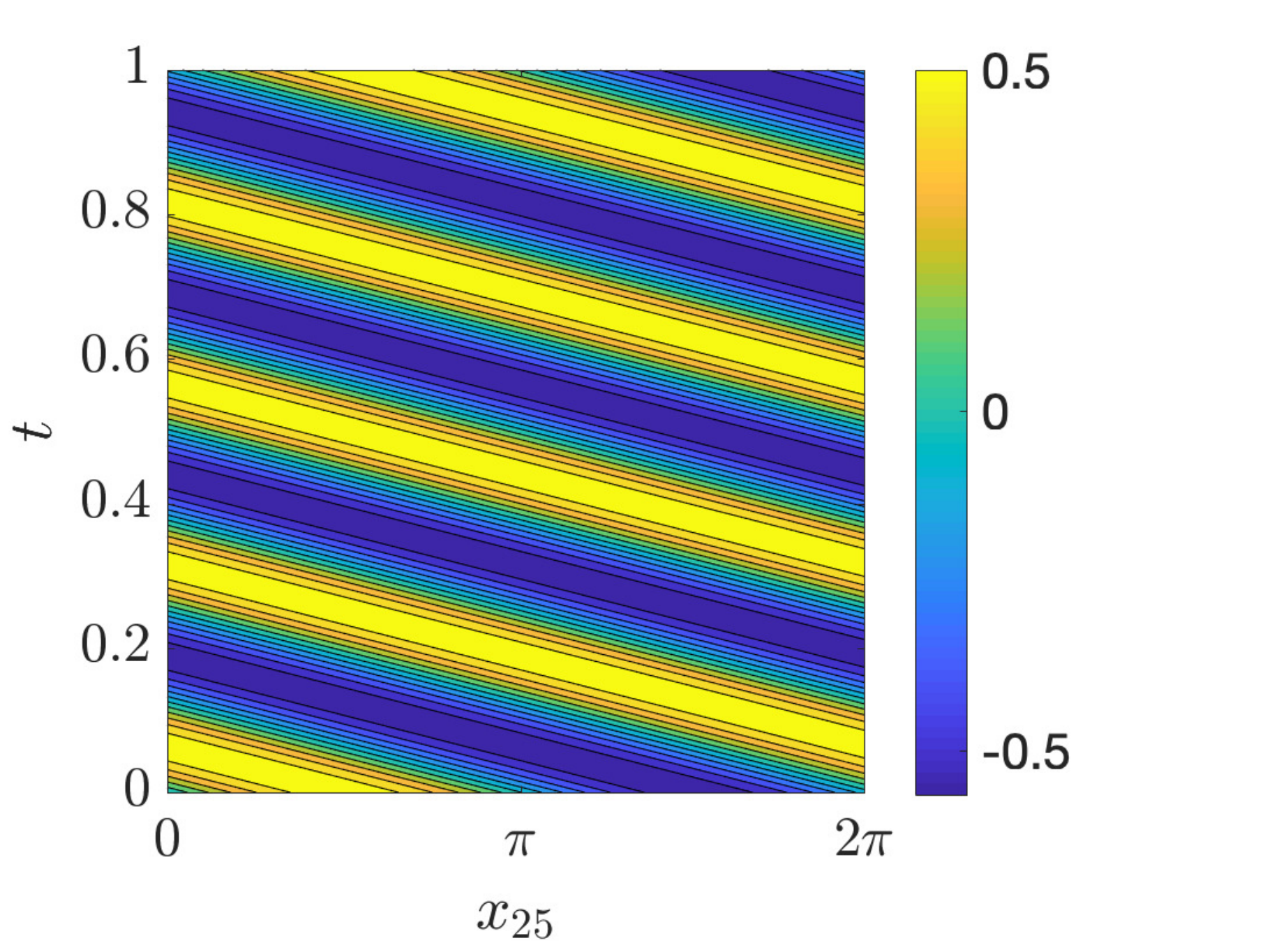}
		\includegraphics[width=0.34\textwidth]{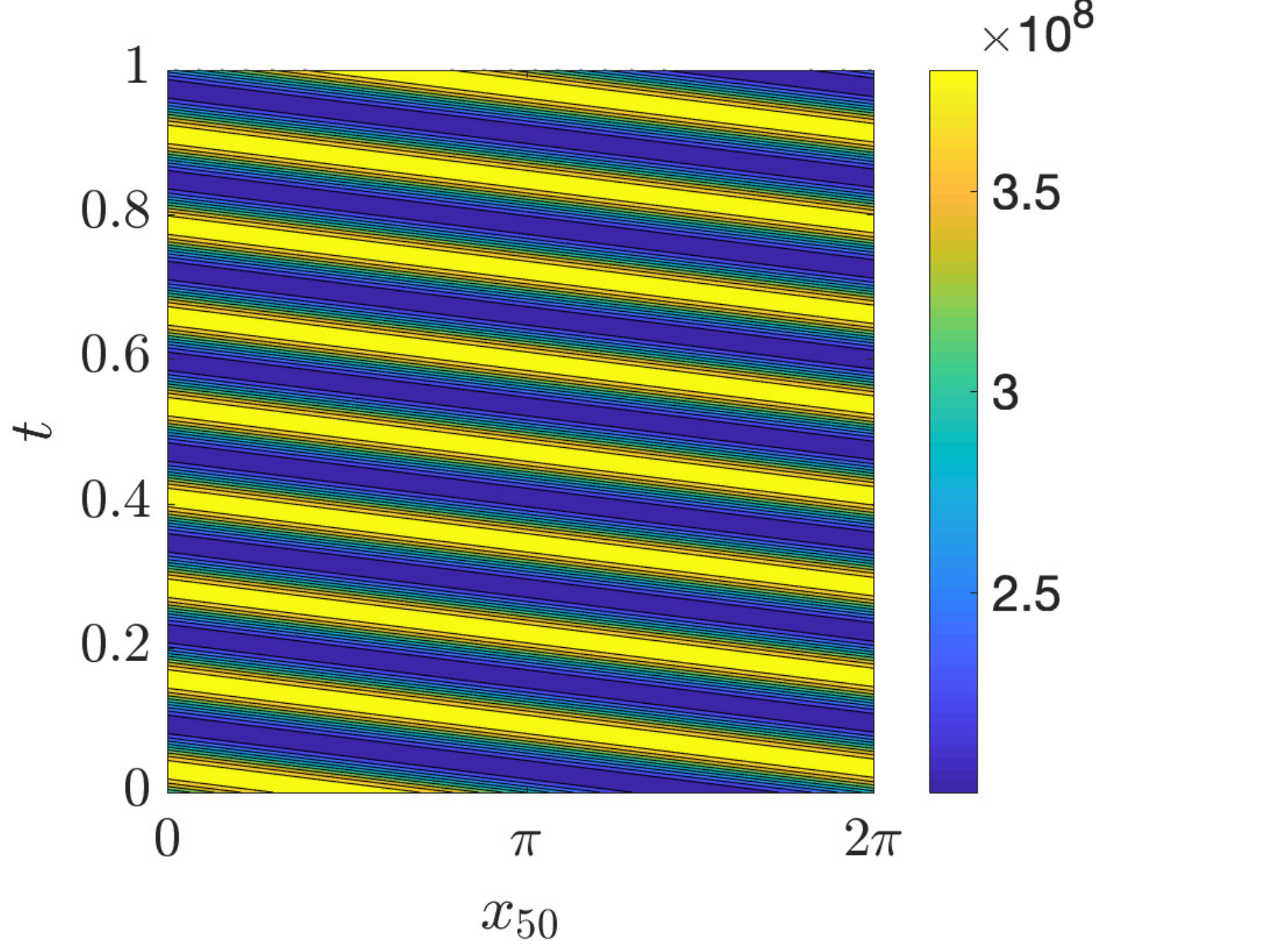}	
}
\caption{Time evolution of a few representative DO-TT modes solving 
nonlinear PDE system  \eqref{do_tt_50d}. It is seen that all modes 
are traveling waves.}
\label{fig:high_dim_figures}
\end{figure}
The $L^2(\Omega)$ error between the analytical 
solution \eqref{hd_analytic} and the DO-TT 
approximation \eqref{DO-TT-50d} can be represented in 
terms of one-dimensional integrals as follows
\begin{equation}
\begin{aligned}
\left\|u-\tilde{u}\right\|^2_{L^2(\Omega)} &= 
\int_{\Omega} \left(\prod_{j = 1}^{50} \psi_0^{(j)}(x_j + jt) - 
\prod_{j=1}^{50} \psi^{(j)}(t)\right)^2 dx_1\cdots dx_{50}\\
&= \prod_{j=1}^{50} \int_0^{2\pi} \psi_0^{(j)}(x_j + jt)^2 dx_j + 
\prod_{j=1}^{50} \int_0^{2\pi} \psi^{(j)}(t)^2 dx_j - 
2\prod_{j=1}^{50} \int_0^{2\pi} \psi^{(j)}_0(x_j + jt) 
\psi^{(j)}(t) dx_j.
\end{aligned}
\label{error50d}
\end{equation}
This error is plotted in Figure \ref{fig:4d_error}(b) versus time.
It is seen that the DO-TT propagator in this case is numerically 
exact.

\subsection{Parabolic PDEs}
\label{sec:parabolic}
In this Section we study DO-TT solution of 
constant-coefficients parabolic PDEs in periodic hypercubes. 
In contrast with the hyperbolic problems discussed in the previous 
Section, the solution in this case does not increase in rank 
throughout propagation. Rather, the solution smooths out
over time and the rank tends to decrease.  This implies 
that in order for the covariance matrices in the DO-TT system  
to remain invertible during propagation, a threshold must be set to 
dynamically remove modes, in particular when 
their energy becomes very small.

\subsubsection{Four-dimensional parabolic PDE}

Let us consider the four-dimensional initial value problem
\begin{equation}
 \label{4dpde_diff}
 \begin{cases}
\displaystyle\frac{\partial u(t,\bm x)}{\partial t} = 
\sum_{j=1}^4\frac{\partial^2 u(\bm x,t) }{\partial x_j^2} \vse\\
\displaystyle {u}(\bm x,0) = \exp\left[-\frac{1}{10}\sin(x_1 + x_2 + x_3 + x_4)\right]
\end{cases}
\end{equation}
in the spatial domain $\Omega=[0,2\pi]^4$, with 
periodic boundary conditions. Note that here we set the same 
initial condition as in the hyperbolic PDE \eqref{4d_pde}. 
In this case, however, the solution decays to zero because 
of diffusion. Hence, we expect 
that the DO-TT solution rank decays in time, instead 
of increasing. Applying the Fourier 
transform $\mathcal{F}[\cdot]$ to 
\eqref{4dpde_diff}, yields the linear ODE
\begin{equation}
 \label{4dpde_diff_transform}
 \begin{cases}
\displaystyle\frac{d\hat{u}(t,\bm \omega)}{d t} = - \sum_{j=1}^4 \omega_j^2 \hat{u}(t,\bm \omega) \vse\\
\displaystyle \hat{u}(\bm \omega,0) = \mathcal{F}\left[\exp\left(-\frac{1}{10} \sin(x_1 + x_2 + x_3 + x_4)\right)\right](\bm \omega)
\end{cases}
\end{equation}
which can be solved analytically or numerically
to obtain a benchmark solution to \eqref{4dpde_diff}. 
With such benchmark solution 
available, we can study the accuracy 
of the DO-TT propagator. To this end, we first 
compute the bi-orthogonal decomposition of the 
initial condition in \eqref{4dpde_diff} as we have 
done in Section \ref{sec:4dhyperbolic}, with threshold set 
to $\sigma = 10^{-10}$.  This yields the the 
hierarchical ranks \eqref{ranks}, as before. 
In Figure \ref{fig:4d_diff_error} we plot $L^2(\Omega)$ 
error of the DO-TT approximation of the solution 
to \eqref{4dpde_diff} relative to the benchmark solution 
obtained via Fourier transform. It is seen that, contrary 
to hyperbolic problems, diffusion promotes 
a low-rank structure of the solution.
\begin{figure}[t]
\centerline{\footnotesize (a)\hspace{8cm } (b)}
	\centerline{
	\rotatebox{90}{\hspace{1.3cm}  \footnotesize}
		\includegraphics[width=0.49\textwidth]{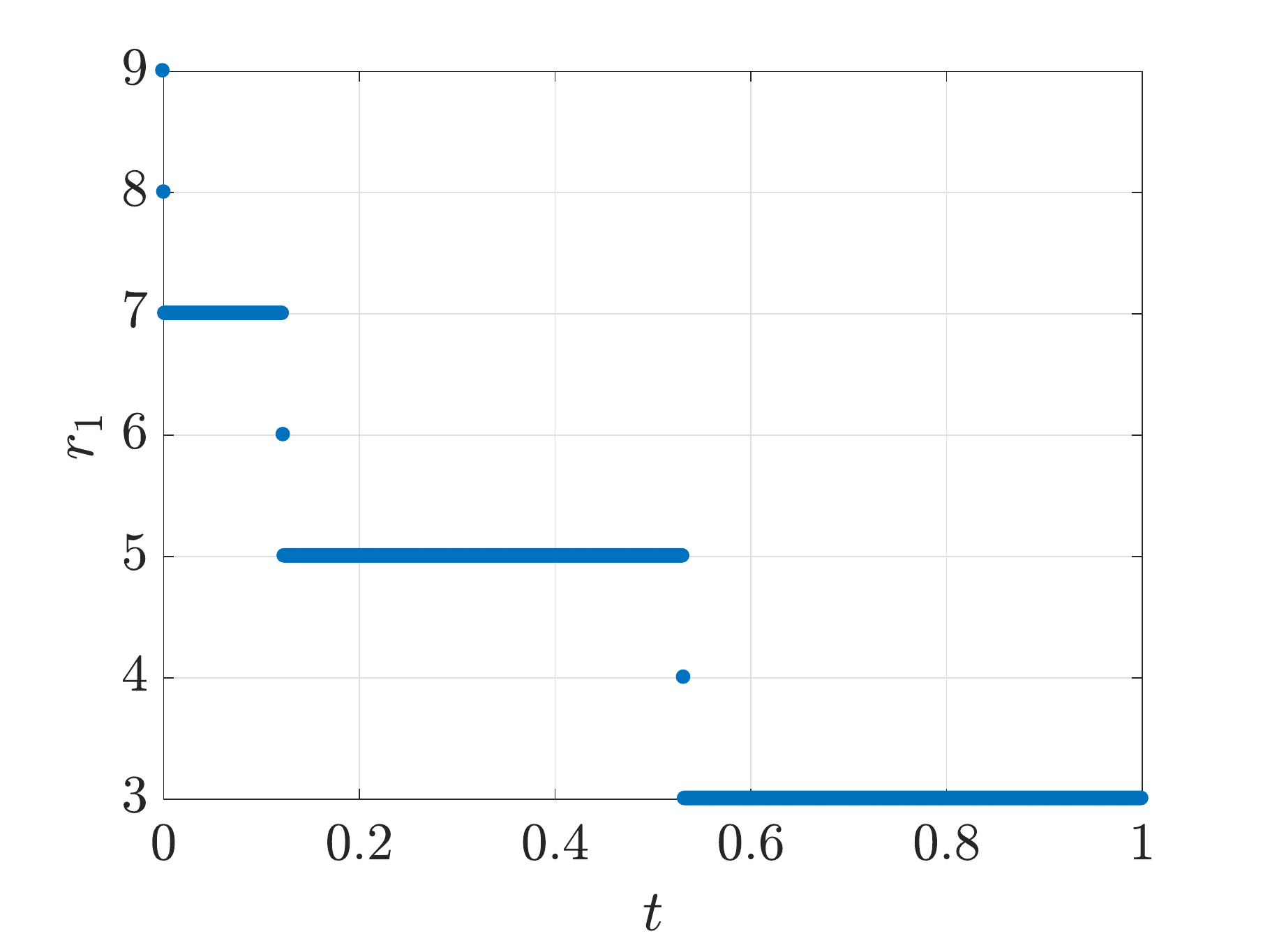}
		\includegraphics[width=0.49\textwidth]{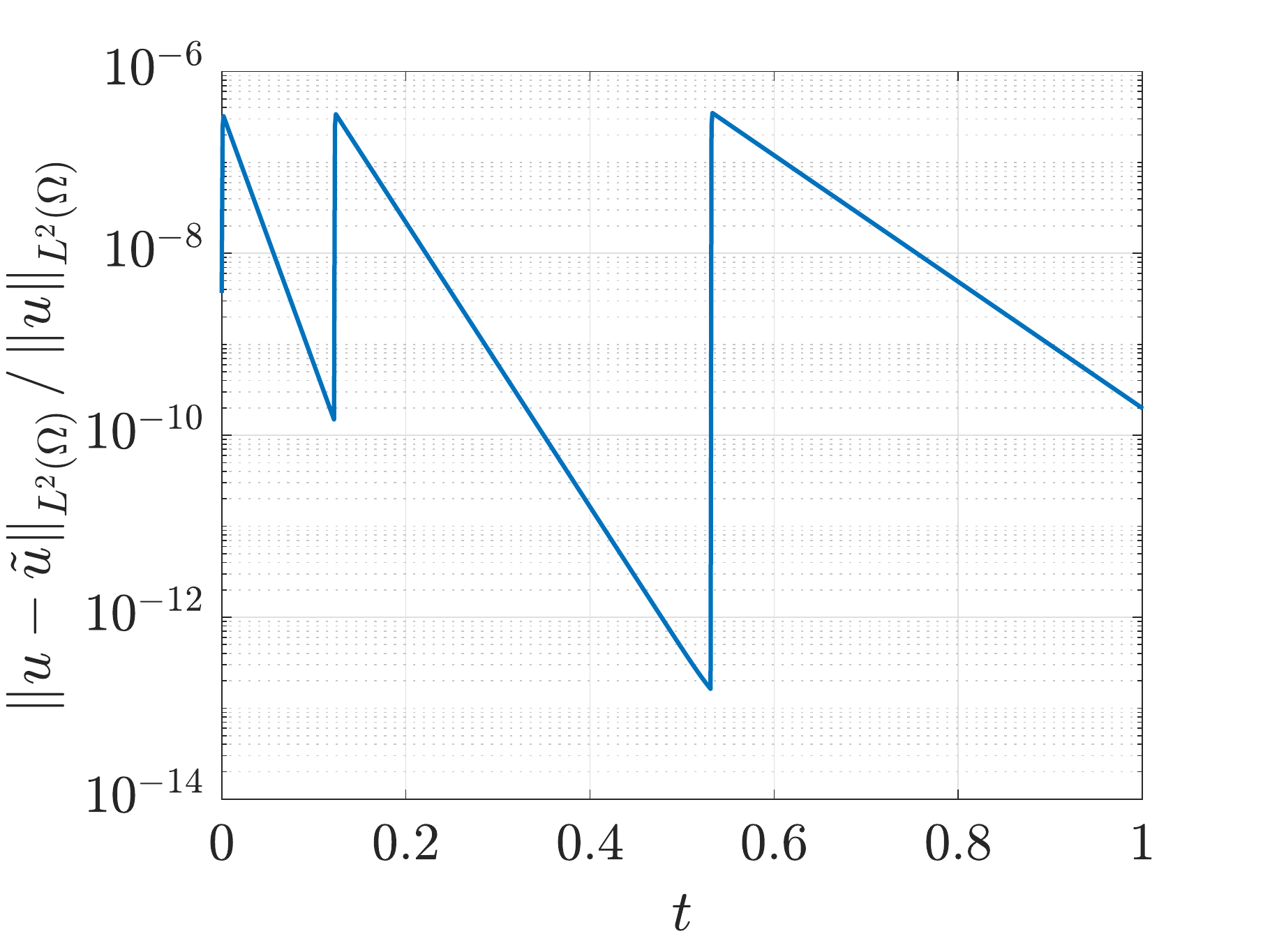}		
}
\caption{(a) Temporal evolution of the level-1 rank in the DO-TT  
approximation of the solution to  four-dimensional diffusion problem \eqref{4dpde_diff}. It is seen that that diffusion promotes 
a low-rank structure of the solution.  In (b)
we plot $L^2(\Omega)$ error of the DO-TT 
approximation relative to the benchmark solution, which is obtained 
via Fourier transform.}
\label{fig:4d_diff_error}
\end{figure}

This means that the energy of the DO-TT modes tends 
to decrease in time, which implies that the covariance 
matrix at the left hand side of the DO-TT propagator
can become numerically singular in time. To avoid such singularities 
we introduce a threshold $\epsilon = 10^{-10}$ on the
eigenvalues of the level-1 TT binary tree, i.e., $\lambda_{i_1}$, 
to determine whether we should decrease the leading rank 
$r_1$. Specifically, if $\lambda_{r_1} < \epsilon$ 
then $r_1 = r_1 - 1$.  For this example, 
thresholding only the first level is sufficient 
to avoid singular covariance matrices at all levels of 
the TT binary tree.  In Figure \ref{fig:4d_diff_error} 
we see that each time we remove one mode we have 
that the relative error jumps.  
Note however, that it immediately decays again with a 
rate that is inversely proportional to the number 
of the DO-TT modes kept in the series expansion.

\subsection{Fifty-dimensional parabolic PDE}
The last problem we consider is a fifty-dimensional 
constant-coefficient diffusion problem with rank-one 
 initial condition
\begin{equation}
 \label{high_dim_pde_diff}
 \begin{cases}
\displaystyle\frac{\partial u(\bm x,t) }{\partial t} = \sum_{j = 1}^{50} \frac{\partial^2 u(\bm x,t) }{\partial x_j^2}  \vse\\
{u}(\bm x,t) = \displaystyle\prod_{j = 1}^{50} \psi_0^{(j)}(x_j)
\end{cases}
\end{equation}
in the spatial domain $\Omega=[0,2\pi]^50$, with 
periodic boundary conditions. The analytical solution can 
be obtained by using the method of separation of variables \cite{Ozisik} as
\begin{equation}
\label{hd_analytic_diff}
u = \prod_{j = 1}^{50} \psi^{(j)}_{0}e^{-50 t}.
\end{equation}
Hence, it is a rank-one solution. Correspondingly, we 
look for a rank-one DO-TT solution of the form 
\begin{equation}
\tilde{u} = \prod_{j=1}^{50} \psi^{(j)}(t) \ , 
\label{rank-one-diff}
\end{equation}  
where the modes $\psi^{(j)}(t)$ satisfy the DO-TT system of
evolution equations
\begin{equation}
\label{do_tt_50d_diff}
\begin{aligned}
\frac{\partial \psi^{(j)}}{\partial t} &= 
\frac{\partial^2 \psi^{(j)}}{\partial x_j^2} - 
\psi^{(j)} \langle \frac{\partial^2 \psi^{(j)}}
{\partial x_j^2} \psi^{(j)}\rangle \ , \qquad j = 1,2, \ldots, 49 \ ,\\
\frac{\partial \psi^{(50)}}{\partial t} &= \sum_{j = 1}^{49} 
\langle \frac{\partial^2 \psi^{(j)}}{\partial x_j^2} 
\psi^{(j)} \rangle \psi^{(50)}.
\end{aligned}
\end{equation}
We set the initial condition in \eqref{high_dim_pde_diff} as 
\begin{equation}
\begin{aligned}
\psi_0^{(j)}= \frac{\sin(x_j)}{\sqrt{\pi}}, \quad j = 1,\ldots, 49 \ ,
\qquad \psi_0^{(50)} = 10^7\sin(x_{50}) \ ,
    \end{aligned}
\end{equation}
which satisfy the orthogonality conditions. As before, 
we solved the system \eqref{do_tt_50d_diff} using 
Fourier spectral collocation with 60 nodes in each variable $x_j$
and RK4 time integration with $\Delta t = 10^{-3}$. In Figure 
\ref{fig:high_dim_figures_diff} we plot the temporal evolution 
of a few representative DO-TT modes. As expected, 
they all decay to zero. 
\begin{figure}[t]
\centerline{\footnotesize\hspace{.2cm} $\psi^{(1)}$ \hspace{4.8cm} $\psi^{(25)}$   \hspace{4.8cm}  $\psi^{(50)}$   }
	\centerline{
	\rotatebox{90}{\hspace{1.3cm}  \footnotesize}
		\includegraphics[width=0.34\textwidth]{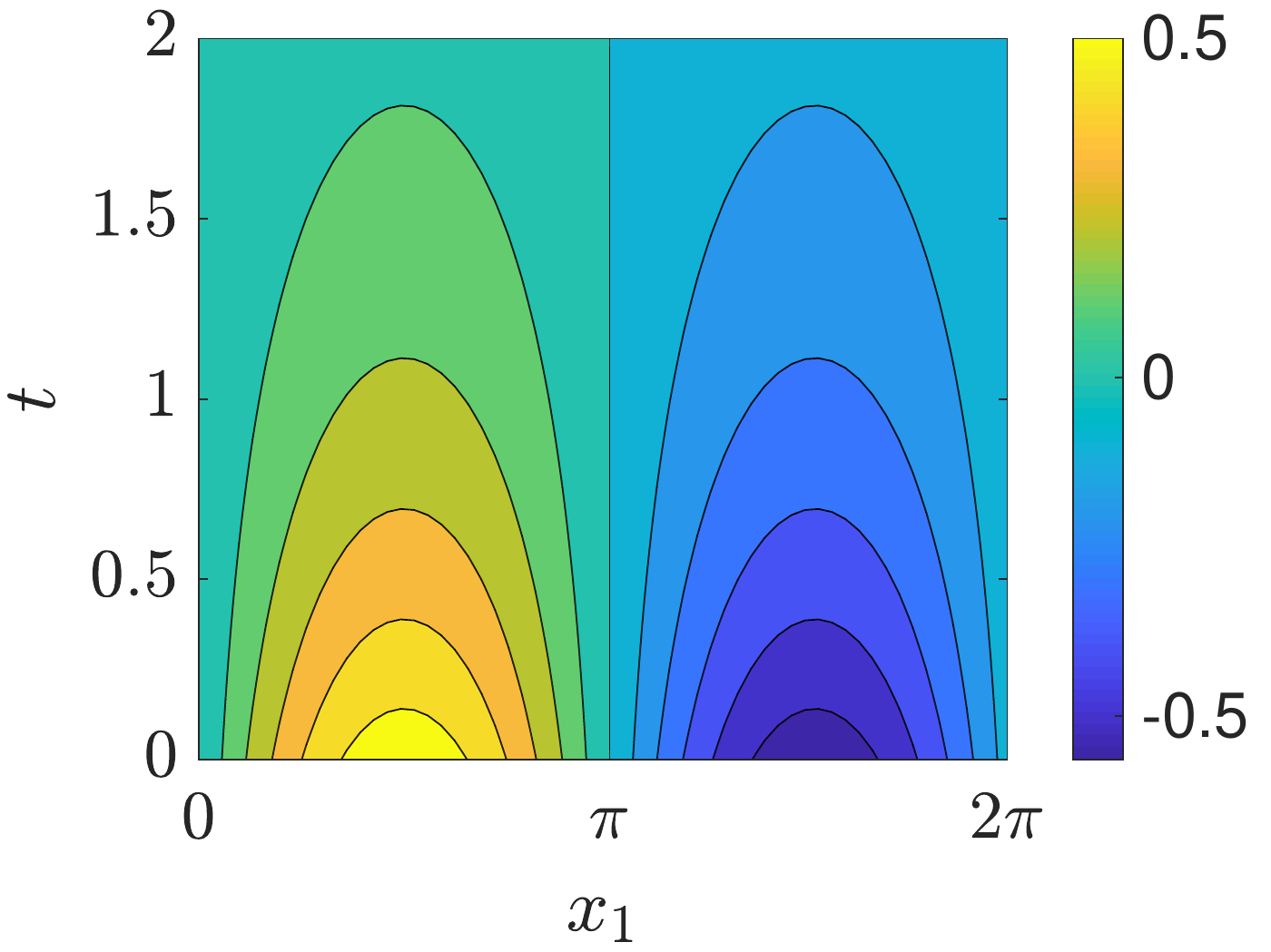}
		\includegraphics[width=0.34\textwidth]{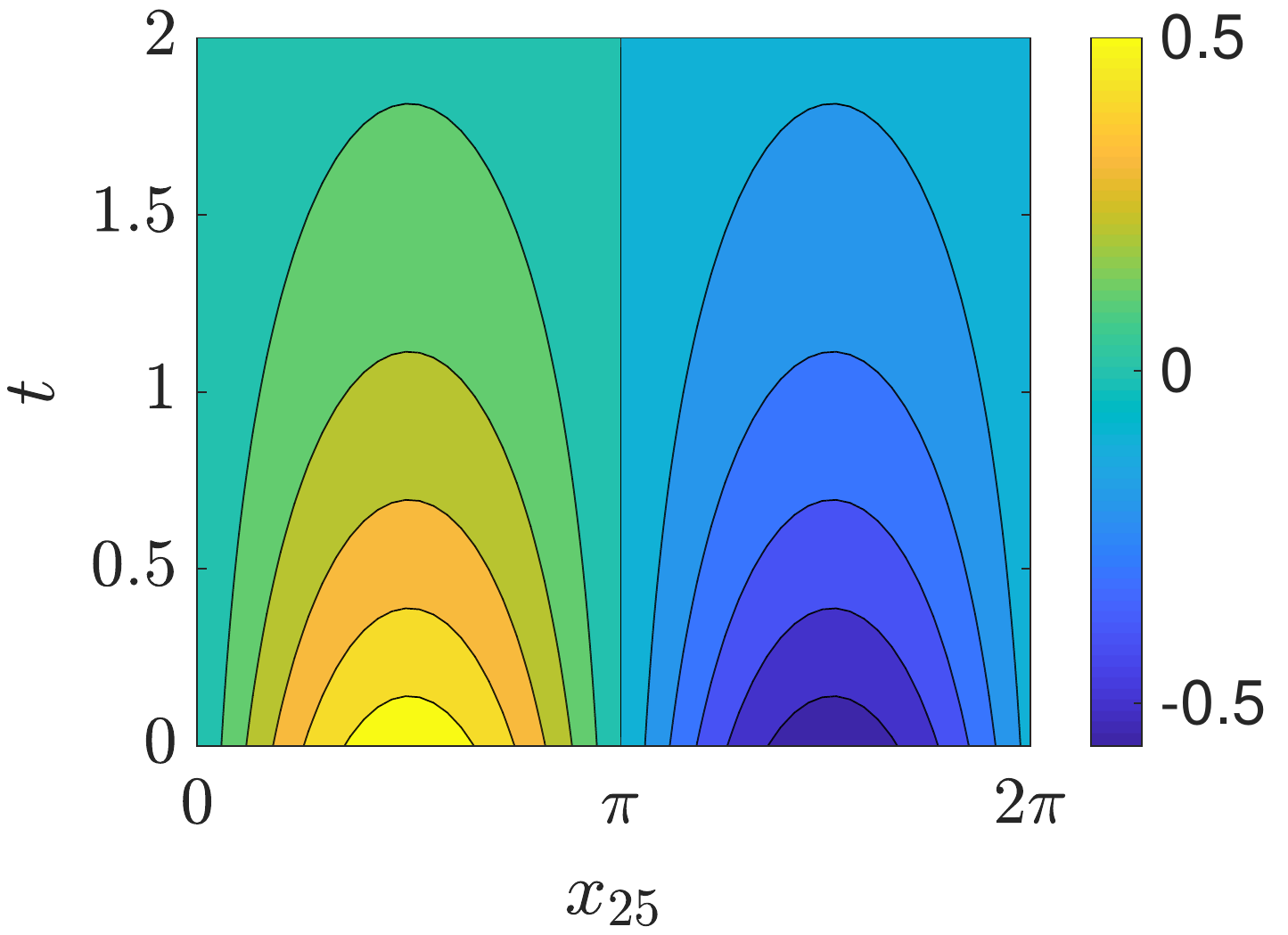}
		\includegraphics[width=0.34\textwidth]{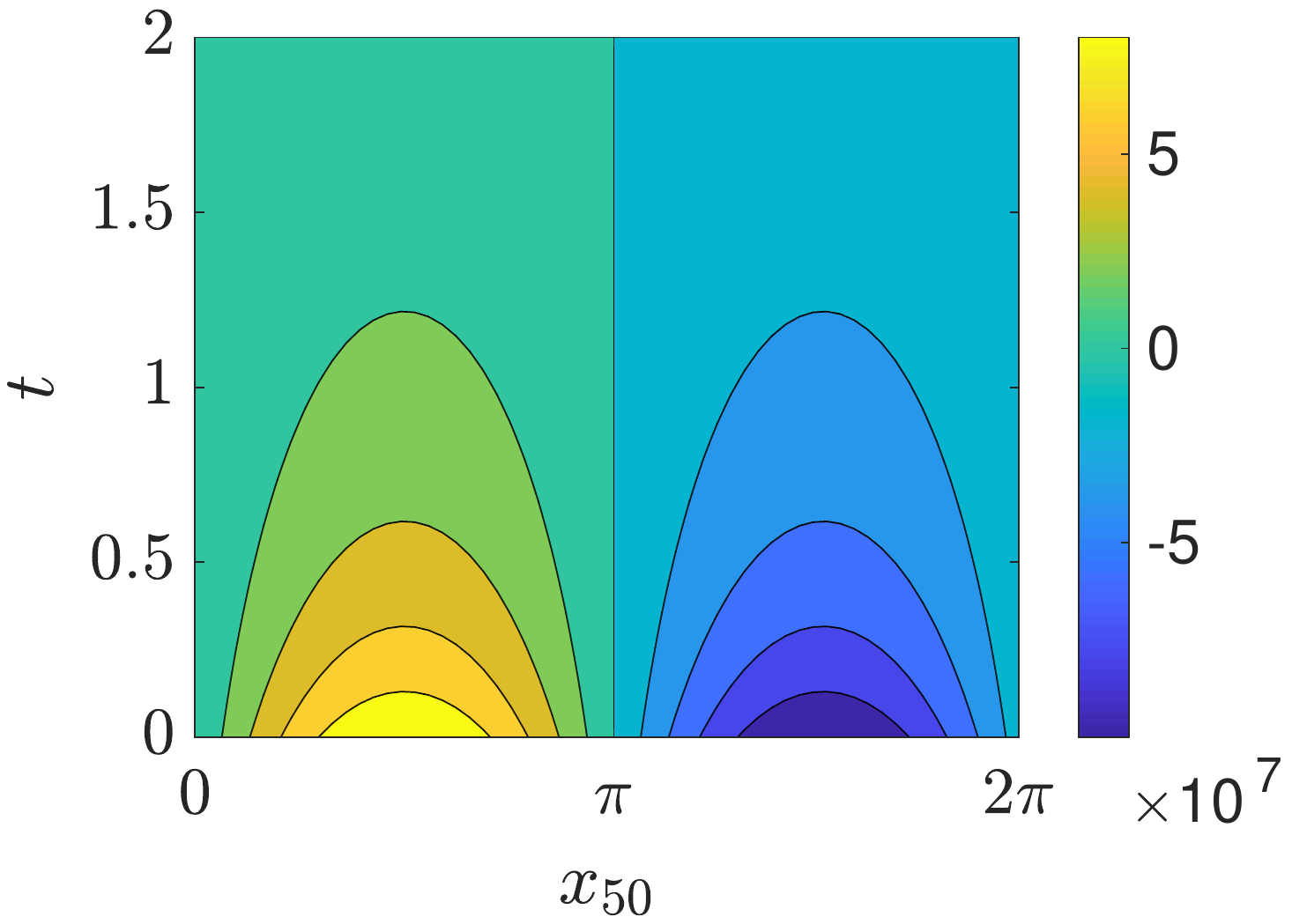}	
}
\caption{Fifty-dimensional diffusion problem \eqref{high_dim_pde_diff}. 
Time evolution of a few representative DO-TT modes  
appearing in the rank-one expansion of solution \eqref{rank-one-diff}. 
As expected, all modes decay to zero.}
\label{fig:high_dim_figures_diff}
\end{figure}
Finally in Figure \ref{fig:50d_diff_error} we plot the time-dependent 
$L^2(\Omega)$ error between the DO-TT solution \eqref{rank-one-diff} 
and the analytical solution \eqref{hd_analytic_diff}. Such error can be 
expressed in terms of one-dimensional integrals as follows
 
\begin{equation}
\begin{aligned}
\left\|u-\tilde{u}\right\|^2_{L^2(\Omega)} 
= &\prod_{j=1}^{50} \int_0^{2\pi} \psi_0^{(j)}(x_j)^2 dx_j e^{-100t} + \prod_{j=1}^{50} \int_0^{2\pi} \psi^{(j)}(t)^2 dx_j - \\
& 2\prod_{j=1}^{50} \int_0^{2\pi} \psi^{(j)}_0(x_j) \psi^{(j)}(t) dx_j e^{-50t}.
\end{aligned}
\end{equation}

\begin{figure}
\centerline{\hspace{0.3cm}\hspace{8cm}}
\centerline{
\includegraphics[width=0.55\textwidth]{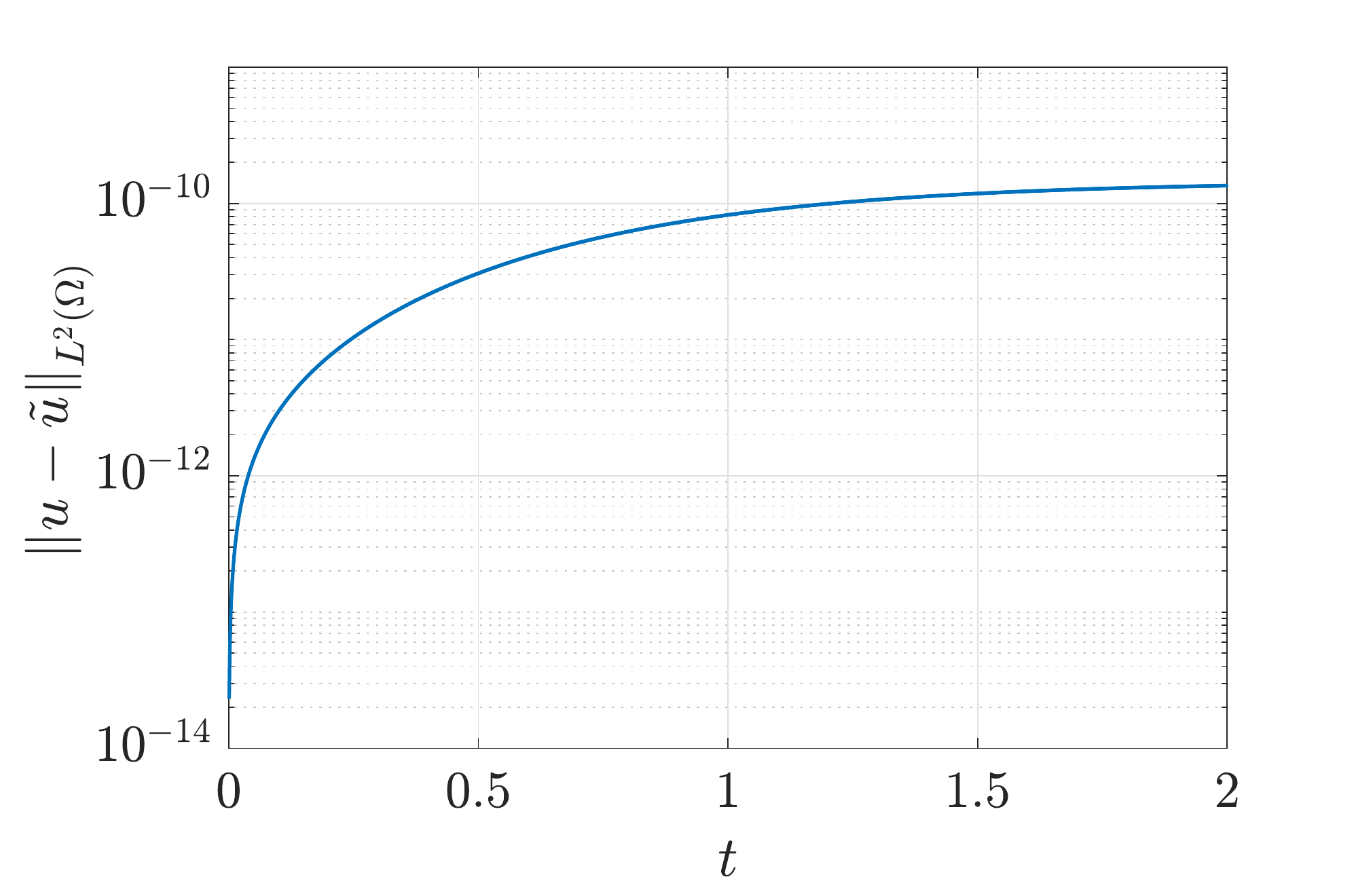}
}
\caption{Fifty-dimensional diffusion problem \eqref{high_dim_pde_diff}.
$L^2(\Omega)$ error between the DO-TT solution \eqref{rank-one-diff} 
and the analytical solution \eqref{hd_analytic_diff}.}
\label{fig:50d_diff_error}
\end{figure}

\section{Summary}
\label{sec:summary}

In this paper, we presented a new dynamically orthogonal 
tensor decomposition method to approximate multivariate 
functions and the solution to high-dimensional 
time-dependent nonlinear PDEs. 
The key idea relies on a hierarchical decomposition 
of the approximation space obtained by splitting 
the independent variables of the problem into 
disjoint subsets.
This process, which can be conveniently 
be visualized in terms of binary trees, yields series expansions 
analogous to the classical tensor-train and hierarchical Tucker 
tensor formats.
By enforcing dynamic orthogonality conditions at each 
level of binary tree, we obtained evolution 
equations for the orthogonal modes spanning each of the nested 
subspaces in the hierarchical decomposition. 
This allowed us to represent the temporal dynamics of 
high-dimensional functions, and compute the solution 
to high-dimensional time-dependent PDEs on a tensor 
manifold with constant rank. We also proposed a new 
algorithm for dynamic addition and removal of modes 
within each subspace, and demonstrated its 
effectiveness in numerical applications to hyperbolic 
and parabolic PDEs. 
The mathematical techniques and algorithms 
we presented in this paper can be readily applied to 
more general high-dimensional nonlinear systems and PDEs, 
such as the finite-dimensional approximation 
of nonlinear functionals and functional 
differential equations  \cite{venturi2018numerical}.

\vs\vs
\noindent 
{\bf Acknowledgements} 
This research was supported by the U.S. Army Research Office  
grant W911NF1810309.

\bibliographystyle{plain}

\begin{thebibliography}{10}
\bibitem{aubry_3}
N.~Aubry.
\newblock On the hidden beauty of the proper orthogonal decomposition.
\newblock {\em Theoretical and Computational Fluid Dynamics}, 2(5):339--352,
  1991.

\bibitem{aubry_1}
N.~Aubry, R.~Guyonnet, and R.~Lima.
\newblock Spatiotemporal analysis of complex signals: theory and applications.
\newblock {\em J. Stat. Phys.}, 64(3-4):683--739, 1991.

\bibitem{aubry_2}
N.~Aubry and R.~Lima.
\newblock Spatiotemporal and statistical symmetries.
\newblock {\em J. Stat. Phys.}, 81(3-4):793--828, 1995.

\bibitem{robust_do/bo}
H.~Babaee, M.~Choi, T.~P. Sapsis, and G.~E. Karniadakis.
\newblock A robust bi-orthogonal/dynamically-orthogonal method using the
  covariance pseudo-inverse with application to stochastic flow problems.
\newblock {\em J. Comput. Phys.}, 344:303--319, 2017.

\bibitem{Bachmayr}
M.~Bachmayr, R.~Schneider, and A.~Uschmajew.
\newblock Tensor networks and hierarchical tensors for the solution of
  high-dimensional partial differential equations.
\newblock {\em Foundations of Computational Mathematics}, 16(6), 2016.

\bibitem{Baldeaux}
J.~Baldeaux and M.~Gnewuch.
\newblock Optimal randomized multilevel algorithms for infinite-dimensional
  integration on function spaces with {ANOVA}-type decomposition.
\newblock {\em SIAM J. Numer. Anal.}, 52(3):1128--1155, 2014.

\bibitem{Barthelmann}
V.~Barthelmann, E.~Novak, and K.~Ritter.
\newblock High dimensional polynomial interpolation on sparse grids.
\newblock {\em Advances in Computational Mechanics}, 12:273--288, 2000.

\bibitem{fast_slow}
R.~Bertram and J.~E. Rubin.
\newblock Multi-timescale systems and fast-slow analysis.
\newblock {\em Math. Biosci.}, 287:105--121, 2017.

\bibitem{parr_tensor}
A.~M.~P. Boelens, D.~Venturi, and D.~M. Tartakovsky.
\newblock Parallel tensor methods for high-dimensional linear {PDE}s.
\newblock {\em J. Comput. Phys.}, 375:519--539, 2018.

\bibitem{Bungartz}
H.~J. Bungartz and M.~Griebel.
\newblock Sparse grids.
\newblock {\em Acta Numerica}, 13:147--269, 2004.

\bibitem{CaoCG09}
Y.~Cao, Z.~Chen, and M.~Gunzbuger.
\newblock {ANOVA} expansions and efficient sampling methods for parameter
  dependent nonlinear {PDEs}.
\newblock {\em Int. J. Numer. Anal. Model.}, 6:256--273, 2009.

\bibitem{cercignani1988}
Carlo Cercignani.
\newblock {\em The {B}oltzmann equation and its applications}.
\newblock Springer, 1988.

\bibitem{Cheng2013}
M.~Cheng, T.~Y. Hou, and Z.~Zhang.
\newblock A dynamically bi-orthogonal method for time-dependent stochastic
  partial differential equations i: Derivation and algorithms.
\newblock {\em J. Comput. Phys.}, 242:843 -- 868, 2013.

\bibitem{bo1}
M.~Cheng, T.~Y. Hou, and Z.~Zhang.
\newblock A dynamically bi-orthogonal method for time-dependent stochastic
  partial differential equations {I}: derivation and algorithms.
\newblock {\em J. Comput. Phys.}, 242:843--868, 2013.

\bibitem{bo2}
M.~Cheng, T.~Y. Hou, and Z.~Zhang.
\newblock A dynamically bi-orthogonal method for time-dependent stochastic
  partial differential equations {II}: adaptivity and generalizations.
\newblock {\em J. Comput. Phys.}, 242:753--776, 2013.

\bibitem{ChinestaBook}
F.~Chinesta, R.~Keunings, and A.~Leygue.
\newblock {\em The Proper generalized decomposition for advanced numerical
  simulations}.
\newblock Springer, 2014.

\bibitem{Chkifa}
A.~Chkifa, A.~Cohen, and C.~Schwab.
\newblock High-dimensional adaptive sparse polynomial interpolation and
  applications to parametric {PDE}s.
\newblock {\em Found. Comput. Math.}, 14:601--633, 2014.

\bibitem{HeyrimJCP_2014}
H.~Cho, D.~Venturi, and G.~E. Karniadakis.
\newblock Numerical methods for high-dimensional probability density function
  equation.
\newblock {\em J. Comput. Phys}, 315:817--837, 2016.

\bibitem{do/bo_equiv}
M.~Choi, T.P. Sapsis, and G.E. Karniadakis.
\newblock On the equivalence of dynamically orthogonal and bi-orthogonal
  methods: theory and numerical simulations.
\newblock {\em J. Comput. Phys.}, 270:1--20, 2014.

\bibitem{Silva}
V.~de~Silva and L.-H. Lim.
\newblock Tensor rank and ill-posedness of the best low-rank approximation
  problem.
\newblock {\em SIAM J. Matrix Anal. Appl.}, 30:1084--1127, 2008.

\bibitem{qmc}
J.~Dick, F.~Y. Kuo, and I.~H. Sloan.
\newblock High-dimensional integration: the quasi-{M}onte {C}arlo way.
\newblock {\em Acta Numerica}, 22:133--288, 2013.

\bibitem{Etter}
A.~Etter.
\newblock Parallel {ALS} algorithm for solving linear systems in the
  hierarchical {T}ucker representation.
\newblock {\em SIAM J. Sci. Comput.}, 38(4):A2585–A2609, 2016.

\bibitem{Foo1}
J.~Foo and G.~E. Karniadakis.
\newblock Multi-element probabilistic collocation method in high dimensions.
\newblock {\em J. Comput. Phys.}, 229:1536--1557, 2010.

\bibitem{hsvd_tensors_grasedyk}
L.~Grasedyck.
\newblock Hierarchical singular value decomposition of tensors.
\newblock {\em SIAM J. Matrix Anal. Appl.}, 31(4):2029--2054, 2009/10.

\bibitem{tensor_survey}
L.~Grasedyck, D.~Kressner, and C.~Tobler.
\newblock A literature survey of low-rank tensor approximation techniques.
\newblock {\em GAMM-Mitt.}, 36(1):53--78, 2013.

\bibitem{Grasedyck2018}
L.~Grasedyck and C.~L\"{o}bbert.
\newblock Distributed hierarchical {SVD} in the hierarchical {T}ucker format.
\newblock {\em Numer. Linear Algebra Appl.}, 25(6):e2174, 2018.

\bibitem{Griebel2019}
M.~Griebel and G.~Li.
\newblock On the decay rate of the singular values of bivariate functions.
\newblock {\em SIAM J. Numer. Anal.}, 56(2):974--993, 2019.

\bibitem{Hackbusch_book}
W.~Hackbusch.
\newblock {\em Tensor spaces and numerical tensor calculus}.
\newblock Springer, 2012.

\bibitem{spectral}
J.~S. Hesthaven, S.~Gottlieb, and D.~Gottlieb.
\newblock {\em Spectral methods for time-dependent problems}.
\newblock Cambridge University Press, Cambridge, 2007.

\bibitem{Hopf}
E.~Hopf.
\newblock Statistical hydromechanics and functional calculus.
\newblock {\em J. Rat. Mech. Anal.}, 1(1):87--123, 1952.

\bibitem{Hopf1}
E.~Hopf and E.~W. Titt.
\newblock On certain special solutions of the $\phi$-equation of statistical
  hydrodynamics.
\newblock {\em J. Rat. Mech. Anal.}, 2(3):587--592, 1953.

\bibitem{Itzykson}
C.~Itzykson and J.~B. Zuber.
\newblock {\em Quantum field theory}.
\newblock Dover, 2005.
\newblock Republication of the work originally published by McGraw-Hill, Inc.,
  NY, 1980.

\bibitem{Jensen}
R.~V. Jensen.
\newblock Functional integral approach to classical statistical dynamics.
\newblock {\em J. Stat. Phys.}, 25(2):183--210, 1981.

\bibitem{Jouvet}
B.~Jouvet and R.~Phythian.
\newblock Quantum aspects of classical and statistical fields.
\newblock {\em Phys. Rev. A}, 19:1350--1355, 1979.

\bibitem{Karlsson}
L.~Karlsson, D.~Kressner, and A.~Uschmajew.
\newblock Parallel algorithms for tensor completion in the {CP} format.
\newblock {\em Parallel compting}, 57:222--234, 2016.

\bibitem{kato}
T.~Kato.
\newblock {\em Perturbation theory for linear operators}.
\newblock Classics in Mathematics. Springer-Verlag, Berlin, 1995.
\newblock Reprint of the 1980 edition.

\bibitem{khoromskij}
B.~N. Khoromskij.
\newblock Tensor numerical methods for multidimensional {PDE}s: theoretical
  analysis and initial applications.
\newblock In {\em C{EMRACS} 2013--modelling and simulation of complex systems:
  stochastic and deterministic approaches}, volume~48 of {\em ESAIM Proc.
  Surveys}, pages 1--28. EDP Sci., Les Ulis, 2015.

\bibitem{hierar}
O.~Koch and C.~Lubich.
\newblock Dynamical tensor approximation.
\newblock {\em SIAM J. Matrix Anal. Appl.}, 31(5):2360--2375, 2010.

\bibitem{Kolda}
T.~Kolda and B.~W. Bader.
\newblock Tensor decompositions and applications.
\newblock {\em SIREV}, 51:455--500, 2009.

\bibitem{Kressner2014}
D.~Kressner and C.~Tobler.
\newblock Algorithm 941: htucker -- a {M}atlab toolbox for tensors in
  hierarchical {T}ucker format.
\newblock {\em ACM Transactions on Mathematical Software}, 40(3):1--22, 2014.

\bibitem{multilinear_svd_lathauwer}
L.~D. Lathauwer, B.~D. Moor, and J.~Vandewalle.
\newblock A multilinear singular value decomposition.
\newblock {\em SIAM J. Matrix Anal. Appl.}, 21(4):1253--1278, 2000.

\bibitem{Li1}
G.~Li and H.~Rabitz.
\newblock Regularized random-sampling high dimensional model representation
  {(RS-HDMR)}.
\newblock {\em Journal of Mathematical Chemistry}, 43(3):1207--1232, 2008.

\bibitem{Lubich2018}
C.~Lubich, B.~Vandereycken, and A.~Walach.
\newblock Time integration of rank-constrained {T}ucker tensors.
\newblock {\em SIAM J. Numer. Anal.}, 56(3):1273--1290, 2018.

\bibitem{dimarco2014}
G.~Di Marco and L.~Pareschi.
\newblock {Numerical methods for kinetic equations}.
\newblock {\em Acta Numerica}, 23:369--520, 2014.

\bibitem{Martin}
P.~C. Martin, E.~D. Siggia, and H.~A. Rose.
\newblock Statistical dynamics of classical systems.
\newblock {\em Phys. Rev. A}, 8:423--437, 1973.

\bibitem{Monin2}
A.~S. Monin and A.~M. Yaglom.
\newblock {\em Statistical Fluid Mechanics, Volume II: Mechanics of
  Turbulence}.
\newblock Dover, 2007.

\bibitem{Akil}
A.~Narayan and J.~Jakeman.
\newblock Adaptive {L}eja sparse grid constructions for stochastic collocation
  and high-dimensional approximation.
\newblock {\em SIAM J. Sci. Comput.}, 36(6):A2952--A2983, 2014.

\bibitem{OseledetsTT}
I.~V. Oseledets.
\newblock Tensor-train decomposition.
\newblock {\em SIAM J. Sci. Comput.}, 33(5):2295–--2317, 2011.

\bibitem{Ozisik}
M.~N. \"{O}zi\c{s}ik.
\newblock {\em Heat conduction}.
\newblock John Wiley \& Sons, second edition, 1993.

\bibitem{Phythian}
R.~Phythian.
\newblock The functional formalism of classical statistical dynamics.
\newblock {\em J. Phys A: Math. Gen.}, 10(5):777--788, 1977.

\bibitem{Raissi}
M.~Raissi and G.~E. Karniadakis.
\newblock Hidden physics models: Machine learning of nonlinear partial
  differential equations.
\newblock {\em J. Comput. Phys.}, 357:125--141, 2018.

\bibitem{Raissi1}
M.~Raissi, P.~Perdikaris, and G.~E. Karniadakis.
\newblock Physics-informed neural networks: A deep learning framework for
  solving forward and inverse problems involving nonlinear partial differential
  equations.
\newblock {\em J. Comput. Phys.}, 378:606--707, 2019.

\bibitem{math_phys_vol1}
M.~Reed and B.~Simon.
\newblock {\em Methods of modern mathematical physics. {I}}.
\newblock Academic Press, Inc. [Harcourt Brace Jovanovich, Publishers], New
  York, second edition, 1980.
\newblock Functional analysis.

\bibitem{Rhee}
H.-K. Rhee, R.~Aris, and N.~R. Amundson.
\newblock {\em First-order partial differential equations, volume 1: theory and
  applications of single equations}.
\newblock Dover, 2001.

\bibitem{Risken}
H.~Risken.
\newblock {\em The {F}okker-{P}lanck equation: methods of solution and
  applications}.
\newblock Springer-Verlag, second edition, 1989.
\newblock Mathematics in science and engineering, vol. 60.

\bibitem{Rohwedder}
T.~Rohwedder and A.~Uschmajew.
\newblock On local convergence of alternating schemes for optimization of
  convex problems in the tensor train format.
\newblock {\em SIAM J. Numer. Anal.}, 51(2):1134--1162, 2013.

\bibitem{do}
T.~P. Sapsis and P.~F.J.Lermusiaux.
\newblock Dynamically orthogonal field equations for continuous stochastic
  dynamical systems.
\newblock {\em Phys. D}, 238(23-24):2347--2360, 2009.

\bibitem{approx_rates}
R.~Schneider and A.~Uschmajew.
\newblock Approximation rates for the hierarchical tensor format in periodic
  {S}obolev spaces.
\newblock {\em J. Complexity}, 30(2):56--71, 2014.

\bibitem{DaSilva2015}
C.~Da Silva and F.~J. Herrmann.
\newblock Optimization on the {H}ierarchical {T}ucker manifold -- applications
  to tensor completion.
\newblock {\em Linear Algebra and its Applications}, 481:131--173, 2015.

\bibitem{h_tucker_geom}
A.~Uschmajew and B.~Vandereycken.
\newblock The geometry of algorithms using hierarchical tensors.
\newblock {\em Linear Algebra Appl.}, 439(1):133--166, 2013.

\bibitem{venturi2006}
D.~Venturi.
\newblock On proper orthogonal decomposition of randomly perturbed fields with
  applications to flow past a cylinder and natural convection over a horizontal
  plate.
\newblock {\em J. Fluid Mech.}, 559:215--254, 2006.

\bibitem{venturi_bi_orthogonal}
D.~Venturi.
\newblock A fully symmetric nonlinear biorthogonal decomposition theory for
  random fields.
\newblock {\em Phys. D}, 240(4-5):415--425, 2011.

\bibitem{venturi2018numerical}
D.~Venturi.
\newblock The numerical approximation of nonlinear functionals and functional
  differential equations.
\newblock {\em Physics Reports}, 732:1--102, 2018.

\bibitem{Venturi_PRS}
D.~Venturi, T.~P. Sapsis, H.~Cho, and G.~E. Karniadakis.
\newblock A computable evolution equation for the joint response-excitation
  probability density function of stochastic dynamical systems.
\newblock {\em Proc. R. Soc. A}, 468(2139):759--783, 2012.

\bibitem{venturi2008}
D.~Venturi, X.~Wan, and G.~E. Karniadakis.
\newblock Stochastic low-dimensional modelling of a random laminar wake past a
  circular cylinder.
\newblock {\em J. Fluid Mech.}, 606:339--367, 2008.

\bibitem{Zhu2019}
Y.~Zhu, N.~Zabaras, P.-S. Koutsourelakis, and P.~Perdikaris.
\newblock Physics-constrained deep learning for high-dimensional surrogate
  modeling and uncertainty quantification without labeled data.
\newblock {\em J. Comput. Phys.}, 394:56--81, 2019.

\end{thebibliography}

\end{document}